\newcommand{\m}{\mathfrak}
\newcommand{\mc}{\mathcal}
\newcommand{\bb}{\mathbb}
\newcommand{\what}{\widehat}
\newcommand\radice[2]{\hspace{-1.5pt}\sqrt[\uproot{2}#1]{#2}}
\newcommand{\infroot}[1]{\radice{\infty}{#1}}
\DeclareMathOperator{\lcm}{lcm} 			
\DeclareMathOperator{\spec}{Spec} 		
\DeclareMathOperator{\Div}{{Div}} 		
\DeclareMathOperator{\Par}{Par} 			
\DeclareMathOperator{\Coh}{Coh} 			
\DeclareMathOperator{\Sch}{Sch} 			
\DeclareMathOperator{\wt}{wt} 				
\DeclareMathOperator{\gp}{gp}				 
\DeclareMathOperator{\Qcoh}{QCoh}	 	
\DeclareMathOperator{\Hom}{Hom} 			
\DeclareMathOperator{\D}{D} 				
\DeclareMathOperator{\Stab}{Stab} 			
\DeclareMathOperator{\id}{id}				
\DeclareMathOperator{\Pic}{Pic} 			
\DeclareMathOperator{\cha}{char}			
\DeclareMathOperator{\rk}{rk}				
\DeclareMathOperator{\FP}{FP}
\DeclareMathOperator{\Aff}{Aff}
\DeclareMathOperator{\gra}{gr}
\DeclareMathOperator{\End}{End}
\DeclareMathOperator{\et}{\acute{e}t} 		
\newcommand{\fPar}{\mathfrak{Par}}
\newcommand{\uqcoh}{\underline{\Qcoh}}	
\newcommand{\upar}{\underline{\Par}}		
\newcommand{\aff}{(\Aff)^{op}}
\newtheorem{thm}{Theorem}[subsection]
\newtheorem*{thm*}{Theorem}
\newtheorem*{ithm}{Ideal Theorem}
\newtheorem{lemma}[thm]{Lemma}
\newtheorem{prop}[thm]{Proposition}
\newtheorem{cor}[thm]{Corollary}
\theoremstyle{definition}
\newtheorem{definition}[thm]{Definition}
\newtheorem{question}[thm]{Question}
\newtheorem{notation}[thm]{Notation}
\newtheorem{assumptions}[thm]{Assumptions}
\newtheorem*{example*}{Example}
\newtheorem{example}[thm]{Example}
\newtheorem{rmk}[thm]{Remark}
\numberwithin{equation}{section}
\keywords{Log geometry, parabolic sheaf, moduli of sheaves, root stack}
\subjclass[2010]{14D20, 14D23}
\title[Moduli of parabolic sheaves]{Moduli of parabolic sheaves on a polarized\\ logarithmic scheme}
\author{Mattia Talpo}
\begin{document}

\begin{abstract}
We generalize the construction of moduli spaces of parabolic sheaves given by Maruyama and Yokogawa in \cite{maruyama-yokogawa} to the case of a projective fine saturated log scheme with a fixed global chart. Furthermore we construct moduli spaces of parabolic sheaves without fixing the weights.
\end{abstract}

\maketitle


\tableofcontents

\section{Introduction}

The aim of this paper is to generalize results of Maruyama and Yokogawa (\cite{maruyama-yokogawa}) about moduli spaces of parabolic sheaves.

Parabolic bundles were first defined by Mehta and Seshadri (\cite{metha-seshadri}) on a projective curve $X$ with a finite number of marked points $x_{1},\hdots,x_{r}\in X$ as vector bundles $E$ on $X$ with the additional data of a filtration 
$$
0=F_{i,k_{i}+1}\subset F_{i,k_{i}}\subset \cdots \subset F_{i,1}=E_{p_{i}}
$$
of the fiber of $E$ over each marked point $x_{i}$, together with a set of weights $0\leq a_{i,1}<\cdots<a_{i,k_{i}}<1$ (which are real numbers) associated to the pieces of the filtration. These objects came up in their work in order to generalize to open curves (the complement of the marked points) the classical correspondence between stable sheaves of rank $0$ on a projective complex curve and irreducible unitary representations of the topological fundamental group. The weights in their definition are real numbers, but for moduli theory one usually assumes that they are rational.

Their definition was later increasingly generalized by many people (\cite{maruyama-yokogawa, biswas, mochizuki, iyer-simpson, borne}) first to a projective variety together with a (simple normal crossing) divisor, and eventually to arbitrary logarithmic schemes (\cite{borne-vistoli}).

Maruyama and Yokogawa in \cite{maruyama-yokogawa} develop a moduli theory for parabolic sheaves on a smooth projective variety with an effective Cartier divisor (extending Mehta and Seshadri's results) by defining an appropriate notion of parabolic (semi-)stability and generalizing the classical GIT theory for moduli spaces of coherent sheaves on a projective variety (\cite{huybrechts-lehn}). Their final result is the construction of the moduli space of torsion-free (semi-)stable parabolic sheaves with fixed rational weights and fixed parabolic Hilbert polynomial.

Nironi (\cite[Section 7.2]{nironi}) recovers the results of Maruyama and Yokogawa by exploiting the correspondence between parabolic sheaves and quasi-coherent sheaves on a root stack of \cite{borne-vistoli}, and applying his results about moduli of coherent sheaves on a projective Deligne--Mumford stack.

We generalize the result of \cite{maruyama-yokogawa} in two ways: first we replace the log scheme obtained from an effective Cartier divisor with a general projective fine and saturated log scheme $X$ over a field $k$ with a global chart $P\to \Div(X)$ for the log structure and a chosen polarization (what we call a \emph{polarized log scheme}), and second, we consider moduli spaces for parabolic sheaves without fixing the weights.

For the first part, we generalize Nironi's approach and apply the correspondence between parabolic sheaves with respect to a Kummer extension $P\to Q$ and quasi-coherent sheaves on the root stack $\radice{Q}{X}$ of \cite{borne-vistoli} (the root stack is denoted by $X_{Q/P}$ there) to reduce the moduli problem to the one of coherent sheaves on the root stack. The prototypical example of Kummer extension $P\to Q$ is the inclusion $P\subseteq \frac{1}{n}P$, that corresponds to taking ``rational weights with denominators dividing $n$''. After this identification, we apply the theory of \cite{nironi} for moduli of coherent sheaves on a tame projective DM stack to the root stack $\radice{Q}{X}$. We remark that we have to assume that $\radice{Q}{X}$ is Deligne--Mumford; this happens if the order of the  cokernel of $P^{\gp}\to Q^{\gp}$ is not divisible by $\cha(k)$, so, in particular, if the characteristic of $k$ is $0$. It seems very plausible that Nironi's theory also applies to tame Artin stacks, so this assumption should be superfluous.

Following Nironi, we define a notion of (semi-)stability for parabolic sheaves with respect to $P\to Q$ by means of a generating sheaf on the root stack $\radice{Q}{X}$, and obtain a moduli space of (semi-)stable sheaves. The final product is the following theorem.

\begin{thm*}[Theorem \ref{thm.fixed.weights}]
Let $X$ be a (fine saturated) polarized log scheme over a field $k$, with global chart $P\to \Div(X)$, and with a Kummer morphism of fine saturated sharp monoids $P\to Q$. Moreover assume that the root stack $\radice{Q} X$ is Deligne--Mumford.

Then the stack  $\mc{M}^{ss}_{H}$ of semi-stable parabolic sheaves with Hilbert polynomial $H \in \bb{Q}[x]$ is an Artin stack of finite type over $k$, and it has a good (resp. adequate, in positive characteristic) moduli space in the sense of Alper (\cite{alper,alperadeq}), that we denote by $M^{ss}_{H}$. This moduli space is a projective scheme.

The open substack $\mc{M}^{s}_{H}\subseteq \mc{M}^{ss}_{H}$ of stable sheaves has a coarse moduli space $M^{s}_{H}$, which is an open subscheme of $M^{ss}_{H}$, and the map $\mc{M}^{s}_{H}\to M^{s}_{H}$ is a $\bb{G}_{m}$-gerbe.
\end{thm*}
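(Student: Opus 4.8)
The plan is to deduce the entire statement from Nironi's theory of moduli of coherent sheaves on a tame projective Deligne--Mumford stack, applied to the root stack $\mc{X} := \radice{Q}{X}$. The first step is to invoke the equivalence of Borne--Vistoli between parabolic sheaves on $X$ with respect to the Kummer morphism $P\to Q$ and quasi-coherent sheaves on $\mc{X}$, together with its compatibility with arbitrary base change on $X$ (set up in the earlier sections). This identifies the fibered category $\mc{M}^{ss}_{H}$, a priori defined via families of semistable parabolic sheaves, with the fibered category of families of coherent sheaves on $\mc{X}$ with fixed modified Hilbert polynomial $H$ that are Gieseker-semistable with respect to a generating sheaf $\mc{E}$ and a polarization on the coarse space. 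Thus one must: (a) exhibit on $\mc{X}$ a generating sheaf $\mc{E}$ in Nironi's sense --- here I would use the natural locally free sheaves on the root stack built from the tautological line bundles associated to $Q$, and verify the generation property, or simply appeal to the fact that a projective DM stack always carries a generating sheaf; and (b) check that, under the equivalence, the parabolic Hilbert polynomial corresponds to the $\mc{E}$-modified Hilbert polynomial of the associated sheaf on $\mc{X}$ computed with the pullback of $\mc{O}_{X}(1)$, and that ``parabolic (semi-)stability'' matches ``$\mc{E}$-(semi-)stability''. Since (following Nironi) the notion of stability for parabolic sheaves was defined in the preceding sections precisely through the generating sheaf on $\mc{X}$, step (b) should amount to unwinding definitions.

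Next I would verify that $\mc{X}$ satisfies the hypotheses of \cite{nironi}: it is proper over $k$ because $X$ is projective and $\mc{X}\to X$ is proper; its coarse moduli space is $X$ itself, which carries the polarization $\mc{O}_{X}(1)$; and it is Deligne--Mumford by assumption, hence tame, since the very hypothesis that $\radice{Q}{X}$ is DM forces the relevant stabilizer orders (the order of the cokernel of $P^{\gp}\to Q^{\gp}$) to be prime to $\cha(k)$. With this in place, Nironi's construction applies: the moduli stack $\mc{M}^{ss}_{H}$ is realized as a quotient stack $[R^{ss}/\GL_{N}]$, where $R^{ss}$ is a suitable locally closed $\GL_{N}$-invariant subscheme of a Quot scheme on $\mc{X}$ parametrizing $\mc{E}$-semistable quotients with Hilbert polynomial $H$ and $N=H(m)$ for $m\gg 0$; since $R^{ss}$ is of finite type over $k$, the stack $\mc{M}^{ss}_{H}$ is an Artin stack of finite type. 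The GIT quotient $M^{ss}_{H}:=R^{ss}/\!\!/\GL_{N}$ is a projective scheme, and the induced morphism $\mc{M}^{ss}_{H}\to M^{ss}_{H}$ is a good moduli space (resp.\ an adequate moduli space in positive characteristic) by Alper's identification of quotients by reductive groups with good/adequate moduli spaces (\cite{alper,alperadeq}). Restricting to the stable locus, the $\mathrm{PGL}_{N}$-action on $R^{s}$ is set-theoretically free, so $M^{s}_{H}=R^{s}/\!\!/\GL_{N}$ is a geometric quotient, open in $M^{ss}_{H}$; as the only automorphisms of a stable parabolic sheaf are the scalars, $\mc{M}^{s}_{H}\to M^{s}_{H}$ is a $\bb{G}_{m}$-gerbe.

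The main obstacle I anticipate is not the final GIT step --- which is Nironi's theorem essentially verbatim --- but the bookkeeping that makes the reduction legitimate: checking that the moduli stack of semistable parabolic sheaves is genuinely equivalent, as a stack over $(\Sch/k)$, to Nironi's moduli stack of $\mc{E}$-semistable coherent sheaves on $\mc{X}$ with the matching numerical invariant. This requires the Borne--Vistoli correspondence to be functorial in families and to preserve flatness in both directions (so that flat families of parabolic sheaves correspond to flat families of sheaves on $\mc{X}$), it requires pinning down the correct generating sheaf and the precise dictionary between parabolic and modified Hilbert polynomials, and it requires that the stability condition introduced in the earlier sections was defined exactly so as to go over to Gieseker-type stability on $\mc{X}$. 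A secondary subtlety is the positive-characteristic case: there one must use adequate moduli spaces and make sure that Nironi's arguments for boundedness, separatedness, and properness/projectivity of the GIT quotient still go through for tame DM stacks over a field of positive characteristic; this is the point where the statement goes slightly beyond the literal content of \cite{nironi} and leans on \cite{alperadeq}.
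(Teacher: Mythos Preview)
Your proposal is correct and follows exactly the paper's approach: the theorem is obtained by a direct application of Nironi's Theorems 6.21 and 6.22 to the root stack $\mc{X}=\radice{Q}{X}$, after the earlier sections have identified families of parabolic sheaves with families of coherent sheaves on $\mc{X}$ and constructed the generating sheaf $\mc{E}$ so that parabolic (semi-)stability is, by definition, $\mc{E}$-(semi-)stability. One small point: tameness of $\mc{X}$ is not a consequence of the Deligne--Mumford hypothesis but is already built into Proposition~\ref{root.stack.algebraic}, which shows that every root stack is a tame Artin stack; the DM hypothesis is only needed because Nironi's arguments are stated for DM stacks.
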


For the second part, we have to analyze the behavior of (semi-)stability under extension of denominators of the weights. The idea is that parabolic sheaves with arbitrary rational weights correspond to quasi-coherent sheaves on the \emph{infinite root stack} $\infroot{X}$ (introduced in \cite{infinite}) of the log scheme $X$. This is an inverse limit of the finite root stacks $\radice{n}{X}=\radice{\frac{1}{n}P}{X}$ with respect to the natural projections $\radice{m}{X}\to \radice{n}{X}$ for $n\mid m$, so the natural approach is to cook up a moduli theory for (finitely presented) sheaves on $\infroot{X}$ as a ``direct limit'' of the moduli theories we have on each $\radice{n}{X}$, thanks to the preceding discussion. Here we have to assume that the characteristic of $k$ is $0$, in order for all the relevant root stacks to be Deligne--Mumford.

We carry out such a plan. For this we have to assume that the monoid $P$ giving the chart of $X$ is \emph{simplicial}, meaning that the extremal rays of the cone $P_{\bb{Q}}$ are linearly independent in $P^{\gp}_{\bb{Q}}$. This ensures the existence of a Kummer extension of the form $P\subseteq \bb{N}^{r}$ and we show that semi-stability is preserved by pullback along $\radice{\frac{1}{m}\bb{N}^{r}}{X}\to \radice{\frac{1}{n}\bb{N}^{r}}{X}$; the behavior of stability is more subtle. By using the root stacks $\{\radice{\frac{1}{n}\bb{N}^{r}}{X}\}_{n \in \bb{N}}$ instead of $\{\radice{n}{X}\}_{n\in \bb{N}}$ and exploiting the above, we are able to define a ``limit'' notion of (semi-)stability for finitely presented parabolic sheaves with rational weights, and obtain moduli spaces. In good cases they are a direct limit of the moduli spaces at finite level, with the transition maps being open and closed immersions. The following theorem contains our results in this direction.

\begin{thm*}[Theorem \ref{thm.parabolic.rational}]
Let $X$ be a (fine saturated) polarized log scheme over $k$ such that in the global chart $P\to \Div(X)$ the monoid $P$ is simplicial. The stack $\mc{M}^{ss}$ of semi-stable parabolic sheaves with rational weights is an Artin stack, locally of finite type, and it has an open substack $\mc{M}^{s}\subseteq \mc{M}^{ss}$ parametrizing stable sheaves.

Furthermore, If stability is preserved by pullback along the projections $\radice{\frac{1}{m}\bb{N}^{r}}{X}\to \radice{\frac{1}{n}\bb{N}^{r}}{X}$ between the root stacks of $X$ (for example if the log structure of $X$ is generically trivial, and we are considering torsion-free parabolic sheaves), then $\mc{M}^{ss}$ has a good moduli space $M^{ss}$, which is a disjoint union of projective schemes. Moreover there is an open subscheme $M^{s}\subseteq M^{ss}$ that is a coarse moduli space for the substack $\mc{M}^{s}$, and $\mc{M}^{s}\to M^{s}$ is a $\bb{G}_{m}$-gerbe.
\end{thm*}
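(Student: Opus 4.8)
The plan is to reduce the entire moduli problem to the theory already established for finite root stacks (Theorem \ref{thm.fixed.weights}) and then pass to the limit along the tower $\{\radice{\frac{1}{n}\bb{N}^{r}}{X}\}_{n\in\bb{N}}$. First I would use simpliciality of $P$: the hypothesis that the extremal rays of $P_{\bb{Q}}$ are linearly independent guarantees a Kummer embedding $P\subseteq\bb{N}^{r}$, and hence, for every $n$, a Kummer morphism $P\subseteq\frac{1}{n}\bb{N}^{r}$ to which Theorem \ref{thm.fixed.weights} applies (note $\radice{\frac{1}{n}\bb{N}^{r}}{X}$ is Deligne--Mumford since $\cha(k)=0$). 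Parabolic sheaves with rational weights are, by the Borne--Vistoli correspondence, quasi-coherent sheaves on the infinite root stack $\infroot{X}=\varprojlim_n \radice{\frac{1}{n}\bb{N}^{r}}{X}$, and a finitely presented such sheaf is pulled back from some finite level; so $\mc{M}^{ss}$ should be realized as the (2-)colimit of the finite-level stacks $\mc{M}^{ss}_{H,n}$ along the pullback functors, once I have shown these functors are well-defined on semistable objects. That last point is exactly the ``semi-stability is preserved by pullback'' statement asserted in the introduction, which I would invoke (or prove via the projection formula and the fact that the pushforward of the structure sheaf along $\radice{\frac{1}{m}\bb{N}^{r}}{X}\to\radice{\frac{1}{n}\bb{N}^{r}}{X}$ contains $\mc{O}$ as a direct summand, so Hilbert polynomials and sub-sheaf inequalities are controlled); the generating sheaf on the target pulls back to a generating sheaf on the source, compatibly.

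With the colimit description in hand, the algebraicity statements are formal: each $\mc{M}^{ss}_{H,n}$ is an Artin stack of finite type over $k$ by Theorem \ref{thm.fixed.weights}, the transition maps $\mc{M}^{ss}_{H,n}\to\mc{M}^{ss}_{H,m}$ (for $n\mid m$) are representable, and a (filtered) colimit of Artin stacks along representable maps is an Artin stack; taking the disjoint union over Hilbert polynomials gives something locally of finite type. For the open substack $\mc{M}^{s}\subseteq\mc{M}^{ss}$ of stable sheaves, I would define stability at the limit by declaring a finitely presented parabolic sheaf stable if it is stable at some (equivalently, using the subtle behavior of stability under pullback, at all sufficiently divisible) finite level; stability being an open condition at each finite level, and openness being stable under the colimit, this cuts out an open substack.

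For the second half I would assume the extra hypothesis that stability (not just semi-stability) is preserved by the projections $\radice{\frac{1}{m}\bb{N}^{r}}{X}\to\radice{\frac{1}{n}\bb{N}^{r}}{X}$. Under this assumption the pullback functors are fully faithful on (semi)stable objects with the relevant compatibility of Hilbert polynomials, so the transition maps of stacks $\mc{M}^{ss}_{H_n,n}\hookrightarrow\mc{M}^{ss}_{H_m,m}$ become open and closed immersions (open because stability/the S-equivalence classes behave well, closed because Hilbert polynomials are locally constant and pullback is exact on the relevant families). A filtered colimit of projective schemes along open-and-closed immersions stabilizes on each connected component, so the good moduli spaces $M^{ss}_{H_n,n}$ of Theorem \ref{thm.fixed.weights} fit into a directed system whose colimit $M^{ss}$ is a disjoint union of projective schemes; since good moduli spaces are compatible with this kind of colimit (one checks the universal property componentwise, each component being eventually constant), $M^{ss}$ is a good moduli space for $\mc{M}^{ss}$. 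The coarse space $M^{s}$ for $\mc{M}^{s}$ and the $\bb{G}_{m}$-gerbe statement then descend from the finite-level assertions in Theorem \ref{thm.fixed.weights} in the same way. Finally I would verify the parenthetical example: when the log structure of $X$ is generically trivial and we restrict to torsion-free parabolic sheaves, the root stacks are isomorphic to $X$ over a dense open, so stability of a torsion-free sheaf can be tested on that open locus, where pullback is an isomorphism — hence stability is preserved, and the hypothesis of the second half is met.

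The main obstacle I expect is showing that the transition maps on moduli spaces are open and closed immersions, i.e. controlling precisely how (semi)stability and S-equivalence interact with pullback along the root-stack projections — semi-stability going up is the comparatively easy projection-formula argument, but the fact that a semistable sheaf can become unstable (or change its S-equivalence class, merging strata) after pullback is what forces the switch from $\{\radice{n}{X}\}$ to $\{\radice{\frac{1}{n}\bb{N}^{r}}{X}\}$ and what makes the ``closed'' part of ``open and closed immersion'' delicate; pinning down the exact hypothesis under which this works, and checking it in the generically-trivial torsion-free case, is the technical heart of the argument.
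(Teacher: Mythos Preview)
Your overall architecture matches the paper's: realize $\mc{M}^{ss}$ as a filtered colimit of the finite-level stacks $\mc{M}^{ss}_n$, use that semi-stability is preserved by pullback to get the transition maps, deduce algebraicity, and under the extra hypothesis pass to a colimit of the good moduli spaces along open-and-closed immersions. Two of your justifications, however, are genuine gaps.

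First, your argument for why $\iota_{n,m}\colon\mc{M}^{ss}_n\to\mc{M}^{ss}_m$ is a \emph{closed} immersion (``Hilbert polynomials are locally constant and pullback is exact'') does not work. Local constancy of the Hilbert polynomial only tells you the image lies in a union of components; it does not prove the image is closed, and indeed the paper gives an explicit example (Example \ref{map.not.closed}) where $\iota_{n,m}$ fails to be closed. The paper's actual argument is more delicate: one first shows (via a careful analysis of the stable factors of $\pi^{*}F$, Lemma \ref{stable.s.equiv} and the surrounding discussion) that the induced map $i_{n,m}$ on good moduli spaces is geometrically injective, hence finite; then, \emph{using the hypothesis that stable sheaves pull back to stable sheaves}, one deduces that polystable sheaves pull back to polystable sheaves, so $\iota_{n,m}$ sends closed points to closed points; finally Alper's Proposition 6.4 converts ``finite on good moduli spaces and preserves closed points'' into finiteness of $\iota_{n,m}$ itself. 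This is where the stability-preservation hypothesis is actually consumed.

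Second, your verification of the parenthetical example is incorrect: stability is a global condition and cannot be ``tested on a dense open'' --- a sheaf can be stable on a dense open and have a destabilizing subsheaf supported elsewhere. The paper's argument instead proves a structure theorem (Proposition \ref{pullback.not.stable}): any stable sheaf whose pullback is not stable must be of the form $I^{i}_{n,j}(F)$, i.e.\ have all pieces zero except along one ``slice''. But torsion-free parabolic sheaves on a log scheme with generically trivial log structure have injective maps between their pieces (Proposition \ref{gen.trivial.inj}), hence no zero pieces, so they cannot be of this form. That is how the hypothesis is checked. (Two smaller points: the non-reduced Hilbert polynomial $H$ is \emph{not} preserved by pullback, so you should index by the reduced polynomial $h$; and the generating sheaves are not literally compatible under pullback --- there is a twist, see Lemma \ref{gen.sheaves} --- though this does not affect the stability comparison.)
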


\addtocontents{toc}{\protect\setcounter{tocdepth}{-1}}
\subsection*{Description of contents}

Let us describe the contents of each section in some more detail.

Section \ref{section.2} contains some preliminaries about logarithmic schemes, root stacks and parabolic sheaves. We will use a somewhat non-standard definition of log schemes, different from (but closely related to) Kato's definition (\cite{kato}), and relying on Deligne--Faltings structures. This was systematically studied in \cite{borne-vistoli}, which is our main reference. In \ref{subsec.rs.ps} we recall the definitions and properties of root stacks of a log scheme and of parabolic sheaves, and the relation between the two (Theorem \ref{BV}). Finally in \ref{irs} we recall the construction and properties of the infinite root stack, introduced in \cite{infinite}, and of parabolic sheaves with arbitrary rational weights.

In Section \ref{section.3} we consider the moduli theory of parabolic sheaves on a projective fine saturated log scheme with a global chart, and with respect to a fixed Kummer extension. To start with, in \ref{sec.3.1} we single out the concept of family of parabolic sheaves that we want to use, by generalizing some of the usual properties of coherent sheaves (mainly finite presentation, flatness and pureness) to the parabolic setting. This links the moduli problem for parabolic sheaves to the one for coherent sheaves on a tame projective DM stack. Next, in \ref{sec.3.3} we start to recall Nironi's strategy (\cite{nironi}) for constructing moduli spaces of coherent sheaves on such a stack, focusing first on the choice of a generating sheaf (\ref{maru.yoko},\ref{global.sec}), and then describing the notion of (semi-)stability and the results that we obtain about moduli spaces (\ref{results}, \ref{thm.fixed.weights}). We conclude this section with an example that shows that (semi-)stability depends on the global chart that we fix for the log structure (\ref{dependence}).

Section \ref{section.4} is devoted to the moduli theory of parabolic sheaves with arbitrary rational weights. We outline our strategy of exploiting the inverse limit of finite root stacks, and argue that in order to do that we have to impose some additional conditions on the log structure, for example to ensure flatness of the projections between the different root stacks. We restrict our attention to simplicial log structures, described in \ref{sec.4.1}, and under these hypotheses we analyze the behavior of semi-stability and stability under pullback along projections between root stacks (\ref{sec.4.2}, \ref{results.4.2}). The behavior of stability in particular is quite subtle, and requires a careful study. In \ref{sec.4.3} we apply the results of the previous discussion to define (semi-)stability for sheaves on the infinite root stack, i.e. parabolic sheaves with arbitrary rational weights, and draw results about the corresponding moduli stacks and spaces (\ref{thm.parabolic.rational}).

\subsection*{Acknowledgements}

This paper is taken from approximately half of my PhD thesis \cite{tesi}. I heartily thank my advisor Angelo Vistoli for his constant support and guidance through my PhD years.

Martin Olsson acted as my local supervisor during a visit to the University of California at Berkeley, in the spring term of 2012. I thank him for his hospitality and for the time he dedicated to me.

I am also grateful to Niels Borne for useful comments.

Finally I would like to thank the anonymous referee for a very careful reading and useful suggestions.

During the final stage of writing this article I was funded by the Max Planck Institute for Mathematics in Bonn, as a postdoctoral fellow.

\subsection*{Notations and conventions}

All monoids will be commutative, and often also fine, saturated and sharp (and hence torsion-free). A log scheme will be a scheme equipped with a Deligne--Faltings structure, in the sense of \cite{borne-vistoli}, or equivalently a quasi-integral log scheme in the sense of Kato (\cite{kato}). We will freely pass from one point of view to the other.

All schemes and stacks will be defined over a field $k$. In \ref{section.4} we will need to assume that the characteristic of $k$ is $0$ (see \ref{characteristic}).

We will denote by $(\Sch)$ and $(\Aff)$ the categories of schemes and affine schemes over $k$ respectively. The small \'{e}tale site of a scheme $X$ will be denoted by $X_{\et}$. If $x\colon \spec(K)\to X$ is a geometric point (often we will also write just $x\to X$) and $A$ is a sheaf on $X_{\et}$, we will denote by $A_{x}$ the stalk of $A$ at the geometric point $x$.

For symmetric monoidal categories we use the conventions described in \cite[Section 2.4]{borne-vistoli}, and for fibered categories we refer to \cite[Chapter 1]{vistoli}.

By ``finitely presented sheaf'' on a ringed site $(\mc{X},\mc{O}_{\mc{X}})$ we mean a sheaf of $\mc{O}_{\mc{X}}$-modules on $\mc{X}$ that has local presentations as a cokernel of a map between two  finite sums of copies of the structure sheaf, but not necessarily a global one (as in \cite[Tag 03DK]{stacks-project}).

A subscript will often be a shorthand for pullback along a morphism of schemes.

\addtocontents{toc}{\protect\setcounter{tocdepth}{2}}

\section{Log schemes, root stacks and parabolic sheaves}\label{section.2}

In this section we recall the definition and results of \cite{borne-vistoli} and \cite{infinite} about root stacks of logarithmic schemes and parabolic sheaves, as well as the somewhat non-standard notion of log scheme that we will use throughout.

We refer the interested reader to the original papers for a more detailed exposition.

\subsection{Log schemes}

Assume that $X$ is a scheme, and denote by $\Div_{X}$ the fibered category over $X_{\et}$ consisting of pairs $(L,s)$ where $L$ is an invertible sheaf and $s$ is a global section.

\begin{definition}
A \emph{Deligne-Faltings structure} on $X$ (abbreviated with DF from now on) is a symmetric monoidal functor $L\colon A\to \Div_{X}$ with trivial kernel, where $A$ is a sheaf of monoids on the small \'{e}tale site $X_{\et}$.

A \emph{logarithmic scheme} is a scheme $X$ equipped with a DF structure.
\end{definition}

The phrasing ``with trivial kernel''  means that for $U\to X$ \'{e}tale, the only section of $A(U)$ that goes to $(\mc{O}_{U},1)$ is the zero section.

We will denote a log scheme by $(X,A,L)$, or just by $X$ if the DF structure is understood. Sometimes we will also denote by $X$ the bare scheme underlying the log scheme, and we will not use the usual underlined notation $\underline{X}$. This should cause no real confusion.

\begin{rmk}
This definition (as well as everything that follows in this section) also makes sense when $X$ is an Artin stack. The only difference is that we have to use the lisse-\'{e}tale site of $X$ in place of the small \'{e}tale site. In the case of schemes or DM stacks, using the lisse-\'{e}tale site or the small \'{e}tale site produces the same theory if we restrict to fine log structures (for a proof see \cite[Proposition $5.3$]{olssonalgstacks}).

For most of this article we will mainly be concerned with log schemes, but from time to time the wording ``log stack'' will come up. For precise definitions, see \cite{olssonalgstacks}.
\end{rmk}

Recall that the standard definition of a log scheme (\cite{kato}) is that of a scheme $X$ with a sheaf of monoids $M$ on $X_{\et}$, with a morphism $\alpha\colon M\to \mc{O}_{X}$ (where $\mc{O}_{X}$ is a sheaf of monoids with the multiplication) such that the restriction of $\alpha$ to $\alpha^{-1}(\mc{O}_{X}^{\times})$ induces an isomorphism $\alpha|_{\alpha^{-1}(\mc{O}_{X}^{\times})}\colon \alpha^{-1}(\mc{O}_{X}^{\times})\to \mc{O}_{X}^{\times}$ with $\mc{O}_{X}^{\times}$. A log scheme in this sense is \emph{quasi-integral} if the natural resulting action of $\mc{O}_{X}^{\times}$ on $M$ is free.

The link between our definition and the standard notion of a quasi-integral log scheme is the following: given a morphism of sheaves of monoids $\alpha\colon M\to \mc{O}_{X}$, one takes the stacky quotient by $\mc{O}_{X}^{\times}$ to obtain  a symmetric monoidal functor $L\colon \overline{M}\to \mc{O}_{X}/\mc{O}_{X}^{\times} \simeq \Div_{X}$, and sets $A=\overline{M}$. In other words a section of $A$ is sent by $L$ to the dual $L_{a}$ of the invertible sheaf associated to the $\bb{G}_{m}$-torsor given by the fiber $M_{a}$ of $M\to \overline{M}=A$ over $a$, and the restriction of $\alpha$ to $M_{a}\to \mc{O}_{X}$ gives the section of $L_{a}$.

In the other direction, starting with a DF structure $L\colon A\to \Div_{X}$ we can take the fibered product $A\times_{\Div_{X}}\mc{O}_{X}\to \mc{O}_{X}$, and verify that $M=A\times_{\Div_{X}}\mc{O}_{X}$ is equivalent to a sheaf.

\begin{notation}
From now on we will always abbreviate ``logarithmic'' with ``log''. Also, because of the above we will use the wording ``DF structure'' and ``log structure'' interchangeably.
\end{notation}

\begin{definition}
A morphism $(A,L)\to (B,N)$ of DF structures on a scheme $X$ is a pair $(f,\alpha)$ where $f\colon A\to B$ is a sheaf of monoids, and $\alpha\colon N\circ f  \simeq M$ is a natural isomorphism of symmetric monoidal functors $A\to \Div_{X}$.
\end{definition}

\begin{rmk}
If $f\colon X\to Y$ is a morphism of schemes and $(A,L)$ is a DF structure on $X$, one defines a \emph{pullback} DF structure $f^{*}(A,L)=(f^{*}A,f^{*}L)$ (see \cite{borne-vistoli}). The sheaf of monoids $f^{*}A$ is, like the notation suggests, the usual pullback as a sheaf of sets on $X_{\et}$, and if $g\colon Z\to X$ is another morphism of schemes, there is a canonical isomorphism $g^{*}f^{*}(A,L) \simeq (fg)^{*}(A,L)$.
\end{rmk}

\begin{definition}
A morphism of log schemes $(X,A,L)\to (Y,B,N)$ is a pair $(f,f^{\flat})$, where $f\colon X\to Y$ is a morphism of schemes, and $f^{\flat}\colon f^{*}(B,N)\to (A,L)$ is a morphism of DF structures on $X$.
\end{definition}

Of course morphisms can be composed in the evident way, and log schemes form a category.

\begin{definition}
A morphism of log schemes $(f,f^{\flat})\colon (X,A,L)\to (Y,B,N)$ is \emph{strict} if $f^{\flat}$ is an isomorphism.
\end{definition}

Strict morphisms are morphisms that do not change the log structure.

We will be interested only in DF structures that arise from local models. Recall that a homomorphism of monoids $\phi\colon P\to Q$ is a \emph{cokernel} if the induced homomorphism $P/\phi^{-1}(0) \to Q$ is an isomorphism. A morphism $A\to B$ of sheaves of monoids on $X_{\et}$ is a cokernel if the induced homomorphism between the stalks are all cokernels.

\begin{definition}

A \emph{chart} for a sheaf of monoids $A$ on $X_{\et}$ is a homomorphism of monoids $P\to A(X)$ such that the induced map of sheaves $P_{X}\to A$ is a cokernel.

A sheaf of monoids $A$ on $X_{\et}$ is \emph{coherent} if $A$ has charts with finitely generated monoids locally for the \'{e}tale topology of $X$.

A log scheme $(X,A,L)$ is \emph{coherent} if the sheaf $A$ is coherent.
\end{definition}

\begin{rmk}
Equivalently, a \emph{chart} for a DF structure $(A,L)$ on $X$ can be seen as a symmetric monoidal functor $P\to \Div(X)$ for a monoid $P$, that induces the functor $L\colon A\to \Div_{X}$ by sheafifying and trivializing the kernel.

\end{rmk}

This differs from the standard notion of chart for a log scheme, which is a morphism of monoids $P\to \mc{O}_{X}(X)$ that induces the log structure $\alpha\colon M\to \mc{O}_{X}$ (see \cite[Section 1]{kato} for details). We will call distinguish the two notions by calling the standard charts \emph{Kato charts}. Every Kato chart $P\to \mc{O}_{X}(X)$ induces a chart by composing with $\mc{O}_{X}(X)\to \Div(X)$.

Moreover one can show that having charts \'{e}tale locally is equivalent to having Kato charts \'{e}tale locally \cite[Proposition $3.28$]{borne-vistoli}.

\begin{rmk}
Note that for a monoid $P$, the scheme $\spec(k[P])$ has a natural DF structure induced by the composite $P\to k[P]\to \Div(\spec(k[P]))$. 

Giving a Kato chart on $X$ with monoid $P$ is the same as giving a strict morphism of log schemes $X\to \spec(k[P])$, and giving a chart is the same as giving a strict morphism $X\to [\spec(k[P])/\what{P}]$, where $\what{P}$ is the Cartier dual of $P^{\gp}$, the action is the natural one, and the DF structure of the quotient stack is defined by descent from the one of $\spec(k[P])$.

From now on $\spec(k[P])$ and $ [\spec(k[P])/\what{P}]$ will be equipped with these DF structures without further mention.
\end{rmk}

One can show that on a coherent log scheme, charts can be constructed by taking as monoid $P$ the stalk of the sheaf $A$ over a geometric point of $X$. More precisely, any geometric point ${x}$ of $X$ has an \'{e}tale neighborhood where we have a chart with monoid $A_{x}$ (\cite[Proposition 3.15]{borne-vistoli}).

A coherent log scheme $(X,A,L)$ has a maximal open subset $U\subseteq X$ over which the restriction of log structure is trivial. This coincides with the open subset where the stalks of the sheaf $A$ are trivial.

\begin{definition}\label{gen.trivial}
A coherent log scheme $(X,A,L)$ has \emph{generically trivial} log structure if the open subscheme $U\subseteq X$ where the log structure is trivial is dense in $X$.
\end{definition}

Using charts one can also describe the logarithmic part of morphisms between coherent log schemes by using homomorphisms of monoids.

\begin{definition}
A \emph{chart} for a morphism of sheaves of monoids $A\to B$ on $X_{\et}$ is given by two charts $P\to A(X)$ and $Q\to B(X)$ for $A$ and $B$, together with a homomorphism of monoids $P\to Q$ making the diagram
$$
\xymatrix{
P\ar[r]\ar[d] & Q\ar[d]\\
A(X)\ar[r] & B(X)
}
$$
commutative.

A \emph{chart} for a morphism of log schemes  $(f,f^{\flat})\colon (X,A,L)\to (Y,B,N)$ is a chart for the morphism of sheaves of monoids $f^{*}B\to A$ given by $f^{\flat}$.
\end{definition}

In other words, a chart for a morphism of log schemes $(f,f^{\flat})\colon (X,A,L)\to (Y,B,N)$ can be seen as two symmetric monoidal functors $P\to \Div(Y)$ and $Q\to \Div(X)$ that are charts for $(B,N)$ and $(A,L)$ respectively, and a morphism of monoids $P\to Q$ inducing $f^{\flat}\colon f^{*}(B,N)\to (A,L)$.

\begin{definition}
A \emph{Kato chart} for a morphism of log schemes $(f,f^{\flat})\colon (X,A,L)\to (Y,B,N)$ is a chart such that the functors $P\to \Div(Y)$ and $Q\to \Div(X)$ lift to $P\to \mc{O}_{Y}(Y)$ and $Q\to \mc{O}_{X}(X)$.
\end{definition}

Equivalently a Kato chart can be seen as a commutative diagram of log schemes
$$
\xymatrix{
(X,A,L)\ar[r]\ar[d] & \spec(k[Q])\ar[d]\\
(Y,B,N)\ar[r] & \spec(k[P])
}
$$
with strict horizontal arrows, and analogously a chart can be seen as such a commutative diagram, with the quotient stacks $[\spec(k[P])/\what{P}]$ and $[\spec(k[Q])/\what{Q}]$ in place of  $\spec(k[P])$ and $\spec(k[Q])$ respectively.

One can show \cite[Proposition $3.17$]{borne-vistoli} that it is always possible to find local charts for morphisms $(X,A,L)\to (Y,B,N)$ between coherent log schemes, and moreover one can take the monoids of the charts to be stalks of the sheaves $A$ and $B$.

We will be dealing mostly with fine and saturated log structures.

\begin{definition}
A log scheme $(X,A,L)$ is \emph{fine} if it is coherent and the stalks of $A$ are fine monoids (i.e. integral and finitely generated).

A log scheme $(X,A,L)$ is \emph{fine and saturated} (often abbreviated by \emph{fs} from now on) if it is fine, and the stalks of $A$ are (fine and) saturated monoids.
\end{definition}

\begin{rmk}
Equivalently one can check these conditions on charts, i.e. a log scheme $(X,A,L)$ is fine (resp. fs) if and only it admits local charts by fine (resp. fine and saturated) monoids.
\end{rmk}

From now on all our log schemes will be assumed to be fs.

\begin{example}[Log points]
Let $k$ be a field and $P$ a sharp monoid. Then we have a DF structure on $\spec(k)$ given by the functor $P\to \Div({\spec(k)})$ sending $0$ to $(k,1)$ and everything else to $(k,0)$. This is the \emph{log point} on $\spec(k)$ with monoid $P$. If $k$ is algebraically closed, one can show that every fine saturated DF structure on $\spec(k)$ is of this form.

In particular if $P=\bb{N}$ we obtain what is usually called the \emph{standard log point} (over $k$). 
\end{example}

\begin{example}[Divisorial log structure]\label{divisorial.log.str}
Let $X$ be a regular scheme and $D\subseteq X$ an effective Cartier divisor. There is a natural log structure associated with $D$, given in Kato's language by taking $M=\{ f \in \mc{O}_{X} \mid f|_{X\setminus D} \in \mc{O}_{X\setminus D}^{\times}\}$ as a subsheaf of $\mc{O}_{X}$, and the inclusion $\alpha\colon M\to\mc{O}_{X}$. This will be called the \emph{divisorial log structure} associated with $D$ on $X$.

If $D$ is simple normal crossings and it has irreducible components $D_{1},\hdots,D_{r}$, then the corresponding DF structure admits a global chart $\bb{N}^{r}\to \Div(X)$, sending $e_{i}$ to $(\mc{O}_{X}(D_{i}),s_{i})$, where $s_{i}$ is the canonical section of $\mc{O}_{X}(D_{i})$, i.e. the image of $1$ along $\mc{O}_{X}\to \mc{O}_{X}(D_{i})$. If $D$ is normal crossings but not simple normal crossings, then the divisorial log structure is still fs, but does not admit a global chart.

Note that one can also consider the DF structure induced by $\bb{N}\to \Div(X)$ sending $1$ to $(\mc{O}_{X}(D),s)$. This DF structure does not coincide with the divisorial log structure unless $D$ is smooth (basically, it does not separate the different branches of the divisor near a singular point).
\end{example}

Note that log points never have generically trivial DF structure (unless it is trivial, i.e. $P=0$), and on the contrary a divisorial log structure is always generically trivial.

\subsection{Root stacks and parabolic sheaves}\label{subsec.rs.ps}

Here we recall the definition and some basic properties of root stacks and parabolic sheaves, and the relation between them. Our main reference is \cite{borne-vistoli}.

Root stacks are stacks that parametrize roots of the log structure of a log scheme $(X,A,L)$ with respect to a system of denominators $A\to B$.

\begin{definition}
A \emph{system of denominators} on a fs log scheme $(X,A,L)$ is a coherent sheaf of monoids $B$ on $X_{\et}$ with a Kummer morphism of sheaves of monoids $j\colon A\to B$ (i.e. the induced morphism between the stalks is Kummer at every point). 
\end{definition}

Recall that a homomorphism of monoids $P\to Q$ is Kummer it is injective, and for every $q \in Q$ there is a positive integer $n$ such that $nq$ is in the image.

The prototype of a Kummer extension is the inclusion $A\to \frac{1}{n}A$ where $\frac{1}{n}A$ is the sheaf that has sections $\frac{a}{n}$ with $a$ a section of $A$. Note that if $n\mid m$, then we have an inclusion $\frac{1}{n}A\subseteq \frac{1}{m}A$. The direct limit $\varinjlim_{n}\frac{1}{n}A$ will be denoted by $A_{\bb{Q}}$ (and we will use the same notation for a torsion-free monoid $P$). A section of $A_{\bb{Q}}$ is locally (if $X$ is quasi-compact, even globally) of the form $\frac{a}{n}$ for some positive integer $n \in \bb{N}$.

The inclusion $A\subseteq A_{\bb{Q}}$ is the maximal Kummer extension of $A$: in fact note that since a Kummer morphism $j\colon A\to B$ induces an isomorphism $A_{\bb{Q}} \simeq B_{\bb{Q}}$, the sheaf $B$ can be regarded as a subsheaf of $A_{\bb{Q}}$. From this point on, this identification will be implicit and used without further mention.
\begin{rmk}
Note that the definition does not require $B$ to be saturated. Nevertheless, most of the times we will deal with systems of denominators $A\to B$ on fs log schemes where $B$ is fine and saturated as well.
\end{rmk}
Since both $A$ and $B$ are coherent sheaves of monoids, \'{e}tale locally on $X$ the morphism $j\colon A\to B$ admits a chart $j_{0}\colon P\to Q$ where $j_{0}$ is Kummer and $P$ and $Q$ are fine and sharp monoids, giving charts for $A$ and $B$ respectively.

For example one can take $P$ and $Q$ to be stalks of $A$ and $B$ at a geometric point, and the morphism $j_0$ to be the morphism induced by $j$ on the stalks (see \cite[Proposition $3.17$]{borne-vistoli}).

\begin{definition}
The \emph{root stack} $\radice{B}{X}$ of the log scheme $(X,A,L)$ with respect to the system of denominators $j\colon A\to B$ is the fibered category over $(\Sch/X)$ defined as follows: for a scheme $t\colon T\to X$, the category $\radice{B}{X}(T)$ is the category pairs $(N,\alpha)$ where $N$ is a DF structure $N\colon t^{*}B\to \Div_{T}$ on $T$, and $\alpha$ is a natural isomorphism between the pullback $t^{*}L$ and the composition $t^{*}A\to t^{*}B\stackrel{N}{\to} \Div_{T}$, and the arrows are morphisms of DF structures, compatible with the natural isomorphisms. Pullback is defined by pullback of DF structures.
\end{definition}

In particular by taking $B=\frac{1}{n}A$, we obtain what we might call the $n$-th root stack of $X$. We will denote it by $\radice{n}{X}$.

Standard descent theory shows that $\radice{B}{X}$ is a stack for the \'{e}tale (or even the fpqc) topology of $(\Sch/X)$.

\begin{rmk}
Note that the stack $\radice{B}{X}$ has a natural DF structure, with sheaf of monoids $\pi^{*}B$, where $\pi\colon \radice{B}{X}\to X$ is the natural projection, and the functor $\Lambda\colon \pi^{*}B\to \Div_{\radice{B}{X}}$ is given over a smooth morphism $T\to \radice{B}{X}$ from a scheme $T$ by the functor $N$ of the definition above.

This will be called the \emph{universal DF structure} on $\radice{B}{X}$.
\end{rmk}

When there is a Kummer morphism $j_{0}\colon P\to Q$ giving a chart for the system of denominators, one can give an analogous definition using $P$ and $Q$ in place of $A$ and $B$. If $\radice{Q}{X}$ denotes this last stack, then we have a canonical isomorphism $\radice{B}{X} \simeq \radice{Q}{X}$.

We have the following description of $\radice{Q}{X}$ as a quotient stack.

The chart given by $P$ corresponds to a morphism $X\to \left[\spec(k[P])/\widehat{P} \right]$, where as usual $\widehat{P}=D(P^{\gp})$ is the diagonalizable group scheme associated to $P^{\gp}$, and the morphism $P\to Q$ induces a morphism of quotient stacks $\left[\spec(k[Q])/\widehat{Q} \right]\to \left[\spec(k[P])/\widehat{P} \right]$.

\begin{prop}[{\cite[Proposition $4.13$]{borne-vistoli}}]\label{local.model.root.stack}
With notations as above, we have an isomorphism
$$
\radice{Q}{X} \simeq X\times_{\left[\spec(k[P])/\widehat{P} \right]} \left[\spec(k[Q])/\widehat{Q} \right].
$$
\end{prop}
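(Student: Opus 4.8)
The plan is to unwind both sides of the claimed isomorphism into explicit groupoid-valued functors on $(\Sch/X)$ and exhibit a natural equivalence between them. The whole argument rests on the representability statement recalled in the excerpt: for any monoid $R$, giving a $T$-point of $\left[\spec(k[R])/\what{R}\right]$ is the same as giving a symmetric monoidal functor $R\to \Div(T)$, functorially in $T$, and the chart $P\to \Div(X)$ of $(A,L)$ is nothing but the associated strict morphism $c\colon X\to \left[\spec(k[P])/\what{P}\right]$.

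First I would record two refinements of this dictionary. Since $\Div_T$ is a stack on $T_{\et}$ and $R_T$ is the sheafification of the constant presheaf $R$, a symmetric monoidal functor $R\to\Div(T)$ is the same as a symmetric monoidal functor of stacks $R_T\to\Div_T$; this is the form in which such data enters the definition of the root stack via charts. And the transition morphism $\left[\spec(k[Q])/\what{Q}\right]\to\left[\spec(k[P])/\what{P}\right]$ induced by the Kummer chart $j_0\colon P\to Q$ corresponds, under the dictionary, to precomposition with $j_0$. Both facts are routine from the constructions in \cite{borne-vistoli}.

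Granting this, I would unwind the fiber product. A $T$-point of $X\times_{\left[\spec(k[P])/\what{P}\right]}\left[\spec(k[Q])/\what{Q}\right]$ is a triple $(t,M,\theta)$, where $t\colon T\to X$ is a morphism of schemes, $M\colon Q\to\Div(T)$ is a symmetric monoidal functor, and $\theta$ is a natural isomorphism between the pullback along $t$ of the chart $P\to\Div(X)$ and the composite $M\circ j_0\colon P\to Q\to\Div(T)$. Rewriting $M$ in its sheafified form $Q_T\to\Div_T$ and transporting $\theta$ accordingly, one sees that this triple is exactly the data of an object of $\radice{Q}{X}(T)$ (the structure functor together with the comparison isomorphism $\alpha$), and conversely every object of $\radice{Q}{X}(T)$ produces such a triple. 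Since all of this is manifestly compatible with pullback along morphisms $T'\to T$, the two assignments are inverse equivalences of fibered categories, giving the desired isomorphism of stacks over $X$. If one prefers the sheaf-theoretic model $\radice{B}{X}$, one composes with the canonical isomorphism $\radice{B}{X}\cong\radice{Q}{X}$ established before the statement — passing between the two amounts to restricting a DF structure on $t^*B$ along the cokernel chart $Q\to(t^*B)(T)$, and going back by sheafifying and trivializing the kernel.

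The main obstacle is not any single deep point but the $2$-categorical bookkeeping. One has to pin down the dictionary of the second paragraph with compatible coherence conventions, and then verify that all the $2$-isomorphisms in play — $\theta$, the comparison $\alpha$ in the root stack, and the coherence data of the various symmetric monoidal functors — match up under sheafification and restriction to charts. The most delicate instance of this, relevant for the $\radice{B}{X}$-model, is that a DF structure on $t^*B$ carries no chosen chart a priori and must be recovered faithfully from its restriction along $Q\to(t^*B)(T)$, which uses the coherence of $B$ (the fact that $Q_X\to B$ is a cokernel) essentially. Once the representability of $\left[\spec(k[R])/\what{R}\right]$ and the functoriality of the transition morphism are in place with matching conventions, the rest is formal manipulation of fibered categories.
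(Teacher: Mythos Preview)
The paper does not actually prove this proposition; it is quoted verbatim as \cite[Proposition 4.13]{borne-vistoli} and used as a black box. So there is no ``paper's own proof'' to compare against here.

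That said, your sketch is the natural and correct argument, and it is essentially what Borne and Vistoli do in the cited reference. The representability statement you lean on --- that $T$-points of $\left[\spec(k[R])/\what{R}\right]$ are symmetric monoidal functors $R\to\Div(T)$ --- is the heart of the matter, and once it is in hand the identification of the fiber product with $\radice{Q}{X}$ is indeed a direct unwinding of definitions. Your cautionary remarks about the $2$-categorical bookkeeping and about passing between the chart model $\radice{Q}{X}$ and the sheaf model $\radice{B}{X}$ via the cokernel property are apt: these are exactly the points where care is needed, and they are handled in \cite{borne-vistoli} rather than in the present paper.
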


This gives a quotient stack description of $\radice{Q}{X}$ itself: call $\eta\colon E\to X$ the $\widehat{P}$-torsor corresponding to the map $X\to \left[\spec(k[P])/\widehat{P} \right]$, and note that we have a $\widehat{P}$-equivariant map $E\to \spec(k[P])$. Then we have an isomorphism
$$
\radice{Q}{X} \simeq \left[ (E\times_{\spec(k[P])}\spec(k[Q]))/\widehat{Q} \right]
$$
for the natural action.

Moreover $E$ is affine over $X$, and if we set $R=\eta_{*}\mc{O}_{E}$, then we have $E\times_{\spec(k[P])}\spec(k[Q]) \simeq \underline{\spec}_{X}(R\otimes_{k[P]}k[Q])$. This gives a description of quasi-coherent sheaves on $\radice{Q}{X}$, that is the key to the relation with parabolic sheaves, and will be important in what follows: quasi-coherent sheaves on $\radice{Q}{X}$ are $Q^{\gp}$-graded quasi-coherent sheaves on $X$, of modules over the sheaf of rings $R\otimes_{k[P]}k[Q]$. The grading corresponds to $\widehat{Q}$-equivariance.

We have a second description as a quotient stack in presence of a Kato chart: if $P\to \Div(X)$ comes from a Kato chart $P\to \mc{O}_{X}(X)$, then the cartesian diagram expressing $\radice{Q}{X}$ as a pullback can be broken up
$$
\xymatrix{
\radice{Q}{X}\ar[r]\ar[d] & \left[\spec(k[Q]) / \mu_{Q/P}\right] \ar[r]\ar[d]& \left[\spec(k[Q])/\widehat{Q} \right]\ar[d]\\
X\ar[r] & \spec(k[P])\ar[r] & \left[\spec(k[P])/\widehat{P} \right]
}
$$
in two cartesian squares, where $\mu_{Q/P}$ is the Cartier dual $D(C)$ of the cokernel $C$ of the morphism $P^{\gp}\to Q^{\gp}$, a finite abelian group. Consequently, we also have an isomorphism
$$
\radice{Q}{X} \simeq \left[(X\times_{\spec(k[P])}\spec(k[Q])) /\mu_{Q/P}\right]
$$
for the natural action.

This local description gives the following properties for the root stacks.

\begin{prop}[{\cite[Proposition $4.19$]{borne-vistoli}}]\label{root.stack.algebraic}
Let $(X,A,L)$ be a log scheme and $j\colon A\to B$ a system of denominators. Then $\radice{B}{X}$ is a tame Artin stack. Moreover the morphism $\radice{B}{X}\to X$ is proper, quasi-finite and finitely presented, and if for every geometric point $x \to X$ the order of the group $B^{\gp}_{x}/A^{\gp}_{x}$ is prime to the characteristic of $k$ (for example if $\cha(k)=0$), then $\radice{B}{X}$ is Deligne--\hspace{0pt}Mumford.
\end{prop}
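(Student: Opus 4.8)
The plan is to reduce to the explicit quotient presentation of the root stack and read off every assertion from the structure of the group scheme that acts. First I would note that $\radice{B}{X}$ is built by \'{e}tale descent on $X$ and that all four conclusions may be checked \'{e}tale-locally on $X$: algebraicity is \'{e}tale-local, the properties ``proper'', ``quasi-finite'' and ``finitely presented'' of $\pi\colon\radice{B}{X}\to X$ are fppf-local on the target, and ``tame'' and ``Deligne--Mumford'' can be detected on the geometric automorphism groups of the stack (together with the already established finite presentation over the finite-type base $X$). So I would pass to an \'{e}tale neighbourhood of a geometric point $\bar x\to X$ on which, by the results on existence of charts in \cite{borne-vistoli}, there is a Kato chart $P\to\mc{O}_X(X)$ with $P=\overline A_{\bar x}$ together with a chart $j_0\colon P\to Q$ for $j$ with $Q=\overline B_{\bar x}$, both monoids fine and sharp and $j_0$ Kummer. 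Then the quotient description recorded after Proposition~\ref{local.model.root.stack} applies and gives
$$
\radice{B}{X}\;\cong\;\radice{Q}{X}\;\cong\;\bigl[(X\times_{\spec(k[P])}\spec(k[Q]))\big/\mu_{Q/P}\bigr],
$$
where $\mu_{Q/P}=D[C]$ with $C$ the cokernel of $P^{\gp}\to Q^{\gp}$, a finite abelian group because $j_0$ is Kummer. Write $Z=X\times_{\spec(k[P])}\spec(k[Q])$.

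From this presentation the three ``easy'' points follow quickly. Since $\mu_{Q/P}$ is a finite, flat, affine group scheme over $k$, the quotient $[Z/\mu_{Q/P}]$ is an Artin stack; every geometric automorphism group of it is a closed subgroup scheme of $\mu_{Q/P}=D[C]$, hence diagonalizable and therefore linearly reductive, so the stack is tame. For $\pi$: the homomorphism $k[P]\to k[Q]$ is module-finite --- picking generators $q_1,\dots,q_\ell$ of $Q$ and $n_i>0$ with $n_iq_i\in j_0(P)$, the monomials $\prod q_i^{a_i}$ with $0\le a_i<n_i$ generate $k[Q]$ over $k[P]$ --- and finitely presented, so $Z\to X$ is finite and finitely presented, hence proper and quasi-finite; as $Z\to\radice{B}{X}$ is a $\mu_{Q/P}$-torsor (so finite, faithfully flat, finitely presented and surjective) and properness, quasi-finiteness and finite presentation descend along such a cover of the source, $\pi$ is proper, quasi-finite and finitely presented.

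For the Deligne--Mumford assertion I would use that an algebraic stack locally of finite type over $k$ with finite inertia is Deligne--Mumford exactly when all its geometric automorphism groups are \'{e}tale (equivalently unramified). With the chart chosen above, $\radice{B}{X}$ restricted to the neighbourhood of $\bar x$ is $[Z/\mu_{Q/P}]$ with $\mu_{Q/P}=D\bigl[B^{\gp}_{\bar x}/A^{\gp}_{\bar x}\bigr]$; a diagonalizable group scheme $D[C]$ is \'{e}tale over $k$ precisely when $|C|$ is invertible in $k$, and every geometric automorphism group occurring here is a closed subgroup scheme of $\mu_{Q/P}$, hence \'{e}tale once $\mu_{Q/P}$ is. Thus if $|B^{\gp}_{x}/A^{\gp}_{x}|$ is prime to $\cha(k)$ for every geometric point $x\to X$ --- in particular if $\cha(k)=0$ --- then, letting $\bar x$ range over $X$, each such restriction is Deligne--Mumford, and therefore so is $\radice{B}{X}$.

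I expect the main obstacle to be not any single computation but the interface between the local and global pictures: one must check that the local quotient presentations on overlapping \'{e}tale charts are compatible with the descent data defining $\radice{B}{X}$, so that tameness and Deligne--Mumford-ness, computed chart by chart on geometric automorphism groups, really are properties of the whole stack, and one must invoke the correct descent statement for properness along a proper, faithfully flat cover of the source. All of this is standard; a cleaner chart-free variant of the same argument runs through the presentation $\radice{Q}{X}\cong X\times_{[\spec(k[P])/\what P]}[\spec(k[Q])/\what Q]$ of Proposition~\ref{local.model.root.stack}, at the cost of having to isolate the finite part $\mu_{Q/P}$ of the torus $\what Q$ when discussing the Deligne--Mumford property.
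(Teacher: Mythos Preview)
The paper does not give a proof of this proposition; it is stated as a citation of \cite[Proposition~4.19]{borne-vistoli} and no argument is supplied. Your proof is correct and is the natural one: pass \'etale-locally to a Kato chart, use the presentation $\radice{B}{X}\cong[Z/\mu_{Q/P}]$ with $\mu_{Q/P}$ a finite diagonalizable group scheme, and read off algebraicity, tameness, the properties of $\pi$, and the Deligne--Mumford criterion from this explicit description. One cosmetic point: in the Deligne--Faltings setup the sheaves $A$ and $B$ already have trivial kernel, so the stalks at $\bar x$ are written $A_{\bar x}$ and $B_{\bar x}$ rather than $\overline{A}_{\bar x}$ and $\overline{B}_{\bar x}$.
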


Let us recall some further facts about root stacks that will be used later on. 

\begin{prop}\label{coarse.space.root}
Assume that $A$ and $B$ are sheaves of fine and saturated monoids. Then the coarse moduli space of $\radice{B}{X}$ is the morphism $\radice{B}{X}\to X$.
\end{prop}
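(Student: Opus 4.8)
The plan is to reduce the statement to a local computation using the quotient stack description of $\radice{B}{X}$, and then invoke the standard fact that forming coarse moduli spaces commutes with flat (in fact arbitrary) base change for tame stacks. Since being a coarse moduli space is an \'{e}tale-local property on the base, by Proposition \ref{local.model.root.stack} we may work \'{e}tale locally on $X$ and assume that the system of denominators admits a chart $j_{0}\colon P\to Q$ by fine saturated sharp monoids, with $P\to \Div(X)$ a chart for $(A,L)$; then $\radice{B}{X}\cong \radice{Q}{X}\cong X\times_{[\spec(k[P])/\what{P}]}[\spec(k[Q])/\what{Q}]$. The projection $\radice{Q}{X}\to X$ is the base change of $[\spec(k[Q])/\what{Q}]\to [\spec(k[P])/\what{P}]$, and since $\radice{Q}{X}$ is tame (Proposition \ref{root.stack.algebraic}), its formation of coarse space commutes with base change along $X\to [\spec(k[P])/\what{P}]$. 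Hence it suffices to show that the coarse moduli space of $[\spec(k[Q])/\what{Q}]$, relative to $[\spec(k[P])/\what{P}]$, is $[\spec(k[P])/\what{P}]$ itself — equivalently, after passing to the $\what{P}$-torsor, that the coarse space of $[\spec(k[Q])/\mu_{Q/P}]$ (or of $[(E\times_{\spec k[P]}\spec k[Q])/\what Q]$ over $X$) is computed correctly.

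The heart of the matter is therefore the following purely toric computation: for a Kummer homomorphism $j_{0}\colon P\to Q$ of fine saturated sharp monoids, the coarse moduli space of $[\spec(k[Q])/\what{Q}]$ over $[\spec(k[P])/\what{P}]$ is $[\spec(k[P])/\what{P}]$; equivalently, the invariants $(k[Q])^{\what{Q}/\what{P}} = (k[Q])^{\mu_{Q/P}}$ equal $k[P]$ inside $k[Q]$. Here $\mu_{Q/P}=D[C]$ with $C=Q^{\gp}/P^{\gp}$ a finite abelian group (finite because $j_{0}$ is Kummer), and it acts on $k[Q]=\bigoplus_{q\in Q}k\cdot t^{q}$ by the character of $C$ determined by the class of $q$ in $C$. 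The invariant subring is thus $\bigoplus_{q\in Q,\ [q]=0\text{ in }C}k\cdot t^{q} = \bigoplus_{q\in Q\cap P^{\gp}}k\cdot t^{q}$. So the claim becomes the monoid-theoretic statement $Q\cap P^{\gp}=P$ inside $Q^{\gp}=P^{\gp}_{\bb{Q}}$ (using $j_{0}$ injective). This is exactly where saturation of $P$ is used: if $q\in Q$ and $q\in P^{\gp}$, then since $nq\in P$ for some $n>0$ (Kummer) and $q\in P^{\gp}$, saturatedness of $P$ gives $q\in P$; the reverse inclusion is trivial. Without saturation this can fail, which is why the hypothesis is imposed.

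From $(k[Q])^{\mu_{Q/P}}=k[P]$ one concludes that $\spec(k[Q])\to \spec(k[P])$ is the coarse moduli space of $[\spec(k[Q])/\mu_{Q/P}]$ over $\spec(k[P])$ (the map is finite, so $\spec(k[P])=\spec((k[Q])^{\mu_{Q/P}})$ is the categorical — hence coarse, by tameness — quotient, and it is clearly a universal homeomorphism on geometric points since $\mu_{Q/P}$ acts with a single orbit on each fibre over $\spec k[P]$ up to the torus action), and descending along the $\what{P}$-torsor $E\to X$ and using the quotient presentation $\radice{Q}{X}\cong[(E\times_{\spec k[P]}\spec k[Q])/\what Q]$ gives that $\radice{Q}{X}\to X$ is a coarse moduli space locally on $X$. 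Gluing the \'{e}tale-local statements finishes the proof. The main obstacle is organizing the base-change/descent bookkeeping cleanly — verifying that "coarse moduli space" is preserved under the two base changes (along $X\to[\spec k[P]/\what P]$ and along $E\to X$) for the tame stack in question, rather than the toric computation itself, which is short.
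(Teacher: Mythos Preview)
Your proposal is correct and follows essentially the same route as the paper: localize \'{e}tale, use tameness to reduce to the universal model, and then compute the $\mu_{Q/P}$-invariants of $k[Q]$ as $k[P]$ via the identity $Q\cap P^{\gp}=P$, which is exactly where saturation enters. The only organizational difference is that the paper assumes a Kato chart (which exists \'{e}tale locally), so it works directly with the cartesian square over the scheme $\spec(k[P])$ rather than over $[\spec(k[P])/\what{P}]$; this lets it bypass the extra descent step through the $\what{P}$-torsor that you carry along.
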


\begin{proof}
This is a local question on $X$, so we can assume to have a chart $P\to \Div(X)$ for $X$ coming from a Kato chart, and a chart $P\to Q$ for the system of denominators. Moreover, since in this case
$$
\radice{Q}{X}  \simeq X\times_{\spec(k[P])}\left[\spec(k[Q]) /\mu_{Q/P}\right]
$$
with the notation introduced above, by tameness we can reduce to showing that the morphism $\left[\spec(k[Q]) /\mu_{Q/P}\right]\to \spec(k[P])$ is a coarse moduli space.

This follows from the fact that the invariants of the action of $\mu_{Q/P}$ on $\spec(k[Q])$ are exactly $\spec(k[P])$. Recall how the action is constructed: $\mu_{Q/P}$ is the Cartier dual $D(C)$ of the cokernel $C$ of $P^{\gp}\to Q^{\gp}$, the algebra $k[Q]$ has a natural $Q^{\gp}$-grading that induces a $C$-grading, and this gives the action of $\mu_{Q/P}$.

The invariants are the piece of degree $0$ with respect to this $C$-grading, and are generated by the $x^{q}$'s such that $q\in Q$ goes to $0$ in $C$, i.e. with $q \in P^{\gp}\cap Q=P$, since $P$ and $Q$ are fine and saturated. This concludes the proof.
\end{proof}

The last proposition implies in particular that $\pi_{*}\colon \Qcoh(\radice{B}{X})\to \Qcoh(X)$ is exact, since $\radice{B}{X}$ is tame and $X$ is the coarse moduli space.

\begin{prop}\label{strict.cartesian}
Let $(X,A,L)$ be a log scheme and $j\colon A\to B$ a system of denominators. If $f\colon Y \to X$ is a morphism of schemes and we equip $Y$ with the pullback DF structure and consider the system of denominators $f^{*}j\colon f^{*}A\to f^{*}B$, then we have an isomorphism $\radice{f^{*}B}{Y} \simeq \radice{B}{X}\times_{X}Y$, i.e. the diagram
$$
\xymatrix{
\radice{f^{*}B}{Y}\ar[r]\ar[d] & \radice{B}{X}\ar[d]\\
Y\ar[r] & X
}
$$
is cartesian.
\end{prop}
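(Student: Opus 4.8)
The plan is to unwind both sides directly from the moduli definition of the root stack, as fibered categories over $(\Sch/Y)$, and to observe that, under the compatibility of pullbacks of DF structures with composition, they parametrize the same data. First I would check that $f^{*}j\colon f^{*}A\to f^{*}B$ really is a system of denominators on the log scheme $(Y,f^{*}A,f^{*}L)$: the sheaf $f^{*}B$ is coherent because the pullback of a chart is again a chart (a cokernel of sheaves of monoids pulls back to a cokernel, this being checkable on stalks), and $f^{*}j$ is Kummer because the stalk of a pullback along $f$ at a geometric point $\bar{y}$ of $Y$ is the stalk of the original sheaf at $f(\bar{y})$, so the stalkwise Kummer condition is inherited. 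This makes $\radice{f^{*}B}{Y}$ well-defined.

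Next I would fix a scheme $s\colon T\to Y$, set $t=f\circ s\colon T\to X$, and compare the fibers over $s$. By definition an object of $\radice{f^{*}B}{Y}$ over $s$ is a pair $(N',\alpha')$ with $N'\colon s^{*}(f^{*}B)\to\Div_{T}$ a DF structure and $\alpha'$ a natural isomorphism between $s^{*}(f^{*}L)$ and the composite $s^{*}(f^{*}A)\to s^{*}(f^{*}B)\xrightarrow{N'}\Div_{T}$. Since $\radice{B}{X}\times_{X}Y$ is fibered over $(\Sch/Y)$, an object over $s$ is the same thing as an object of $\radice{B}{X}$ over the composite $t$, i.e. a pair $(N,\alpha)$ with $N\colon t^{*}B\to\Div_{T}$ a DF structure and $\alpha$ a natural isomorphism between $t^{*}L$ and $t^{*}A\to t^{*}B\xrightarrow{N}\Div_{T}$. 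The canonical isomorphisms $s^{*}f^{*}A\cong t^{*}A$, $s^{*}f^{*}B\cong t^{*}B$, $s^{*}f^{*}L\cong t^{*}L$ recalled above (see also \cite{borne-vistoli}) then match these two packages of data, and likewise the morphisms between them, yielding an equivalence between the two fibers over each $s$.

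Finally I would check that these fiberwise equivalences are natural in $T$, i.e. compatible with pullback along morphisms $T'\to T$ over $Y$, so that they assemble into an isomorphism of stacks over $Y$ identifying $\radice{f^{*}B}{Y}\to Y$ with the second projection of $\radice{B}{X}\times_{X}Y$ and its composite to $X$ with the first; this gives the cartesian square in the statement. The main (and essentially the only) obstacle is this last piece of $2$-categorical bookkeeping: tracking the coherence of the canonical isomorphisms for iterated pullbacks of DF structures and checking the equivalences respect them. Since the pullback on both root stacks is by definition given by pullback of DF structures, this reduces to invoking the pseudo-functoriality of that operation from \cite{borne-vistoli}. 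If one prefers to sidestep this, an alternative is to argue étale-locally on $X$: pick a chart $P\to Q$ for the system of denominators, present $\radice{Q}{X}$ as the quotient stack of Proposition \ref{local.model.root.stack}, use that fiber products of stacks compose, and then glue by descent.
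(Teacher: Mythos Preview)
Your proposal is correct and is exactly what the paper means by its one-line proof ``This is immediate from the definitions'': you have simply spelled out the comparison of the two fibered categories over $(\Sch/Y)$ that the paper leaves implicit. The alternative route via charts and Proposition~\ref{local.model.root.stack} is also fine but unnecessary here.
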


\begin{proof}
This is immediate from the definitions.
\end{proof}

if $A\to B$ and $B\to B'$ are systems of denominators, then the composition $A\to B'$ is also a system of denominators, and we have a morphism $\radice{B'}{X}\to \radice{B}{X}$ defined by restricting the extension of the DF structure $B'_{T}\to \Div_{T}$ along $B_{T}\to B'_{T}$ for every scheme $T\to X$. We will sometimes call this operation an ``extension of denominators''.

This morphism is very similar to the projection $\radice{B}{X}\to X$ from a root stack to the scheme $X$.

\begin{prop}
Let $j\colon A\to B$ and $j'\colon B\to B'$ be two systems of denominators over the log scheme $X$. Then the root stack $\radice{B'}{X}$ can be identified with the root stack of the log stack $\radice{B}{X}$ with respect to the system of denominators $j'$.
\end{prop}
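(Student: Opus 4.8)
The plan is to prove the statement by unwinding the definitions into an equivalence of fibered categories over $(\Sch/X)$. First I would make the claim precise: by the remark above on root stacks, the stack $\radice{B}{X}$ carries its \emph{universal DF structure}, whose sheaf of monoids is $\pi^{*}B$ for $\pi\colon \radice{B}{X}\to X$ the projection, and the system of denominators $j'\colon B\to B'$ pulls back to a Kummer morphism $\pi^{*}j'\colon \pi^{*}B\to \pi^{*}B'$ on the lisse-\'{e}tale site of $\radice{B}{X}$; this is the system of denominators with respect to which we form the root stack of the log stack $\radice{B}{X}$. So what is to be shown is that there is a canonical isomorphism $\radice{B'}{X}\cong \radice{\pi^{*}B'}{\radice{B}{X}}$ compatible with the maps to $X$.

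Next I would build the comparison in both directions on $T$-points, it being enough to evaluate on schemes $t\colon T\to X$ since both sides are stacks for the \'{e}tale topology. Given a $T$-point $(N,\alpha)$ of $\radice{B'}{X}$ — so $N\colon t^{*}B'\to \Div_{T}$ is a DF structure and $\alpha$ an isomorphism of $t^{*}L$ with $t^{*}A\to t^{*}B'\xrightarrow{N}\Div_{T}$ — I restrict $N$ along the injection $t^{*}B\hookrightarrow t^{*}B'$ and $\alpha$ accordingly, getting a $T$-point of $\radice{B}{X}$, i.e.\ a morphism $g\colon T\to \radice{B}{X}$ with $g^{*}(\text{universal DF})\cong N|_{t^{*}B}$; then $N$ itself, via the canonical identification $g^{*}\pi^{*}B'\cong t^{*}B'$ (from $\pi g=t$) together with the tautological compatibility isomorphism, promotes $g$ to a $T$-point of $\radice{\pi^{*}B'}{\radice{B}{X}}$. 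Conversely, a $T$-point of $\radice{\pi^{*}B'}{\radice{B}{X}}$ is a morphism $g\colon T\to \radice{B}{X}$ — equivalently a pair $(N_{0},\alpha_{0})$ over $X$ — together with a DF structure $N'$ on $T$ with monoid $t^{*}B'$ extending $g^{*}(\text{universal DF})\cong N_{0}$ along $t^{*}B\to t^{*}B'$; composing $\alpha_{0}$ with this extension datum and using that $A\to B'$ factors through $B$ (and that this composite is indeed a system of denominators, as recalled just before the statement) produces an isomorphism of $t^{*}L$ with $t^{*}A\to t^{*}B'\xrightarrow{N'}\Div_{T}$, hence a $T$-point of $\radice{B'}{X}$.

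Finally I would check that the two assignments are $2$-functorial in $T$, i.e.\ compatible with pullback of DF structures, and mutually quasi-inverse, and that they commute with the projections to $X$; this yields the isomorphism of stacks over $X$. Conceptually there is no obstacle here: the statement is formal. The one mildly delicate point, and the place where I would be most careful, is the bookkeeping of the coherence isomorphisms among the iterated pullbacks $g^{*}\pi^{*}(-)\cong t^{*}(-)$ of sheaves of monoids and of DF structures, but since $B$ and $B'$ are fine and the stacks in play are tame, the lisse-\'{e}tale subtleties noted in the earlier remark cause no trouble. As an alternative I would argue \'{e}tale-locally on $X$: choosing charts $P\to Q\to Q'$ for $A\to B\to B'$ and applying Proposition \ref{local.model.root.stack}, the statement reduces to the pasting law for fiber products, $\radice{Q}{X}\times_{[\spec(k[Q])/\widehat{Q}]}[\spec(k[Q'])/\widehat{Q'}]\cong X\times_{[\spec(k[P])/\widehat{P}]}[\spec(k[Q'])/\widehat{Q'}]\cong \radice{Q'}{X}$, at the cost of then having to glue these local identifications.
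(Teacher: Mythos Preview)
Your proposal is correct and is precisely the unwinding of definitions that the paper alludes to; the paper's own proof is simply ``This is also immediate from the definitions.'' Your explicit construction of the mutually inverse functors on $T$-points, together with the alternative local argument via charts and Proposition~\ref{local.model.root.stack}, is exactly the content behind that one-line proof.
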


\begin{proof}
This is also immediate from the definitions.
\end{proof}

Because of this, the map $\radice{B'}{X}\to \radice{B}{X}$ behaves in some sense as a coarse moduli space. For example, we have a projection formula for quasi-coherent sheaves.

\begin{prop}[Projection formula for the root stacks]\label{proj.formula.finite}
With the notation of the preceding proposition, denote by $\pi\colon \radice{B'}{X}\to \radice{B}{X}$ the natural map, and assume that $A$, $B$ and $B'$ are fine and saturated. Then:
\begin{itemize}
\item the canonical morphism $\mc{O}_{\radice{B}{X}}\to \pi_{*}\mc{O}_{\radice{B'}{X}}$ is an isomorphism,
\item if $F \in \Qcoh(\radice{B'}{X})$ and $G \in \Qcoh(\radice{B}{X})$ the natural morphism $\pi_{*}F\otimes G\to \pi_{*}(F\otimes \pi^{*}G)$ is an isomorphism,
\item consequently for $F \in \Qcoh(\radice{B}{X})$ the natural map $F\to\pi_{*}\pi^{*}F$ on $\radice{B}{X}$ is also an isomorphism.
\end{itemize}
\end{prop}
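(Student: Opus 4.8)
The plan is to deduce all three statements from Proposition \ref{coarse.space.root} via the preceding proposition: since $\radice{B'}{X}$ is the root stack of the log stack $\radice{B}{X}$ with respect to $j'$, the morphism $\pi$ should behave just like the coarse-moduli morphism $\radice{B}{X}\to X$ treated there. First I would observe that all three assertions are local on $X$ for the étale topology — formation of the root stacks commutes with étale base change on $X$ by Proposition \ref{strict.cartesian} and its analogue for an extension of denominators, and formation of $\pi_{*}$ commutes with flat base change because $\pi$ is proper and finitely presented (being a root stack of $\radice{B}{X}$; cf.\ Proposition \ref{root.stack.algebraic}). So I may assume $X=\spec R$ affine, equipped with a Kato chart $P\to R$ for $A$ and with compatible Kummer charts $P\to Q$ and $Q\to Q'$ for $j$ and $j'$ in which $P$, $Q$, $Q'$ are fine, saturated and sharp; such charts exist étale-locally on $X$ by \cite{borne-vistoli}.

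In this local situation, set $X_{Q}=\spec\left(R\otimes_{k[P]}k[Q]\right)$ and $X_{Q'}=\spec\left(R\otimes_{k[P]}k[Q']\right)$. Using the quotient-stack description of root stacks in the presence of a Kato chart (Proposition \ref{local.model.root.stack} and the discussion following it), $\radice{Q}{X}\cong[X_{Q}/\mu_{Q/P}]$, $\radice{Q'}{X}\cong[X_{Q'}/\mu_{Q'/P}]$, and by naturality of these identifications $\pi$ is induced by the $\mu_{Q'/P}$-equivariant morphism $X_{Q'}\to X_{Q}$ and the surjection $\mu_{Q'/P}\to\mu_{Q/P}$ dual to the inclusion $Q^{\gp}/P^{\gp}\hookrightarrow Q'^{\gp}/P^{\gp}$, where $\mu_{Q'/P}$ acts on $X_{Q}$ through this surjection. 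The kernel of $\mu_{Q'/P}\to\mu_{Q/P}$ is $\mu_{Q'/Q}$, and $[X_{Q'}/\mu_{Q'/Q}]$ is precisely the root stack $\radice{Q'}{X_{Q}}$ of the log scheme $X_{Q}$ (with Kato chart $Q$) with respect to $Q\to Q'$ — the same local description, now with $X_{Q}$ and $Q$ in the roles of $X$ and $P$. Hence $\pi$ is the quotient by the residual $\mu_{Q/P}$-action of the morphism $\rho\colon\radice{Q'}{X_{Q}}\to X_{Q}$, which by Proposition \ref{coarse.space.root} is the coarse moduli space of a tame stack. It follows at once that $\pi_{*}$ is exact (it is, flat-locally on $\radice{B}{X}$, the pushforward $\rho_{*}$, exact by tameness) and that $\mc O_{\radice{B}{X}}\to\pi_{*}\mc O_{\radice{B'}{X}}$ is an isomorphism: the corresponding statement for $\rho$ reads $R\otimes_{k[P]}k[Q]=\left(R\otimes_{k[P]}k[Q']\right)^{\mu_{Q'/Q}}$, which is exactly the saturation computation in the proof of Proposition \ref{coarse.space.root}, and it is unaffected by passing to the $\mu_{Q/P}$-quotient. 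This gives the first bullet.

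The remaining two bullets are then formal. Knowing that $\pi_{*}$ is exact, commutes with arbitrary direct sums ($\pi$ is quasi-compact and quasi-separated), and satisfies $\pi_{*}\mc O=\mc O$, the natural transformation $\pi_{*}F\otimes(-)\to\pi_{*}\left(F\otimes\pi^{*}(-)\right)$ is an isomorphism on a free module (where both sides are a direct sum of copies of $\pi_{*}F$ and the map is the identity), and both its source and target are right exact ($\pi^{*}$ is exact, $F\otimes-$ and $\pi_{*}F\otimes-$ are right exact, $\pi_{*}$ is exact) and commute with direct sums; presenting $G$ as the cokernel of a map between free modules and applying the five lemma yields the second bullet. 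The third bullet is the second applied with the sheaf on $\radice{B'}{X}$ equal to $\mc O$: for $F\in\Qcoh(\radice{B}{X})$ one gets $\pi_{*}\pi^{*}F=\pi_{*}(\mc O\otimes\pi^{*}F)\cong\pi_{*}\mc O\otimes F=F$, and one checks this realizes the canonical map. The only genuinely delicate point in the argument is the bookkeeping in the middle paragraph — matching $\pi$ with the $\mu_{Q/P}$-quotient of a coarse-moduli morphism and keeping track of the gradings and equivariances — once that is in place there is no new content beyond Proposition \ref{coarse.space.root}.
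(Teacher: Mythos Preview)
Your argument is correct and follows the same overall strategy as the paper: reduce to the situation of Proposition \ref{coarse.space.root} and use that coarse moduli spaces of tame stacks satisfy $\pi_*\mc{O}=\mc{O}$ and the projection formula. The differences are in execution. The paper performs a cleaner reduction: rather than localizing \'etale-locally on $X$ and tracking the residual $\mu_{Q/P}$-action, it flat-base-changes along a smooth atlas $T\to\radice{B}{X}$, so that the pulled-back morphism $\radice{B'}{T}\to T$ is literally a coarse moduli space map of a tame stack (by the preceding proposition and \ref{coarse.space.root}), with no further quotient to unwind. For the second bullet the paper simply cites the projection formula for good moduli spaces from \cite{alper}, whereas you give the self-contained presentation-and-five-lemma argument. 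Your route is a bit longer but avoids the external reference; the paper's is shorter but relies on \cite{alper}.
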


\begin{proof}
The last bullet is consequence of the first two.

By flat base change along $T\to \radice{B}{X}$, we reduce to proving the statements for $\pi_{T}\colon \radice{B'}{T}\to T$, where the log structure on $T$ is the pullback of the universal DF structure of $\radice{B}{X}$. Now since $B$ and $B'$ are fine and saturated, by \ref{coarse.space.root} the morphism $\pi_{T}$ is a coarse moduli space of a tame Artin stack, and the claims follow: the first one is a general property of coarse moduli spaces, and the second follows for example from the proof of Corollary 5.3 of \cite{olsson-starr}: we can assume that $T$ is affine and that $G$ is a colimit of finitely presented $\mc{O}_T$-modules, and using the fact that $(\pi_T)_*$ commutes with colimits we are reduced to the case where $G$ is finitely presented. By right exactness of $\pi_T^*$ and exactness of $(\pi_T)_*$ we reduce further to checking the statement for $G=\mc{O}_T$, for which it is trivial.
\end{proof}

Now let us turn to parabolic sheaves. The main result of \cite{borne-vistoli} is that quasi-coherent sheaves on $\radice{B}{X}$ are exactly parabolic sheaves on $X$ with respect to $B$.

We will give the general definition of a parabolic sheaf on a log scheme right away, without going through the earlier definitions by Seshadri (\cite{seshadri}), Maruyama and Yokogawa (\cite{maruyama-yokogawa}) and Borne (\cite{borne}). A comparison with Maruyama and Yokogawa's definition will be given in \ref{maru.yoko}.

Let us first define parabolic sheaves on a log scheme $(X,A,L)$ when there is a chart $P\to \Div(X)$ for $L$, and with respect to a Kummer morphism $P\to Q$.

Let us recall the construction of the category of weights $Q^{\wt}$ associated to $Q$: objects are elements of $Q^{\gp}$, and an arrow $a\to b$ is an element $q \in Q$ such that $b=a+q$. We will write $a\leq b$ to mean that there is an arrow from $a$ to $b$. Note that if $Q$ is integral, the element $q$ that gives the arrow is uniquely determined.

The symmetric monoidal functor $L\colon P\to \Div(X)$ giving the DF structure extends to a symmetric monoidal functor $L^{\wt}\colon P^{\wt}\to \Pic(X)$ in the obvious way. If $p \in P^{\gp}$, we denote $L^{\wt}(p)$ simply by $L_{p}$.

\begin{definition}\label{def.parabolic.sheaf.chart}
A \emph{parabolic sheaf} on $X$ with weights in $Q$ is a functor $E\colon Q^{\wt}\to \Qcoh(X)$ that we denote by $a\mapsto E_{a}$, for $a$ an object or an arrow of $Q^{\wt}$, with an additional datum for any $p \in P^{\gp}$ and $a \in Q^{\gp}$ of an isomorphism of $\mc{O}_{X}$-modules
$$
\rho^{E}_{p,a}\colon E_{p+a} \simeq L_{p}\otimes E_{a}
$$
called the pseudo-periods isomorphism.

These isomorphism are required to satisfy some compatibility conditions. Let $p,p'\in P^{\gp}$, $r \in P$, $q \in Q$ and $a \in Q^{\gp}$. Then the following diagrams are commutative
$$
\xymatrix@C=50pt{
E_{a} \ar[r]^{E_{r}}\ar[d] & E_{r+a}\ar[d]^{\rho^{E}_{r,a}}\\
\mc{O}_{X}\otimes E_{a}\ar[r]^{\sigma_{r}\otimes \id} & L_{r}\otimes E_{a}
}
$$
$$
\xymatrix@C=50pt{
E_{p+a}\ar[r]^{\rho^{E}_{p,a}}\ar[d]_{E_{q}} & L_{p}\otimes E_{a}\ar[d]^{\id\otimes E_{q}}\\
E_{p+q+a}\ar[r]^{\rho^{E}_{p,q+a}} & L_{p}\otimes E_{q+a}
}
$$
$$
\xymatrix@C=50pt{
E_{p+p'+a}\ar[r]^{\rho^{E}_{p+p',a}}\ar[d]_{\rho^{E}_{p,p'+a}} &L_{p+p'}\otimes E_{a}\ar[d]^{\mu_{p,p'}\otimes \id} \\
L_{p}\otimes E_{p'+a}\ar[r]^{\id\otimes \rho^{E}_{p',a}} & L_{p}\otimes L_{p'}\otimes E_{a},
}
$$
where $\mu_{p,p'}\colon L_{p+p'} \simeq L_{p}\otimes L_{p'}$ is the natural isomorphism given by $L$, and the composite
$$
\xymatrix{
E_{a}=E_{0+a}\ar[r]^<<<<<{\rho^{E}_{0,a}} &  L_{0}\otimes E_{a} \simeq \mc{O}_{X}\otimes E_{a}
}
$$
coincides with the natural isomorphism $E_{a} \simeq \mc{O}_{X}\otimes E_{a}$.
\end{definition}

There is a notion of morphisms of parabolic sheaves, which is a natural transformation $E\to E'$ between the two functors $Q^{\wt}\to \Qcoh(X)$ which is compatible with the pseudo-periods isomorphism in the obvious sense, so we get a category $\Par(X,j_{0})$ of parabolic sheaves on $X$ with respect to $j_{0}\colon P\to Q$. This is in fact an abelian category in the natural way.

\begin{example}
Let $X=\spec(k)$ be the standard log point, and let us take the Kummer extension $\bb{N}\subseteq \frac{1}{2}\bb{N}$.

In this case $Q^{\wt} \simeq \frac{1}{2}\bb{Z}$ as a partially ordered set in the natural way, and a parabolic sheaf $E\colon \frac{1}{2}\bb{Z} \to \Qcoh(\spec(k))$ is determined by its values at $0$ and $\frac{1}{2}$, since the pseudo-periods isomorphism gives for any $\frac{1}{2}k$ and $n \in \bb{N}$ an isomorphism
$$
E_{\frac{1}{2}k+n} \simeq E_{\frac{1}{2}k}.
$$
In other words we can visualize $E$ as a pair of vector spaces $V_{0}$ and $V_{1}$ with maps
$$
\xymatrix{
0& \frac{1}{2} & 1\\
V_{0}\ar[r]^{a}& V_{1}\ar[r]^{b}& V_{0}
}
$$
such that $a\circ b=0$ and $b\circ a=0$ (since these compositions have to coincide with multiplication by the image of $1$ in $k$, i.e. $0$.

\end{example}

\begin{example}\label{example.1}
Let $X$ be a regular scheme with $D\subseteq X$ a smooth Cartier divisor and consider the divisorial log structure given by $D$. This has a chart $\bb{N}\to \Div(X)$ sending $1$ to $(\mc{O}_{X}(D), s)$, and consider the Kummer extension $\bb{N}\subseteq \frac{1}{2}\bb{N}$.

Then a parabolic sheaf in this case consists of quasi-coherent sheaves $E_{\frac{1}{2}k}$ for any $k \in  \bb{Z}$, and of morphisms $E_{\frac{1}{2}k}\to E_{\frac{1}{2}k+\frac{1}{2}n}$ for any $n \in \bb{N}$, with the properties as in the definition. In particular if $m \in \bb{Z}$ we have an isomorphism $E_{\frac{1}{2}k+m} \simeq E_{\frac{1}{2}k}\otimes \mc{O}_{X}(mD)$, and the map $E_{\frac{1}{2}k}\to E_{\frac{1}{2}k+m}$ for positive $m$ corresponds to multiplication by $s^{\otimes m}$. Note that if the sheaves $E_{\frac{1}{2}k}$ are torsion-free (say $X$ is integral for simplicity) then, since $s$ is not a zero-divisor, all these maps will be injective.

Because of the pseudo-periods isomorphism, we can identify a parabolic sheaf with the data consisting of the sheaves $E_{0},E_{\frac{1}{2}}$ together with the two maps
$$
\xymatrix{
0& \frac{1}{2} & 1\\
E_{0}\ar[r] & E_{\frac{1}{2}}\ar[r] &E_{1}.
}
$$
The rest of the data is completely determined by this diagram.

Clearly we could have as well chosen the sheaves  corresponding to $-1,-\frac{1}{2}, 0$. We will see that for us it will be more convenient to identify a parabolic sheaf with the sheaves and maps in this second range.
\end{example}

The same definition with minor variations defines a parabolic sheaf in absence of a global chart. Starting from the sheaf $B$ one defines a weight category $B^{\wt}$ in analogy with the preceding case.

\begin{definition}\label{def.parabolic.sheaf}
A \emph{parabolic sheaf} on $X$ with weights in the sheaf $B$ is a cartesian functor $E\colon B^{\wt}\to \Qcoh_{X}$, together with the datum for every $U\to X$ \'{e}tale, any $p \in A^{\gp}(U)$ and $a\in B^{\gp}(U)$ of an isomorphism of $\mc{O}_{U}$-modules
$$
\rho^{E}_{p,a}\colon E_{p+a} \simeq L_{p}\otimes E_{a}
$$
called the pseudo-periods isomorphism.

These morphism satisfy the conditions analogous to those of the preceding definition, and the following one in addition: if $f\colon U\to V$ is a morphism over $X$ and we have $p\in A^{\gp}(V)$ and $a \in B^{\gp}(V)$, then the isomorphism
$$
\rho^{E}_{f^{*}p,f^{*}a}\colon E_{f^{*}(p+a)} \simeq L_{f^{*}p}\otimes E_{f^{*}a}
$$
is the pullback to $U$ of $\rho^{E}_{p,a}\colon E_{p+a} \simeq L_{p}\otimes E_{a}$.
\end{definition}

As for the preceding case there is a notion of morphism (a natural transformation compatible with the pseudo-periods isomorphisms) that gives a category $\Par(X,j)$ of parabolic sheaves on $X$ with weights in $B$, and this is an abelian category. This construction has some functoriality property with respect to morphisms of log schemes $X\to Y$. We will discuss briefly pullbacks at the beginning of Section \ref{section.3}.

Furthermore, $\Par(X,j)$ can be extended to a fibered category $\fPar(X,j)$ over the small \'{e}tale site $X_{\et}$ by taking over an \'{e}tale morphism $U\to X$ the category $\Par(U,j|_{U})$ where $U$ has the pullback log structure. This fibered category is a stack for the \'{e}tale topology, by standard arguments of descent theory.

In case we have a global chart, we can use either one of the definitions.

\begin{prop}[{\cite[Proposition $5.10$]{borne-vistoli}}]
Let $(X,A,L)$ be a log scheme with a system of denominators $j\colon A\to B$, admitting a global chart $j_{0}\colon P\to Q$. Then we have an equivalence $\Par(X,j) \simeq \Par(X,j_{0})$.
\end{prop}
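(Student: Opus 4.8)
The plan is to build a restriction functor $R\colon\Par(X,j)\to\Par(X,j_{0})$ out of the global chart and to show it is fully faithful and essentially surjective, carrying out the last two steps étale-locally on $X$.

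First I would set up $R$. The global chart amounts to homomorphisms $P\to A(X)$ and $Q\to B(X)$ whose sheafifications $P_{X}\to A$ and $Q_{X}\to B$ are cokernels, forming a commutative square with $j_{0}\colon P\to Q$ and $j\colon A\to B$, and with $L\colon A\to\Div_{X}$ precomposed with $P\to A(X)$ equal to the symmetric monoidal functor $P\to\Div(X)$ defining the chart. For each étale $U\to X$ this induces $Q^{\wt}\to B^{\wt}(U)$, compatibly with restriction, and the line bundles $L_{p}$ ($p\in P^{\gp}$) of Definition~\ref{def.parabolic.sheaf.chart} coincide with those of Definition~\ref{def.parabolic.sheaf} attached to the images of $p$ in $A^{\gp}$. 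Restricting a parabolic sheaf $E\colon B^{\wt}\to\Qcoh_{X}$ with its pseudo-periods along $Q^{\wt}\to B^{\wt}(X)$ then produces $R(E)\colon Q^{\wt}\to\Qcoh(X)$, $a\mapsto E_{c(a)}$ (where $c$ is the induced $Q^{\gp}\to B^{\gp}(X)$), with $\rho^{R(E)}_{p,a}:=\rho^{E}_{\bar p,\,c(a)}$; every compatibility diagram of Definition~\ref{def.parabolic.sheaf.chart} is the restriction of one of Definition~\ref{def.parabolic.sheaf}, and morphisms restrict as well, so $R$ is a well-defined exact functor.

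Next I would reduce to a local statement. Both $U\mapsto\Par(U,j|_{U})$ and $U\mapsto\Par(U,j_{0}|_{U})$ (the chart $P\to Q$ is unchanged under pullback) are stacks on $X_{\et}$ by descent for quasi-coherent sheaves, and $R$ globalizes to a cartesian functor between them, so it is enough to prove that $R$ is an equivalence over a basis of opens. I would shrink $U$ so that it is connected, so that $Q^{\gp}\to\Gamma(U,B^{\gp})$ and $P^{\gp}\to\Gamma(U,A^{\gp})$ are surjective (the sheaf maps $Q^{\gp}_{X}\to B^{\gp}$ and $P^{\gp}_{X}\to A^{\gp}$ are epimorphisms, being group completions of stalkwise cokernels), and so that for every $q$ in the kernel submonoid $\ker(Q\to B|_{U})$ some fixed multiple $nq$ lies in $P$ and dies in $A(U)$ — possible because $j_{0}$ is Kummer, $Q$ has finitely many faces, and $A_{x}\hookrightarrow B_{x}$ at each point. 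Over such a $U$: given $F\in\Par(U,j_{0}|_{U})$, for $V\to U$ and a weight $a\in B^{\gp}(V)$ I would pick a lift $\tilde a\in Q^{\gp}$ and set $E_{a}:=F_{\tilde a}|_{V}$, transporting arrows and the pseudo-periods for $p\in A^{\gp}(V)$ from $F$ after lifting to $Q$, resp.\ $P^{\gp}$; then check that $E$ is well defined, cartesian, carries pseudo-periods satisfying the axioms, and satisfies $R(E)\cong F$. Full faithfulness is handled in the same spirit: by cartesian-ness a morphism of parabolic sheaves over $V$ is determined by its components at liftable weights, and a compatible family of such components extends.

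The hard part will be the well-definedness of $E$ independently of the chosen lift — equivalently, that $F$ is ``locally constant along the kernel directions''. Two lifts of $a$ differ by $d\in\ker(Q^{\gp}\to B^{\gp}(V))$, which I would write as $q_{1}-q_{2}$ with $q_{i}\in\ker(Q\to B|_{V})$; the claim is that the structure map of the functor $F$ along each $q_{i}$ is an isomorphism over $V$. Since $nq_{i}=j_{0}(p_{0})$ with $p_{0}\in P$ dying in $A(V)$, the section $\sigma_{p_{0}}$ of $L_{p_{0}}$ is invertible on $V$, so by the first compatibility diagram of Definition~\ref{def.parabolic.sheaf.chart} the $n$-fold composite structure map along $q_{i}$, followed by $\rho^{F}_{p_{0},-}$, equals $\sigma_{p_{0}}\otimes\id$ and is an isomorphism; applying this to all translates of $q_{i}$ forces each individual structure map along $q_{i}$ to be at once split mono and split epi, hence an isomorphism. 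The resulting canonical identifications are coherent by functoriality of $F$ and the cocycle diagrams, which lets the local $E_{a}$ glue and the $\rho^{E}$ be defined. The main obstacle is thus packaging all of this: it is exactly here that one needs the Kummer hypothesis (to pass from $Q$ to $P$), the compatibility of the charts with $L$ (so $L_{p_{0}}$ trivializes where $p_{0}$ dies in $A$), and the full strength of the cocycle diagrams of Definitions~\ref{def.parabolic.sheaf.chart} and~\ref{def.parabolic.sheaf}, together with a careful coherent choice of lifts.
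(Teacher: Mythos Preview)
The paper does not give its own proof of this proposition: it is simply quoted with a citation to \cite[Proposition~5.10]{borne-vistoli}, so there is nothing to compare your argument against here. That said, your plan is essentially the standard one and is sound in outline: build the obvious restriction functor along $Q^{\wt}\to B^{\wt}$, reduce to an \'etale-local statement using that both sides form stacks on $X_{\et}$, and construct a quasi-inverse by lifting sections of $B^{\gp}$ to $Q^{\gp}$ and checking independence of the lift.

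Two small points where your write-up is a little loose. First, ``shrink $U$ so that it is connected'' is not quite the right justification for surjectivity of $Q^{\gp}\to B^{\gp}(U)$; what you actually want is an \'etale neighborhood on which the constructible sheaf $B$ (and $A$) is constant, i.e.\ $B(U)\cong B_{x}$ for some geometric point $x$, which is available because $B$ is coherent. Second, in the key isomorphism step you should be explicit that $p_{0}$ maps to $0$ in $A(V)$ (this follows because $nq$ maps to $0$ in $B(V)$ and $j\colon A\to B$ is injective), and therefore $L_{p_{0}}|_{V}\cong(\mathcal{O}_{V},1)$ by the trivial-kernel axiom on $L$; that is what makes $\sigma_{p_{0}}$ invertible. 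Your split-mono/split-epi argument for the individual structure maps along $q$ is correct once you note that the $n$-fold composite starting at \emph{every} translate $F_{a+jq}$ is an isomorphism. The remaining coherence bookkeeping (that the canonical identifications between different lifts satisfy the cocycle condition, and that the pseudo-periods for sections $p\in A^{\gp}(V)$ are well defined) is genuinely tedious but contains no hidden obstruction.
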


This says that when dealing with local statements about parabolic sheaves, we can assume that they are relative to a chart.

The following is the main result of \cite{borne-vistoli}, and relates parabolic sheaves on $X$ with respect to $j\colon A\to B$ to quasi-coherent sheaves on the root stack $\radice{B}{X}$.

\begin{thm}[{\cite[Theorem $6.1$]{borne-vistoli}}]\label{BV}
Let $(X,A,L)$ be a log scheme with a system of denominators $j\colon A\to B$. Then there is an equivalence of abelian categories $\Phi\colon \Qcoh(\radice{B}{X})\to \Par(X,j)$.
\end{thm}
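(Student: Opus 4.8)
The statement is étale-local on $X$: both sides are the global sections of stacks on $X_{\et}$ --- for $\Qcoh(\radice{B}{X})$ by étale descent of quasi-coherent sheaves combined with Proposition~\ref{strict.cartesian}, and for $\Par(X,j)$ by the stack property of $\fPar(X,j)$ recorded after Definition~\ref{def.parabolic.sheaf} --- and the functor I will construct commutes with strict étale base change. So I would first reduce to the case of a global chart $j_{0}\colon P\to Q$ for $j$, coming, after a further étale shrinking, from a Kato chart $P\to\mc{O}_{X}(X)$. By Proposition~\ref{local.model.root.stack} and the discussion after it, $\radice{B}{X}\cong\radice{Q}{X}$ is then the quotient stack $\left[\underline{\spec}_{X}(\mc{S})/\what{Q}\right]$, where $\mc{S}=R\otimes_{k[P]}k[Q]$ is the $Q^{\gp}$-graded quasi-coherent $\mc{O}_{X}$-algebra built from $R=\eta_{*}\mc{O}_{E}$ and the chart, and a quasi-coherent sheaf on $\radice{Q}{X}$ is the same as a $Q^{\gp}$-graded quasi-coherent $\mc{O}_{X}$-module carrying a compatible graded $\mc{S}$-action. (A parabolic sheaf is likewise a bounded amount of data: by the pseudo-period isomorphisms it is determined by its restriction to a set of representatives of the finite group $Q^{\gp}/P^{\gp}$.)

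I would define $\Phi$ intrinsically, before passing to charts, so that compatibility with base change is built in. The universal DF structure $\Lambda\colon\pi^{*}B\to\Div_{\radice{B}{X}}$ extends to a symmetric monoidal functor $\Lambda^{\wt}\colon B^{\wt}\to\Pic(\radice{B}{X})$, $a\mapsto M_{a}$, with a tautological section $s_{q}\colon\mc{O}\to M_{q}$ for every $q\in B$. For $F\in\Qcoh(\radice{B}{X})$ set $\Phi(F)_{a}=\pi_{*}(F\otimes M_{a})$; let the transition map along an arrow $a\le b=a+q$ be $\pi_{*}$ of $F\otimes M_{a}\xrightarrow{\id\otimes s_{q}}F\otimes M_{a}\otimes M_{q}\cong F\otimes M_{b}$; and for $p\in A^{\gp}$ let the pseudo-period isomorphism $\rho^{\Phi(F)}_{p,a}$ be induced by the canonical identification $M_{p+a}\cong\pi^{*}L_{p}\otimes M_{a}$ --- valid because the composite of $\Lambda$ with $\pi^{*}A\to\pi^{*}B$ is identified with the pullback of $L$ by the very definition of the root stack --- together with the projection formula $\pi_{*}(F\otimes\pi^{*}L_{p}\otimes M_{a})\cong L_{p}\otimes\pi_{*}(F\otimes M_{a})$. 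The three coherence diagrams of Definition~\ref{def.parabolic.sheaf.chart} then follow formally from symmetric monoidality of $\Lambda^{\wt}$, the projection formula, the unit constraint, and the identification of the sections $s_{q}$ for $q\in A$ with the pullbacks of those of $L$; so $\Phi$ is a well-defined additive functor, and it is exact because $\pi_{*}$ is exact (the root stack is tame, cf.\ Propositions~\ref{root.stack.algebraic} and~\ref{coarse.space.root}).

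For the quasi-inverse I would pass to the chart situation and send a parabolic sheaf $E$ with weights in $Q$ to the $Q^{\gp}$-graded $\mc{O}_{X}$-module $\Psi(E)=\bigoplus_{a\in Q^{\gp}}E_{a}$, endowed with the graded $\mc{S}$-action in which the $k[Q]$-factor acts by the structure maps $E_{q}\colon E_{a}\to E_{q+a}$ and the degree-$p$ part of the $R$-factor acts by the inverse of the pseudo-period isomorphism $\rho^{E}_{p,a}\colon E_{p+a}\cong L_{p}\otimes E_{a}$. The dictionary is exact: functoriality of $E$ on $Q^{\wt}$ gives associativity of the $k[Q]$-action; the first coherence diagram (comparing $E_{r}$ with $\sigma_{r}\otimes\id$ for $r\in P$) is precisely what forces these two actions to respect the relation defining $R\otimes_{k[P]}k[Q]$; the second gives the compatibility of the $R$- and $k[Q]$-actions; the third gives associativity of the $R$-action; and the unit condition gives the unit axiom. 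One then checks, locally and by unwinding the quotient presentation, that $\Phi$ and $\Psi$ are mutually quasi-inverse --- in the chart case $\Phi$ is, under the above identification, the functor sending a graded $\mc{S}$-module $N=\bigoplus_{a}N_{a}$ to the parabolic sheaf $a\mapsto N_{a}$ (transition maps multiplication by the $x^{q}$, pseudo-periods the canonical isomorphisms $R_{p}\otimes N_{a}\cong N_{p+a}$), which is visibly quasi-inverse to $\Psi$. Since all of these comparisons are canonical, they are compatible with étale localization, and the local equivalences glue to the desired $\Phi\colon\Qcoh(\radice{B}{X})\to\Par(X,j)$; as both sides are abelian and $\Phi$ is additive, this is an equivalence of abelian categories.

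The main obstacle is the bookkeeping behind the third paragraph: establishing a clean dictionary between the three coherence axioms of a parabolic sheaf and the module/algebra axioms for $\mc{S}=R\otimes_{k[P]}k[Q]$, and controlling the collapse of the $Q^{\gp}$-grading caused by the tensor relation over $k[P]$ --- one must check that $\bigoplus_{a}E_{a}$ genuinely carries a well-defined $\mc{S}$-action, not merely one defined up to that relation, and that $\pi_{*}(-\otimes M_{a})$ really isolates a single graded piece. A secondary point, needed to make the gluing legitimate, is that $\Phi$, and hence the locally defined $\Psi$, is $2$-functorial under strict étale pullback on $X$; this rests in turn on $\pi_{*}$ and the projection formula commuting with the étale base change of Proposition~\ref{strict.cartesian}.
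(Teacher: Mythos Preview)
Your proposal is correct and follows essentially the same approach as the paper's sketch: define $\Phi$ globally by $\Phi(F)_{b}=\pi_{*}(F\otimes\Lambda_{b})$ using the universal DF structure and the projection formula, then construct the quasi-inverse $\Psi$ étale-locally (after reducing to a chart) as $\Psi(E)=\bigoplus_{q\in Q^{\gp}}E_{q}$ viewed as a $Q^{\gp}$-graded $R\otimes_{k[P]}k[Q]$-module. Your account is in fact more detailed than the paper's sketch --- in particular your explicit matching of the three coherence diagrams with the module/algebra axioms for $\mc{S}$ is exactly the bookkeeping the paper leaves to the reader.
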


We sketch the proof here, since the construction of the two functors will be important in what follows.

\begin{proof}[Sketch of proof]
Let us denote by $\pi\colon \radice{B}{X}\to X$ the projection, and by $\Lambda\colon \pi^{*}B\to \Div_{\radice{B}{X}}$ the universal DF structure on the root stack $\radice{B}{X}$.

Let us describe the functor $\Phi$. Given a quasi-coherent sheaf $F$ on $\radice{B}{X}$, we want to get a parabolic sheaf $\Phi(F)$. We set, for $U\to X$ \'{e}tale and $b \in B^{\gp}(U)$
$$
\Phi(F)_{b}=\pi_{*}(F\otimes \Lambda_{b}).
$$
This gives a cartesian functor $B^{\wt}\to \Qcoh_{X}$ by means of the maps $\Lambda_{b}\to \Lambda_{b+b'}$ for $b \in B^{\gp}$ and $b' \in B$, and there is a pseudo-periods isomorphism, basically coming from the fact that if $a \in A(U)$, then $\Lambda_{a} \simeq \pi^{*}L_{a}$, and using the projection formula for $\pi$.

Now since parabolic sheaves on $X$ and quasi-coherent sheaves on $\radice{B}{X}$ form a stack in the \'{e}tale topology of $X$, we can construct the quasi-inverse \'{e}tale locally, and so we can assume that we have a chart $j_{0}\colon P\to Q$ for the system of denominators. In this case recall that we have an isomorphism
$$
\radice{Q}{X} \simeq \left[\underline{\spec}_{X}(R\otimes_{k[P]}k[Q]) / \widehat{Q} \right]
$$
(see the discussion after \ref{local.model.root.stack} for the notation) and consequently quasi-coherent sheaves on $\radice{Q}{X}$ are $Q^{\gp}$-graded quasi-coherent sheaves on $X$, which are modules over the sheaf of rings $R\otimes_{k[P]}k[Q]$.

Starting from a parabolic sheaf $E \in \Par(X,j_{0})$, we define $\Psi(E)$ as the direct sum $\bigoplus_{q\in Q^{\gp}} E_{q}$. This is a $Q^{\gp}$-graded quasi-coherent sheaf on $X$, that has a structure of $R$-module (this uses the pseudo-periods isomorphism). Moreover it is also a sheaf of $k[Q]$-modules in the natural way, and the two actions are compatible over $k[P]$ by the properties of parabolic sheaves. This gives $\Psi(E)$ the structure of a $Q^{\gp}$-graded quasi-coherent sheaf of $R\otimes_{k[P]}k[Q]$-modules, i.e. of a quasi-coherent sheaf on $\radice{Q}{X}$.

One checks that these two constructions are quasi-inverses, and thus give equivalences.
\end{proof}

From the proof of this theorem we see that if the log structure of a noetherian log scheme $X$ is generically trivial, then the maps $E_{b}\to E_{b'}$ between the pieces of any parabolic sheaf are generically isomorphisms (over the open set where the log structure is trivial). Moreover in this case if we also assume that the maps $E_{b}\to E_{b'}$ are injective (this will be automatic for torsion-free parabolic sheaves, see Proposition \ref{gen.trivial.inj}) the pieces $E_{b}$ cannot be zero, unless the whole parabolic sheaf is.

This is not true in general, as easy examples show.

\begin{notation}
In the following we will systematically use the notation $\Phi$ and $\Psi$ for the equivalences of Theorem \ref{BV}, regardless of the log scheme $X$ they refer to. This should cause no confusion.

Moreover we will refer to both these functors as ``the BV equivalence'', for Borne-Vistoli.
\end{notation}

\begin{rmk}
A variant of this theorem also holds for log stacks: if $X$ is a log stack with a system of denominators $j\colon A\to B$, there is an equivalence between parabolic sheaves on $X$ with respect to $j$ and quasi-coherent sheaves on the root stack $\radice{B}{X}$. The proof is just a matter of taking an atlas and keeping track of descent data. We will use this without further comment, especially in Section \ref{section.4}.
\end{rmk}

To conclude, let us describe in parabolic terms pushforwards and pullbacks between root stacks: let $j\colon A\to B$ and $B\to B'$ be systems of denominators on $X$, and consider their composite $j'\colon A\to B'$ and the canonical map $\pi\colon \radice{B'}{X}\to \radice{B}{X}$.

We have a functor $F\colon \Par(X,j')\to \Par(X,j)$ given by ``restriction'': we have an inclusion $B^{\gp}\to B'^{\gp}$, that identifies $B^{\wt}$ with a subcategory of $B'^{\wt}$. Consequently given a parabolic sheaf $E \in \Par(X,j')$ we can restrict the functor $E\colon B'^{\wt}\to \Qcoh_{X}$ to $B^{\wt}$, and one checks that, together with the induced pseudo-periods isomorphism, this gives a parabolic sheaf in $\Par(X,j)$.

\begin{prop}
The functor $F$ described in the preceding discussion corresponds to the pushforward functor $\pi_{*}\colon \Qcoh(\radice{B'}{X})\to \Qcoh(\radice{B}{X})$.
\end{prop}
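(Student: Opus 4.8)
The plan is to compare the composite functors $F\circ\Phi$ and $\Phi\circ\pi_{*}$ (from $\Qcoh(\radice{B'}{X})$ to $\Par(X,j)$) directly, using the explicit descriptions of the BV equivalence and of $\pi_{*}$. Because parabolic sheaves and quasi-coherent sheaves on the root stacks form stacks for the \'{e}tale topology of $X$, and because it is enough to show that a canonical natural transformation between the two functors is an isomorphism, I may work \'{e}tale-locally on $X$ and assume that the two systems of denominators fit into a chart $j_{0}\colon P\to Q\to Q'$ by fine saturated sharp monoids. With the notation $S=R\otimes_{k[P]}k[Q]$ and $S'=R\otimes_{k[P]}k[Q']=S\otimes_{k[Q]}k[Q']$ of the discussion after Proposition~\ref{local.model.root.stack}, quasi-coherent sheaves on $\radice{Q}{X}$ and $\radice{Q'}{X}$ are $Q^{\gp}$-graded $S$-modules and $Q'^{\gp}$-graded $S'$-modules respectively, $\Psi$ sends a parabolic sheaf $E$ to $\bigoplus_{q}E_{q}$ with its natural grading and module structure, and, directly from the definition of $F$ as restriction along $Q^{\wt}\hookrightarrow Q'^{\wt}$, one sees that $\Psi(F(E'))=\bigoplus_{q\in Q^{\gp}}E'_{q}=\bigoplus_{q\in Q^{\gp}}(\Psi E')_{q}$ with the induced $S$-module structure.

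The crux is then to identify $\pi_{*}$ with this same operation, i.e.\ to show that for a $Q'^{\gp}$-graded $S'$-module $M$ one has $\pi_{*}M=\bigoplus_{q\in Q^{\gp}}M_{q}$ as a $Q^{\gp}$-graded $S$-module. I would get this from the adjunction $\pi^{*}\dashv\pi_{*}$: the functor $\pi^{*}$ is explicitly $N\mapsto N\otimes_{k[Q]}k[Q']$ with its induced $Q'^{\gp}$-grading, and a short Hom-set computation shows that $\bigoplus_{q\in Q^{\gp}}M_{q}$ with the restricted $S$-action is right adjoint to it. Combining with the previous step gives $\pi_{*}(\Psi E')\cong\Psi(F(E'))$ naturally, that is, $F\circ\Phi\cong\Phi\circ\pi_{*}$. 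As a consistency check, for $M=S'$ one recovers $\pi_{*}\mc{O}_{\radice{Q'}{X}}=\mc{O}_{\radice{Q}{X}}$, using that $Q$ saturated forces $Q'\cap Q^{\gp}=Q$ inside $Q'^{\gp}$; cf.\ the first bullet of Proposition~\ref{proj.formula.finite}.

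I do not expect a genuine obstacle: once the functors are written out the statement is essentially formal, so the remaining work is bookkeeping --- checking that the degreewise identification respects the transition morphisms and the pseudo-periods isomorphisms, is natural in the sheaf, and glues over the \'{e}tale cover. If one prefers to avoid charts altogether, the same conclusion follows globally: writing $\pi_{B},\pi_{B'}$ for the projections of $\radice{B}{X},\radice{B'}{X}$ to $X$ and $\Lambda,\Lambda'$ for their universal DF structures, the fact that $\radice{B'}{X}$ is the root stack of $(\radice{B}{X},\Lambda)$ along $B\to B'$ gives $\Lambda'_{b}\cong\pi^{*}\Lambda_{b}$ for $b\in B^{\gp}$, whence for $G\in\Qcoh(\radice{B'}{X})$
$$
\Phi(\pi_{*}G)_{b}=(\pi_{B})_{*}\big((\pi_{*}G)\otimes\Lambda_{b}\big)\cong(\pi_{B})_{*}\pi_{*}\big(G\otimes\pi^{*}\Lambda_{b}\big)\cong(\pi_{B'})_{*}\big(G\otimes\Lambda'_{b}\big)=\Phi(G)_{b}=F(\Phi(G))_{b},
$$
the middle isomorphism being the projection formula of Proposition~\ref{proj.formula.finite}; one is then left with the same compatibility checks.
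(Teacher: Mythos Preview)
Your proposal is correct, and in fact your final ``chart-free'' paragraph is exactly the paper's proof: the paper simply says this follows from the definition of $\Phi$ and the projection formula of Proposition~\ref{proj.formula.finite}, which is precisely the computation
\[
\Phi(\pi_{*}G)_{b}=(\pi_{B})_{*}\big((\pi_{*}G)\otimes\Lambda_{b}\big)\cong(\pi_{B})_{*}\pi_{*}\big(G\otimes\pi^{*}\Lambda_{b}\big)=(\pi_{B'})_{*}\big(G\otimes\Lambda'_{b}\big)=F(\Phi(G))_{b}
\]
you wrote out. Your first, chart-based approach via graded modules is also valid but is more work than necessary; the global argument already handles the statement without localizing, so you can safely drop the graded-module computation and keep only the last paragraph.
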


\begin{proof}
This is easily checked by using the definition of the functor $\Phi$ of \ref{BV}, and the projection formula \ref{proj.formula.finite}.
\end{proof}

Let us now turn to pullback, whose description is more complicated. We will describe it in the case where we have global charts $j_{0}\colon P\to Q$ and $j'_{0}\colon P\to Q'$ (induced by $j_0$ and by a chart $Q\to Q'$ for $B\to B'$) for $A\to B$ and $A\to B'$.

Let us define a functor $G\colon \Par(X,j_{0})\to \Par(X,j'_{0})$. Start with a parabolic sheaf $E\colon Q^{\wt}\to \Qcoh(X)$, and take an element $q' \in Q'^{\gp}$. Denote by $Q_{q'}$ the set
$$
Q_{q'}=\{q\in Q^{\gp}\mid q\leq q'\}
$$
where $q\leq q'$ means that there exists $a \in Q'$ such that $q+a=q'$. This is naturally a pre-ordered set, and we have a functor $Q_{q'}\to \Qcoh(X)$ by restricting $E$.

We define
$$
G(E)_{q'}=\varinjlim_{q \in Q_{q'}} E_{q}
$$
which is a quasi-coherent sheaf on $X$, being a colimit of quasi-coherent sheaves.

Note that in particular if $Q_{q'}$ has a maximum $m$ (i.e. if there is an element $m \in Q^{gp}$ such that $q\leq m$ for any $q \in Q_{q'}$), then $G(E)_{q'}=E_{m}$. Further, if $q \in Q^{\gp}$, we clearly have $G(E)_{q}=E_{q}$, where of course we see $q \in Q^{\gp}\subseteq Q'^{\gp}$.

If we have an arrow $q'\to q''$ in $Q'^{\wt}$, i.e. an element $a \in Q'$ such that $q'+a=q''$, then we have a homomorphism $Q_{q'}\to Q_{q''}$ given by inclusion, and this induces a map $G(E)_{q'}\to G(E)_{q''}$. This defines a functor $G(E)\colon Q'^{\wt}\to \Qcoh(X)$. Similar reasonings give a pseudo-periods isomorphism, so that $G(E)$ becomes a parabolic sheaf, and one checks that $G$ gives a functor $\Par(X,j_{0})\to \Par(X,j'_{0})$ as claimed.

It is also immediate to check that $G$ is left adjoint to the $F$ constructed above (in the case where we have global charts), and that the unit of the adjunction $\id \to F\circ G$ is an isomorphism.

\begin{prop}
Assume that we have global charts for $A\to B$ and $B\to B'$ as in the preceding discussion. Then the functor $G$ described in the preceding discussion corresponds to the pullback functor $\pi^{*}\colon \Qcoh(\radice{B}{X})\to \Qcoh(\radice{B'}{X})$.
\end{prop}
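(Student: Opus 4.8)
The plan is to deduce this from the previous proposition by a formal argument with adjoints, and to record the direct computation as a backup. Write $p\colon \radice{B}{X}\to X$ and $p'\colon \radice{B'}{X}\to X$ for the two projections, so that $p'=p\circ\pi$, and let $\Phi$ and $\Phi'$ denote the BV equivalences of \ref{BV} for $\radice{B}{X}$ and $\radice{B'}{X}$ respectively. What has to be produced is a natural isomorphism $\Phi'\circ\pi^{*}\cong G\circ\Phi$ of functors $\Qcoh(\radice{B}{X})\to\Par(X,j'_{0})$.

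First I would assemble the three inputs at hand: $\pi^{*}\dashv\pi_{*}$; $G\dashv F$, observed just before the statement (with unit $\id\to F\circ G$ even an isomorphism); and, from the previous proposition, a natural isomorphism $\Phi\circ\pi_{*}\cong F\circ\Phi'$. Since $\Phi$ and $\Phi'$ are equivalences, conjugating the adjunction $\pi^{*}\dashv\pi_{*}$ by them shows that $\Phi'\circ\pi^{*}\circ\Phi^{-1}$ is a left adjoint of $\Phi\circ\pi_{*}\circ\Phi'^{-1}$, and the latter functor is naturally isomorphic to $F$ by the previous proposition. Hence $\Phi'\circ\pi^{*}\circ\Phi^{-1}$ and $G$ are both left adjoints of $F$, so they agree up to canonical natural isomorphism, which is precisely the assertion.

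Alternatively, a direct verification runs as follows. Fix $\mathcal{F}\in\Qcoh(\radice{B}{X})$ and $q'\in Q'^{\gp}$. Using the formula $\Phi'(\mathcal{G})_{q'}=p'_{*}(\mathcal{G}\otimes\Lambda'_{q'})$ for the BV functor (with $\Lambda'$ the universal DF structure on $\radice{B'}{X}$), together with $p'_{*}=p_{*}\circ\pi_{*}$ and the projection formula \ref{proj.formula.finite} for $\pi$ (legitimate since $P$, $Q$, $Q'$ are fine and saturated), one obtains $\Phi'(\pi^{*}\mathcal{F})_{q'}\cong p_{*}\bigl(\mathcal{F}\otimes\pi_{*}\Lambda'_{q'}\bigr)$. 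The key computation is then the identification $\pi_{*}\Lambda'_{q'}\cong\varinjlim_{q\in Q_{q'}}\Lambda_{q}$, compatibly with transition maps and pseudo-periods isomorphisms; granting this, one uses that $p_{*}$ is exact and commutes with colimits — \'etale-locally on $X$ it is the functor taking the degree-zero part of a $Q^{\gp}$-graded module, via the presentation after \ref{local.model.root.stack} — to get $\Phi'(\pi^{*}\mathcal{F})_{q'}\cong\varinjlim_{q\in Q_{q'}}p_{*}(\mathcal{F}\otimes\Lambda_{q})=\varinjlim_{q\in Q_{q'}}\Phi(\mathcal{F})_{q}=G(\Phi(\mathcal{F}))_{q'}$, naturally in $q'$ and $\mathcal{F}$, and one finally checks this matches the pseudo-periods data on both sides.

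I expect the real obstacle to lie in the direct route, specifically in establishing $\pi_{*}\Lambda'_{q'}\cong\varinjlim_{q\in Q_{q'}}\Lambda_{q}$: this is where Kummer-ness of $P\to Q\to Q'$ and saturation of the monoids genuinely enter, via the quotient-stack presentations of the root stacks. For the adjoint argument, the only point a fastidious reader might flag is whether the isomorphism $\Phi\circ\pi_{*}\cong F\circ\Phi'$ of the previous proposition intertwines the units and counits of $\pi^{*}\dashv\pi_{*}$ and $G\dashv F$; but since the statement asks only that $G$ \emph{correspond to} $\pi^{*}$ under the BV equivalences, i.e. that there exist a natural isomorphism, uniqueness of left adjoints already delivers it. I would therefore present the adjoint argument as the proof and relegate the explicit computation to a remark.
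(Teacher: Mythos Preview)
Your adjoint argument is correct and is exactly the paper's proof: it simply invokes uniqueness of adjoint functors together with the preceding proposition. The direct computation you sketch is extra and not needed.
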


\begin{proof}
This follows from uniqueness of adjoint functors and the preceding proposition.
\end{proof}

\begin{example}
Consider the situation of Example \ref{example.1}, and the Kummer extensions $\bb{N}\subseteq \frac{1}{2}\bb{N}\subseteq \frac{1}{4}\bb{N}$. Call $\pi\colon X_{4}\to X_{2}$ the projection, and assume that we have a parabolic sheaf $E$ with respect to $\bb{N}\subseteq \frac{1}{2}\bb{N}$, given by
$$
\xymatrix{
0& \frac{1}{2} & 1\\
E_{0}\ar[r] & E_{\frac{1}{2}}\ar[r] &E_{1}
}
$$
as above. Then the pullback $\pi^{*}E$ on $X_{4}$ can be described as a parabolic sheaf as
$$
\xymatrix{
0& \frac{1}{4} & \frac{1}{2} & \frac{3}{4} & 1\\
E_{0}\ar@{=}[r] &E_{0}\ar[r] & E_{\frac{1}{2}}\ar@{=}[r] & E_{\frac{1}{2}}\ar[r]&E_{1}.
}
$$
In fact in this case the set $Q_{q'}$ of the description above has always a maximum, and the direct limit reduces to evaluating the parabolic sheaf $E$ at the maximum. For example if we take $q'=\frac{1}{4}$, then $Q_{q'}$ has $0$ as maximum, and consequently $(\pi^{*}E)_{\frac{1}{4}}$ will be just $E_{0}$.
\end{example}

\begin{cor}\label{pullback.fully.faithful}
Let $X$ be a log scheme with DF structure $L\colon A \to \Div_{X}$, and $A\to B$, $B\to B'$ two systems of denominators. Then pullback along $\pi\colon \radice{B'}{X}\to \radice{B}{X}$ is fully faithful.
\end{cor}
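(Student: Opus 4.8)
The plan is to deduce full faithfulness of $\pi^{*}$ from the general principle that, for an adjoint pair $L\dashv R$, the left adjoint $L$ is fully faithful if and only if the unit $\id\to RL$ is an isomorphism. Here $\pi^{*}\colon\Qcoh(\radice{B}{X})\to\Qcoh(\radice{B'}{X})$ is left adjoint to $\pi_{*}$, so it suffices to prove that for every $F\in\Qcoh(\radice{B}{X})$ the unit map $F\to\pi_{*}\pi^{*}F$ is an isomorphism. Being an isomorphism of quasi-coherent sheaves, this can be tested étale-locally on $X$.

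So first I would reduce to the presence of global charts. By Proposition \ref{strict.cartesian} the root stacks $\radice{B}{X}$ and $\radice{B'}{X}$ base change along étale morphisms $U\to X$, the categories of quasi-coherent sheaves form a stack for the étale topology of $X$, and formation of $\pi^{*}$ and $\pi_{*}$, together with the unit of the adjunction, is compatible with this base change (for $\pi_{*}$ this is flat base change, valid since $\pi$ is finitely presented, hence quasi-compact and quasi-separated). Hence one may assume that $A\to B$ and $B\to B'$ admit global charts $j_{0}\colon P\to Q$ and $j_{0}'\colon Q\to Q'$. In that situation Theorem \ref{BV} and the two propositions immediately preceding this corollary identify $\pi^{*}$ with the functor $G$ and $\pi_{*}$ with the functor $F$ from the discussion above, under the BV equivalences. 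As was already observed there, $G$ is left adjoint to $F$ and the unit $\id\to F\circ G$ is an isomorphism: concretely, for a parabolic sheaf $E$ on $Q^{\wt}$ and any $q\in Q^{\gp}$ one has $G(E)_{q}=E_{q}$, whence $(F\circ G)(E)_{q}=E_{q}$ and the unit is the identity in each weight. Transporting back through $\Phi$ shows $F\to\pi_{*}\pi^{*}F$ is an isomorphism étale-locally, hence globally.

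The main obstacle is the bookkeeping in the localization step: one must make sure that the adjunction $(\pi^{*},\pi_{*})$, and in particular its unit, is genuinely compatible with étale base change on $X$, so that the purely local parabolic computation suffices; once this is granted the argument is formal, since the adjunction $G\dashv F$ and the fact that its unit is an isomorphism have already been established on the parabolic side. (If one is moreover willing to assume that $B$ and $B'$ are fine and saturated, the localization can be avoided entirely: the third bullet of Proposition \ref{proj.formula.finite}, applied to $\pi\colon\radice{B'}{X}\to\radice{B}{X}$, directly asserts that $F\to\pi_{*}\pi^{*}F$ is an isomorphism, and fully faithfulness of $\pi^{*}$ follows at once from the adjunction.)
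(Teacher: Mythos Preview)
Your proof is correct and follows essentially the same approach as the paper: reduce étale-locally to the case with global charts and use that the unit of the adjunction $G\dashv F$ is an isomorphism, with the alternative route via Proposition~\ref{proj.formula.finite} noted as well. The paper's proof is more terse but relies on exactly the same ingredients you identify.
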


\begin{proof}
Being a local question in the \'{e}tale topology of $X$, this follows from the previous propositions and from the fact that the unit of the adjunction $G\dashv F$ is an isomorphism.
\end{proof}

\subsection{The infinite root stack}\label{irs}

The infinite root stack is a limit version of the ``finite'' root stacks, and was first introduced and studied in \cite{infinite}. We will recall here its definition and the basic properties that we will use later on.

Let $X$ be a fs log scheme. Note that the set of systems of denominators $\{j\colon A\to B \mid \mbox{ } j \mbox{ is a system of denominators }\}$ is partially ordered by inclusion, i.e. say that $(j\colon A\to B)\leq (j'\colon A\to B')$ if there is a factorization $A\stackrel{j}{\to} B\to B'$ of $j'$ (we can restrict to considering subsheaves $B\subseteq A_{\bb{Q}}$, as remarked before).

Note that if $(j\colon A\to B)\leq (j'\colon A\to B')$, then as we mentioned in the previous section there is a natural morphism $\radice{B'}{X}\to \radice{B}{X}$. Moreover if $(j\colon A\to B)\leq (j'\colon A\to B')$ and $(j'\colon A\to B')\leq (j''\colon A\to B'')$, then the morphism $\radice{B''}{X}\to \radice{B}{X}$ is equal (and not just isomorphic) to the composition of $\radice{B''}{X}\to \radice{B'}{X}$ and $\radice{B'}{X}\to \radice{B}{X}$.

Thus we can consider the inverse limit $\varprojlim_{({j\colon A\to B})}\radice{B}{X}$ as a stack over $(\Sch/X)$.

\begin{definition}
The \emph{infinite root stack} of $X$ is the inverse limit $\infroot{X}=\varprojlim_{({j\colon A\to B})} \radice{B}{X}$.
\end{definition}

\begin{rmk}
One can give an alternative definition of $\infroot{X}$ as $\radice{A_{\bb{Q}}}{X}$ (see \cite[Proposition 3.5]{infinite}), where $A\to A_{\bb{Q}}$ is the maximal Kummer extension of $A$, and $\radice{A_{\bb{Q}}}{X}$ is defined in the same way as the root stacks in \ref{subsec.rs.ps}. Note however that $A\to A_{\bb{Q}}$ is not a system of denominators, since the sheaf $A_{\bb{Q}}$ is not finitely generated.

One can also restrict to smaller subsystems to take the limit. For example if $A\to B$ is a fixed system of denominators, one can consider the subsystem $\{A\to \frac{1}{n}B\}_{n \in \bb{N}}$. Taking the limit $\varprojlim_{n}\radice{\frac{1}{n}B}{X}$, we find again $\infroot{X}$, basically because $\varinjlim_{n}\frac{1}{n}B=A_{\bb{Q}}$. In particular we have $\infroot{X} \simeq \varprojlim_{n}\radice{n}{X}$, where recall that $\radice{n}{X}=\radice{\frac{1}{n}A}{X}$.

More generally this holds for any subsystem $\{A\to B_{i}\}_{i \in I}$ such that $\varinjlim_{i \in I}B_{i}=A_{\bb{Q}}$.
\end{rmk}

If $X$ has a global chart (and thus \'{e}tale locally on any $X$) we have a quotient stack description of $\infroot{X}$ (see \cite[Section 3.1]{infinite}).

\begin{prop}
Let $X$ be a log scheme and $P\to \Div(X)$ a chart for $X$. Then we have an isomorphism
$$
\infroot{X} \simeq X\times_{[\spec(k[P])/\what{P}]}[\spec(k[P_{\bb{Q}}])/\what{P_{\bb{Q}}}].
$$
\end{prop}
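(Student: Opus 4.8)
The statement is an analogue of Proposition \ref{local.model.root.stack} (the quotient stack description of a finite root stack $\radice{Q}{X}$) with the finite Kummer extension $P\to Q$ replaced by the ``infinite'' one $P\to P_{\bb{Q}}$, so the natural approach is to pass to the limit in that local model. First I would recall, from the previous remark, that $\infroot{X}\cong\varprojlim_{n}\radice{\frac{1}{n}P}{X}$, where the limit runs over the subsystem $\{P\to\tfrac1nP\}_{n\in\bb{N}}$, using that $\varinjlim_n\tfrac1nP=P_{\bb{Q}}$. By Proposition \ref{local.model.root.stack}, for each $n$ we have
$$
\radice{\tfrac1nP}{X}\cong X\times_{[\spec(k[P])/\what P]}\left[\spec(k[\tfrac1nP])/\what{\tfrac1nP}\right],
$$
and these isomorphisms are compatible with the transition maps (this compatibility is part of the content of \cite[Proposition 4.13]{borne-vistoli} and the way the morphisms $\radice{\frac1mP}{X}\to\radice{\frac1nP}{X}$ are defined, for $n\mid m$). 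Taking the inverse limit over $n$ on both sides, and using that the base change $X\to[\spec(k[P])/\what P]$ is fixed (so $X\times_{[\spec(k[P])/\what P]}(-)$ commutes with the limit, limits of fibered categories over $(\Sch/X)$ being computed objectwise), I obtain
$$
\infroot{X}\cong X\times_{[\spec(k[P])/\what P]}\varprojlim_{n}\left[\spec(k[\tfrac1nP])/\what{\tfrac1nP}\right].
$$

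It then remains to identify $\varprojlim_{n}\left[\spec(k[\tfrac1nP])/\what{\tfrac1nP}\right]$ with $[\spec(k[P_{\bb{Q}}])/\what{P_{\bb{Q}}}]$. Here $k[P_{\bb{Q}}]=\varinjlim_n k[\tfrac1nP]$, so $\spec(k[P_{\bb{Q}}])=\varprojlim_n\spec(k[\tfrac1nP])$, and dually $P_{\bb{Q}}^{\gp}=\varinjlim_n(\tfrac1nP)^{\gp}$ gives $\what{P_{\bb{Q}}}=D[P_{\bb{Q}}^{\gp}]=\varprojlim_n D[(\tfrac1nP)^{\gp}]=\varprojlim_n\what{\tfrac1nP}$ (Cartier duality turns the filtered colimit of abelian groups into a cofiltered limit of diagonalizable group schemes). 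One then checks that forming the quotient stack commutes with this particular inverse limit — the transition maps $\spec(k[\tfrac1mP])\to\spec(k[\tfrac1nP])$ are affine and the group maps $\what{\tfrac1mP}\to\what{\tfrac1nP}$ are surjective (compatibly, in the sense that $\spec(k[\tfrac1mP])$ is $\what{\tfrac1mP}$-equivariantly a torsor-like cover), so descent data pass to the limit and one gets $\varprojlim_n[\spec(k[\tfrac1nP])/\what{\tfrac1nP}]\cong[\spec(k[P_{\bb{Q}}])/\what{P_{\bb{Q}}}]$ for the natural action. This is exactly the kind of limit computation carried out in \cite[Section 3.1]{infinite}, so I would cite it for the details.

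The step I expect to be the main obstacle is the last one: verifying that the stacky quotient $[\;-\;/\;-\;]$ commutes with the inverse limit, i.e. that the limit of the quotient presentations is the quotient of the limits. Unlike the honest scheme-theoretic limit $\spec(k[P_{\bb{Q}}])=\varprojlim_n\spec(k[\tfrac1nP])$, this requires a genuine argument about $2$-limits of quotient stacks and the behavior of descent along a cofiltered system of affine covers with pro-finite (or at least surjective, affine) structure group — one wants to know an object of the limit stack is the same as a compatible family of objects, and this uses that the relevant group schemes and covers are nice enough (the $\what{\tfrac1nP}$ are diagonalizable, hence the transition maps behave well). Everything else — rewriting $\infroot{X}$ as the limit over $\{\tfrac1nP\}$, commuting the fixed base change with the limit, and the Cartier-duality identifications on the level of monoids and group schemes — is formal.
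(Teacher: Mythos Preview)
The paper does not actually give a proof of this proposition; it simply states it and refers to \cite[Section 3.1]{infinite} for the argument, so there is no ``paper's own proof'' to compare against in detail. Your limit argument is correct and is one natural way to proceed.

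That said, the step you flag as the main obstacle --- showing that $\varprojlim_n[\spec(k[\tfrac1nP])/\what{\tfrac1nP}]\cong[\spec(k[P_{\bb{Q}}])/\what{P_{\bb{Q}}}]$ --- is less delicate than you suggest, and can be handled without any analysis of torsors or descent along pro-finite groups. The key observation (used implicitly throughout \cite{borne-vistoli} and \cite{infinite}) is that for any monoid $Q$, the stack $[\spec(k[Q])/\what{Q}]$ represents the functor $T\mapsto\{\text{symmetric monoidal functors }Q\to\Div(T)\}$. With this description, an object of $\varprojlim_n[\spec(k[\tfrac1nP])/\what{\tfrac1nP}](T)$ is a compatible system of symmetric monoidal functors $\tfrac1nP\to\Div(T)$, which is the same thing as a single symmetric monoidal functor $P_{\bb{Q}}=\varinjlim_n\tfrac1nP\to\Div(T)$, i.e.\ an object of $[\spec(k[P_{\bb{Q}}])/\what{P_{\bb{Q}}}](T)$. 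This makes the identification immediate. In fact, once you adopt this viewpoint you can bypass the limit entirely and argue directly, exactly as in the finite case (Proposition~\ref{local.model.root.stack}), using the description $\infroot{X}\cong\radice{A_{\bb{Q}}}{X}$ from the remark preceding the proposition: both sides of the claimed isomorphism parametrize, over a test scheme $T\to X$, extensions of the pulled-back chart $P\to\Div(T)$ to a symmetric monoidal functor $P_{\bb{Q}}\to\Div(T)$.
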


As in \ref{subsec.rs.ps}, this turns into a quotient stack description (still in presence of a global chart): if $\eta\colon E\to X$ is the $\widehat{P}$-torsor corresponding to the map $X\to \left[\spec(k[P])/\widehat{P} \right]$ giving the global chart, then we have an isomorphism
$$
\infroot{X} \simeq \left[ (E\times_{\spec(k[P])}\spec(k[P_{\bb{Q}}]))/\widehat{P_{\bb{Q}}} \right]
$$
for the natural action.

We also have an alternative description as a quotient stack in presence of a Kato chart: if $P\to \Div(X)$ comes from a Kato chart $P\to \mc{O}_{X}(X)$, then we also have an isomorphism
$$
\infroot{X} \simeq \left[(X\times_{\spec(k[P])}\spec(k[P_{\bb{Q}}])) /\mu_{\infty}(P)\right]
$$
for the natural action, where $\mu_{\infty}(P)$ is the Cartier dual of the cokernel of the homomorphism $P^{\gp}\to P^{\gp}_{\bb{Q}}$.

This local description has the following consequence.

\begin{prop}[{\cite[Proposition 3.18]{infinite}}]
Let $X$ be a fs log scheme. Then the infinite root stack $\infroot{X}$ has an fpqc atlas, i.e. a representable fpqc morphism $U\to \infroot{X}$ from a scheme. Moreover the diagonal $\infroot{X}\to \infroot{X}\times_{X}\infroot{X}$ is affine.
\end{prop}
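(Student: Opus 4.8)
The plan is to work étale-locally on $X$, where we can fix a global chart, and use the explicit quotient-stack presentations of $\infroot{X}$ recalled just above. Since the claim (existence of an fpqc atlas, affineness of the diagonal) can be checked after an étale cover of $X$ — an étale cover of $X$ pulls back to a representable étale, hence fpqc, cover of $\infroot{X}$ by Proposition \ref{strict.cartesian}, and affineness of a morphism is étale-local on the target — I would assume from the start that $X$ carries a global chart $P\to \Div(X)$, and moreover (refining the cover further, using that charts can be taken with $P$ a stalk of $A$, and that Zariski-locally a line bundle is trivial) that the chart lifts to a Kato chart $P\to \mathcal{O}_X(X)$.

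First I would produce the atlas. With a Kato chart we have the presentation
$$
\infroot{X}\cong \left[(X\times_{\spec(k[P])}\spec(k[P_{\bb{Q}}]))/\mu_{\infty}(P)\right]
$$
from the discussion above, so the obvious candidate atlas is
$$
U:=X\times_{\spec(k[P])}\spec(k[P_{\bb{Q}}])\longrightarrow \infroot{X}.
$$
I need to check this is representable and fpqc. For representability it suffices that $\mu_\infty(P)$ acts on $U$, which it does by construction; the quotient map from a scheme by a flat affine group(-scheme-like) action is representable, and in fact $U\to\infroot{X}$ is a $\mu_\infty(P)$-torsor, hence affine, in particular representable. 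For the fpqc property: the morphism $\spec(k[P_{\bb{Q}}])\to\spec(k[P])$ is the inverse limit of the finite flat morphisms $\spec(k[\tfrac{1}{n}P])\to\spec(k[P])$, hence is flat (flatness passes to filtered limits of rings) and affine, so after base change $U\to X$ is flat and affine, thus fpqc; composing with the fpqc cover structure, $U\to\infroot{X}$ is fpqc. Alternatively, and more robustly if one does not want to assume a Kato chart, I would use the torsor presentation $\infroot{X}\cong[(E\times_{\spec(k[P])}\spec(k[P_{\bb{Q}}]))/\widehat{P_{\bb{Q}}}]$ with $E\to X$ the $\widehat P$-torsor; here $E$ is affine over $X$, and $E\times_{\spec(k[P])}\spec(k[P_{\bb{Q}}])\to\infroot{X}$ is a $\widehat{P_{\bb{Q}}}$-torsor, which is again representable and affine, and fpqc over $X$ by the same flatness argument.

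Next, the diagonal. Writing $\infroot{X}=[U/G]$ for $G$ one of the (pro-)group schemes above, the diagonal $[U/G]\to[U/G]\times_X[U/G]$ fits into the standard cartesian diagram whose fiber over $U\times_X U$ is $G\times U$ (the groupoid $U\times_X U \times$ arrows), i.e. $\mathrm{Isom}$ between the two pulled-back objects is a $G$-torsor over $U\times_X U$; since $G$ is affine over $k$ and $U\times_X U$ is affine over $X$, this $\mathrm{Isom}$-scheme is affine over $X$, hence the diagonal is affine. Concretely: $\infroot{X}\times_X\infroot{X}$ has atlas $U\times_X U$, and the pullback of the diagonal along $U\times_X U\to\infroot{X}\times_X\infroot{X}$ is $\underline{\mathrm{Isom}}$ of the two tautological objects, which by the quotient description is a torsor under the affine group (scheme) $G$ over $U\times_X U$, in particular an affine morphism; affineness of the diagonal can be checked after the fpqc base change $U\times_X U\to\infroot{X}\times_X\infroot{X}$, so we are done. (One may also invoke that $\infroot{X}\to X$ has affine diagonal because each $\radice{B}{X}\to X$ does — the root stacks are tame Artin, with finite inertia, in particular separated with affine diagonal over $X$ — and affine morphisms are stable under inverse limit.)

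The main obstacle, and the one point deserving care, is the passage to the limit: one must be sure that $\spec(k[P_{\bb Q}])\to\spec(k[P])$ — equivalently the ring map $k[P]\to k[P_{\bb Q}]=\varinjlim_n k[\tfrac1n P]$ — is genuinely flat and that the quotient presentation is compatible with the limit over $n$, so that the ``atlas'' is a bona fide fpqc morphism and not merely a pro-(fppf) one. This is where the hypothesis that $X$ is fs (so $P$ is fine and saturated, and each $\tfrac1n P$ is a finitely generated saturated monoid, making each $k[\tfrac1n P]$ a finite free, hence flat, $k[P]$-module) is used; flatness is then preserved in the filtered colimit, and everything in sight is affine over $X$, so no properness or finiteness issues intervene. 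Once flatness in the limit is secured, the rest is the formal yoga of quotient stacks recalled above.
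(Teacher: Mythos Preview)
The paper does not give its own proof of this proposition; it is simply imported from \cite{infinite}, with the one-line indication ``This local description has the following consequence.'' Your argument correctly fleshes out that hint: reduce \'etale-locally to a global (Kato) chart, use the presentation $\infroot{X}\cong[U/G]$ with $G=\mu_\infty(P)$ or $\widehat{P_{\bb Q}}$, observe that $U\to\infroot{X}$ is a torsor under the affine group scheme $G$ (hence representable, affine and fpqc), and that the diagonal is affine because the relevant $\mathrm{Isom}$-sheaves are $G$-torsors. This is exactly the approach the paper points to.

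One minor correction: your claim that each $k[\tfrac{1}{n}P]$ is a \emph{finite free} $k[P]$-module is not true for general fine saturated $P$; indeed the paper's Lemma~\ref{free.flat} singles out free monoids precisely to obtain flatness of the root stack over $X$, and the discussion in \S\ref{sec.4.1} explains that simpliciality is imposed exactly to get a cofinal system with flat transition maps. Fortunately this point is inessential to your argument: the map $U\to\infroot{X}$ is fpqc because it is a torsor under an affine group scheme flat over $\spec(k)$, not because $U\to X$ is flat. So the proof stands, but your final paragraph misidentifies where the fs hypothesis is actually used---it gives good local charts and makes the diagonalizable groups $\widehat{P_{\bb Q}}$, $\mu_\infty(P)$ well-defined affine group schemes, rather than making $k[P]\to k[P_{\bb Q}]$ flat.
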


So, although $\infroot{X}$ is not an algebraic stack in the sense of Artin, it still has some weak algebraicity property. In particular, thanks to fpqc descent of quasi-coherent sheaves, we can define quasi-coherent (and coherent, and finitely presented) sheaves on $\infroot{X}$. See \cite[Section 4]{infinite} for details.

In particular we will denote by $\FP(\infroot{X})$ the category of finitely presented sheaves on $\infroot{X}$.

\begin{rmk}
One feature that complicates the discussion about sheaves is the fact that $\infroot{X}$ is not coherent in general (see \cite[Example 4.17]{infinite}), and consequently finitely presented and coherent sheaves are two distinct notions. Since we are interested in moduli of sheaves, we will exclusively be dealing with finitely presented sheaves.

In any case, since when considering moduli theory for varying weights we will assume that our log schemes are \emph{simplicial} (see \ref{sec.4.1}), then $\infroot{X}$ will be in fact coherent (\cite[Proposition 4.19]{infinite}).
\end{rmk}

Here are some properties of the infinite root stack that we are going to use in what follows.

First of all, the infinite root stack is compatible with strict base change.

\begin{prop}[{\cite[Proposition 3.4]{infinite}}]
Let $(X,A,L)$ be a log scheme, $f\colon Y \to X$ a morphism of schemes, and equip $Y$ with the pullback DF structure $f^{*}(A,L)$. Then we have an isomorphism $\infroot{Y} \simeq \infroot{X}\times_{X}Y$, i.e. the diagram
$$
\xymatrix{
\infroot{Y}\ar[r]\ar[d] & \infroot{X}\ar[d]\\
Y\ar[r] & X
}
$$
is cartesian.
\end{prop}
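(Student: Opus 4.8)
The plan is to reduce the statement to the strict base change property of the \emph{finite} root stacks, Proposition~\ref{strict.cartesian}, by realizing $\infroot{X}$ and $\infroot{Y}$ as inverse limits of finite root stacks over a convenient subsystem, and then commuting the limit past the base change.

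\textbf{Step 1: write everything as a limit of finite root stacks.} By the Remark following the definition of the infinite root stack, for \emph{any} fs log scheme $Z$ one has $\infroot{Z}\cong\varprojlim_{n}\radice{n}{Z}$, where $\radice{n}{Z}=\radice{\frac{1}{n}C}{Z}$ for $C$ the sheaf of monoids of $Z$ and the transition maps are the canonical extension-of-denominators morphisms for $n\mid m$. Applying this to $Z=Y$ equipped with the pullback DF structure $f^{*}(A,L)$ gives $\infroot{Y}\cong\varprojlim_{n}\radice{\frac{1}{n}(f^{*}A)}{Y}$.

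\textbf{Step 2: identify the pulled-back systems.} I would next check the identification $\tfrac{1}{n}(f^{*}A)\cong f^{*}\!\bigl(\tfrac{1}{n}A\bigr)$ of systems of denominators on $Y$, compatibly with the transition maps for $n\mid m$. This is essentially formal: $\tfrac{1}{n}A$ is abstractly isomorphic to $A$, with the Kummer inclusion $A\hookrightarrow\tfrac{1}{n}A$ being ``multiplication by $n$'', and pullback of sheaves of sets commutes with this; one verifies it on stalks, where at a geometric point $y$ of $Y$ lying over $x$ of $X$ both sides have stalk $\tfrac{1}{n}(A_{x})$ with the same inclusion from $A_{x}$. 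With this identification, Proposition~\ref{strict.cartesian} (applied to the system $A\to\tfrac{1}{n}A$) yields $\radice{\frac{1}{n}(f^{*}A)}{Y}\cong\radice{\frac{1}{n}A}{X}\times_{X}Y=\radice{n}{X}\times_{X}Y$, and these isomorphisms are compatible with the transition morphisms of the two inverse systems.

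\textbf{Step 3: pass to the limit.} Since the fibered product $-\times_{X}Y$ over $(\Sch/X)$ is itself a (finite) limit, it commutes with the inverse limit over $n$, so
$$
\infroot{Y}\;\cong\;\varprojlim_{n}\bigl(\radice{n}{X}\times_{X}Y\bigr)\;\cong\;\Bigl(\varprojlim_{n}\radice{n}{X}\Bigr)\times_{X}Y\;\cong\;\infroot{X}\times_{X}Y ,
$$
which is exactly the asserted cartesian square. An even shorter alternative uses the description $\infroot{X}\cong\radice{A_{\bb{Q}}}{X}$ from \cite[Proposition~3.5]{infinite}: pullback of sheaves commutes with the filtered colimit defining $A_{\bb{Q}}$, so $(f^{*}A)_{\bb{Q}}=f^{*}(A_{\bb{Q}})$, and the proof of Proposition~\ref{strict.cartesian} is purely formal (it never uses coherence of $B$), hence applies verbatim to the non-coherent system $A\to A_{\bb{Q}}$ to give $\radice{A_{\bb{Q}}}{X}\times_{X}Y\cong\radice{f^{*}(A_{\bb{Q}})}{Y}=\infroot{Y}$ directly. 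The only points requiring genuine care — and thus the ``main obstacle'', though a mild one — are the bookkeeping identifications $f^{*}(\tfrac{1}{n}A)\cong\tfrac{1}{n}(f^{*}A)$ (resp. $f^{*}(A_{\bb{Q}})\cong(f^{*}A)_{\bb{Q}}$) and the verification that all the isomorphisms are strictly compatible with the transition maps, so that passing to the inverse limit is legitimate.
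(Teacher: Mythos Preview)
The paper does not give its own proof of this statement: it is simply cited as \cite[Proposition~3.4]{infinite} with no argument in the text. Your proof is correct and is the natural one --- reduce to the finite case via $\infroot{X}\cong\varprojlim_n\radice{n}{X}$, apply Proposition~\ref{strict.cartesian} levelwise, and commute the limit past the fibered product --- and your alternative via $\radice{A_{\bb{Q}}}{X}$ is equally valid and indeed closer to how \cite{infinite} phrases things.
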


Now note that if $j\colon A\to B$ is a system of denominators, then the natural projection $\pi\colon \infroot{X}\to \radice{B}{X}$ induces pullback and pushforward, $\pi^{*}\colon \Qcoh(\radice{B}{X})\to \Qcoh(\infroot{X})$ and $\pi_{*}\colon \Qcoh(\infroot{X})\to \Qcoh(\radice{B}{X})$ (\cite[Section 4.1]{infinite}). One can show moreover that the pushforward $\pi_{*}$ takes finitely presented sheaves to finitely presented sheaves, see \cite[Proposition 4.16]{infinite} (this is clear for the pullback).

\begin{prop}[{\cite[Proposition 6.1]{infinite} \label{fp.sheaves}}]
Let $X$ be a quasi-compact fs log scheme. Then the natural functor
$$
\varinjlim_{n}\FP(\radice{n}{X})\to\FP(\infroot{X}) 
$$
is an equivalence.
\end{prop}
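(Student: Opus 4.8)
The plan is to prove the equivalence $\varinjlim_{n}\FP(\radice{n}{X})\to\FP(\infroot{X})$ by separately establishing essential surjectivity and full faithfulness, exploiting the quasi-compactness of $X$ to reduce finitely presented data on $\infroot{X}$ to finite level. First I would recall that the transition functors in the colimit are the pullbacks $\pi_{nm}^{*}\colon\FP(\radice{n}{X})\to\FP(\radice{m}{X})$ for $n\mid m$, which by Corollary \ref{pullback.fully.faithful} are fully faithful; hence the colimit category has, as objects, finitely presented sheaves on some $\radice{n}{X}$, and as morphisms between a sheaf at level $n$ and one at level $m$, morphisms after pulling both back to any common multiple. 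The functor to $\FP(\infroot{X})$ sends such an object $F$ on $\radice{n}{X}$ to its pullback along the projection $p_{n}\colon\infroot{X}\to\radice{n}{X}$.

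For full faithfulness, given $F$ on $\radice{n}{X}$ and $G$ on $\radice{m}{X}$, I may assume after replacing $n,m$ by a common multiple that both live on the same $\radice{n}{X}$; then I must show $\Hom_{\radice{n}{X}}(F,G)\to\Hom_{\infroot{X}}(p_{n}^{*}F,p_{n}^{*}G)$ becomes an isomorphism after passing to the colimit over further denominators. Since $\infroot{X}=\varprojlim_{d}\radice{nd}{X}$, a morphism $p_{n}^{*}F\to p_{n}^{*}G$ on $\infroot{X}$ should, using quasi-compactness and the fact that $F,G$ are finitely presented (so locally cokernels of maps between finite free sheaves), descend to a morphism at some finite level $\radice{nd}{X}$; this is where the colimit on the source kicks in. The key input is that $\pi_{*}\mathcal{O}_{\infroot{X}}$ and more generally the structure sheaf behaves well under the limit — concretely, that $\Hom$ out of a finitely presented sheaf commutes with the relevant filtered limit of the $\radice{nd}{X}$ — together with the projection formula \ref{proj.formula.finite} at finite level and its analogue for $\infroot{X}$, so that $\Hom_{\infroot{X}}(p_{n}^{*}F,p_{n}^{*}G)=\Hom_{\radice{n}{X}}(F,(p_{n})_{*}p_{n}^{*}G)$ and one analyzes $(p_{n})_{*}p_{n}^{*}G$ as a colimit of $(\pi_{nd,n})_{*}\pi_{nd,n}^{*}G$.

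For essential surjectivity, start with a finitely presented sheaf $\mathcal{F}$ on $\infroot{X}$. Using the fpqc atlas of $\infroot{X}$ and quasi-compactness of $X$, I would cover $X$ by finitely many affines and observe that on each, $\mathcal{F}$ has a presentation $\mathcal{O}^{a}\to\mathcal{O}^{b}\to\mathcal{F}\to 0$ by a map of finite free modules; such a map is given by finitely many sections of $\mathcal{O}_{\infroot{X}}$, and since $\mathcal{O}_{\infroot{X}}=\varinjlim_{n}(p_{n})^{\ast}$-images — more precisely the structure sheaf is the colimit of the pushforwards $(\pi_{n})_{*}\mathcal{O}_{\radice{n}{X}}$ pulled back, so sections come from finite level — the presentation matrix descends to some $\radice{n}{X}$. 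Gluing the finitely many local descents (again using full faithfulness at finite level, going to a common $n$), one produces a finitely presented sheaf on some $\radice{n}{X}$ whose pullback is $\mathcal{F}$. I expect the main obstacle to be the bookkeeping in this descent-to-finite-level step: one must verify that both the local presentations and the gluing isomorphisms can be realized simultaneously at a single finite denominator, which requires the filtered-colimit description of morphisms (the full faithfulness just proved) and a careful use of quasi-compactness to keep everything finite; the analogous statement for the structure sheaf $\mathcal{O}_{\infroot{X}}=\varinjlim(\pi_{n})_{\ast}\mathcal{O}_{\radice{n}{X}}$, which underlies both halves, is the technical heart and presumably is quoted or proved via the explicit chart description $\infroot{X}\cong[\,(X\times_{\spec k[P]}\spec k[P_{\bb Q}])/\mu_{\infty}(P)\,]$ together with $k[P_{\bb Q}]=\varinjlim_{n}k[\tfrac{1}{n}P]$.
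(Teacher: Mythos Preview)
The paper does not actually give a proof of this proposition: it is quoted verbatim from \cite[Proposition 6.1]{infinite} and no argument is supplied here. So there is no ``paper's own proof'' to compare against.

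That said, your outline is the standard and correct strategy for a result of this type. One simplification worth noting: for full faithfulness you do not need to analyze $(p_{n})_{*}p_{n}^{*}G$ as a colimit over finite levels. Proposition~\ref{proj.formula.infinite} (the projection formula for $\infroot{X}$, also cited from \cite{infinite}) already gives directly that the unit $G\to (p_{n})_{*}p_{n}^{*}G$ is an isomorphism, so $\Hom_{\infroot{X}}(p_{n}^{*}F,p_{n}^{*}G)\cong\Hom_{\radice{n}{X}}(F,G)$ on the nose, and the transition functors in the colimit are fully faithful for the same reason at finite level (Corollary~\ref{pullback.fully.faithful}). The real content, as you correctly identify, is essential surjectivity: one must show that a finitely presented sheaf on $\infroot{X}$, together with its local presentations and the gluing data, can all be realized at a single finite level. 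Your sketch of this via the quotient description and $k[P_{\bb{Q}}]=\varinjlim_{n}k[\tfrac{1}{n}P]$ is the right idea, and is indeed how it is carried out in \cite{infinite}.
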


As in the case of the finite root stacks we have a projection formula for the infinite root stack.

\begin{prop}[{\cite[Proposition 4.16]{infinite}}] \label{proj.formula.infinite}
Let $(X,A,L)$ be a fs log scheme, $j\colon A\to B$ a system of denominators with $B$ saturated, and denote by $\pi\colon \infroot{X}\to \radice{B}{X}$ be the canonical projection. Then:
\begin{itemize}
\item $\pi_{*}\colon \Qcoh(\infroot{X})\to \Qcoh(\radice{B}{X})$ is exact,
\item the canonical morphism $\mc{O}_{\radice{B}{X}}\to \pi_{*}\mc{O}_{\infroot{X}}$ is an isomorphism,
\item if $F \in \Qcoh(\radice{B}{X})$ and $G\in \Qcoh(\infroot{X})$ the natural map $ F \otimes \pi_{*}G\to \pi_{*}(\pi^{*}F\otimes G)$ is an isomorphism,
\item consequently for $F \in \Qcoh(\radice{B}{X})$ the canonical map $F\to \pi_{*}\pi^{*}F$ on $\radice{B}{X}$ is also an isomorphism.
\end{itemize}
\end{prop}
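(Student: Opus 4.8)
The plan is to mimic the proofs of Propositions~\ref{coarse.space.root} and~\ref{proj.formula.finite} for the finite root stacks, by reducing to a local quotient-stack presentation of $\pi\colon\infroot{X}\to\radice{B}{X}$. Since the formation of $\pi_{*}$ commutes with flat base change (part of the theory of quasi-coherent sheaves on $\infroot{X}$ set up in \cite[Section~4]{infinite}), and since both $\radice{B}{X}$ and $\infroot{X}$ are compatible with strict base change (Proposition~\ref{strict.cartesian} and \cite[Proposition~3.4]{infinite}), exactness of $\pi_{*}$ and each isomorphism assertion may be checked étale-locally on $X$. Shrinking $X$, we may thus assume that $X=\spec R$ is affine, that it carries a global Kato chart $P\to R$ with $P$ fine and saturated, and that $j\colon A\to B$ has a chart $P\to Q$ with $Q$ fine and saturated (here we use that $B$ is saturated). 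Writing $C_{Q}=Q^{\gp}/P^{\gp}$ and $C_{\infty}=P^{\gp}_{\bb{Q}}/P^{\gp}$, the descriptions recalled in \ref{subsec.rs.ps} and~\ref{irs} give, compatibly with $\pi$,
\[
\radice{B}{X}\cong\left[\spec(S_{Q})/\mu_{Q/P}\right],\qquad \infroot{X}\cong\left[\spec(S_{\infty})/\mu_{\infty}(P)\right],
\]
where $S_{Q}=R\otimes_{k[P]}k[Q]$, $S_{\infty}=R\otimes_{k[P]}k[P_{\bb{Q}}]$, $\mu_{Q/P}=D[C_{Q}]$ and $\mu_{\infty}(P)=D[C_{\infty}]$. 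Thus $\Qcoh(\radice{B}{X})$ is the category of $C_{Q}$-graded $S_{Q}$-modules, $\Qcoh(\infroot{X})$ that of $C_{\infty}$-graded $S_{\infty}$-modules, the homomorphism $C_{Q}\to C_{\infty}$ is injective, and $\pi_{*}$ sends a $C_{\infty}$-graded $S_{\infty}$-module $M$ to its $C_{Q}$-homogeneous part $\bigoplus_{c\in C_{Q}}M_{c}$, viewed as a module over $S_{Q}$ via restriction of scalars along $S_{Q}\to S_{\infty}$.

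With this dictionary in hand I would dispatch the three bullets as follows. Exactness of $\pi_{*}$ is clear, since restriction of scalars is exact and extracting a fixed family of homogeneous components of a graded module is exact (exactness of graded modules being tested degree by degree). For the second bullet one identifies the $C_{Q}$-homogeneous part of $S_{\infty}$: a monomial $x^{q}$ with $q\in P_{\bb{Q}}$ has degree in $C_{Q}$ exactly when $q\in P_{\bb{Q}}\cap Q^{\gp}$, and this intersection equals $Q$ --- the inclusion $\supseteq$ because $P\to Q$ is Kummer, and $\subseteq$ because $Q$ is saturated --- so that part is $\bigoplus_{q\in Q}R\,x^{q}=S_{Q}$, and the canonical map $\mc{O}_{\radice{B}{X}}\to\pi_{*}\mc{O}_{\infroot{X}}$ is the identity of $S_{Q}$. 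For the third bullet, write $F\leftrightarrow N$ (a $C_{Q}$-graded $S_{Q}$-module) and $G\leftrightarrow M$ (a $C_{\infty}$-graded $S_{\infty}$-module); then $\pi^{*}F\otimes G\leftrightarrow N\otimes_{S_{Q}}M$, and since $N$ lives in degrees coming from $C_{Q}$, the $C_{Q}$-homogeneous part of $N\otimes_{S_{Q}}M$ is $N\otimes_{S_{Q}}M'$ with $M'$ the $C_{Q}$-homogeneous part of $M$; this is precisely $F\otimes\pi_{*}G$, and one checks the canonical morphism induces this isomorphism. The fourth bullet is then formal, as in Proposition~\ref{proj.formula.finite}: taking $G=\mc{O}_{\infroot{X}}$ in the third bullet and invoking the second gives $\pi_{*}\pi^{*}F\cong F\otimes\pi_{*}\mc{O}_{\infroot{X}}\cong F$, compatibly with the unit of the adjunction.

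I do not anticipate any real difficulty. The one point needing care is the base-change reduction at the outset, which rests on the fact that $\pi_{*}$ commutes with flat base change along maps to $X$ together with the strict base-change invariance of $\infroot{X}$; everything afterwards is bookkeeping with the two gradings. The sole nontrivial input is the monoid identity $P_{\bb{Q}}\cap Q^{\gp}=Q$, and this is exactly where the Kummer and saturatedness hypotheses enter. (One could also simply cite \cite[Proposition~4.16]{infinite}.)
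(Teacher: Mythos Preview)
The paper does not give its own proof of this proposition: it is stated with a citation to \cite[Proposition~4.16]{infinite} and used as a black box. Your argument is correct and is exactly the natural one---reducing by flat base change to an affine Kato chart, translating $\pi_{*}$ into ``extract the $C_{Q}$-homogeneous part of a $C_{\infty}$-graded module,'' and then checking each assertion by hand. The key monoid identity $P_{\bb{Q}}\cap Q^{\gp}=Q$ is where the saturatedness of $Q$ and the Kummer hypothesis are used, as you note; this mirrors the computation $P^{\gp}\cap Q=P$ in the proof of Proposition~\ref{coarse.space.root}. Your verification of the projection formula via the $S_{Q}$-module splitting $M=M'\oplus M''$ (with $M''$ supported in degrees outside $C_{Q}$) is clean and correct, since $S_{Q}$ acts by $C_{Q}$-shifts. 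In short, there is nothing to compare: you have supplied the proof that the paper omits.
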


To conclude, let us remark that one can give a definition of a parabolic sheaf with arbitrary rational weights, by replacing $Q$ with $P_{\bb{Q}}$ and $B$ with $A_{\bb{Q}}$ in Definitions \ref{def.parabolic.sheaf.chart} and \ref{def.parabolic.sheaf} (\cite[Section 7]{infinite}). They form an abelian category that we denote by $\Par(X)$.

\begin{example}
If $X$ is a regular scheme and $D\subseteq X$ is a smooth Cartier divisor, then a parabolic sheaf with rational weights on the log scheme $X$ with the divisorial log structure is given (by using the usual chart $\bb{N}\to \Div(X)$ on $X$) by a sheaf $E_{q}$ for every rational number $q$, together with maps $E_{q}\to E_{q'}$ for every pair with $q\leq q'$. Moreover we have an isomorphism $E_{q+1} \simeq E_{q}\otimes \mc{O}_{X}(D)$ for every $q \in \bb{Q}$, such that $E_{q}\to E_{q+1}$ corresponds to multiplication by the canonical section $s$ of $\mc{O}_{X}(D)$.
\end{example}

With the same proof of Theorem \ref{BV} one proves the following generalization.

\begin{thm}[{\cite[Theorem 7.3]{infinite}}]
Let $(X,A,L)$ be a fs log scheme. Then there is an equivalence of abelian categories $\Phi\colon \Qcoh(\infroot{X})\to \Par(X)$.
\end{thm}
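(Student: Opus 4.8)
The plan is to repeat the proof of Theorem \ref{BV} essentially verbatim, replacing the system of denominators $A\to B$ throughout by the maximal Kummer extension $A\to A_{\bb{Q}}$ (equivalently, in the presence of a global chart $P\to\Div(X)$, replacing the Kummer morphism $P\to Q$ by $P\to P_{\bb{Q}}$), and using the alternative description $\infroot{X}\cong\radice{A_{\bb{Q}}}{X}$ recalled above. Rather than trying to pass to the limit in Theorem \ref{BV}, it is cleanest to argue directly; the only two features that are genuinely new --- that $\infroot{X}$ is no longer an algebraic stack, and that $A_{\bb{Q}}$ (resp.\ $P_{\bb{Q}}$) is not finitely generated --- have already been dealt with in the preliminary material.

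First I would define the functor $\Phi\colon\Qcoh(\infroot{X})\to\Par(X)$, globally and without reference to a chart. Let $\pi\colon\infroot{X}\to X$ be the projection and $\Lambda\colon\pi^{*}A_{\bb{Q}}\to\Div_{\infroot{X}}$ the universal DF structure. For $F\in\Qcoh(\infroot{X})$, an \'{e}tale $U\to X$ and $c\in A_{\bb{Q}}^{\gp}(U)$, set $\Phi(F)_{c}=\pi_{*}(F\otimes\Lambda_{c})$. The maps $\Lambda_{c}\to\Lambda_{c+c'}$ for $c'\in A_{\bb{Q}}(U)$ turn this into a cartesian functor $A_{\bb{Q}}^{\wt}\to\Qcoh_{X}$, and the pseudo-period isomorphisms are obtained exactly as in the finite case, from the canonical identification $\Lambda_{a}\cong\pi^{*}L_{a}$ for $a\in A(U)$ together with the projection formula for the infinite root stack (Proposition \ref{proj.formula.infinite}). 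The compatibility diagrams of Definition \ref{def.parabolic.sheaf} then follow formally.

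Next I would construct a quasi-inverse $\Psi$. Since both $\Qcoh(\infroot{X})$ and $\Par(X)$ are stacks for the \'{e}tale topology of $X$ --- for the former by fpqc descent of quasi-coherent sheaves along the fpqc atlas of $\infroot{X}$, for the latter by the descent property of $\fPar$ --- it is enough to build $\Psi$ \'{e}tale locally, so I may assume a global chart $P\to\Div(X)$ and work with parabolic sheaves with weights in $P_{\bb{Q}}$. Writing $\eta\colon E\to X$ for the $\widehat{P}$-torsor corresponding to the chart and $R=\eta_{*}\mc{O}_{E}$, the quotient description $\infroot{X}\cong\bigl[\underline{\spec}_{X}(R\otimes_{k[P]}k[P_{\bb{Q}}])/\widehat{P_{\bb{Q}}}\bigr]$ identifies $\Qcoh(\infroot{X})$ with the category of $P_{\bb{Q}}^{\gp}$-graded quasi-coherent sheaves on $X$ equipped with a compatible module structure over $R\otimes_{k[P]}k[P_{\bb{Q}}]$. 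Given a parabolic sheaf $E\colon P_{\bb{Q}}^{\wt}\to\Qcoh(X)$ I set $\Psi(E)=\bigoplus_{q\in P_{\bb{Q}}^{\gp}}E_{q}$, which is $P_{\bb{Q}}^{\gp}$-graded, carries a $k[P_{\bb{Q}}]$-module structure coming from the arrows of $P_{\bb{Q}}^{\wt}$ and an $R$-module structure built from the pseudo-period isomorphisms, the two being compatible over $k[P]$ precisely by the parabolic axioms; this gives an object of $\Qcoh(\infroot{X})$, functorially in $E$.

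It then remains to check that $\Phi$ and $\Psi$ are mutually quasi-inverse, which is a direct unwinding: $\Phi(\Psi(E))_{q}$ is the degree-$q$ component of $\bigoplus_{q'}E_{q'}$, which is $E_{q}$ naturally, and $\Psi(\Phi(F))\cong F$ since $\Phi(F)$ recovers all the graded (equivariant) components of $F$. I expect the main obstacle to be purely bookkeeping: checking that the infinitude of $P_{\bb{Q}}^{\gp}$ causes no trouble --- that the direct sum defining $\Psi(E)$ is still quasi-coherent, that $\pi_{*}$ is exact and commutes with the relevant tensor products, and that the universal DF structure and the projection formula behave on $\infroot{X}$ exactly as on a finite root stack. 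All of this is supplied by the properties of $\infroot{X}$ recorded above (the fpqc atlas, the affine diagonal, and Proposition \ref{proj.formula.infinite}), so no essentially new input beyond the finite case is required.
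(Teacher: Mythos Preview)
Your proposal is correct and matches the paper's own treatment exactly: the paper simply states that ``with the same proof of Theorem \ref{BV} one proves the following generalization'', and you have spelled out precisely what that means, correctly identifying the projection formula for $\infroot{X}$ (Proposition \ref{proj.formula.infinite}) and the quotient stack description in the presence of a chart as the ingredients that make the finite-case argument go through verbatim.
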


Moreover we have a parabolic description of pullback and pushforward along a projection $\pi\colon \infroot{X}\to \radice{B}{X}$, analogous to the one described at the end of \ref{subsec.rs.ps} for morphisms between finite root stacks.

\begin{rmk}
The definitions of parabolic sheaves with arbitrary rational weights in the existing literature (\cite{borne}, \cite{maruyama-yokogawa}) all assume some ``finiteness of the jumps'' of the pieces of the parabolic sheaf. These conditions single out precisely parabolic sheaves coming from some finite root stack $\radice{n}{X}$ by pullback to $\infroot{X}$ (see \cite[Section 7.1]{infinite}).
\end{rmk}

\section{Moduli of parabolic sheaves with fixed denominators}\label{section.3}

This section is about the moduli theory of parabolic sheaves on a projective fs log scheme $X$ with a fixed system of denominators $j\colon A\to B$, together with a global chart $P\to Q$. This is basically an application of the theory of \cite{nironi} to coherent sheaves on the corresponding root stack.

From now on $X$ will be a fine and saturated log scheme with a DF structure $L\colon A\to \Div_{X}$, and $j\colon A \to B$ will be a fixed system of denominators. Moreover (from \ref{sec.3.3} on) $X$ will be projective over $k$, and $\m{X}$ will denote the root stack $\radice{B} X$. To apply Nironi's machinery we will also have to assume that $\radice{B} X$ is Deligne--Mumford. This holds if $\cha(k)$ does not divide the order of the group $B^{\gp}_{x}/A^{\gp}_{x}$ for any geometric point of $X$ (see \ref{root.stack.algebraic}).

\subsection{Families of parabolic sheaves}\label{sec.3.1}

To define a moduli stack for parabolic sheaves, the first thing to worry about is the notion of pullback. From the parabolic point of view it is not so clear how it should be defined, but the BV equivalence gives us a natural candidate.

Let $f\colon {Y}\to {X}$ a morphism of schemes, and equip ${Y}$ with the pullback log structure. We want to define a pullback functor $f^{*}$ from parabolic sheaves on $X$ with respect to $j$ to parabolic sheaves on $Y$ with respect to the pullback system of denominators $f^{*}j\colon f^{*}A \to f^{*}B$. 

Assume more generally that $Y$ is a (coherent) log scheme with log structure $N\colon C\to \Div_{Y}$, $h\colon C\to D$ is a system of denominators, $f\colon Y\to X$ is a morphism of log schemes and the morphism $f^{*}A\to C$ fits in a commutative diagram
$$
\xymatrix{
f^{*}A\ar[d]\ar[r]&C\ar[d]\\
f^{*}B\ar[r]&D.
}
$$
Then we have a natural morphism of root stacks $\Pi\colon \radice{D} Y\to \radice{B} X$, fitting in a commutative diagram
$$
\xymatrix{
\radice{D} Y\ar[r]\ar[d]& \radice{B} X\ar[d]\\
Y\ar[r]^{f}& X,
}
$$
and sending an object $M \colon t^{*}D\to \Div_{T}$ of $\radice{D} Y(T)$, where $t\colon {T}\to {Y}$, to the composite $\Pi(M)\colon (f\circ t)^{*}B\to t^{*}D\to  \Div_{T}$, which is an object of $\radice{B} X(T)$.

In case $N=f^{*}L$ and $h=f^{*}j$, the diagram is also cartesian (\ref{strict.cartesian}), i.e. $\radice{f^{*}B} Y \simeq \radice{B} X\times_{X}Y$.

\begin{definition}
Given a parabolic sheaf $E \in \Par(X,j)$, the \emph{pullback} $f^{*}E$ of $E$ along $f$ is the parabolic sheaf $\Phi(\Pi^{*}(\Psi(E))) \in \Par(Y,h)$ corresponding via the BV equivalence to the pullback of the quasi-coherent sheaf $\Psi(E) \in \Qcoh(\radice{B} X)$ along $\Pi$.
\end{definition}

The pullback $f^{*}E$ is unique up to isomorphism and has the usual functorial properties. Moreover by definition the diagram
$$
\xymatrix{
\Par(X,j)\ar[r]^{f^{*}}\ar[d]_{\Psi}&\Par(Y,h)\ar[d]^{\Psi}\\
\Qcoh(\radice{B} X)\ar[r]^{\Pi^{*}}& \Qcoh(\radice{D} Y)
}
$$
is $2$-commutative.

In the case where $X$ and the system of denominators have a global chart, the pullback of a parabolic sheaf along a strict morphism $f\colon {Y}\to X$ has a simple ``purely parabolic'' description (which seems to be lacking for example for a non-strict morphism): assume that we have charts $P\to \Div(X)$ for $L$ and $P \to Q$ for $j$. These charts also give charts on $Y$ for $f^{*}A$ and $f^{*}j$, and given a parabolic sheaf $E\colon Q^{\wt}\to \Qcoh(X)$, we can define $f^{*}E$ as the composition $Q^{\wt}\to \Qcoh(X)\to \Qcoh(Y)$, where the last functor is pullback of quasi-coherent sheaves.

\begin{prop}
The functor $f^{*}E$ is a parabolic sheaf on $Y$, and the corresponding quasi-coherent sheaf $\Psi(f^{*}E)$ on the root stack $\radice{f^{*}B} Y$ is naturally isomorphic to the pullback along $\Pi\colon \radice{f^{*}B} Y\to \radice{B} X$ of the quasi-coherent sheaf $\Psi(E)$ on $\radice{B} X$ corresponding to $E$. 
\end{prop}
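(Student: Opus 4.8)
The plan is to unwind both sides of the claimed isomorphism using the explicit quotient-stack descriptions of the root stacks recalled after Proposition \ref{local.model.root.stack}, and check that the naive composition-with-pullback functor on parabolic sheaves matches $\Pi^{*}$ of quasi-coherent sheaves under $\Psi$. Since $\Psi$ is an equivalence (Theorem \ref{BV}) and $\Pi^{*}$ is characterized up to canonical isomorphism, it suffices to produce a natural isomorphism $\Psi(f^{*}E)\cong \Pi^{*}\Psi(E)$; by the $2$-commutativity diagram stated just before the proposition, the first assertion (that $f^{*}E$ is a parabolic sheaf) is then automatic, as $\Pi^{*}\Psi(E)$ lands in $\Qcoh(\radice{f^{*}B}{Y})$ and $\Phi$ sends it back to a parabolic sheaf, which we must identify with the composite functor $Q^{\wt}\to\Qcoh(X)\to\Qcoh(Y)$.

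First I would reduce to the case where the chart $P\to\Div(X)$ comes from a Kato chart $P\to\mc{O}_{X}(X)$: this is harmless since the statement is about a pullback to $Y$ via a strict morphism, and both functors in question are compatible with further strict pullback, so we may work \'etale-locally on $X$ where a Kato chart exists. Under a Kato chart we have, from the discussion after Proposition \ref{local.model.root.stack}, the description $\radice{Q}{X}\cong[(X\times_{\spec k[P]}\spec k[Q])/\mu_{Q/P}]$, hence $\radice{f^{*}B}{Y}\cong[(Y\times_{\spec k[P]}\spec k[Q])/\mu_{Q/P}]$ by Proposition \ref{strict.cartesian}, and $\Pi$ is simply the morphism induced by $f\colon Y\to X$ on these quotient stacks. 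In this language a quasi-coherent sheaf on $\radice{Q}{X}$ is a $Q^{\gp}$-graded quasi-coherent $\mc{O}_{X}[Q]$-module on $X$, the $\Psi$ of the sketch of Theorem \ref{BV} sends $E\colon Q^{\wt}\to\Qcoh(X)$ to $\bigoplus_{q\in Q^{\gp}}E_{q}$, and $\Pi^{*}$ is ordinary pullback of graded modules along $f$, i.e. $f^{*}\bigoplus_{q}E_{q}\cong\bigoplus_{q}f^{*}E_{q}$, with its evident module and grading structure. But $\bigoplus_{q}f^{*}E_{q}$ is by definition $\Psi$ applied to the composite functor $Q^{\wt}\to\Qcoh(X)\xrightarrow{f^{*}}\Qcoh(Y)$, which is exactly $f^{*}E$ as defined just before the proposition. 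So the two agree on the level of graded modules.

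The remaining work is bookkeeping: one must check that the $\mc{O}_{Y}[Q]$-module structure and the $Q^{\gp}$-grading on $f^{*}\Psi(E)$ induced by pullback coincide with those coming from the parabolic structure of the composite functor $f^{*}E$ — in particular that the transition maps $(f^{*}E)_{q}\to(f^{*}E)_{q+q'}$ for $q'\in Q$ are the pullbacks of $E_{q}\to E_{q+q'}$, and that the pseudo-periods isomorphisms $\rho^{f^{*}E}_{p,a}$ are the pullbacks of $\rho^{E}_{p,a}$, using that $L_{f^{*}p}\cong f^{*}L_{p}$ for the Kato chart. All of this is a direct application of the fact that $f^{*}$ of quasi-coherent sheaves is symmetric monoidal and commutes with the construction of $\Psi$ termwise. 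The \textbf{main obstacle}, such as it is, is purely organizational: one has to be careful that the \'etale-local reduction to a Kato chart is compatible with the \'etale-stack structure on both parabolic sheaves and quasi-coherent sheaves on root stacks (so that a local isomorphism $\Psi(f^{*}E)\cong\Pi^{*}\Psi(E)$ glues to a global one), which follows because both constructions and the morphism $\Pi$ are defined \'etale-locally and are manifestly functorial for \'etale base change on $X$. Given the setup already in place, there is no genuine analytic or geometric difficulty here.
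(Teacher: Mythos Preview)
Your argument is correct, but it proceeds by a different mechanism than the paper's. You unwind $\Psi$ via the quotient-stack/graded-module description and identify $\Pi^{*}$ with ordinary pullback of $Q^{\gp}$-graded modules along $f$, which forces an \'etale-local reduction to a Kato chart and a gluing step at the end. The paper instead works with the other equivalence $\Phi$: it computes $\Phi(\Pi^{*}\Psi(E))_{q}=(\pi_{Y})_{*}(\Pi^{*}\Psi(E)\otimes(\Lambda_{Y})_{q})$, uses $(\Lambda_{Y})_{q}\cong\Pi^{*}\Lambda_{q}$, and then invokes the base-change isomorphism $f^{*}\pi_{*}\cong(\pi_{Y})_{*}\Pi^{*}$ for the cartesian square (Nironi's Proposition~1.5, applicable since $\pi$ is a tame coarse moduli space map) to match this with $f^{*}E_{q}$ directly. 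The paper's route is shorter and avoids both the Kato-chart reduction and the gluing bookkeeping, at the cost of importing a base-change result; your route is more self-contained and makes the compatibility of module structures explicit, which is arguably more transparent. Either way the content is the same: naturality under strict pullback of the two halves of the BV equivalence.
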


\begin{proof}
It is clear that $f^{*}E$ is a parabolic sheaf, by applying $f^{*}$ to the pseudo-periods isomorphism $\rho^{E}$ and to all the relevant diagrams.

Let us now show that the parabolic sheaf $\Phi(\Pi^{*}\Psi(E)) \in \Par(Y,f^{*}j)$ is isomorphic to $f^{*}E$ as defined above.

For $q \in Q^{\gp}$, let us calculate
$$
(f^{*}E)_{q}=f^{*}E_{q}=f^{*}\pi_{*}(\Psi(E)\otimes \Lambda_{q})
$$
and
$$
\Phi(\Pi^{*}\Psi(E))_{q}=(\pi_{Y})_{*}(\Pi^{*}\Psi(E)\otimes (\Lambda_{Y})_{q}).
$$
Note now that $(\Lambda_{Y})_{q}=\Pi^{*}\Lambda_{q}$, so
$$
(\pi_{Y})_{*}(\Pi^{*}\Psi(E)\otimes (\Lambda_{Y})_{q}) \simeq (\pi_{Y})_{*}\Pi^{*}(\Psi(E)\otimes \Lambda_{q}).
$$
Now we apply Proposition $1.5$ of \cite{nironi} to the cartesian diagram
$$
\xymatrix{
\radice{f^{*}B} Y\ar[r]^{\Pi}\ar[d]_{\pi_{Y}} & \radice{B} X\ar[d]^{\pi}\\
Y\ar[r]^{f} & X
}
$$
to get a functorial base change isomorphism $f^{*}\pi_{*} \to (\pi_{Y})_{*}\Pi^{*}$ of functors $\Qcoh(\radice{B} X)\to \Qcoh(Y)$.

This gives an isomorphism $(f^{*}E)_{q}\to \Phi(\Pi^{*}\Psi(E))_{q}$ for any $q \in Q$. By functoriality, by putting all these isomorphisms together we get a natural isomorphism of functors $Q^{\wt}\to \Qcoh(X)$, which moreover respects the pseudo-periods isomorphisms, and so is an isomorphism of parabolic sheaves.
\end{proof}

This also gives a local description of the pullback as defined in general, using local charts for the Kummer morphism $j$.
\begin{rmk}
When there is no global chart, it is still possible to give a purely parabolic description of the pullback along a strict morphism, but we will not need it in the rest of the paper. The interested reader can check \cite[Proposition 3.1.4]{tesi} for a sketch of the construction.
\end{rmk}
Using this notion of pullback, we can now define a category of families of parabolic sheaves on a fixed log scheme $X$, and with respect to a Kummer morphism $j\colon A\to B$.

We define a fibered category $\upar_{X}\to (\Sch)$ by setting, for a scheme $T$
$$
\upar_{X}({T})=\Par(T\times_{k}X, \pi_{T}^{*}j)
$$
(with $\pi_{T}\colon {T\times_{k}X}\to {X}$ the second projection), and by declaring that pullback along a morphism $f\colon {S}\to {T}$ over $k$ is given by the pullback of parabolic sheaves along the induced morphism $S\times_{k}X\to T\times_{k}X$ of log schemes. Of course here $T\times_{k}X$ plays the role of a trivial family with fiber $X$, and a parabolic sheaf over $T\times_{k}X$ is seen as a ``naive'' (i.e. without any flatness hypothesis) family of parabolic sheaves over $X$.

On the other hand we have a second fibered category $\uqcoh_{\radice{B} X}\to (\Sch)$ of quasi-coherent sheaves over the root stack $\radice{B} X$, where for a scheme ${T}$ we have
$$
\uqcoh_{\radice{B} X}({T})=\Qcoh(T\times_{k}\radice{B} X)=\Qcoh(\radice{\pi_{T}^{*}B} {T\times_{k}X})
$$
with pullback along ${S}\to {T}$ defined by the induced morphism $S\times_{k}\radice{B} X \to T\times_{k}\radice{B} X$.

\begin{notation}
To ease the notation, for the rest of this section $\m{X}$ will denote the root stack $\radice{B} X$, and for a scheme $T$, a subscript $(-)_{T}$ will denote a base change to $T$, or to the fibered product $X_{T}=T\times_{k}X$ along the projection $\pi_{T}\colon X_{T}\to X$ (this ambiguity should cause no real confusion). For example $L_{T}\colon A_{T}\to \Div_{X_{T}}$ will denote the pullback log structure.
\end{notation}

Also, in what follows we will repeatedly consider properties of parabolic sheaves on $X_{T}$ relative to the base $T$. It will be useful to keep in mind the following diagram, where all the squares are cartesian.

$$
\xymatrix{
\m{X}_{T}\ar[r]\ar[d]  &\m{X}\ar[d]\\
X_{T}\ar[r]\ar[d]&X\ar[d]\\
 T\ar[r] &\spec(k)
}
$$
Note that $\m{X}_{T} \simeq \radice{B_{T}}{X_{T}}$, from Proposition \ref{strict.cartesian}. Moreover the projection $\m{X}_{T}\to X_{T}$ is a coarse moduli space for any $T$, since the log structure of $T$ is fine and saturated (\ref{coarse.space.root}).

The following is an immediate consequence of \ref{BV} and the definition of pullback of parabolic sheaves.

\begin{prop}
There are equivalences $\Phi\colon \uqcoh_{\m{X}}\to  \upar_{X}$ and $\Psi\colon \upar_{X}\to \uqcoh_{\m{X}}$, restricting to the BV equivalences on the fiber categories.
\end{prop}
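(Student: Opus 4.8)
The plan is to reduce the statement to Theorem \ref{BV} applied fiberwise, together with the observation that the pullback of parabolic sheaves was \emph{defined} precisely so that the BV equivalence intertwines it with the pullback of quasi-coherent sheaves. First I would fix a scheme $T$: by Proposition \ref{strict.cartesian}, applied to the strict projection $X_{T}=T\times_{k}X\to X$, there is a canonical isomorphism $\m{X}_{T}\cong \radice{B_{T}}{X_{T}}$, where $X_{T}$ carries the pullback log structure $L_{T}\colon A_{T}\to\Div_{X_{T}}$ and $B_{T}=\pi_{T}^{*}B$. Hence Theorem \ref{BV}, applied to the log scheme $X_{T}$ with the system of denominators $\pi_{T}^{*}j$, supplies an equivalence of abelian categories
$$
\Phi_{T}\colon \uqcoh_{\m{X}}(T)=\Qcoh(\m{X}_{T})\;\longrightarrow\;\Par(X_{T},\pi_{T}^{*}j)=\upar_{X}(T),
$$
with quasi-inverse $\Psi_{T}$; these are the BV equivalences on the fiber categories appearing in the statement.

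Next I would check compatibility with the fibered structure. A morphism $f\colon S\to T$ over $k$ induces a strict morphism $X_{S}\to X_{T}$ of log schemes, which is moreover a base change, and hence a morphism of root stacks $\Pi\colon \m{X}_{S}\to\m{X}_{T}$. By the definition of the pullback of a parabolic sheaf, for $E\in\upar_{X}(T)$ one has $f^{*}E=\Phi(\Pi^{*}\Psi(E))$, so $\Psi_{S}(f^{*}E)\cong\Pi^{*}\Psi_{T}(E)$ canonically; dually, since $\Phi$ and $\Psi$ are quasi-inverse, $f^{*}\Phi_{T}(F)\cong\Phi_{S}(\Pi^{*}F)$ for $F\in\uqcoh_{\m{X}}(T)$. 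Thus the square
$$
\xymatrix{
\uqcoh_{\m{X}}(T)\ar[r]^{\Phi_{T}}\ar[d]_{f^{*}} & \upar_{X}(T)\ar[d]^{f^{*}}\\
\uqcoh_{\m{X}}(S)\ar[r]^{\Phi_{S}} & \upar_{X}(S)
}
$$
$2$-commutes, and symmetrically for $\Psi$, with vertical arrows the respective pullback functors. I would then verify that these comparison isomorphisms are natural and satisfy the cocycle condition for a composite $S\to T\to T'$, so that $\{\Phi_{T}\}$ and $\{\Psi_{T}\}$ assemble into cartesian functors $\Phi\colon\uqcoh_{\m{X}}\to\upar_{X}$ and $\Psi\colon\upar_{X}\to\uqcoh_{\m{X}}$ of fibered categories over $(\Sch)$.

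Finally I would invoke the general principle that a cartesian functor between fibered categories over the same base that is an equivalence on every fiber is an equivalence of fibered categories (\cite[Chapter 1]{vistoli}); applied to $\Phi$ and $\Psi$, which are fiberwise quasi-inverse by construction, this yields that they are quasi-inverse equivalences of fibered categories, proving the proposition. The main obstacle — really the only part with any substance beyond Theorem \ref{BV} and Proposition \ref{strict.cartesian} — is the $2$-categorical bookkeeping in the middle step: checking naturality of the comparison isomorphisms in $F$ (resp.\ $E$) and the cocycle condition. This is routine, following from uniqueness up to canonical isomorphism of pullbacks and the pseudofunctoriality of $\Pi^{*}$ and of the explicit BV construction $\Phi(F)_{b}=\pi_{*}(F\otimes\Lambda_{b})$, but it is where the care is needed.
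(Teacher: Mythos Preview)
Your proposal is correct and follows exactly the approach the paper has in mind: the paper presents this proposition as ``an immediate consequence of \ref{BV} and the definition of pullback of parabolic sheaves'' and gives no further proof, and your argument is precisely the unpacking of that sentence --- fiberwise BV equivalences via \ref{BV} and \ref{strict.cartesian}, compatibility with pullback built into the very definition of $f^{*}E=\Phi(\Pi^{*}\Psi(E))$, and the standard criterion that a cartesian functor which is a fiberwise equivalence is an equivalence of fibered categories.
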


This allows us to systematically transport various (absolute and relative) notions from ordinary quasi-coherent sheaves to parabolic sheaves. The notions that we care about also have a parabolic interpretation.

\begin{definition}[Meta-definition]\label{meta}
A parabolic sheaf $E \in \Par(X_T,j_T)$ has some property, absolute or over the base $T$ (for example is \emph{coherent, finitely generated, finitely presented, locally free, flat over} $T$), if the corresponding $\Psi(E) \in \Qcoh(\m{X}_{T})$ has said property.
\end{definition}

These definitions also make sense for an arbitrary log scheme, not of the form $X_{T}$. We restrict to this case because we are interested in families of parabolic sheaves over a fixed log scheme $X$.

Let us discuss with more details some properties of parabolic sheaves, that will single out the correct notion of ``family of coherent parabolic (pure) sheaves''.

Let us start with coherence: the notion we want to use is actually ``finitely presented'', since coherence does not behave well with respect to pullback in a (possibly) non-noetherian setting. The definition is contained in \ref{meta}, but let us spell it out again.

\begin{definition}
A parabolic sheaf $E \in \Par(X_T,j_T)$ is \emph{finitely presented} if the corresponding $\Psi(E) \in \Qcoh(\m{X}_{T})$ is finitely presented.
\end{definition}

Here is a parabolic interpretation of coherence.

\begin{prop}
A parabolic sheaf $E \in \Par(X_T,j_T)$ is finitely presented if and only if for every \'{e}tale morphism $U\to X_{T}$ and every section $b \in B_{T}^{\wt}(U)$, the quasi-coherent sheaf $E_{b} \in \Qcoh(U)$ is finitely presented.
\end{prop}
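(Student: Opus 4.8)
The statement to prove is that a parabolic sheaf $E \in \Par(X_T,j_T)$ is finitely presented (i.e. $\Psi(E)$ is finitely presented on $\m{X}_T$) if and only if each piece $E_b \in \Qcoh(U)$ is finitely presented, for every étale $U \to X_T$ and every weight $b \in B_T^{\wt}(U)$. The plan is to reduce to the case of a global chart and then use the explicit quotient-stack description of $\radice{Q}{X_T}$ recalled after Proposition \ref{local.model.root.stack}, under which $\Psi(E)$ becomes the $Q^{\gp}$-graded module $\bigoplus_{q \in Q^{\gp}} E_q$ over the sheaf of rings $R \otimes_{k[P]} k[Q]$.

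First I would note that being finitely presented is a local condition in the étale topology of $X_T$ (by definition, see the Notations section, and since $\m{X}_T \to X_T$ is proper and finitely presented), so we may assume we have a global chart $P \to \Div(X_T)$ and a chart $P \to Q$ for $j_T$ with $P, Q$ fine sharp monoids; then the pieces $E_b$ for $b$ ranging over all étale $U$ are controlled by the pieces $E_q$ for $q \in Q^{\gp}$ by cartesianness of the functor. For the easy direction, suppose $\Psi(E)$ is finitely presented on $\m{X}_T$. Since $\Phi(F)_q = \pi_*(F \otimes \Lambda_q)$ and $\Lambda_q$ is a line bundle on $\m{X}_T$, the sheaf $F \otimes \Lambda_q$ is again finitely presented; and $\pi \colon \m{X}_T \to X_T$ is proper, quasi-finite and finitely presented (Proposition \ref{root.stack.algebraic}) with $\pi_*$ exact (Proposition \ref{coarse.space.root}, since $B$ is saturated), so $\pi_*$ preserves finite presentation. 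Hence each $E_q$ is finitely presented over $X_T$. For the converse, suppose every $E_q$ is finitely presented. The key point is that $Q^{\gp}$ is a finitely generated abelian group, so $Q^{\wt}$ is ``finitely generated'' in the sense that finitely many weights $q_1,\dots,q_m \in Q^{\gp}$ and the action of $Q$ suffice to recover everything via the pseudo-periods isomorphisms and the $k[Q]$-module structure; correspondingly $\Psi(E) = \bigoplus_q E_q$ is generated as a module over $R \otimes_{k[P]} k[Q]$ by the finitely many finitely presented pieces $E_{q_1},\dots,E_{q_m}$, and one checks the module of relations is likewise finitely generated because $k[Q]$ is a finitely generated $k[P]$-algebra. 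This gives a local presentation of $\Psi(E)$ as a cokernel of a map of finite free $\mc{O}_{\m{X}_T}$-modules, i.e. finite presentation.

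The main obstacle I expect is making the converse direction precise: one must argue carefully that finite generation of the pieces $E_q$ (over the base) together with the graded/equivariant structure really does yield a \emph{global-on-an-atlas} finite presentation of $\Psi(E)$, not just fiberwise finite generation. Concretely, passing to the atlas $E \times_{\spec(k[P])} \spec(k[Q]) = \underline{\spec}_{X_T}(R \otimes_{k[P]} k[Q])$, one needs that $\bigoplus_q E_q$ — a priori an infinitely-generated-looking direct sum — is a finitely presented module over the (quasi-coherent sheaf of) rings $R \otimes_{k[P]} k[Q]$, which rests on the finite generation of $Q^{\gp}$ as a group and of $Q$ as a monoid, and on the compatibility of the $R$- and $k[Q]$-actions over $k[P]$ that is built into the definition of a parabolic sheaf. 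Once this bookkeeping is done, finite presentation of $\Psi(E)$ follows, and combined with the easy direction this proves the proposition.
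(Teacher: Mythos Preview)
Your proposal is correct and follows essentially the same architecture as the paper's proof: localize to a global chart, and for the converse use that $\Psi(E)=\bigoplus_{q\in Q^{\gp}}E_q$ is generated over the sheaf of rings $R\otimes_{k[P]}k[Q]$ by finitely many of the (finitely presented) pieces $E_q$, thanks to the pseudo-periods isomorphism; the paper states this last step just as briefly as you do.

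The one genuine difference is in the forward direction. You argue directly that $\pi_*$ preserves finite presentation from the fact that $\pi$ is proper, quasi-finite, finitely presented and $\pi_*$ is exact. The paper instead routes through the infinite root stack: it writes $(\pi_T)_*(E\otimes\Lambda_b)\cong(\pi_T\circ\Pi)_*\Pi^*(E\otimes\Lambda_b)$ via the projection formula for $\Pi\colon\infroot{X_T}\to\m{X}_T$, and then invokes \cite[Proposition~4.16]{infinite}, which asserts that pushforward from the infinite root stack preserves finite presentation. Your route is more self-contained and avoids importing a result about the infinite root stack at this stage; on the other hand, the properties you list do not by themselves constitute a standard citation for ``$\pi_*$ preserves finite presentation'', so you should spell out the missing step: locally $\m{X}_T\cong[\underline{\spec}_{X_T}(S)/\widehat{Q}]$ with $S=R\otimes_{k[P]}k[Q]$ finite and finitely presented over $\mc{O}_{X_T}$, a finitely presented sheaf corresponds to a finitely presented graded $S$-module $M$, hence $M$ is finitely presented over $\mc{O}_{X_T}$, and $\pi_*$ picks out the degree-zero summand, which is finitely presented because direct summands of finitely presented modules are finitely presented. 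With that sentence added, your forward direction is cleaner than the paper's.
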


\begin{proof}
Assume first that $E$ is finitely presented. Then for any section $b  \in B_{T}^{\wt}(U)$ the sheaf $E\otimes \Lambda_{b}$ is finitely presented, and thus $E_{b}=(\pi_{T})_{*}(E\otimes \Lambda_{b})$ is also finitely presented, by \cite[Proposition 4.16]{infinite}: if $\Pi\colon \infroot{X_{T}}\to \m{X}_{T}$ denotes the projection, then (by \ref{proj.formula.infinite}) we have an isomorphism
$$
(\pi_{T})_{*}(E\otimes \Lambda_{b}) \simeq (\pi_{T}\circ \Pi)_{*}(\Pi^{*}(E\otimes \Lambda_{b}))
$$
and $\Pi^{*}(E\otimes \Lambda_{b})$ is finitely presented on $\infroot{X_{T}}$.

In the other direction, this is a local problem so we can assume that there is a global chart. In this case, recall (see the sketch of proof of \ref{BV}) that the quasi-coherent sheaf on $\m{X}$ corresponding to $E$ is obtained by taking $\bigoplus_{v \in Q^{\gp}} E_{v}$ as a sheaf on $X$, with an action of the sheaf of algebras $A=\bigoplus_{u \in P^{\gp}} L_{u}$, and then by descending it on the quotient stack $\m{X}$. Now it suffices to notice that a finite number of the $E_{v}$ generate the direct sum as a sheaf of $A$-modules (thanks to the pseudo-periods isomorphism), and since the $E_{v}$'s are finitely presented, we are done.
\end{proof}

Now let us turn to flatness.

\begin{definition}
A parabolic sheaf $E \in\Par(X_T,j_T)$ is \emph{flat} over $T$ if the corresponding $\Psi(E) \in \Qcoh(\m{X}_{T})$ is flat over $T$.
\end{definition}

The following proposition gives a parabolic interpretation of flatness.

\begin{prop}
A parabolic sheaf $E \in \Par(X_T,j_T)$ is flat over $T$ if and only if for every \'{e}tale morphism $U\to X_{T}$ and every section $b \in B_{T}^{\wt}(U)$ the quasi-coherent sheaf $E_{b} \in \Qcoh(U)$ is flat over $T$.
\end{prop}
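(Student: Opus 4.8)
The plan is to reduce the statement to the corresponding fact about quasi-coherent sheaves on the root stack, using the BV equivalence together with the exactness of pushforward. First I would observe that flatness over $T$ can be checked \'etale-locally on $X_T$, so we may assume that there is a global chart $P\to \Div(X)$ and a chart $P\to Q$ for the system of denominators; by \ref{coarse.space.root} the projection $\pi_T\colon \m{X}_T\to X_T$ is a coarse moduli space, and since the relevant root stacks are tame Deligne--Mumford (\ref{root.stack.algebraic}), $\pi_{T*}$ is exact.

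For the ``only if'' direction, suppose $\Psi(E)$ is flat over $T$. Then for each \'etale $U\to X_T$ and each $b\in B_T^{\wt}(U)$, the sheaf $\Psi(E)\otimes\Lambda_b$ is again flat over $T$ (tensoring with the invertible sheaf $\Lambda_b$ preserves flatness), hence so is $E_b=(\pi_T)_*(\Psi(E)\otimes\Lambda_b)$: indeed, using the same device as in the proof of the previous proposition, one writes $(\pi_T)_*(\Psi(E)\otimes\Lambda_b)\cong(\pi_T\circ\Pi)_*\Pi^*(\Psi(E)\otimes\Lambda_b)$ for $\Pi\colon\infroot{X_T}\to\m{X}_T$, and checks that pushforward along the coarse space of a tame stack, being exact and commuting with base change, sends $T$-flat sheaves to $T$-flat sheaves. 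Alternatively, in the global-chart picture $\Psi(E)=\bigoplus_{v\in Q^{\gp}}E_v$ as a sheaf on $X_T$, so flatness of $\Psi(E)$ over $T$ is literally equivalent to flatness of each summand $E_v$ over $T$, and every $E_b$ with $b\in B_T^{\wt}$ is (\'etale locally) one of these $E_v$.

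For the converse, assume every $E_b$ is flat over $T$. Working with a global chart as above, $\Psi(E)$ is obtained by descending the sheaf of $\mc A=\bigoplus_{u\in P^{\gp}}L_u$-modules $\bigoplus_{v\in Q^{\gp}}E_v$ on $X_T$ along the torsor-type presentation of $\m{X}_T$; flatness over $T$ is a property that can be checked on this presentation, i.e.\ on the underlying $\mc{O}_{X_T}$-module, which is the direct sum of the $E_v$. A direct sum of $T$-flat sheaves is $T$-flat, so $\Psi(E)$ is flat over $T$, as desired.

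The main obstacle, such as it is, is purely bookkeeping: making sure that ``flat over $T$'' for a sheaf on the stack $\m{X}_T$ really is detected on an \'etale (or fpqc/torsor) presentation and hence on the $Q^{\gp}$-graded $\mc{O}_{X_T}$-module underlying $\Psi(E)$, and that passing between $E_b$ for $b\in B_T^{\wt}(U)$ and the graded pieces $E_v$ for $v\in Q^{\gp}$ (via the pseudo-periods isomorphisms) does not lose any information. Once these identifications are in place the argument is a short diagram chase, exactly parallel to the proof of the finite presentation statement above.
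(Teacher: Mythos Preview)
Your argument is correct and follows essentially the same route as the paper: for the forward direction the paper tensors with the invertible sheaf $\Lambda_b$ and then invokes the fact that pushforward to the coarse moduli space of a tame stack preserves flatness over the base (citing \cite[Corollary~1.3]{nironi} directly, rather than going via the infinite root stack as you do), and for the converse it reduces to a global chart and observes that $\Psi(E)=\bigoplus_{q\in Q^{\gp}}E_q$ is a direct sum of $T$-flat sheaves, hence $T$-flat, and this survives descent. Your alternative for the forward direction---reading off each $E_b$ as a graded piece of the direct sum---is in fact slightly cleaner than either version.
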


\begin{proof}
Assume first that the sheaf $\Psi(E) \in \Qcoh(\m{X}_{T})$ is flat over $T$. Given $f\colon U\to X_{T}$ \'{e}tale, call $\m{U}$ the base change of $\m{X}_{T}$ to $U$, $\pi_{U}\colon \m{U}\to U$ the projection to the coarse moduli space, and $f_{\m{U}}\colon \m{U}\to \m{X}_{T}$ the base change of $f$.

Now for $b \in B_{T}^{\wt}(U)$ we have
$$
E_{b}= (\pi_{U})_{*} ( f_{\m{U}}^{*}\Psi(E) \otimes (\Lambda_{T})_{b} ) \in \Qcoh(U).
$$
Since $(\Lambda_{T})_{b}$ is invertible and $\Psi(E)$ is flat over $T$ by assumption, the sheaf $f_{\m{U}}^{*}\Psi(E) \otimes (\Lambda_{T})_{b}$ is flat over $T$. Furthermore since $\m{U}$ is tame, the pushforward along the projection to the coarse moduli space $\pi_{U}:\m{U}\to U$ preserves flatness over the base $T$ (Corollary $1.3$ of \cite{nironi}). In conclusion $E_{b}$ is flat over $T$.

Now let us prove the converse. Since the question is local we can assume that $X$ has a global chart. Recall once again (\ref{BV}) that the sheaf $\Psi(E)$ is defined by forming $\bigoplus_{q \in Q^{\gp}} E_{q}$, a quasi-coherent sheaf on $X$. Then this is regarded as a quasi-coherent sheaf on the relative spectrum of a sheaf of algebras on $X$, and by descent this gives a quasi-coherent sheaf on the root stack $\m{X}$ (which is a quotient stack of this relative spectrum). Now since by assumption all the $E_{q}$ are flat over $T$, their direct sum is flat over $T$, and the quasi-coherent sheaf induced on the relative spectrum mentioned above will be flat over $T$ as well. Finally by descent $\Psi(E)$ itself will be flat over $T$.
\end{proof}

In moduli theory of coherent sheaves (and in all of moduli theory, in fact), flatness is a crucial condition to impose on a family.

\begin{definition}
A \emph{family of parabolic sheaves} on $X$ with weights in $B/A$ over a base scheme $T$ is a finitely presented parabolic sheaf $E\in \Par(X_T,j_T)$ that is flat over $T$.

From now on the wording ``family of parabolic sheaves'' will always include this flatness condition.
\end{definition}

The last important concept in moduli of coherent sheaves is pureness. This is not a condition that we have to impose on the family, but rather on the geometric fibers of a family, so it is enough to consider it for parabolic sheaves on $X$ itself (the same notion will apply on any base change $X_{K}$ with $K$ an algebraically closed extension $k\subseteq K$).

For the definition of a pure sheaf on an algebraic stack we refer to \cite{nironi} and \cite{lieblich}. It is the natural generalization of the concept for schemes; one possible definition is that a coherent sheaf on a (noetherian) Artin stack is pure if and only if its pullback to a smooth presentation is. More intrinsically one can define the support of a coherent sheaf $F$ on an algebraic stack $\mc{X}$ as the closed substack defined by the kernel of the morphism $\mc{O}_{\mc{X}}\to \End(F)$. The \emph{dimension} of $F$ will be the dimension of the support, and a sheaf is pure of dimension $d$ if and only if all of its subsheaves have dimension $d$.

\begin{rmk}
We declare the zero sheaf to be pure of arbitrary dimension. This allows us to simplify some statements and does no harm, since the property of being zero for a flat sheaf on a projective family is open and closed.
\end{rmk}

We will say that a coherent sheaf $F$ is \emph{torsion-free} on a noetherian algebraic stack $\mc{X}$ if it is pure of maximal dimension (the dimension of $\mc{X}$). Note that this does not imply that $F$ is supported everywhere (topologically) unless $\mc{X}$ is irreducible.

Assume that the log scheme $X$ is noetherian.

\begin{definition}
A parabolic sheaf $E \in \Par(X,j)$ is \emph{pure of dimension} $d$ if it is finitely presented and the corresponding $\Psi(E) \in \Qcoh(\m{X})$ is pure of dimension $d$.
\end{definition}

As for the preceding properties, there is a parabolic interpretation of pureness.

\begin{prop}\label{prop.pure}
A parabolic sheaf $E \in \Par(X,j)$ is pure of dimension $d$ if and only if for every \'{e}tale morphism $U\to X$ and every section $b \in B^{\wt}(U)$ the coherent sheaf $E_{b} \in \Qcoh(U)$ is pure of dimension $d$.
\end{prop}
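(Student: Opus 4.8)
The plan is to transport the question to the root stack $\m{X}=\radice{B}{X}$ and then down to $X$ along a finite atlas, in the spirit of the proofs of the analogous statements for finite presentation and flatness. First I would note that both conditions are local for the \'etale topology of $X$: purity of $\Psi(E)$ on $\m{X}$ is \'etale-local on $X$ since an \'etale cover $\{U_i\to X\}$ induces an \'etale cover $\coprod_i\m{X}\times_X U_i\to\m{X}$ of the stack and purity of coherent sheaves is \'etale-local, while the condition on the pieces $E_b$ is visibly so. Hence one may assume that $j$ has a global Kato chart $j_0\colon P\to Q$ (possible \'etale-locally by \cite[Proposition $3.28$]{borne-vistoli}). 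Then $\m{X}\cong[W/\mu_{Q/P}]$ with $W=X\times_{\spec(k[P])}\spec(k[Q])$ (Proposition \ref{local.model.root.stack} and the discussion after it), and since $\m{X}$ is assumed Deligne--Mumford, $\mu_{Q/P}=D[Q^{\gp}/P^{\gp}]$ is a finite \'etale group scheme (Proposition \ref{root.stack.algebraic}); so the quotient $\rho\colon W\to\m{X}$ is a finite \'etale atlas, whereas the structure map $g\colon W\to X$ (which is $\pi\circ\rho$, for $\pi\colon\m{X}\to X$ the projection) is finite, because $P\to Q$ Kummer makes $k[Q]$ a finite $k[P]$-algebra.

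The core of the argument is then a chain of equivalences. By the definition of purity on a Deligne--Mumford stack via an \'etale presentation, $\Psi(E)$ is pure of dimension $d$ if and only if $\rho^{*}\Psi(E)$ is pure of dimension $d$ on the scheme $W$. Next I would record the elementary lemma that for a \emph{finite} morphism $g$ a coherent sheaf of the form $g_{*}F'$ is pure of dimension $d$ if and only if $F'$ is: $g_{*}$ is exact, faithful and preserves dimensions of supports, it sends subsheaves to subsheaves, and a coherent subsheaf $N\subseteq g_{*}F'$ generates a coherent $\mc{O}_{W}$-subsheaf $\mc{O}_W\cdot N\subseteq F'$ which contains $N$ and is a finite sum of homomorphic images of $N$, hence has support of the same dimension as $N$. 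Applying this to $g$ gives: $\Psi(E)$ is pure of dimension $d$ if and only if $g_{*}\rho^{*}\Psi(E)$ is pure of dimension $d$ on $X$. Finally I would unwind the Borne--Vistoli equivalence through the presentation $[W/\mu_{Q/P}]$: writing $g_{*}\rho^{*}\Psi(E)=\pi_{*}(\rho_{*}\rho^{*}\Psi(E))$, using the projection formula for $\rho$ and the decomposition $\rho_{*}\mc{O}_{W}\cong\bigoplus_{\bar c}\mc{L}_{\bar c}$ of the structure sheaf of the $\mu_{Q/P}$-torsor into character line bundles, and the fact that the universal line bundles $\Lambda_q$ ($q\in Q^{\gp}$) realize these characters modulo $\pi^{*}\Pic(X)$ together with $E_b=\pi_{*}(\Psi(E)\otimes\Lambda_b)$, one obtains $g_{*}\rho^{*}\Psi(E)\cong\bigoplus_{\bar c\in Q^{\gp}/P^{\gp}}(E_{q_{\bar c}}\otimes M_{\bar c})$ for arbitrary coset representatives $q_{\bar c}\in Q^{\gp}$ and suitable line bundles $M_{\bar c}$ on $X$.

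To finish: a finite direct sum of coherent sheaves is pure of dimension $d$ if and only if each summand is (with the convention that $0$ is pure of any dimension), and tensoring by an invertible sheaf preserves purity and dimension, so $g_{*}\rho^{*}\Psi(E)$ is pure of dimension $d$ if and only if every $E_{q_{\bar c}}$ is; since the representatives are arbitrary and $E_{p+q}\cong L_p\otimes E_q$ with $L_p$ invertible, this is equivalent to purity of $E_q$ for all $q\in Q^{\gp}$, and finally --- using $\Par(X,j)\cong\Par(X,j_0)$ and that every section of $B^{\wt}$ over an \'etale $U\to X$ is \'etale-locally the image of some $q\in Q^{\gp}$, while purity of the $\mc{O}_U$-module $E_b$ is \'etale-local on $U$ --- to the condition in the statement. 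The point I expect to be the main obstacle is precisely why one cannot just push forward along the coarse space $\pi\colon\m{X}\to X$: $\pi_{*}$ does \emph{not} reflect purity (a non-pure sheaf can have pure invariants), so $E_0=\pi_{*}\Psi(E)$ alone does not suffice; what rescues the statement is that the whole family $\{E_b\}$ reassembles the pullback of $\Psi(E)$ to the finite \'etale atlas $W$ --- available exactly because $\m{X}$ is Deligne--Mumford --- over which purity is visible through the exact faithful functor $g_{*}$.
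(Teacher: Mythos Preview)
Your proof is correct and takes a genuinely different route from the paper's. The paper argues the two implications separately: for the forward direction it uses the characterization of purity on a noetherian scheme via injectivity of the adjunction map $E_b\to i_*i^*E_b$ for every open $i\colon V\hookrightarrow U$ with $\dim(U\setminus V)<d$ (Lemmas \ref{lem.pure} and \ref{cor.pure}), together with exactness of $\pi_*$, the projection formula and base change along the coarse moduli square; for the converse it takes a hypothetical lowest-dimensional pure subsheaf $\Psi(G)\subseteq\Psi(E)$, applies the forward direction to it, and produces a non-pure piece $G_b\subseteq E_b$. Your argument instead passes to a Kato chart, uses the finite \'etale atlas $\rho\colon W\to\m X$ (available precisely because $\m X$ is Deligne--Mumford) so that purity of $\Psi(E)$ is equivalent to purity of $\rho^*\Psi(E)$ on a scheme, and then invokes the clean lemma that a finite morphism $g$ has $g_*$ both preserving and reflecting purity; the identification $g_*\rho^*\Psi(E)\cong\bigoplus_{\bar c}E_{q_{\bar c}}\otimes M_{\bar c}$ comes from decomposing $\rho_*\mc O_W$ into character line bundles and recognizing each as $\Lambda_{q_{\bar c}}$ twisted by something pulled back from $X$ (a line bundle on the tame stack $\m X$ with trivial stabilizer action descends to the coarse space). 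What your approach buys is a single chain of equivalences instead of two asymmetric arguments, and a reusable structural lemma about finite pushforward; what the paper's approach buys is that it stays entirely at the level of the coarse moduli map $\pi$ and the functor $\Phi$, in exact parallel with the proofs for flatness and finite presentation just above it, without needing to unwind the torsor description or invoke descent of line bundles.
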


\begin{rmk}
Nironi gives a seemingly different definition of a torsion-free parabolic sheaf in \cite[Definition 7.4]{nironi}; one can easily see that it agrees with the one given here, basically thanks to \ref{gen.trivial.inj} below.
\end{rmk}

We stress again that the zero sheaf is considered to be pure of arbitrary dimension (in fact, it can happen that $E_{b}$ is zero for a pure parabolic sheaf $E$ and some section $b$). We will use the following lemmas, whose proofs are easy and left to the reader.

\begin{lemma}\label{lem.pure}
Let $E$ be a coherent sheaf on a noetherian scheme $X$, and set $d=\dim(F)$. Then $E$ is pure of dimension $d$ if and only if for every open subset $U\subseteq X$ such that $\dim(X\setminus U)<d$, the adjunction map $\sigma\colon E\to i_{*}i^{*}E$ is injective, where $i\colon U\to X$ is the inclusion.\qed
\end{lemma}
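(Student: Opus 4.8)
The plan is to unwind the definition of purity and read off the kernel of the adjunction map. Recall that, with $d=\dim E$ fixed, the sheaf $E$ is pure of dimension $d$ precisely when it admits no nonzero coherent subsheaf of dimension strictly less than $d$. The crucial observation is that for the open immersion $i\colon U\hookrightarrow X$ with closed complement $Z=X\setminus U$, the kernel of $\sigma\colon E\to i_{*}i^{*}E$ is the subsheaf $\Gamma_{Z}(E)\subseteq E$ of sections supported on $Z$: over an open $V\subseteq X$ the map $\sigma$ is the restriction $E(V)\to E(V\cap U)$, whose kernel consists of the sections of $E(V)$ with support in $V\cap Z$.

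For the forward implication I would argue as follows. Given $U$ with $\dim(X\setminus U)<d$, the kernel $\Gamma_{Z}(E)$ (with $Z=X\setminus U$) is a coherent subsheaf of $E$ — here one uses that $X$ is noetherian and $E$ coherent — whose support is contained in $Z$, so $\dim\Gamma_{Z}(E)\le\dim Z<d$. Purity of $E$ then forces $\Gamma_{Z}(E)=0$, that is, $\sigma$ is injective.

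For the converse I would take an arbitrary nonzero coherent subsheaf $F\subseteq E$, set $Z=\Supp F$ and $U=X\setminus Z$, and suppose for contradiction that $\dim F<d$. Then $\dim(X\setminus U)<d$, so $\sigma$ is injective by hypothesis; on the other hand, naturality of the unit of the adjunction $i^{*}\dashv i_{*}$ makes the composite $F\hookrightarrow E\xrightarrow{\sigma}i_{*}i^{*}E$ factor through $i_{*}i^{*}F$, which vanishes since $i^{*}F=F|_{U}=0$. Hence $F\subseteq\ker\sigma=0$, a contradiction, so every nonzero subsheaf of $E$ has dimension $d$ and $E$ is pure of dimension $d$.

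I do not expect a genuine obstacle: the statement is essentially a reformulation of the definition of purity, and the paper itself calls the proof ``easy''. The only step that is not purely formal is the coherence of $\Gamma_{Z}(E)$, which rests on the noetherian hypothesis through the fact that $\Gamma_{Z}(E)=\varinjlim_{n}\chom(\mathcal{O}_{X}/\mathcal{I}_{Z}^{n},E)$ is an ascending union of coherent subsheaves and therefore stabilizes. (The degenerate case $E=0$ is consistent with the convention that the zero sheaf is pure of arbitrary dimension, both conditions being then vacuous.)
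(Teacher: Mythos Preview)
Your argument is correct and is precisely the standard unwinding of the definition of purity via $\Gamma_{Z}(E)=\ker\sigma$; the paper itself omits the proof as ``easy and left to the reader'', so there is no alternative approach to compare against. The one point worth noting is that your coherence argument for $\Gamma_{Z}(E)$ is the only place the noetherian hypothesis is genuinely used, and you have handled it correctly.
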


\begin{lemma}\label{cor.pure}
If $\mc{X}$ is a noetherian DM stack and $E \in \Coh(\mc{X})$ is pure of dimension $d$ then for every open substack $\mc{U}\subseteq \mc{X}$ such that $\dim(\mc{X}\setminus \mc{U})<d$, the adjunction map $\sigma\colon E\to i_{*}i^{*}E$ is injective, where $i\colon \mc{U}\to \mc{X}$ is the inclusion.\qed
\end{lemma}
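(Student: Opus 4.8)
The plan is to bootstrap from the scheme case, Lemma \ref{lem.pure}, by passing to an \'{e}tale atlas. Since $\mc{X}$ is a noetherian DM stack, choose an \'{e}tale surjection $p\colon V\to\mc{X}$ with $V$ a noetherian scheme, put $\mc{U}_{V}=p^{-1}(\mc{U})$ (an open subscheme of $V$), and write $q\colon \mc{U}_{V}\to\mc{U}$ and $j\colon \mc{U}_{V}\hookrightarrow V$ for the induced maps, so that $j,q,i,p$ sit in a cartesian square. As $p$ is faithfully flat, $p^{*}$ is exact and faithful on quasi-coherent sheaves; in particular a morphism of quasi-coherent $\mc{O}_{\mc{X}}$-modules is injective precisely when its pullback along $p$ is. It therefore suffices to prove that $p^{*}\sigma$ is injective.

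The next step is to identify $p^{*}\sigma$ with the adjunction map on $V$. Since $i$ is an open immersion into a noetherian stack it is quasi-compact and quasi-separated, and $p$ is flat, so flat base change yields a canonical isomorphism $p^{*}i_{*}i^{*}E\cong j_{*}q^{*}i^{*}E=j_{*}j^{*}E_{V}$, where $E_{V}:=p^{*}E$; moreover, by the construction of the base change morphism via adjunctions, the pullback along $p$ of the unit $E\to i_{*}i^{*}E$ of $i^{*}\dashv i_{*}$ corresponds under this isomorphism to the unit $E_{V}\to j_{*}j^{*}E_{V}$. Hence $p^{*}\sigma$ is exactly the adjunction map for $E_{V}$ relative to the open immersion $j$. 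Now $E_{V}$ is pure of dimension $d$: purity of a coherent sheaf on a DM stack means purity of its pullback to any \'{e}tale presentation, and for the \'{e}tale surjection $p$ one has $\Supp(p^{*}E)=p^{-1}(\Supp E)$, of the same dimension as $\Supp E$, so $\dim(\Supp E_{V})=d$; similarly $\dim(V\setminus\mc{U}_{V})=\dim(\mc{X}\setminus\mc{U})<d$. Applying Lemma \ref{lem.pure} to $E_{V}$ on the noetherian scheme $V$ shows that $E_{V}\to j_{*}j^{*}E_{V}$ is injective, i.e.\ $p^{*}\sigma$ is injective, and we conclude by the first paragraph.

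The only point that requires a little care is the compatibility invoked in the second paragraph between the flat base change isomorphism and the adjunction units; this is a standard fact about the base change formalism, but it is the step one should not gloss over. Everything else — existence of the \'{e}tale atlas, faithful flatness detecting injectivity, and the \'{e}tale invariance of dimension and of purity — is routine.
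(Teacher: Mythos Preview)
Your argument is correct and is exactly the natural reduction one expects: the paper itself omits the proof (both this lemma and Lemma~\ref{lem.pure} are stated with a \qed\ and the remark that the proofs are easy and left to the reader), and the intended route is precisely to pull back to an \'etale atlas and invoke Lemma~\ref{lem.pure} on the scheme $V$. The only mildly nontrivial points---flat base change for $i_*$ against the \'etale $p$, compatibility of the base change isomorphism with the adjunction units, and the \'etale invariance of dimension and of purity---are exactly the ones you flag, and they hold in this noetherian DM setting.
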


\begin{proof}[Proof of Proposition \ref{prop.pure}]
Assume first that $G=\Psi(E)$ is pure of dimension $d$, and fix $f\colon U\to X$ \'{e}tale, and $b \in B^{\wt}(U)$. Moreover call $\m{U}$ the base change of $\m{X}$ to $U$, $\pi_{U}\colon \m{U}\to U$ the projection, and $f_{\m{U}}$ the base change of $f$.

We will show that the coherent sheaf $E_{b}=(\pi_{U})_{*}(f_{\m{U}}^{*}(G)\otimes \Lambda_{b}) \in \Coh(U)$ is pure of dimension $d$. Set $G'=f_{\m{U}}^{*}(G)$, and notice that this is still pure of dimension $d$ on $\m{U}$ (it may be zero).

Note that since $\pi_{U}$ is proper and quasi-finite, if $E_{b}$ is not zero (in which case there is nothing to prove) then it has dimension $d$. Take an open subset $V\subseteq U$ with $\dim(U\setminus V)<d$, and call $i\colon V\to U$ the inclusion. We will show that the adjunction map $\sigma\colon E_{b}\to i_{*}i^{*}E_{b}$ is injective, and by Lemma \ref{lem.pure} this will prove that $E_{b}$ is pure of dimension $d$.

Call $\m{V}$ the fiber product $V\times_{U}\m{U}$, denote by $j\colon \m{V}\to \m{U}$ the base change of the inclusion $V\subseteq U$ and $\pi_{V}\colon \m{V}\to V$ the base change of the projection. The map $j$ is an open immersion, so $\m{V}$ is an open substack of $\m{U}$, with complement of dimension less than $d$. Since $G'$ is pure, by Lemma \ref{cor.pure} the map $\sigma'\colon G'\to j_{*}j^{*}G'$ is injective.

Now the pushforward $(\pi_{U})_{*}$ and tensor product with $\Lambda_{b}$ are both exact functors, so the induced map
$$
E_{b}=(\pi_{U})_{*}(G'\otimes \Lambda_{b}) \to (\pi_{U})_{*}(j_{*}j^{*}(G')\otimes \Lambda_{b})
$$
is still injective. Now note that by the projection formula we have
$$
j_{*}j^{*}(G')\otimes \Lambda_{b} \simeq j_{*}(j^{*}(G')\otimes j^{*}(\Lambda_{b})) \simeq j_{*}j^{*}(G'\otimes\Lambda_{b}).
$$
From the cartesian diagram
$$
\xymatrix{
\m{V}\ar[r]^{j}\ar[d]_{\pi_{V}} & \m{U} \ar[d]^{\pi_{U}}\\
V \ar[r]^{i} & U
}
$$
we first see that $(\pi_{U})_{*}j_{*}=i_{*}(\pi_{V})_{*}$, and since $i$ is flat, by base change (Proposition $1.5$ of \cite{nironi}) we also have a canonical isomorphism $(\pi_{V})_{*}j^{*} \simeq i^{*}(\pi_{U})_{*}$.

By putting everything together we have that the composite
$$
E_{b}=(\pi_{U})_{*}(G'\otimes \Lambda_{b}) \to (\pi_{U})_{*}j_{*}j^{*}(G'\otimes \Lambda_{b}) \simeq i_{*}i^{*}(\pi_{U})_{*}(G'\otimes \Lambda_{b})  \simeq i_{*}i^{*}E_{b},
$$
which coincides with the adjunction map of $E_{b}$, is injective, and this is what we had to show.

Note that some of these $E_{b}$ can be zero, which is consistent with our convention about the zero sheaf, but if $E$ is not zero as a parabolic sheaf, then necessarily we will have $E_{b}\neq 0$ for some $U\to X$ \'{e}tale and $b \in B^{\wt}(U)$.

Now for the converse, assume that all the $E_{b}$'s are pure of dimension $d$, and that $E$ is not zero (otherwise there is nothing to prove). If by contradiction $\Psi(E)$ is not pure, then there is a non zero pure subsheaf $\Psi(G)\subseteq \Psi(E)$, of dimension strictly less than $d$, say $d' \geq 0$. Now pick $U\to X$ \'{e}tale and $b\in B^{\wt}(U)$ such that $G_{b}\neq 0$.

By the first part of the proof $0\neq G_{b}\subseteq E_{b}$ is pure of dimension $d'$, so $E_{b}$ is non zero and thus of dimension $d>d'$. In particular $E_{b}$ is not pure, and this contradicts the assumption.
\end{proof}

In case the log structure of $X$ is generically trivial, the maps between the pieces of a torsion-free parabolic sheaf are injective.

\begin{prop}\label{gen.trivial.inj}
Let $X$ be a noetherian log scheme with generically trivial log structure and with a chart $P\to \Div(X)$, and let $j\colon P\to Q$ be a Kummer extension of fine saturated monoids. Take a torsion-free parabolic sheaf $E \in \Par(X,j)$. Then for any pair $q,q' \in Q^{\wt}$ such that $q\leq q'$, the morphism $E_{q}\to E_{q'}$ is injective.
\end{prop}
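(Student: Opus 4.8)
The plan is to combine two ingredients: the observation (recorded right after the proof of Theorem \ref{BV}) that over the locus where the log structure is trivial the transition maps of a parabolic sheaf are isomorphisms, and the purity of the individual pieces $E_q$ provided by Proposition \ref{prop.pure}. The point is that a map between torsion-free sheaves on $X$ that is an isomorphism over a dense open with small complement is automatically injective.

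First I would fix $q\leq q'$ in $Q^{\wt}$ and let $\phi\colon E_q\to E_{q'}$ be the associated map. If $E_q=0$ there is nothing to prove, so assume $E_q\neq 0$. Let $U\subseteq X$ be the maximal open subscheme over which the log structure is trivial; by hypothesis $U$ is dense in $X$, and over $U$ the root stack $\radice{Q}{X}$ restricts to $U$ itself, so that $\phi|_U\colon E_q|_U\to E_{q'}|_U$ is an isomorphism (this is exactly the ``generically isomorphism'' remark following Theorem \ref{BV}).

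Next I would use torsion-freeness. Since $E$ is torsion-free, $\Psi(E)$ is pure of dimension $d:=\dim\radice{Q}{X}=\dim X$ on the root stack (the equality of dimensions because $\radice{Q}{X}\to X$ is proper and quasi-finite by \ref{root.stack.algebraic}), so by Proposition \ref{prop.pure} each piece $E_q$ is pure of dimension $d$ on $X$; being non-zero, $E_q$ then has dimension exactly $d$. Because $U$ is dense and $X$ is noetherian, the closed complement $Z=X\setminus U$ contains no irreducible component of $X$, hence $\dim Z<d$. Lemma \ref{lem.pure} therefore applies and gives that the adjunction map $\sigma\colon E_q\to i_*i^*E_q$ is injective, where $i\colon U\hookrightarrow X$ is the open immersion.

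Finally I would conclude by naturality of the unit of the adjunction $i^*\dashv i_*$: writing $\sigma'\colon E_{q'}\to i_*i^*E_{q'}$ for the adjunction map of $E_{q'}$, we have $\sigma'\circ\phi=(i_*i^*\phi)\circ\sigma$. By the second step $i^*\phi=\phi|_U$ is an isomorphism, hence so is $i_*i^*\phi$; combined with the injectivity of $\sigma$ this makes $(i_*i^*\phi)\circ\sigma$ injective, and therefore $\phi$ is injective as well. I do not expect a genuine obstacle here: the argument is a routine assembly of the generically-trivial remark, the parabolic characterization of purity (Proposition \ref{prop.pure}), and Lemma \ref{lem.pure}. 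The only points needing a little care are verifying that $\dim(X\setminus U)<\dim X$ (which uses density of $U$ together with noetherianity, so that the complement omits every irreducible component, in particular every top-dimensional one) and separating off the degenerate case $E_q=0$, which is harmless given the convention that the zero sheaf is pure of arbitrary dimension.
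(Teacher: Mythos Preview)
Your proposal is correct and follows essentially the same approach as the paper: both use that $\phi$ restricts to an isomorphism over the dense open $U$ where the log structure is trivial, and then invoke purity of $E_q$ (via Proposition \ref{prop.pure}) to rule out a non-zero kernel. The only presentational difference is that the paper argues directly that $\ker\phi$, being supported on $X\setminus U$, has dimension $<d$ and hence vanishes by purity, whereas you route the same idea through Lemma \ref{lem.pure} and the adjunction $i^*\dashv i_*$; this is a slight detour but entirely valid.
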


\begin{proof}
If $E_{q}$ is zero there is nothing to prove.

Otherwise, by assumption we have a dense open subscheme $U\subseteq X$ on which the log structure is trivial. Consequently the restriction of the projection $\radice{Q}{X}\to X$ to $U$ is an isomorphism $\radice{Q}{U} \simeq U$, and the morphism $E_{q}\to E_{q'}$ is an isomorphism on $U$ (this follows for example from the construction of the BV equivalence \ref{BV}). If $K$ is the kernel of this morphism, it follows that $K$ has dimension strictly less then the dimension of $X$, but then since $E_{q}$ is pure of maximal dimension by \ref{prop.pure}, we must have $K=0$, i.e. the map is injective.
\end{proof}

Finally, we give the definition of a family of pure parabolic sheaves.

\begin{definition}
A \emph{family of pure $d$-dimensional parabolic sheaves} on $X$ with weights in $B/A$ over a base scheme $T$ is a finitely presented sheaf $E \in \Par(X_T,j_T)$ that is flat over $T$, and such that for any geometric point $t \to T$, the pullback $E_{t}$ on $X_{t}$ is pure of dimension $d$.
\end{definition}

\subsection{(semi-)stability of parabolic sheaves and moduli spaces}\label{sec.3.3}

Let us now specify the situation and add some hypotheses in order to talk about moduli of sheaves.

\begin{assumptions}
From now on $X$ will be projective over $k$, with a fixed very ample line bundle $\mc{O}_{X}(1)$. This polarization will be fixed throughout all that follows, so we will omit it from the notations.

\end{assumptions}

The strategy now is to reduce the moduli problem for parabolic sheaves on $X$ to moduli of coherent sheaves on the root stack $\m{X}$. This was basically done in the previous section: families of pure $d$-dimensional parabolic sheaves correspond to families of pure $d$-dimensional coherent sheaves on the corresponding root stack.

But as in the case of sheaves on schemes, if we want to construct moduli \emph{spaces} we need to come up with some good notion of stability. For this we apply  the theory for moduli of coherent sheaves on algebraic stacks of \cite{nironi}. His hypothesis are that the stack is a tame DM stack with projective coarse moduli space, together with the choice of a generating sheaf (see below). 

Instead of giving a separate summary of this theory, we will recall it along the way, applying it directly to our case.

The basic idea is to define a Hilbert polynomial (and then a notion of stability) for coherent sheaves on $\m{X}$, and the first thing that comes to mind is to push forward to the moduli space, and take the Hilbert polynomial there. This doesn't work, because pushing forward loses too much information. The first step to fix this is to find an appropriate \emph{generating sheaf} on the root stack $\m{X}$.

\begin{definition}
A locally free sheaf $\mc{E}$ of finite rank on a tame Artin stack $\m{X}$ is a \emph{generating sheaf} if for any geometric point $x \to \m{X}$ the fiber $\mc{E}_{x}$ contains every irreducible representation of $\Stab(x)$.
\end{definition}

Tensoring by $\mc{E}^{\vee}$ before taking the pushforward fixes the problem. We will recall later of such a sheaf allows to define a good notion of (semi-)stability. For generalities about generating sheaves see \cite{nironi} and the references therein.

Although root stacks of a fs log scheme are probably always global quotient stacks, so that they will have generating sheaves, in general there does not seem to be a canonical choice of such a sheaf. We are able to single out a distinguished generating sheaf in presence of a global chart for the logarithmic structure and the system of denominators. This global chart plays a role analogous to that of the polarization $\mc{O}_{X}(1)$ in the classical moduli theory of coherent sheaves on a projective scheme. For this reason we adopt the following terminology.

\begin{definition}
A \emph{polarized log scheme} over $k$ is a fs log scheme $X$ over $k$, together with a polarization of the underlying scheme and a global chart for the log structure.
\end{definition}

From now on we will assume that our log scheme $X$ is polarized in this sense. The chart for the log structure will be usually denoted by $P\to \Div(X)$.

We will focus on the construction of the generating sheaf shortly. Once we have such a sheaf, we can define what Nironi calls the \emph{modified Hilbert polynomial}. We will drop the adjective ``modified'' for brevity.

\begin{definition}
The \emph{Hilbert polynomial} (with respect to $\mc{E}$) of a coherent sheaf $F \in \Coh(\m{X})$ is the Hilbert polynomial
$$
P_{\mc{E}}(F)=P(\pi_{*}(F\otimes\mc{E}^{\vee})) \in \bb{Q}[m]
$$
of the coherent sheaf $\pi_{*}(F\otimes \mc{E}^{\vee})$ on $X$, with respect to $\mc{O}_{X}(1)$.
\end{definition}

\begin{rmk}
Of course the dual in $\mc{E}^{\vee}$ is not essential, and could be removed by dualizing the choice of the generating sheaf below. We decided to leave it in order to keep the notations from \cite{nironi}.
\end{rmk}

Note that, since $\pi_{*}(-\otimes \mc{E}^{\vee})$ preserves the dimension (see \cite[Proposition 3.6]{nironi}), $P_{\mc{E}}(F)$ will be a polynomial of degree $d$ where $d=\dim(F)$, and as usual we can write it as
$$
P_{\mc{E}}(F)(m)=\sum_{i=0}^{d}\frac{\alpha^{i}(F)}{i!}m^{i}
$$
where $\alpha^{i}(F)$ are rational numbers that also depend on $\mc{E}$.

The number $\alpha^{d}(F)$, which is always positive, will be called the \emph{multiplicity} of the sheaf $F$. If $X$ is integral and $F$ has maximal dimension, it is closely related to the rank of the sheaf $\pi_{*}(F\otimes \mc{E}^{\vee})$ on $X$.

\begin{definition}\label{gen.slope}
The \emph{reduced Hilbert polynomial} or \emph{(generalized) slope} of a coherent sheaf $F \in \Coh(\m{X})$ is the polynomial
$$
p_{\mc{E}}(F)=\frac{P_{\mc{E}}(F)}{\alpha^{d}(F)}=\frac{1}{d!}m^{d}+\hdots+ \frac{\alpha^{0}(F)}{\alpha^{d}(F)} \in \bb{Q}[x].
$$
\end{definition}

\begin{rmk}
We are aware that the word ``slope'' is usually reserved for the quotient
$$
\mu(F)=\frac{\alpha^{d-1}(F)}{\alpha^{d}(F)},
$$
that in the case of curves is closely related to the ratio $\deg(F)/\rk(F)$, but nonetheless in this document we will use it to mean the reduced Hilbert polynomial, since the slope $\mu(F)$ will not play a significant role here.
\end{rmk}

This slope will give a notion of (semi-)stable parabolic sheaves, and we will restrict to that class in order to get well-behaved moduli stacks and moduli spaces. Before describing how this happens (which we will do in \ref{results}), let us focus on the choice of the generating sheaf.

\subsubsection{Choice of the generating sheaf: the case of a variety with a divisor}\label{maru.yoko}

To get some clues for the choice of the generating sheaf, let us look at the case of a smooth projective variety $X$ over $k$, equipped with the log structure induced by an effective Cartier divisor. By this, in this case, we mean the DF structure induced by the functor $\bb{N}\to \Div(X)$ sending $1$ to $(\mc{O}_{X}(D),s)$, and \emph{not} the divisorial log structure of \ref{divisorial.log.str}.

In this situation, moduli spaces of parabolic sheaves with rational weights have been constructed in \cite{maruyama-yokogawa}, by adapting the GIT construction of moduli spaces of (semi-)stable coherent sheaves on a projective scheme. Their result in turn generalizes the first results of Seshadri (\cite{seshadri}) on curves.

Let us recall their definition of a parabolic sheaf. Let $X$ be a projective smooth scheme over $k$, and $D\subseteq X$ an effective Cartier divisor.

\begin{definition}
A \emph{MY-parabolic sheaf} $E_{*}$ on $X$ is given by the following data:
\begin{itemize}
\item a coherent torsion-free sheaf $E \in \Coh(X)$,
\item a sequence of real numbers $a_{1},\hdots,a_{k}$ called \emph{weights}, such that $0\leq a_{1}<a_{2}<\cdots<a_{k}<1$, and
\item a filtration $E(-D)=F_{k+1}(E)\subset F_{k}(E)\subset \cdots \subset F_{1}(E)=E$ of $E$, where $E(-D)$ is the image of the natural map $\mc{O}_{X}(-D)\otimes E\to E$.
\end{itemize}
\end{definition}

The \emph{rank} of $E_{*}$ will be the rank of the torsion-free sheaf $E$. We will assume that the weights $a_{1},\hdots, a_{k}$ are rational numbers. This is crucial for the correspondence with quasi-coherent sheaves on a root stack, and also for the moduli theory developed in \cite{maruyama-yokogawa} (the rationality assumption is on page $94$). Moreover, in light of what follows it is more convenient to think about the opposites $-1<-a_{k}<\cdots <-a_{1}\leq 0$.

Let us now describe explicitly how this definition is connected with our definition of a parabolic sheaf.

As we already remarked, the divisor $D$ induces a log structure on $X$, given by the morphism $L\colon \bb{N}\to \Div(X)$ that sends $1 \in \bb{N}$ to $(\mc{O}_{X}(D),s)$, where $s$ is the section of $\mc{O}_{X}(D)$ corresponding to the natural map $\mc{O}_{X}\to \mc{O}_{X}(D)$. Given a MY-parabolic sheaf $E$, let us take $n$ to be the least common multiple of the denominators of the weights $a_{i}$, and consider the system of denominators $j\colon \bb{N}\to \frac{1}{n}\bb{N}$.

Now we define a parabolic sheaf, that we still denote by $E \in \Par(X,j)$, as follows. Set $a_{i}=\frac{b_{i}}{n}$ with $b_{i}\in \bb{N}$, and for $q=\frac{a}{n}$ with $-n\leq  a < 0$ define
$$
E_{\frac{a}{n}}=F_{i}(E) \hspace{1cm} \mbox{           where           }  \hspace{1cm}-{b_{i}}\leq {a}< -{b_{i-1}},
$$
with the convention that $b_{0}=0$ and $b_{k+1}=n$. For an arbitrary $\frac{a}{n} \in \bb{Q}$, we set 
$$
E_{\frac{a}{n}}=E_{\frac{a'}{n}}\otimes \mc{O}_{X}(bD) \hspace{1cm} \mbox{ where } \hspace{1cm} -n\leq a'< 0,\;\; b \in \bb{Z} \hspace{0.2cm} \mbox{ and } \hspace{0.2cm}\frac{a}{n}=\frac{a'}{n}+b,
$$
and for $\frac{a}{n}\leq \frac{a'}{n}$ there is an obvious morphism $E_{\frac{a}{n}}\to E_{\frac{a'}{n}}$, which is either the identity or an inclusion. Moreover by construction there is a pseudo-periods isomorphism, and this gives a parabolic sheaf in our sense.

Conversely, given a parabolic sheaf $E \in \Par(X,j)$ such that the maps $E_{\frac{a}{n}}\to E_{\frac{a'}{n}}$ are all injective, we obtain a MY-parabolic sheaf by taking as weights the opposites of the numbers $\frac{i}{n} \in \bb{Q}\cap \left.( -1,0 \right]$ such that $E_{\frac{i-1}{n}}\to E_{\frac{i}{n}}$ is not an isomorphism, and the filtration consisting of the sheaves $E_{\frac{a}{n}}$ with $-n\leq a \leq 0$, but without repetitions.

This gives an equivalence between MY-parabolic sheaves and parabolic sheaves with injective maps. The injectivity condition is implied by torsion-freeness of $E$, see Proposition \ref{gen.trivial.inj}.

From this description we see that the weights of the MY definition are nothing else than (the opposites of) what we could call ``jumping numbers'' for a parabolic sheaf with injective maps, i.e. the numbers $\frac{i}{n} \in \left.(-1,0\right]$ where the subsheaf $E_{\frac{i}{n}}\subseteq E$ ``jumps'' with respect to the preceding one. 

\begin{rmk}
If $E_{*}$ is a MY-parabolic sheaf and $L \in \Pic(X)$ is an invertible sheaf, then there is a natural MY-parabolic sheaf $E_{*}\otimes L$ obtained by tensoring everything with $L$. In particular this gives for any $m \in \bb{Z}$ the MY-parabolic sheaf $E_{*}(m)=E_{*}\otimes \mc{O}_{X}(m)$.
\end{rmk}

In \cite{maruyama-yokogawa}, in order to construct moduli spaces, they define a parabolic Hilbert polynomial. Let us briefly recall their definitions and results.

\begin{definition}
The \emph{MY-parabolic Euler characteristic} of a MY-parabolic sheaf $E_{*}$ is the rational number
$$
\chi_{MY}(E_{*})= \chi(E(-D))+\sum_{i=1}^{k} a_{i}\chi(G_{i}),
$$
where $G_{i}$ is the quotient $F_{i}(E)/F_{i+1}(E)$.

The \emph{MY-parabolic Hilbert polynomial} of $E_{*}$ is the polynomial with rational coefficients given by
$$
P_{MY}(E_{*})(m)=\chi_{MY}(E_{*}(m)),
$$
where $E_{*}(m)=E_{*}\otimes \mc{O}_{X}(m)$.

The \emph{MY-reduced parabolic Hilbert polynomial} of $E_{*}$ is the polynomial
$$
p_{MY}(E_{*})=\frac{P_{MY}(E_{*})}{\rk(E_{*})}.
$$
\end{definition}

\begin{rmk}
The rank of $E_{*}$ is the rank of the torsion-free sheaf $E$. Note that, since $G_{i}=F_{i}(E)/F_{i+1}(E)$ is generically zero, if instead of using $\rk(E_{*})$ we use the leading coefficient of $P_{MY}(E_{*})$ (as one does when dealing with pure sheaves, not necessarily torsion-free), we would get a scalar multiple of $p_{MY}(E_{*})$, which of course would then give the same stability condition.
\end{rmk}

Maruyama and Yokogawa define then (semi-)stability by defining an appropriate notion of subsheaf, and requiring that 
$$
p_{MY}(F_{*})\mbox{ } (\leq) \mbox{ }p_{MY}(E_{*}).
$$
for every subsheaf $F_{*}\subseteq E_{*}$.

The resulting notion has many properties resembling the ones of classical (semi-)stability for coherent sheaves, for example the existence of Harder-Narasimhan and Jordan-H\"{o}lder filtrations. Moreover by restricting to (semi-)stable sheaves one can construct moduli spaces (Theorem 3.6 of \cite{maruyama-yokogawa} and Theorems 2.9 and 4.6 of \cite{yokogawa}).

To extend these result to general log schemes we aim to find in this particular case a generating sheaf $\mc{E}$ on $\m{X}=\radice{n} X=\radice{\frac{1}{n}\bb{N}}X$ that gives the parabolic Hilbert polynomial of Maruyama and Yokogawa, where the parabolic sheaves have weights in $\frac{1}{n}\bb{N}$.

A little thought produces the locally free sheaf
$$
\mc{E}=\mc{O}_{\m{X}}(\mc{D})\oplus \mc{O}_{\m{X}}(2\mc{D}) \cdots\oplus \mc{O}_{\m{X}}(n\mc{D})=\bigoplus_{i=1}^{n} \mc{O}_{\m{X}}(i\mc{D}),
$$
where $\mc{D}$ is the universal root on $\m{X}$ of the pullback of the divisor $D$: a simple calculation shows that $p_{MY}(E_{*})$ and $P_{\mc{E}}(E)$ coincide up to a constant factor $\frac{1}{n}$ with this choice of $\mc{E}$.

This was basically already observed by Nironi (\cite[Section 7.2]{nironi}).

\begin{rmk}
This says that the notion of (semi-)stability for MY-parabolic sheaves is equivalent to the notion of stability when we use the generating sheaf $\mc{E}$ introduced above, so the two moduli theories that we get should be the same.

There is a minor detail, though, related to the fact that by working on the root stack $\m{X}=\radice{n} X$ we only bound the denominators of the weights (in the divisibility sense), when in \cite{maruyama-yokogawa} and \cite{yokogawa}, the authors fix the jumps of the parabolic sheaves, and the parabolic Hilbert polynomials of the quotients $F_{i}(E)/F_{i+1}(E)$.

One can check that the moduli stacks and spaces of Maruyama and Yokogawa are connected components of the ones that we will construct.
\end{rmk}

\subsubsection{Choice of the generating sheaf: the general case}\label{global.sec}

Now we turn to the general case of a log scheme $X$ with a global chart $L\colon P\to \Div(X)$, and a Kummer extension $j\colon P\to Q$, that gives a chart for the system of denominators $A\to B$. The previous example suggests the following construction: since $Q$ is sharp and fine, its finite number of indecomposable elements are a minimal set of generators (see for example \cite[Proposition 2.1.2]{ogus}), call them $q_{1},\dots,q_{r}$. Moreover, call $d_{i}$ the order of the image of $q_{i}\in Q^{\gp}$ in the quotient $Q^{\gp}/P^{\gp}$.

If $\Lambda \colon Q\to \Div(\m{X})$ is the universal lifting of the log structure of $X$, for every $q_{i}$ we have an associated invertible sheaf $\Lambda_{i}=\Lambda(q_{i})$ on $\m{X}$, and we consider the locally free sheaf
$$
\displaystyle\mc{E}=\mc{E}_{Q/P}=\bigoplus_{1\leq a_{i}\leq d_{i}} \Lambda\left(\sum_{i}a_{i}q_{i}\right)=\bigotimes_{i=1,\dots,r} \left(\bigoplus_{j=1,\dots,d_{i}} \Lambda_{i}^{\otimes j}\right).
$$  
Note that if $X$ is the log scheme given by a variety with an effective Cartier divisor, then this sheaf corresponds to the one described in the last section.

We will denote this sheaf by $\mc{E}$ when the Kummer extension is clear, and by $\mc{E}_{Q/P}$ when it needs to be specified. In particular we will write $\mc{E}_{n}$ for $\mc{E}_{\frac{1}{n}P/P}$, or more generally $\mc{E}_{\frac{1}{n}Q/P}$ for a fixed Kummer extension $P\subseteq Q$, which will be clear from the context.

\begin{rmk}\label{gen.choice}
One could argue that the sheaf
$$
\displaystyle\mc{E}'=\bigoplus_{0\leq a_{i}< d_{i}} \Lambda\left(\sum_{i}a_{i}q_{i}\right)=\bigotimes_{i=1,\dots,r} \left(\bigoplus_{j=0,\dots,d_{i}-1}\Lambda_i^{\otimes j}\right).
$$
in which we take $\mc{O}_{\m{X}}$ instead of $\Lambda_{i}^{\otimes d_{i}}$ (which is the pullback of something from $X$, so corresponds to the trivial representation of the stabilizer at any point of $\m{X}$) in each summand would be somewhat more natural. In fact one could twist different pieces of the direct sum with an invertible sheaf coming from $X$ and still have a perfectly good generating sheaf.

The choice of the one we singled out is guided by the fact that in the case of a variety with a divisor it gives back the (semi-)stability of Maruyama and Yokogawa, and, as we will see in the next section, it will allow semi-stability to be preserved after changing denominators, something that does not happen for example with the generating sheaf written down in the last formula. Actually we will also need to use the alternative sheaf $\mc{E}'$ in that instance, but only as an accessory. 
\end{rmk}

\begin{prop}\label{gen.sheaf}
The locally free sheaf $\mc{E}$ is a generating sheaf on $\m{X}$.
\end{prop}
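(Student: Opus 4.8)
The plan is to verify the generating-sheaf condition pointwise, using the explicit description of the root stack as a quotient stack supplied by the global chart. Since the condition is precisely that $\mc{E}_x$ contain every irreducible representation of $\Stab(x)$ for every geometric point $x\to\m{X}$, there is nothing to reduce: it suffices to know the structure of the stabilizer groups and the way the summands of $\mc{E}$ restrict to them. Writing $\m{X}\cong\bigl[(E\times_{\spec k[P]}\spec k[Q])/\widehat Q\bigr]$ as in the discussion following Proposition \ref{local.model.root.stack}, one has, for $x\to\m{X}$ lying over a geometric point $\bar x\to X$ with log stalk $B_{\bar x}$ (a quotient of $Q$ by a face), an identification $\Stab(x)\cong D[B^{\gp}_{\bar x}/A^{\gp}_{\bar x}]$, and the surjection $Q^{\gp}/P^{\gp}\twoheadrightarrow B^{\gp}_{\bar x}/A^{\gp}_{\bar x}$ exhibits it as a closed subgroup of $\mu_{Q/P}=D[Q^{\gp}/P^{\gp}]$ (see \cite{borne-vistoli}). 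Since $\m{X}$ is Deligne--Mumford, $|Q^{\gp}/P^{\gp}|$ is prime to $\cha(k)$, so $\Stab(x)$ is linearly reductive, its irreducible representations are one-dimensional, and they are precisely the characters, indexed by the group $B^{\gp}_{\bar x}/A^{\gp}_{\bar x}$.

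Next I would observe that each summand $\Lambda\bigl(\sum_i a_iq_i\bigr)$ of $\mc{E}$ restricts, on the fiber at $x$, to the character of $\Stab(x)$ given by the image of $\sum_i a_iq_i$ under $Q^{\gp}\to B^{\gp}_{\bar x}/A^{\gp}_{\bar x}$; this is immediate from the local model, $\Lambda$ being the universal lifting of the log structure. Hence the statement reduces to the combinatorial assertion that the subset $\{\sum_i a_iq_i \bmod P^{\gp} : 1\leq a_i\leq d_i\}$ of $C:=Q^{\gp}/P^{\gp}$ surjects onto $B^{\gp}_{\bar x}/A^{\gp}_{\bar x}$; as the latter is a quotient of $C$, it is enough to show this subset is all of $C$.

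Finally I would settle that combinatorial point: if $\bar q_i$ denotes the image of $q_i$ in $C$, then by definition $d_i=\ord(\bar q_i)$, so $d_i\bar q_i=0$ and $\{a\bar q_i : 1\leq a\leq d_i\}$ is exactly the cyclic subgroup $\langle\bar q_i\rangle\subseteq C$; summing over $i$, the subset in question equals $\langle\bar q_1\rangle+\cdots+\langle\bar q_r\rangle=\langle\bar q_1,\dots,\bar q_r\rangle$, which is all of $C$ because the indecomposables $q_1,\dots,q_r$ generate $Q$, hence $Q^{\gp}$, hence their images generate $C$. I expect the only genuinely delicate point to be the first step, namely pinning down the stabilizer groups of $\radice{Q}{X}$ and the restriction of the tautological bundles $\Lambda(q)$ to them; but this is entirely contained in \cite{borne-vistoli}, so once it is quoted correctly the rest is elementary arithmetic of finite abelian groups.
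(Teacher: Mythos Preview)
Your proof is correct and follows essentially the same approach as the paper's: identify the characters of the stabilizer groups with (a quotient of) $Q^{\gp}/P^{\gp}$, observe that each summand $\Lambda(\sum_i a_iq_i)$ restricts to the character given by the image of $\sum_i a_iq_i$, and check that these images exhaust $Q^{\gp}/P^{\gp}$ because the indecomposables $q_i$ generate it. The paper's proof is a two-line sketch of exactly this; you have simply spelled out the combinatorics and the identification of the stabilizers more carefully.
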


\begin{proof}
This follows with a routine check from the definition of the generating sheaf: each of the invertible sheaves $\Lambda(\sum_{i} a_{i}q_{i})$ carries the action of a single character of the stabilizer group of a point of $\m{X}$, and the fact that the $q_{i}$'s generate the quotient $Q^{\gp}/P^{\gp}$ assures that all characters appear.

In more detail, recall from the discussion following Proposition \ref{local.model.root.stack} that the global chart $P\to Q$ for the system of denominators gives the following description for the stack of roots: the map $X\to [\spec(k[P])/\what{P}]$ corresponding to the chart $P\to \Div(X)$ gives a $\what{P}$-torsor $\eta\colon E\to X$, and $\m{X}$ is isomorphic to the quotient stack $[E\times_{\spec(k[P])}\spec(k[Q]) /\what{Q}]$, where the action of $\what{Q}$ on the first factor is induced by the action of $\what{P}$ on $E$ and the natural homomorphism $\what{Q}\to \what{P}$. In particular a quasi-coherent sheaf on $\m{X}$ corresponds to a $\what{Q}$-equivariant  quasi-coherent sheaf on $E\times_{\spec(k[P])}\spec(k[Q])$, or equivalently to a $Q^{\gp}$-graded quasi-coherent sheaf of modules over the sheaf of rings $B=A\otimes_{k[P]} k[Q]$, where $A=\eta_{*}\mc{O}_{E}$.

Now fix a geometric point $p \to \m{X}$. We will show that the fiber $\mathcal{E}_{p}=p^{*}\mc{E}$ at $p$ of $\mathcal{E}$ contains every irreducible representation of the stabilizer group $\Stab(p)\subseteq \mu_{Q/P} \subseteq \what{Q}$, where $\mu_{Q/P}$ is the Cartier dual of the cokernel $C$ of $P^{\gp}\to Q^{\gp}$ (as in the discussion preceding \ref{root.stack.algebraic}). Note that, being $\Stab(p)$ a closed subgroup of a diagonalizable group we will have $\Stab(p)=\D (M)$ for a quotient $M$ of the group $C$, and the action of $\Stab(p)$ on $\mc{E}_{p}$ will correspond to an $M$-grading. Moreover since $\Stab(p)$ is diagonalizable, irreducible representations correspond to characters, so what we need to verify is that in the $M$-grading every piece is non zero.

This grading is obtained as follows: the $Q^{\gp}$-grading on the sheaf corresponding to $\mc{E}$ on $E\times_{\spec(k[P])}\spec(k[Q])$ is inherited by the various summands, and by construction the sheaf corresponding to $\Lambda_{i}=\Lambda(q_{i})$ is in degree $q_{i}$. This gives a $Q^{\gp}$ grading on $\mc{E}_{p}$ by pulling back, and we finally get the $M$-grading by means of the homomorphism $Q^{\gp}\to C\to M$.

More explicitly, following through the above we find
$$
\mc{E}_{p}\cong \bigoplus_{m \in M} k(p)^{\oplus{a(m)}}
$$
where $a(m)$ is the number of $r$-tuples $(e_{1},\hdots, e_{r})$ of integers such that $0<e_{i}\leq d_{i}$ and $e_{1}m_{1}+\hdots+e_{r}m_{r}=m$, where $m_{i}$ is the image of $q_{i}$ in $M$. Since the $q_{i}$'s generate $C$, the $m_{i}$'s will generate $M$ (and still have order at most $d_{i}$), so $a(m)\geq 1$ for any $m$. This means that every character of $\Stab(p)$ appears in $\mc{E}_p$, so $\mc{E}$ is a generating sheaf on $\m{X}$.
\end{proof}

This settles the choice of a generating sheaf for a polarized log scheme and a Kummer extension of monoids.
\begin{rmk}
One would hope to be able to generalize this construction to an arbitrary log scheme (i.e. without a global chart for the log structure), by patching the local generating sheaves on open subsets where there is a chart.

Unfortunately, it is not so clear how to do this, even in very natural situations, for example for the log scheme given by a projective surface $X$ with an irreducible nodal curve $D\subseteq X$.

One can generalize the construction of the generating sheaf by adding an extra piece of data (which is not guaranteed to exist in general) to the log structure, that we could call a \emph{locally constant sheaf of charts}, which is a sheaf of monoids which is locally constant, and gives a chart for the sheaf $B$ of the system of denominators $A\to B$.

Since this slightly more general situation will not play any role in the rest of the paper, we refer the interested reader to \cite[Section 3.3.3]{tesi}.
\end{rmk}

\subsubsection{Results}\label{results}

Let us describe the results that we get by applying Nironi's theory, with the choice of generating sheaf that we just described. The proof of the results that are just stated here can all be found in \cite{nironi}.

\begin{rmk}\label{characteristic}
In order to apply Nironi's theory we have to assume that the root stack $\radice{B} X$ is Deligne--\hspace{0pt}Mumford. For example, this is assured by the condition the $\cha(k)$ does not divide the order of the quotient $B_{x}^{\gp}/A_{x}^{\gp}$ for any geometric point $x\to X$ (\ref{root.stack.algebraic}). We will include this assumption from now on.

This will force us to assume that $\cha(k)=0$ in the next section, since we will have to consider a cofinal system of root stacks, and, for example if we consider the system of root stacks $\radice{n}{X}$, we would have to exclude indices divisible by some fixed prime $p$.

We remark that it seems likely that Nironi's theory also applies to tame Artin stacks, without the Deligne--\hspace{0pt}Mumford assumption. If this were true our results would hold in arbitrary characteristic.
\end{rmk}

\begin{notation}
From now on we will use the same letter to denote a coherent sheaf $E \in \Coh(\m{X})$ on the root stack and the corresponding parabolic sheaf $\Phi(E) \in \Par(X,j)$. In particular we will denote by $E_{b}$ the piece $\pi_{*}(\Phi(E)\otimes \Lambda_{b}) \in \Coh(X)$ of the parabolic sheaf corresponding to the element $b \in B^{\wt}(U)$. 

We will also be drawing parabolic sheaves more often. We recall how to think about them, in the case where there is a global chart $P\to Q$ for the system of denominators (Definition \ref{def.parabolic.sheaf.chart}): there is a quasi-coherent sheaf $E_{q}$ on $X$ on each point $q$ of the lattice $Q^{\gp}$ and maps $E_{q}\to E_{q'}$ exactly when $q\leq q'$, in the sense that there exists $q'' \in Q$ such that $q'=q+q''$. If $p \in P$, then the sheaf $E_{q+p}$ is isomorphic to $E_{q}\otimes L_{p}$, and the map $E_{q}\to E_{q+p}$ corresponds to multiplication by the distinguished section of $L_{p}$. In practice it will be enough to consider a small portion of the sheaf, which will determine it uniquely via the pseudo-periods isomorphism (see below for examples).
\end{notation}

The starting point of the moduli theory of sheaves is the definition of the generalized slope $p_{\mc{E}}(E) \in \bb{Q}[m]$ for a parabolic sheaf $E \in \Par(X,j)$ (Definition \ref{gen.slope}). We recall that it is defined as
$$
p_{\mc{E}}(E)(m)=\frac{P_{\mc{E}}(E)(m)}{\alpha^{d}(E)}=\frac{\chi(\pi_{*}(E\otimes \mc{E}^{\vee})(m))}{\alpha^{d}(E)}
$$
where $d$ is the degree of the Hilbert polynomial $P_{\mc{E}}(E)=P(\pi_{*}(E\otimes \mc{E}^{\vee}))$, and $\alpha^{d}(E)$ is $d!$ times its leading coefficient, a positive rational number.

Let us take a closer look at this Hilbert polynomial and slope. Since $\mc{E}$ is a sum of line bundles, $\pi_{*}(E\otimes \mc{E}^{\vee})$ splits as a direct sum
\begin{align*}
\pi_{*}(E\otimes \mc{E}^{\vee})&=\pi_{*} (E\otimes ( \bigoplus_{1\leq a_{i}\leq d_{i}} {\Lambda_{1}^{\otimes a_{1}}\otimes \cdots \otimes \Lambda_{r}^{\otimes a_{r}}} )^{\vee} )\\ &=\bigoplus_{1\leq a_{i}\leq d_{i}}\pi_{*}(E\otimes ({\Lambda_{1}^{\otimes a_{1}}\otimes \cdots \otimes \Lambda_{r}^{\otimes a_{r}}})^{\vee})=\bigoplus_{1\leq a_{i}\leq d_{i}} E_{-\sum a_{i}q_{i}}
\end{align*}
of pieces of $E$ in some kind of (negative) ``fundamental region'' for the Kummer extension $P\to Q$.

We will give a name to the pieces of $E$ that show up in this decomposition.

\begin{definition}
The \emph{fundamental pieces} of a parabolic sheaf $E $ are the pieces $E_{q}$ with $q=-\sum a_{i}q_{i}$ and $1\leq a_{i}\leq d_{i}$, where $q_{i}$ are the indecomposable elements of $Q$ and $d_{i}$ is the order of the image of $q_{i}$ in the quotient $Q^{\gp}/P^{\gp}$.
\end{definition}

\begin{example}\label{n2sq}
For example, if $P=\bb{N}^{2}$ and we are considering the extension $j\colon \bb{N}^{2}\to \frac{1}{2}\bb{N}^{2}$, then for a parabolic sheaf $E \in \Par(X,j)$ the fundamental pieces are the four sheaves in the ``negative unit square''
$$
\xymatrix{
  -1 & -\frac{1}{2} &   \\
 E_{-1,-\frac{1}{2}} \ar[r] & E_{-\frac{1}{2},-\frac{1}{2}} & -\frac{1}{2}\\
 E_{-1,-1}\ar[u]\ar[r]& E_{-\frac{1}{2},-1} \ar[u] & -1
}
$$
A similar description holds if $P$ is free and we are considering the extension $P\to \frac{1}{n}P$.
\end{example}

From the fundamental pieces of a parabolic sheaf we can reconstruct all of its pieces, since for any $q\in Q^{\gp}$ there is a $p \in P^{\gp}$ such that $q+p=-\sum a_{i}q_{i}$ for $1\leq a_{i}\leq d_{i}$, and consequently $E_{q} \simeq E_{-\sum a_{i}q_{i}} \otimes L_{p}^{\vee}$ (we can even reconstruct the whole sheaf from the fundamental pieces and the maps  $E_{q}\to E_{q+q_{i}}$, where $E_{q}$ is a fundamental piece).

\begin{rmk}
Note that if $P$ is not free, then it is not necessarily the case that every fundamental piece shows up exactly once in $\pi_{*}(E\otimes \mc{E}^{\vee})$.
\end{rmk}

From the splitting of $\pi_{*}(E\otimes \mc{E}^{\vee})$ described above we also see that
$$
P_{\mc{E}}(E)=P(\pi_{*}(E\otimes \mc{E}^{\vee}))=P(\bigoplus_{1\leq a_{i}\leq d_{i}} E_{-\sum a_{i}q_{i}})=\sum_{1\leq a_{i}\leq d_{i}}P(E_{-\sum a_{i}q_{i}})
$$
is the sum of the Hilbert polynomials of the fundamental pieces of $F$. Consequently, assuming that the fundamental pieces of $F$ all have dimension $d$ (and recall that by our conventions the zero sheaf is pure of any dimension), for the slope of $F$ we have
\begin{equation}\label{weighted.mean}
p_{\mc{E}}(E)=\frac{\sum_{1\leq a_{i}\leq d_{i}}P(E_{-\sum a_{i}q_{i}})}{\sum_{1\leq a_{i}\leq d_{i}}\alpha^{d}(E_{-\sum a_{i}q_{i}})}=\sum_{1\leq a_{i}\leq d_{i}} \gamma_{(a_{i})}p(E_{-\sum a_{i}q_{i}})
\end{equation}
where $d$ is the dimension of $E$, and
$$
\gamma_{(a_{i})}=\frac{\alpha^{d}(E_{-\sum a_{i}q_{i}})}{\sum_{1\leq a_{i}\leq d_{i}}\alpha^{d}(E_{-\sum a_{i}q_{i}})}
$$
are rational numbers such that $0\leq \gamma_{(a_{i})}\leq 1$ and $\sum_{1\leq a_{1}\leq d_{i}}\gamma_{(a_{i})}=1$.

In other words the slope of the parabolic sheaf $E$ (provided that all its non-zero pieces are of the same dimension) is a weighted mean of the slopes (i.e. reduced Hilbert polynomials as sheaves on $X$) of its non-zero fundamental pieces. The condition about the pieces is satisfied in particular if $E$ is pure (\ref{prop.pure}).

\begin{definition}
A parabolic sheaf is \emph{(semi-)stable} if it is pure, and for any subsheaf $G\subseteq E$ we have
$$
p_{\mc{E}}(G)\mbox{ } (\leq) \mbox{ }p_{\mc{E}}(E).
$$
\end{definition}

As is usually done in moduli theory of sheaves, we write $(\leq)$ to indicate that one should read $\leq$ when considering semi-stability, and $<$ when considering stability.

\begin{rmk}
It is natural to ask if this notion of (semi-)stability is independent of the global chart (through the construction of the associated generating sheaf). The answer, as we will show with an example, is no (see \ref{dependence}).
\end{rmk}

\begin{example}
It is clear from the preceding discussion that if the fundamental pieces of a parabolic sheaf $E$ are all (Gieseker) semi-stable (as coherent sheaves on $X$, with respect to the same polarization that we fixed at the beginning), then $E$ will be semi-stable.
\end{example}

This notion of stability has many of the properties of the classical notion of Gieseker stability on a projective scheme.

For example (semi-)stability can be checked on \emph{saturated} subsheaves $G\subseteq E$, i.e. subsheaves such that the quotient $E/G$ is pure of the same dimension as $E$. This implies that line bundles, if they are pure (essentially if $X$ is Cohen-Macaulay), are all stable. Moreover, a direct sum $E_{1}\oplus E_{2}$ is never stable, and is semi-stable if and only if $E_{1}$ and $E_{2}$ are semi-stable of the same slope.

The following two results are also identical to the corresponding ones for classical moduli theory of sheaves. They provide filtrations that ``break up'' a parabolic sheaf in semi-stable pieces, and a semi-stable parabolic sheaf in stable pieces.

\begin{prop}[Harder-Narasimhan filtration]
For any parabolic sheaf $E \in \Par(X,j)$ there is a filtration
$$
0=E_{0}\subset E_{1}\subset \cdots\subset E_{k}\subset E_{k+1}=E
$$
such that
\begin{itemize}
\item the quotients $E_{i}/E_{i-1}$ are semi-stable for $i=1,\hdots,k+1$; call $p_{i}$ the slope $p_{\mc{E}}(E_{i}/E_{i-1})$,
\item the slopes are such that $p_{1}> \cdots> p_{k+1}$.
\end{itemize}
Moreover this filtration is unique, and it is called the \emph{Harder-Narasimhan filtration} of the parabolic sheaf $E$.
\end{prop}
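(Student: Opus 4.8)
The plan is to transport the classical argument for Gieseker stability via the BV equivalence and Nironi's machinery. By Theorem \ref{BV}, a parabolic sheaf $E \in \Par(X,j)$ is the same data as a coherent sheaf $\Psi(E) \in \Coh(\m{X})$ on the tame DM stack $\m{X} = \radice{B}{X}$, and the generalized slope $p_{\mc{E}}$ is, by definition, computed from the modified Hilbert polynomial $P_{\mc{E}}(F) = P(\pi_{*}(F\otimes\mc{E}^{\vee}))$. So it suffices to establish the existence and uniqueness of the HN filtration for coherent sheaves on $\m{X}$ with respect to $p_{\mc{E}}$, and then apply $\Phi$ to carry the filtration back to the parabolic side (noting that $\Phi$, being an equivalence of abelian categories, preserves subobjects and quotients). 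Concretely: first reduce the zero-sheaf and pure cases as needed, then invoke Nironi's theory directly, since the pair $(\m{X},\mc{E})$ satisfies exactly his standing hypotheses (tame DM stack with projective coarse moduli space, plus a generating sheaf --- the latter verified in Proposition \ref{gen.sheaf}).

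The key steps, in order. First, recall from Nironi's work that on such a polarized stacky pair the modified Hilbert polynomial behaves formally like the usual one: it is additive in short exact sequences, the dimension and multiplicity are well-behaved under $\pi_{*}(-\otimes\mc{E}^{\vee})$ (\cite[Proposition 3.6]{nironi}), and hence the reduced Hilbert polynomial $p_{\mc{E}}$ defines a preordered semistability notion with the same combinatorial formalism as Gieseker stability on a projective scheme. Second, establish the two standard lemmas that make the HN formalism work: (a) every coherent sheaf $F$ (of pure dimension $d$, after discarding lower-dimensional torsion) has a \emph{maximal destabilizing subsheaf} $F_{\mathrm{max}}$ --- the unique semistable subsheaf of maximal slope among all subsheaves of maximal slope; (b) for semistable $G$ and $G'$ with $p_{\mc{E}}(G) > p_{\mc{E}}(G')$ one has $\Hom(G,G') = 0$. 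These follow because the set of slopes of subsheaves of $F$ is bounded above and attains its maximum (boundedness of the relevant families on $\m{X}$ is in \cite{nironi}), and one argues exactly as in \cite[Chapter 1]{huybrechts-lehn}. Third, construct the filtration by iterating: set $E_{1}$ to be the maximal destabilizing subsheaf of $E$, pass to $E/E_{1}$, repeat; the process terminates by Noetherianity and the strict decrease of the leading-term data. Fourth, uniqueness: given any filtration with semistable quotients of strictly decreasing slopes, a diagram chase using (b) identifies the first step with $E_{\mathrm{max}}$, and one concludes by induction on the length.

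The main obstacle --- really the only non-formal point --- is \textbf{boundedness}: one must know that the slopes $p_{\mc{E}}(G)$ for $G \subseteq E$ form a set that is bounded above and, more importantly, that the maximal-slope subsheaf actually exists as a coherent subsheaf (not merely as a supremum). On a projective scheme this is Grothendieck's lemma on bounded families of quotients; on the stack $\m{X}$ the analogous statement is precisely what Nironi proves (his boundedness results for the Quot functor on a tame DM stack with a generating sheaf), so I would cite it rather than reprove it. One subtlety worth a sentence: since our convention declares the zero sheaf pure of every dimension and since some pieces $E_{b}$ of a pure parabolic sheaf may vanish (as emphasized after Proposition \ref{prop.pure}), one should check that $\Psi(E)$ being possibly ``partially zero'' causes no trouble --- but this is automatic, as $\Psi(E)$ is an honest coherent sheaf on $\m{X}$ and the HN theory there applies verbatim. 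Everything else is the standard Harder--Narasimhan bookkeeping, transported across $\Phi$ and $\Psi$ without change.
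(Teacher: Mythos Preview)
Your proposal is correct and is essentially the paper's approach: the paper does not give its own proof but simply states that all the results in this subsection (including this one) can be found in \cite{nironi}, which is exactly what you do --- transport via the BV equivalence to $\Coh(\m{X})$ and invoke Nironi's HN theory for tame DM stacks with a generating sheaf. Your additional sketch of the standard HN mechanism (maximal destabilizing subsheaf, boundedness, iteration, uniqueness) is accurate but more detail than the paper supplies; the ``partially zero'' worry you raise is, as you correctly note, a non-issue.
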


\begin{prop}[Jordan-H\"{o}lder filtration]
For any semi-stable parabolic sheaf $E \in \Par(X,j)$ there is a filtration
$$
0=F_{0}\subset F_{1}\subset \cdots \subset F_{h}\subset F_{h+1}=E
$$
such that the quotients $F_{i}/F_{i-1}$ are stable with slope $p_{\mc{E}}(E)$ for $i=1,\hdots, h+1$.

This filtration is not unique, but the set $\{ F_{i}/F_{i-1}\}_{i=1,\hdots, h+1}$ of partial quotients of the filtration is unique, as is their direct sum
$$
\gra(E)=\bigoplus_{i=1}^{h+1} F_{i}/F_{i-1},
$$
sometimes called the \emph{associated graded sheaf} of $E$.

Any such filtration is called a \emph{Jordan-H\"{o}lder filtration} of the parabolic sheaf $E$.
\end{prop}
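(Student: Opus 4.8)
The plan is to carry the statement across the BV equivalence and reduce to Nironi's construction of Jordan--H\"older filtrations for coherent sheaves on the tame projective DM stack $\m{X}=\radice{B} X$ equipped with the generating sheaf $\mc{E}$. By the meta-definition of (semi-)stability, a parabolic sheaf $E\in\Par(X,j)$ is semi-stable if and only if $G:=\Psi(E)\in\Coh(\m{X})$ is a semi-stable coherent sheaf of reduced Hilbert polynomial $p:=p_{\mc{E}}(E)=p_{\mc{E}}(G)$; since $\Phi$ and $\Psi$ are exact equivalences of abelian categories, parabolic subsheaves of $E$ correspond to subsheaves of $G$ and quotients to quotients, so it suffices to produce the filtration for $G$ and transport it back through $\Phi$.

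First I would recall the seesaw estimates for the modified Hilbert polynomial from \cite{nironi} (following \cite{huybrechts-lehn}): for a short exact sequence $0\to G'\to G\to G''\to 0$ of sheaves of pure dimension $d$ on $\m{X}$ one has $p_{\mc{E}}(G')\leq p_{\mc{E}}(G)$ if and only if $p_{\mc{E}}(G)\leq p_{\mc{E}}(G'')$, and in that case $p_{\mc{E}}(G')\leq p_{\mc{E}}(G)\leq p_{\mc{E}}(G'')$. From these one deduces that every morphism between semi-stable sheaves with the same reduced Hilbert polynomial $p$ is strict (kernel, image and cokernel are again semi-stable of reduced Hilbert polynomial $p$, using the convention that the zero sheaf is pure, hence semi-stable, of every dimension), so that the full subcategory $\mc{C}_{p}\subseteq\Coh(\m{X})$ of semi-stable sheaves of reduced Hilbert polynomial $p$ is an abelian subcategory in which the simple objects are precisely the stable sheaves with reduced Hilbert polynomial $p$. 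Proposition \ref{prop.pure} is what guarantees that purity of $E$ is equivalent to purity of $G$, so that all the sheaves involved share the same dimension $d$.

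Next I would show that $\mc{C}_{p}$ has finite length, which is the only place needing a small argument beyond formalities. The multiplicity $\alpha^{d}$ is additive on short exact sequences inside $\mc{C}_{p}$ — because $\pi_{*}(-\otimes\mc{E}^{\vee})$ is exact and the leading coefficient of a Hilbert polynomial on the fixed projective scheme $X$ is additive — it is strictly positive on nonzero sheaves, and it takes values in a fixed discrete subgroup of $\bb{Q}$; hence any chain $0\subsetneq G_{1}\subsetneq\cdots\subsetneq G_{k}=G$ inside $\mc{C}_{p}$ has $k$ bounded by $\alpha^{d}(G)$ divided by the least positive value of $\alpha^{d}$. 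Thus $\mc{C}_{p}$ is Artinian and Noetherian, and the Jordan--H\"older theorem in a finite-length abelian category applied to $G\in\mc{C}_{p}$ produces a composition series $0=G_{0}\subset G_{1}\subset\cdots\subset G_{h+1}=G$ with simple quotients, i.e.\ stable sheaves of reduced Hilbert polynomial $p$; the Schreier refinement argument shows the multiset $\{G_{i}/G_{i-1}\}$, and hence $\bigoplus_{i}G_{i}/G_{i-1}$, is independent of the chosen series, while non-uniqueness of the series itself is already visible for any strictly semi-stable sheaf. Transporting back through $\Phi$ yields the asserted filtration $0=F_{0}\subset\cdots\subset F_{h+1}=E$ with $F_{i}=\Phi(G_{i})$, stable quotients of slope $p_{\mc{E}}(E)$, and a well-defined $\gra(E)=\Phi\bigl(\bigoplus_{i}G_{i}/G_{i-1}\bigr)$. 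The genuinely imported ingredients are just the two facts quoted from \cite{nironi} — the seesaw inequalities and the additivity-plus-discreteness of $\alpha^{d}$ on a tame projective DM stack — so there is no substantial obstacle once the reduction to $\m{X}$ is made.
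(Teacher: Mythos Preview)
Your proposal is correct and follows the paper's approach exactly: the paper does not give its own proof of this proposition but simply states that ``the proof of the results that are just stated here can all be found in \cite{nironi}'', i.e.\ it reduces everything to Nironi's theory on the root stack $\m{X}$ via the BV equivalence. Your write-up in fact supplies more detail than the paper does, sketching the standard finite-length abelian category argument that underlies Nironi's version, so there is nothing to correct.
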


Note that since the quotients $F_{i}/F_{i-1}$ of the above filtration are stable with the same slope, the parabolic sheaf $\gra(E)$ is semi-stable with the same slope as $E$.

\begin{definition}[S-equivalence]
Two parabolic sheaves $E,E' \in \Par(X,j)$ are said to be \emph{S-equivalent} if their associated graded sheaves $\gra(E)$ and $\gra(E')$ are isomorphic.

Equivalently one can say that the sets $\{F_{i}/F_{i-1}\}$ and $\{F'_{i}/F'_{i-1}\}$ of quotients of a Jordan-H\"{o}lder filtrations of the two sheaves are the same, i.e. such quotients are pairwise isomorphic.
\end{definition}

Recall that a semi-stable sheaf is called \emph{polystable} if it is a direct sum of stable sheaves, which then must all have the same slope. Every parabolic semi-stable sheaf $E \in Par(X,j)$ is S-equivalent to exactly one polystable sheaf, the sheaf $\gra(E)$.

The notion of (semi-)stability satisfies some openness and boundedness conditions, as shown in \cite{nironi}. We summarize the final product of the theory.

\begin{definition}
Fix a polynomial $H \in \bb{Z}[x]$, and define the \emph{moduli stack of semi-stable parabolic sheaves on} $X$, denoted by $\mc{M}^{ss}_{H}$, as the stack over $(\Sch)$ having as objects of $\mc{M}^{ss}_{H}(T)$ for a scheme $T$ families of parabolic sheaves $E \in \Par(X_T,j_T)$ (in the sense of \ref{sec.3.1}) such that for every geometric point $t\to T$, the restriction $E_{t} \in \Par(X_{t},j_{t})$ is pure and semi-stable with Hilbert polynomial $H$.

Denote by $\mc{M}^{s}_{H} \subseteq \mc{M}^{ss}_{H}$ the subcategory parametrizing families of parabolic sheaves that are stable on the fibers, instead of just semi-stable. This is an open substack.
\end{definition}

The pullback $\mc{M}^{ss}_{H}(T)\to  \mc{M}^{ss}_{H}(S)$ for $S\to T$ is the pullback of parabolic sheaves we discussed earlier (\ref{sec.3.1}). By the discussion in the same section, the stack $\mc{M}^{ss}_{H}$ coincides with the stack of families of semi-stable sheaves with Hilbert polynomial $H$ on the root stack $\m{X}$ studied by Nironi (\cite{nironi}).

Note that of course this stack also depends on the Kummer extension $P\to Q$, but we did not include it in the notation to keep it lighter.

\begin{rmk}
To define (semi-)stability on the base change $X_{t}=X\times_{k}\spec(k(t))$ we use the pullback of the generating sheaf $\mc{E}$ that we have on $\m{X}$ along the natural map $\radice{B_{t}}{X_{t}}\to \radice{B} X=\m{X}$.
\end{rmk}

Here is the result that we obtain from \cite[Section $6$]{nironi}, just by applying Theorems 6.21 and 6.22 to our situation.

\begin{thm}\label{thm.fixed.weights}
Let $X$ be a polarized log scheme over $k$, with global chart $P\to \Div(X)$, and with a Kummer morphism of fine saturated monoids $P\to Q$. Moreover assume that the root stack $\radice{Q} X$ is Deligne--Mumford.

Then the stack  $\mc{M}^{ss}_{H}$ of semi-stable parabolic sheaves is an Artin stack of finite type over $k$, and it has a good (resp. adequate, in positive characteristic) moduli space in the sense of Alper (\cite{alper,alperadeq}), that we denote by $M^{ss}_{H}$. This moduli space is a projective scheme.

The open substack $\mc{M}^{s}_{H}\subseteq \mc{M}^{ss}_{H}$ of stable sheaves has a coarse moduli space $M^{s}_{H}$, which is an open subscheme of $M^{ss}_{H}$, and the map $\mc{M}^{s}_{H}\to M^{s}_{H}$ is a $\bb{G}_{m}$-gerbe. \qed

\end{thm}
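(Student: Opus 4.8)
The plan is to translate the statement, via the relative Borne--Vistoli equivalence, into Nironi's theory of moduli of coherent sheaves on a tame projective Deligne--Mumford stack, applied to the root stack $\m{X}=\radice{B} X$ equipped with the generating sheaf $\mc{E}$ constructed in \ref{global.sec}. Since \cite{nironi} provides, under its hypotheses, precisely a moduli stack of semi-stable sheaves with a projective good (or adequate) moduli space together with the gerbe statement for the stable locus, the work consists entirely in checking that those hypotheses hold for our root stack and that the two moduli problems coincide.

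\textbf{Step 1: verifying Nironi's hypotheses.} First I would record that $(\m{X},\mc{E})$ is an admissible input for \cite{nironi}. By Proposition \ref{root.stack.algebraic} the stack $\m{X}$ is tame, and it is Deligne--Mumford by hypothesis; by Proposition \ref{coarse.space.root} (recall that $A$ and $B$ are fs) its coarse moduli space is the projective scheme $X$. Hence $\m{X}$ is a tame projective DM stack, and by Proposition \ref{gen.sheaf} the locally free sheaf $\mc{E}$ is a generating sheaf on it.

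\textbf{Step 2: matching the moduli problems.} Next I would identify $\mc{M}^{ss}_{H}$ with Nironi's stack of $\mc{E}$-semistable sheaves on $\m{X}$ with modified Hilbert polynomial $H$. By the relative BV equivalence $\Phi\colon\uqcoh_{\m{X}}\to\upar_{X}$ together with the notion of pullback fixed in \ref{sec.3.1}, a family of parabolic sheaves on $X$ over a base $T$ in our sense is exactly a $T$-flat finitely presented coherent sheaf on $\m{X}_{T}\cong\radice{B_{T}}{X_{T}}$, functorially in $T$; here one uses the parabolic interpretations of finite presentation, flatness and purity (Proposition \ref{prop.pure}). The modified Hilbert polynomial $P_{\mc{E}}$ and the notion of $\mc{E}$-(semi-)stability introduced in \ref{sec.3.3} and \ref{results} were set up precisely so that a parabolic sheaf $E$ is (semi-)stable with Hilbert polynomial $H$ if and only if $\Psi(E)\in\Coh(\m{X})$ is, and the same holds on each geometric fibre $X_{t}$ for the pullback of $\mc{E}$ along $\radice{B_{t}}{X_{t}}\to\m{X}$. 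Thus $\mc{M}^{ss}_{H}$ and $\mc{M}^{s}_{H}$ are, respectively, the semi-stable and the stable loci studied in \cite{nironi}.

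\textbf{Step 3: applying Nironi and the gerbe statement.} Given Steps 1 and 2, Theorems 6.21 and 6.22 of \cite{nironi} immediately yield that $\mc{M}^{ss}_{H}$ is an Artin stack of finite type over $k$ with a good (adequate in positive characteristic, by \cite{alperadeq}) moduli space $M^{ss}_{H}$ that is a projective scheme, and that $\mc{M}^{s}_{H}\subseteq\mc{M}^{ss}_{H}$ is open with coarse moduli space $M^{s}_{H}$ an open subscheme of $M^{ss}_{H}$. For the last assertion I would argue that a stable parabolic sheaf $E$ --- equivalently a stable coherent sheaf on $\m{X}$ --- is simple: any nonzero endomorphism preserves the slope $p_{\mc{E}}$, hence is an isomorphism by the usual saturation-and-degree argument, and over the algebraically closed base field it is then a scalar. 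Therefore every object of $\mc{M}^{s}_{H}$ has automorphism group $\bb{G}_{m}$, so $\mc{M}^{s}_{H}\to M^{s}_{H}$ is a $\bb{G}_{m}$-gerbe. The only step with genuine (as opposed to purely formal) content is Step 2 --- ensuring that the family condition, flatness, purity and stability really transport correctly through the BV equivalence and remain stable under base change to geometric fibres --- but this has been prepared in \ref{sec.3.1} and in the construction of the generating sheaf, so I do not expect a serious obstacle; the remainder is a direct citation of \cite{nironi}.
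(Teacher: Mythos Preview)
Your proposal is correct and follows exactly the paper's approach: the theorem is stated with a \qed and the text immediately preceding it says it is obtained ``just by applying Theorems 6.21 and 6.22'' of \cite{nironi} to the root stack $\m{X}$ with the generating sheaf $\mc{E}$ of \ref{global.sec}. Your Steps 1--3 simply make explicit the verifications (tameness, DM, projective coarse moduli space, generating sheaf, and the identification of the moduli problems via \ref{sec.3.1}) that the paper leaves implicit; the separate argument for the $\bb{G}_{m}$-gerbe is already part of Nironi's package and need not be reproved.
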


Some comments about this theorem.

\begin{rmk}
We chose to fix the Hilbert polynomial in this formulation, but one can also fix other invariants of coherent sheaves, for example Chern classes (up to numerical equivalence), or the reduced Hilbert polynomial $h$. The corresponding moduli stacks are defined analogously, and the results one obtains translate verbatim.

In particular, in the next section we will fix the reduced Hilbert polynomial $h\in \bb{Q}[x]$, that is obtained from $H$ by dividing it by $d!$ times its leading coefficient, $d$ being the degree, and consider the corresponding stacks $\mc{M}^{s}_{h}\subseteq \mc{M}^{ss}_{h}$. Note that for example $\mc{M}^{ss}_{h}$ will be a disjoint union
$$
\mc{M}^{ss}_{h}=\bigsqcup_{\overline{H}=h} \mc{M}^{ss}_{H}
$$
where $\overline{H}$ is $H$ divided by $d!$ times its leading coefficient. This has a good moduli space, the disjoint union of the corresponding moduli spaces, and the same is true for the substack of stable sheaves.

In the same fashion one can form the disjoint union $\mc{M}^{ss}=\bigsqcup_{H \in \bb{Z}[x]}\mc{M}^{ss}_{H}$ and the analogous one for stable sheaves. This stack, that parametrizes parabolic sheaves on $X$ with respect to the Kummer extension $P\to Q$ without fixing invariants, will still have a good moduli space, the disjoint union of the $M^{ss}_{H}$.
\end{rmk}

\begin{rmk}
The points of the good moduli space ${M}^{ss}_{H}$ do not correspond to isomorphism classes of semi-stable sheaves, but rather to S-equivalence classes, or, in other words, to isomorphism classes of polystable sheaves. This follows from the GIT construction.

Moreover, a point of the stack $\mc{M}^{ss}_{H}$ is closed if and only if the corresponding sheaf is polystable. This follows from the description as a quotient and from the fact that an orbit of a point is closed if and only if it is polystable (see Theorem $6.20$  of \cite{nironi}).
\end{rmk}

\begin{rmk} \label{comp.maru.yoko}

We remark that this construction recovers the moduli spaces of Maruyama and Yokogawa. The only difference with our construction is that they fix the weights (or ``jumping numbers'', see \ref{maru.yoko}) of the parabolic sheaves.

One can check easily that fixing the weights gives a connected component in our moduli stack of parabolic sheaves, and that this component gives back the moduli spaces of Maruyama and Yokogawa.
\end{rmk}

\subsubsection{Dependance of (semi-)stability on the global chart}\label{dependence}
Since there are in general many choices for a chart of a logarithmic structure with a Kummer morphism (when at least one exists), the problem of the dependance of the (semi-)stability of a parabolic sheaf on the chart is a very natural one. It turns out that the (semi-)stability is not independent of the chart, as we will show with the following example.

Take $X=\mathbb{P}^{1}\times\mathbb{P}^{1}$, with effective divisor $D=D_{1}+D_{2}$ where $D_{1}$ and $D_{2}$ are two distinct closed fibers of the first projection $X\to \mathbb{P}^{1}$, so that $\mathcal{O}(D_{1}) \simeq \mathcal{O}(D_{2}) \simeq \mathcal{O}(1,0)$ and $\mathcal{O}(D) \simeq \mathcal{O}(2,0)$. The DF structure induced by $D$, call it $L\colon A\to \Div_{X}$, has two natural charts $l\colon \mathbb{N}\to \Div(X)$, sending $1$ to $(\mathcal{O}(D),s_{D})$, with $s_{D}$ the canonical section of $\mathcal{O}(D)$ as usual, and $l'\colon \mathbb{N}^{2}\to \Div(X)$, sending $(1,0)$ to $(\mathcal{O}(D_{1}),s_{D_{1}})$ and $(0,1)$ to $(\mathcal{O}(D_{2}),s_{D_{2}})$. Let us focus on $l'$.

Notice that any cokernel of monoids $P\to \mathbb{N}^{2}$ would give us a new chart $P\to \Div(X)$ for the DF structure $L$ by composing with $l'$, since the composite of two cokernels is still a cokernel (this is an easy verification). 

Consider the monoid $P=\mathbb{N}^{4}/(e_{1}+e_{2}=e_{3}+e_{4})$, where the $e_{i}$'s are the canonical basis,  call $p_{i}$ the image of $e_{i}$ in $P$, and consider the morphism $\phi\colon P\to \mathbb{N}^{2}$ determined by
$$
\begin{array}{l}
\phi(p_{1})=(1,0)\\
\phi(p_{2})=(0,1)\\
\phi(p_{3})=(1,1)\\
\phi(p_{4})=(0,0).
\end{array}
$$
We leave to the reader to check that $\phi$ is a cokernel, and so the composition $P\to \mathbb{N}^{2}\to \Div_{X}$ gives a chart for the DF structure $L$.

Now take the Kummer morphism $j\colon A\to \frac{1}{2}A$, and as usual call $\m{X}$ the stack of roots of $X$ with respect to $j$, denote by $\mathcal{D}_{1}, \mathcal{D}_{2}$ and $\mathcal{D}=\mathcal{D}_{1}+\mathcal{D}_{2}$ the universal square roots of $D_{1},D_{2}$ and $D$ respectively, and call $\mc{E}$ and $\mc{E}'$ the two generating sheaves associated to the charts $l'\colon \mathbb{N}^{2}\to \Div(X)$ and $l'\circ \phi\colon P\to \Div(X)$. By following the construction of the generating sheaf we get
$$
\mc{E}=\mc{O}(\mc{D})\oplus\mathcal{O}(2\mathcal{D}_{1}+\mc{D}_{2})\oplus\mathcal{O}(\mc{D}_{1}+2\mathcal{D}_{2})\oplus\mathcal{O}(2\mathcal{D})
$$
and, noting that the indecomposable elements of $P$ are precisely the $p_{i}$'s, we get $\mc{E}'=\mc{E}''\oplus \mc{E}''$ (since $\Lambda_{p_{4}} \simeq\Lambda_{2p_{4}} \simeq\mathcal{O}$), where
\begin{multline*}
\mc{E}''=\mc{O}(2\mc{D})\otimes (\mathcal{O}\oplus\mathcal{O}(\mathcal{D}_{1})\oplus\mathcal{O}(\mathcal{D}_{2})\oplus\mathcal{O}(\mathcal{D})\oplus \\ \oplus \mathcal{O}(2\mathcal{D}_{1}+\mathcal{D}_{2})\oplus\mathcal{O}(\mathcal{D}_{1}+2\mathcal{D}_{2})\oplus\mathcal{O}(\mathcal{D})\oplus\mathcal{O}(2\mathcal{D}));
\end{multline*}
in particular $p_{\mc{E}'}(F)=p_{\mc{E}''}(F)$ for any parabolic sheaf $F \in \Par(X,j)$.

Our objective is to find a parabolic sheaf $F \in \Par(X,j)$ that is $\mc{E}$-semi-stable but not $\mc{E}'$-semi-stable. Our example will be an extension of two line bundles $L$ and $L'$ on the root stack $\m{X}$: the point will be that such an extension is semi-stable (and in fact stable, if is not trivial) if and only if $L$ and $L'$ have the same slope. To find an $F$ with the property we want, it will suffice then to find $L$ and $L'$ such that
$$
p_{\mc{E}}(L)=p_{\mc{E}}(L')
$$
but
$$
p_{\mc{E}'}(L)\neq p_{\mc{E}'}(L').
$$

Now recall that to give a torsion-free parabolic sheaf $F \in \Par(X,j)$, it suffices to give a torsion-free coherent sheaf $F_{0} \in \Coh(X)$, together with a subsheaf $F_{1}\subseteq F_{0}$ such that $F_{0}(-D)\subseteq F_{1}$. In particular we can take $F_{1}=F_{0}$, and we get a (somewhat trivial) parabolic sheaf, we will denote it by $\widetilde{F_{0}} \in \Par(X,j)$. For such a parabolic sheaf, an easy calculation shows that
$$
p_{\mc{E}}\left(\widetilde{F_{0}}\right) =\frac{1}{4}(p(F_{0})+2p(F_{0}(-D_{1}))+p(F_{0}(-D)))
$$
where $p$ denotes the usual Gieseker (generalized) slope of the coherent sheaf $F_{0}$ on $X$ with respect to the fixed polarization, that in our case will be $H=\mathcal{O}(1,1)$.

The same calculation for $\mc{E}''$ gives
$$
p_{\mc{E}''}\left(\widetilde{F_{0}}\right)  =  \frac{5}{8}p(F_{0}(-D))+\frac{1}{8}\left(p(F_{0}(-2D))+2p(F_{0}(-D-D_{1}))\right)
$$
Now take $F_{0}=\mathcal{O}(2,0)$ and $F_{0}'=\mathcal{O}(1,1)$. Finally using
$$
\chi(\mathcal{O}(a,b)(m))=(a+1+m)(b+1+m)
$$
and $\mathcal{O}(-D) \simeq \mathcal{O}(-2,0)$, $\mathcal{O}(-D_{1}) \simeq\mathcal{O}(-1,0)$, we get
$$
p_{\mc{E}}\left(\widetilde{L_{0}}\right)=m^{2}+3m+2 =p_{\mc{E}}\left(\widetilde{L_{0}'}\right)
$$
but
$$
p_{\mc{E}''}\left(\widetilde{L_{0}}\right)=m^{2}+\frac{3}{2}m+\frac{1}{2}
$$
and
$$
p_{\mc{E}''}\left(\widetilde{L_{0}'}\right)=m^{2}+\frac{3}{2}m-1
$$
as we wanted.

\begin{rmk}
This shows that the notion of semi-stability depends on the chart. We do not know if the moduli spaces for different charts are isomorphic (which might happen even if the notion semi-stability is different) or not.
\end{rmk}

\section{Moduli of parabolic sheaves with arbitrary rational weights}\label{section.4}

In this section we consider moduli of parabolic sheaves with rational weights on a polarized log scheme $X$, without bounding the denominators or fixing a finitely generated Kummer extension. In other words we consider moduli theory of finitely presented sheaves on the infinite root stack $\infroot{X}$.

We remark that the previous works of Mehta-Seshadri (\cite{metha-seshadri}) and Maruyama-Yokogawa (\cite{maruyama-yokogawa}) only considered moduli spaces for parabolic sheaves with a fixed set of weights. 

The natural approach for this problem is to take a limit of the moduli theory at finite levels, and this is what we will do in this section.

The global chart $P\to \Div(X)$ of the polarized log scheme $X$ gives us, as we saw in the last section, the generating sheaves $\mc{E}_{n}$ on the root stacks $\radice{n}{X}=\radice{\frac{1}{n}P}{X}$, and the moduli spaces and stacks $\mc{M}^{s}_{n}\subseteq \mc{M}^{ss}_{n}$ and $M^{s}_{n}\subseteq M^{ss}_{n}$ of (semi-)stable parabolic sheaves on $X$ (here the subscript keeps track of the denominators, and the Hilbert polynomial is not fixed for now).

In order to have this for every $n$, in this section we will assume that the characteristic of $k$ is zero. As remarked in the last section (see \ref{characteristic}), this would be unnecessary if we knew that Nironi's machinery works on tame Artin stacks.

The ideal situation to take a limit would be the following.

\begin{ithm}\label{false.thm}
Let $X$ be a projective log scheme over $k$ with a global chart $P\to \Div(X)$. Then for every pair $n,m \in \bb{N}$ with $n \mid m$ there is a morphism $\iota_{n,m}\colon \mc{M}^{ss}_{n}\to \mc{M}^{ss}_{m}$, that induces $i_{n,m}\colon M^{ss}_{n}\to M^{ss}_{m}$ between the good moduli spaces, given by the pullback along $\radice{m}{X}\to \radice{n}{X}$. Moreover these morphisms are open and closed immersions.
\end{ithm}

This would allow us to make sense of the direct limit $\varinjlim_{n} M^{ss}_{n}$ as a scheme, which would be a moduli space for parabolic sheaves with arbitrary rational weights on $X$. Even without the last statement about the morphisms, the direct limit would still make sense formally as an ind-scheme, and it would still be a good candidate for a ``moduli space'' of parabolic sheaves with arbitrary rational weights.

The first apparent problem about this is that without a flatness hypothesis on $\radice{m}{X}\to \radice{n}{X}$, it is not even clear that the pullback of a semi-stable sheaf is pure. It is easy to construct examples of the form $\spec(k[P])$ where there are pure sheaves whose pullback is not pure.

This leads us to impose some condition that ensure flatness of projections $\radice{Q_{m}}{X}\to \radice{Q_{n}}{X}$, at least for a cofinal system of finite root stacks $\radice{Q_{n}}{X}$, where $\{Q_{n}\}$ is a sequence of Kummer extensions $P\subseteq Q_{n}$, with $Q_{n}\subseteq Q_{m}$ for $n \mid m$ and $\bigcup_{n}Q_{n}=P_{\bb{Q}}$ .

It turns out that requiring this on the universal model $\spec(k[P])$ (for $P$ fine and saturated) forces the monoid $P$ to be \emph{simplicial}. The interested reader can find the proof in \cite[Section~4.1]{tesi}.

\subsection{Simplicial log structures}\label{sec.4.1}

Let us briefly describe simplicial monoids and simplicial log structures. The following definition is inspired by the definition of simplicial cones (see for example the discussion following Corollary 2.3.13 in \cite{ogus}).

\begin{definition}
A monoid $P$ is simplicial if it is fine, saturated, sharp and the positive rational cone $P_{\bb{Q}}$ it generates inside $P^{\gp}_{\bb{Q}}$ is simplicial, meaning that its extremal rays are linearly independent.
\end{definition}

\begin{definition}
An indecomposable element $p \in P$ that lies on an extremal ray of the rational cone $P_{\bb{Q}}$ will be called \emph{extremal}. Non-extremal indecomposables will be called \emph{internal}.
\end{definition}

In other words, an indecomposable $p \in P$ is extremal if $q+r \in \langle p \rangle $ implies $q,r \in \langle p \rangle$. 

Assume $P$ is a simplicial monoid, and call $p_{1},\dots, p_{r}$ its extremal indecomposable elements, and $q_{1},\dots, q_{s}$ its internal ones. For any $q \in P$, we can write $q=\sum_{i}a_{i}p_{i}$ in $P_{\bb{Q}}$, where $a_{i} \in \bb{Q}$, and by simpliciality of $P$ the $a_{i}$ are uniquely determined.

In particular for every $q_{j}$ we have get a relation $c_{j}q_{j}=\sum_{i}a_{ij}p_{i}$ in $P$ where $(c_{j}, \{a_{ij}\})=1$. These relations will be called the \emph{standard relations} of $P$.

\begin{prop}\label{simplicial.envelope}
Every simplicial monoid has a Kummer morphism to some free monoid $\bb{N}^{r}$. Conversely, if a fine saturated monoid $P$ has a Kummer morphism $P\subseteq \bb{N}^{r}$, then $P$ is simplicial.
\end{prop}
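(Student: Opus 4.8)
The plan is to prove both directions by analyzing the combinatorics of the rational cone $P_\bb{Q}$ inside $P^\gp_\bb{Q}$. For the forward direction, suppose $P$ is simplicial, with extremal indecomposable elements $p_1,\dots,p_r$ spanning the extremal rays. First I would observe that since the rays are linearly independent, they form a basis of $P^\gp_\bb{Q}$ (after noting that a fine saturated sharp monoid generates its groupification rationally, so the number of extremal rays equals $\dim P^\gp_\bb{Q}$). Let $L\subseteq P^\gp_\bb{Q}$ be the lattice spanned by $p_1,\dots,p_r$; this is a free abelian group of rank $r$, and $L\cap P_\bb{Q}=\bb{N}p_1+\cdots+\bb{N}p_r$ because the $p_i$ lie on distinct extremal rays and a nonnegative rational combination of the $p_i$ lies in $L$ only if all coefficients are integers — wait, that last point needs care. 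The cleaner statement: the free monoid $F=\bb{N}p_1\oplus\cdots\oplus\bb{N}p_r$ sits inside $P$, and I claim the inclusion $F\hookrightarrow P$ is Kummer. Injectivity is clear. For the Kummer condition, take $q\in P$; writing $q=\sum a_i p_i$ with $a_i\in\bb{Q}_{\geq 0}$ (possible and unique by simpliciality), choose $N$ a common denominator of all the $a_i$, so $Nq=\sum (Na_i)p_i\in F$. This shows $F\to P$ is Kummer. But I want a Kummer morphism \emph{from} $P$ \emph{to} a free monoid, so I should instead dualize: a Kummer morphism $F\to P$ with $F$ free induces, on groupifications, an inclusion $F^\gp\to P^\gp$ of finite index, hence $P^\gp\hookrightarrow \frac{1}{N}F^\gp\cong \bb{Z}^r$ for suitable $N$, and then $P\hookrightarrow \frac{1}{N}F\cong\bb{N}^r$ is Kummer (it is injective, and for $q\in P$ we have $Nq\in F$, in particular some multiple lands in the submonoid $F$, but we need multiples landing in $\frac1N F = \bb N^r$, which is automatic since $P\subseteq \frac1N F$). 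Actually the neatest route is: the inclusion $F\subseteq P$ is Kummer, so by the remark preceding Definition \ref{gen.trivial} (every Kummer extension $F\to P$ identifies $F_\bb{Q}\cong P_\bb{Q}$ and lets us view $P$ as a subsheaf of $F_\bb{Q}$), we get $P\subseteq F_\bb{Q}$, and since $P$ is finitely generated, $P\subseteq \frac1N F$ for some $N$; then $P\hookrightarrow \frac1N F\cong\bb{N}^r$ is the desired Kummer morphism.

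For the converse, suppose $P$ is fine and saturated and there is a Kummer morphism $\iota\colon P\hookrightarrow \bb{N}^r$. A Kummer morphism induces an isomorphism $P_\bb{Q}\xrightarrow{\sim}(\bb{N}^r)_\bb{Q}=\bb{Q}^r_{\geq 0}$ on rational cones (again using that Kummer morphisms become isomorphisms after tensoring with $\bb{Q}$). The cone $\bb{Q}^r_{\geq 0}$ is simplicial — its extremal rays are spanned by the standard basis vectors $e_1,\dots,e_r$, which are linearly independent. Since the isomorphism $P^\gp_\bb{Q}\cong\bb{Q}^r$ carries $P_\bb{Q}$ to $\bb{Q}^r_{\geq 0}$, it carries extremal rays to extremal rays, and linear independence is preserved by a linear isomorphism; hence the extremal rays of $P_\bb{Q}$ are linearly independent in $P^\gp_\bb{Q}$, i.e. $P$ is simplicial (the monoid $P$ is already assumed fine, saturated, and it is sharp because it embeds in the sharp monoid $\bb{N}^r$).

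I expect the main obstacle to be the careful bookkeeping in the forward direction — specifically, pinning down why the extremal indecomposable elements $p_1,\dots,p_r$ can be taken as a genuine basis of a full-rank sublattice (so that $\bb{N}p_1\oplus\cdots\oplus\bb{N}p_r$ is honestly free of the right rank and sits Kummer-ly inside $P$), and then extracting from the Kummer inclusion $F\subseteq P$ a Kummer morphism going the other way into a free monoid. The subtlety is that "Kummer morphism $P\to\bb{N}^r$" and "Kummer morphism $\bb{N}^r\to P$" are not symmetric notions; the bridge is the identification $P\subseteq P_\bb{Q}=F_\bb{Q}$ combined with finite generation of $P$ to get $P\subseteq\frac1N F$. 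Everything else (the rational-cone isomorphism under Kummer morphisms, preservation of simpliciality under linear isomorphism, sharpness passing to submonoids) is routine.
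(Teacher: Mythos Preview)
Your proof is correct, and the converse direction is essentially identical to the paper's. For the forward direction you take a somewhat different route: you first observe that the free monoid $F=\bb{N}p_1\oplus\cdots\oplus\bb{N}p_r$ on the extremal indecomposables sits Kummer-ly \emph{inside} $P$, and then use $P\subseteq F_\bb{Q}$ together with finite generation to embed $P$ into $\frac{1}{N}F\cong\bb{N}^r$ for a single common $N$. The paper instead works directly with the internal indecomposables $q_j$ and their standard relations $c_jq_j=\sum_i a_{ij}p_i$, from which it extracts for each $i$ the optimal denominator $d_i=\lcm_j\bigl(c_j/\gcd(c_j,a_{ij})\bigr)$ and defines $F(P)$ as the monoid generated by the $p_i/d_i$. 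Your argument is cleaner for mere existence, but the paper's explicit construction produces the \emph{minimal} such Kummer extension, the free envelope $F(P)$, which is then used as a named object throughout the rest of Section~\ref{sec.4.1} and beyond (e.g.\ in setting $X_1=\radice{F(P)}{X}$). So both approaches work; the paper's buys the canonical object it needs later, while yours buys a shorter and more conceptual proof of the proposition as stated.
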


In fact there is a minimal such Kummer extension, that we well call the \emph{free envelope} of $P$.
\begin{rmk}
The preceding proposition is the reason for introducing this simpliciality hypothesis. The Kummer extension $P\subseteq \bb{N}^{r}$ gives us a sequence $P\subseteq \frac{1}{n}\bb{N}^{r}=P_{n}$ of finitely generated Kummer extensions such that $\bigcup_{n} P_{n}=P_{\bb{Q}}$, and since $\bb{N}^{r}$ is free, the transition maps $\radice{m}{X}\to \radice{n}{X}$ between the corresponding root stacks are flat, as the following lemma shows.
\end{rmk}
\begin{lemma}\label{free.flat}
Let $X$ be a log stack with a global chart $\bb{N}^{r}\to \Div(X)$. Then for any $n$, the projection $\pi\colon \radice{n}{X}\to X$ is flat.
\end{lemma}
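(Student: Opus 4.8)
The claim is local on $X$ for the \'etale (or even fppf) topology, so I would start by reducing to the case where the chart $\bb{N}^r \to \Div(X)$ comes from a Kato chart $\bb{N}^r \to \mc{O}_X(X)$. This is legitimate because having a chart \'etale-locally is equivalent to having a Kato chart \'etale-locally (\cite[Proposition 3.28]{borne-vistoli}), and flatness can be checked \'etale-locally on the source and target. Once we have a Kato chart, I would invoke the second quotient-stack description of the root stack recorded after Proposition \ref{local.model.root.stack}: with $P = \bb{N}^r$ and $Q = \frac{1}{n}\bb{N}^r$, we get
$$
\radice{n}{X} \cong \left[ \left( X \times_{\spec(k[\bb{N}^r])} \spec\bigl(k[\tfrac1n\bb{N}^r]\bigr) \right) \big/ \mu_{Q/P} \right],
$$
where $\mu_{Q/P}$ is the Cartier dual of the (finite) cokernel of $(\bb{N}^r)^{\gp} \to (\frac1n\bb{N}^r)^{\gp}$.

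The point is then that flatness of $\radice{n}{X} \to X$ can be checked after passing to the quotient presentation: a morphism from a quotient stack $[Z/G] \to X$ is flat if and only if the composite $Z \to [Z/G] \to X$ is flat (since $Z \to [Z/G]$ is a smooth, in particular flat, surjection, and flatness descends along fppf covers). So it suffices to show that
$$
X \times_{\spec(k[\bb{N}^r])} \spec\bigl(k[\tfrac1n\bb{N}^r]\bigr) \longrightarrow X
$$
is flat. This morphism is the base change along $X \to \spec(k[\bb{N}^r])$ of the map $\spec(k[\frac1n\bb{N}^r]) \to \spec(k[\bb{N}^r])$, so by stability of flatness under base change it is enough to check that $k[\bb{N}^r] \to k[\frac1n\bb{N}^r]$ is a flat ring homomorphism.

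Finally, this last statement is an elementary monoid-algebra computation: $k[\bb{N}^r] = k[t_1, \dots, t_r]$ and $k[\frac1n\bb{N}^r] = k[s_1, \dots, s_r]$ with $s_i^n = t_i$, which is manifestly a free $k[\bb{N}^r]$-module, with basis the monomials $s_1^{a_1}\cdots s_r^{a_r}$ for $0 \le a_i \le n-1$ (indeed $k[\frac1n\bb{N}^r] \cong k[\bb{N}^r]^{\otimes_k r}$-style, a tensor product over $k$ of the one-variable extensions $k[t] \to k[s]$, $s^n = t$, each of which is free of rank $n$). Since $\bb{N}^r$ is free, there is no torsion or relation to worry about — this is exactly where freeness of the chart monoid is used, and it is the only nontrivial input. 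I do not expect any real obstacle here; the lemma is essentially the observation that adjoining $n$-th roots of the coordinate functions along a global free chart is a flat (even finite free) operation, and the quotient-stack presentation lets us bypass the stacky subtleties. One should remark, though, that the same argument would fail for a non-free chart monoid $P$, since $k[P] \to k[\frac1n P]$ need not be flat — this is the algebraic shadow of the non-simplicial counterexample given above.
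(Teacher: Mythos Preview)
Your argument is correct and follows essentially the same route as the paper's proof: both reduce, via the quotient-stack presentation of the root stack, to the flatness of the ``raise-to-the-$n$th-power'' map $\spec(k[\tfrac{1}{n}\bb{N}^r]) \to \spec(k[\bb{N}^r])$, which is the core computation. The paper phrases this as saying that $\radice{n}{X}\to X$ is a base change of $[\bb{A}^r/\mu_n^r]\to \bb{A}^r$ and is somewhat terser about the reduction to a Kato chart and about descending flatness through the $\mu_{Q/P}$-quotient, but you have simply made those steps explicit.
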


This implies that all projections $\radice{m}{X}\to \radice{n}{X}$ between root stacks are flat as well, and actually for this one needs to assume that the log structure of $X$ is locally free, in the sense that the stalks of $A$ are all free monoids. For example if $D\subseteq X$ is a normal crossings divisor, but not simple normal crossings, then the induced log structure on $X$ is locally free, but does not have a global chart.

\begin{proof}
We can assume that $X$ is a log scheme. Then this follows from the fact that the projection $\radice{n}{X}\to X$ is a base change of the morphism $\left[ \bb{A}^{r}/\mu_{n}^{r} \right]\to \bb{A}^{r}$ induced by the map $\bb{A}^{r}\to \bb{A}^{r}$ given by raising the variables to the $n$-th power. This last morphism is flat, and the conclusion follows.
\end{proof}

This assures that purity of coherent sheaves is preserved by pullback (recall that a semi-stable sheaf is pure). This cofinal system of root stacks will also be crucial for the arguments that we will use in the rest of this section.

Let us now construct for a simplicial monoid $P$ the minimal Kummer extension to a free monoid.

\begin{proof}[Proof of Proposition \ref{simplicial.envelope}]
Let $c_{j}q_{j}=\sum_{i} a_{ij}p_{i}$ be the standard relations of $P$, and let $b_{ij}=c_{j}/\gcd(c_{j},a_{ij})$, a positive integer. The standard relations can be rewritten as follows
$$
q_{j}=\sum_{i}\frac{a_{ij}}{\gcd(c_{j},a_{ij})}\cdot \frac{p_{i}}{b_{ij}}.
$$
Finally, let $d_{i}=\lcm(b_{ij}\mid j=1,\hdots,r)$, and let $F(P)$ be the (free) submonoid of $P^{\gp}\otimes_{\bb{Z}}\bb{Q}$ generated by the elements $\frac{p_{1}}{d_{1}},\hdots, \frac{p_{r}}{d_{r}}$. By construction we have $P\subseteq F(P)$, and $P_{\bb{Q}}=F(P)_{\bb{Q}}$, so the morphism is Kummer.

The converse is clear, since if we have a Kummer morphism $P\subseteq \bb{N}^{r}$, then $P_{\bb{Q}} \simeq \bb{Q}_{+}^{r}$, which is a simplicial cone.
\end{proof}

\begin{definition}
We will call the monoid $F(P)$ constructed in the proof the \emph{free envelope} of $P$. The \emph{rank} of $P$ will be the rank of the free monoid $F(P)$, or equivalently of the free abelian group $P^{\gp}$.
\end{definition}

\begin{example}
Let $P=\langle p,q,r \mid p+q=2r\rangle$. Then $p$ and $q$ are the extremal indecomposables, and the only standard relation
$$
r=\frac{p}{2}+\frac{q}{2}
$$
gives the two generators $\frac{p}{2}$ and $\frac{q}{2}$ for the free envelope $F(P)$.

If we identify $P$ with the submonoid of $\bb{N}^{2}$ generated by $(2,0),(1,1),(0,2)$, then $F(P)$ coincides with $\bb{N}^{2}$, as $\frac{p}{2}=(1,0)$ and $\frac{q}{2}=(0,1)$.
\end{example}

The free envelope has the following universal property.

\begin{prop}
For any Kummer homomorphism $\phi\colon P\to \mathbb{N}^{r}$ there exists a unique (injective) homomorphism $\bar{\phi}\colon F(P)\to \mathbb{N}^{r}$ extending $\phi$.
\end{prop}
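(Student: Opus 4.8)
The plan is to build $\bar\phi$ as the restriction to $F(P)$ of the rationalized groupification $\phi^{\gp}_{\bb{Q}}\colon P^{\gp}_{\bb{Q}}\to \bb{Q}^{r}$ of $\phi$. Since $\phi$ is Kummer, $\phi^{\gp}$ is injective with finite cokernel, so $\phi^{\gp}_{\bb{Q}}$ is an isomorphism; it carries $P_{\bb{Q}}$ bijectively onto $(\bb{N}^{r})_{\bb{Q}}=\bb{Q}^{r}_{\geq 0}$ (one inclusion uses $\phi(P)\subseteq \bb{N}^{r}$, the other the Kummer property). In particular $r=\rk P^{\gp}$, which matches the rank of $F(P)$, so the statement makes sense. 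Because $F(P)\subseteq P^{\gp}_{\bb{Q}}$ by construction, $\bar\phi:=\phi^{\gp}_{\bb{Q}}|_{F(P)}$ is a priori a monoid homomorphism $F(P)\to \bb{Q}^{r}$ that extends $\phi$ and is injective; everything reduces to showing that its image lies in $\bb{N}^{r}$, equivalently that $\bar\phi$ sends each free generator $\tfrac{p_{1}}{d_{1}},\hdots,\tfrac{p_{r}}{d_{r}}$ of $F(P)$ into $\bb{N}^{r}$.

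Next I would pin down $\phi$ on the extremal indecomposables. A linear isomorphism of pointed rational cones sends extremal rays to extremal rays, so $\phi^{\gp}_{\bb{Q}}$ matches the extremal rays of $P_{\bb{Q}}$ (spanned by the $p_{i}$) with those of $\bb{Q}^{r}_{\geq 0}$ (spanned by the $e_{k}$); up to permuting the coordinates of $\bb{N}^{r}$, which does not affect the statement, we may assume $\phi(p_{i})\in \bb{Q}_{\geq 0}e_{i}$, so $\phi(p_{i})=n_{i}e_{i}$ for a unique positive integer $n_{i}$.

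The heart of the argument is the divisibility $d_{i}\mid n_{i}$. I would apply $\phi$ to each standard relation $c_{j}q_{j}=\sum_{i}a_{ij}p_{i}$ of $P$: writing $\phi(q_{j})=\sum_{i}m_{ij}e_{i}$ with $m_{ij}\in\bb{N}$, one gets $c_{j}m_{ij}=a_{ij}n_{i}$ in $\bb{Z}$, hence $c_{j}\mid a_{ij}n_{i}$. Dividing through by $\gcd(c_{j},a_{ij})$ and using that $c_{j}/\gcd(c_{j},a_{ij})$ and $a_{ij}/\gcd(c_{j},a_{ij})$ are coprime yields $b_{ij}\mid n_{i}$ for every internal indecomposable $q_{j}$; taking the least common multiple over $j$ gives $d_{i}\mid n_{i}$. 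Therefore $\bar\phi\bigl(\tfrac{p_{i}}{d_{i}}\bigr)=\tfrac{1}{d_{i}}\phi(p_{i})=\tfrac{n_{i}}{d_{i}}e_{i}\in \bb{N}^{r}$, so $\bar\phi$ maps the free monoid $F(P)$ into $\bb{N}^{r}$, and it extends $\phi$ and is injective as already noted. For uniqueness, any homomorphism $\psi\colon F(P)\to \bb{N}^{r}$ extending $\phi$ must satisfy $d_{i}\,\psi(\tfrac{p_{i}}{d_{i}})=\psi(p_{i})=\phi(p_{i})=n_{i}e_{i}$, and since $\bb{N}^{r}$ is cancellative and torsion-free this forces $\psi(\tfrac{p_{i}}{d_{i}})=\tfrac{n_{i}}{d_{i}}e_{i}=\bar\phi(\tfrac{p_{i}}{d_{i}})$; as the $\tfrac{p_{i}}{d_{i}}$ generate $F(P)$ we conclude $\psi=\bar\phi$.

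The only real obstacle is the bookkeeping in the third paragraph: verifying that the integers $d_{i}$ produced by the construction (via the $b_{ij}$ and the lcm) are precisely what is needed for $\bar\phi(\tfrac{p_{i}}{d_{i}})$ to be integral, and that this works for \emph{every} Kummer target $\bb{N}^{r}$, not merely the free envelope. The rest is formal once one knows that a Kummer map induces an isomorphism on rationalized cones and respects extremal rays.
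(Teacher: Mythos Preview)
Your argument is correct and is precisely the kind of verification the paper has in mind when it leaves the proof to the reader: construct $\bar\phi$ inside $P^{\gp}_{\bb{Q}}$, use that a Kummer map induces an isomorphism of rational cones (hence matches extremal rays), and then check the divisibility $d_{i}\mid n_{i}$ from the standard relations. The permutation of coordinates is a harmless notational convenience, and the uniqueness step via torsion-freeness of $\bb{Z}^{r}$ is the expected one.
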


The proof is easy and left to the reader.

One can give the following definition of a simplicial log scheme.

\begin{definition}
A fs log scheme $X$ is simplicial if for any geometric point $x\to X$ the stalk $A_{x}$ is a simplicial monoid. 
\end{definition}

Since charts can be made up from stalks, a simplicial log scheme has local charts $P\to \Div(X)$ with $P$ a simplicial monoid.

The converse (if there are simplicial charts, then the stalks are simplicial) is also true, and follows from the fact that the kernel of a morphism $P\to Q$ from a simplicial monoid to a sharp fs monoid is generated by extremal indecomposables. From this one sees that the map $P_{\bb{Q}}\to Q_{\bb{Q}}$ corresponds to a quotient by the span of a subset of a basis of $P_{\bb{Q}}^{\gp}$, and consequently $Q_{\bb{Q}}$ is still a simplicial cone inside $Q_{\bb{Q}}^{\gp}$.

\begin{rmk}
Despite this general definition, for the rest of this section we will assume that $X$ has a global chart $P\to \Div(X)$, in which $P$ is moreover simplicial.
\end{rmk}

\subsection{(semi-)stability and extension of denominators}\label{sec.4.2}

Let us fix some assumptions that will hold for the rest of the section.

\begin{assumptions}
From now on $X$ is a polarized simplicial log scheme, and in the global chart $P\to \Div(X)$ the monoid $P$ is simplicial of rank $r$. 
\end{assumptions}

The first thing we want to do is to replace $X$ by the root stack $X_{1}=\radice{F(P)}{X}$, where $F(P) \simeq \bb{N}^{r}$ is the free envelope of $P$ introduced above. After we have done this, when considering parabolic sheaves on $\radice{n}{X_{1}}=\radice{\frac{1}{n}{F(P)}}X$ we can see them as parabolic sheaves on the log stack $X_{1}$, where the log structure has a free chart, and the transition maps $\radice{m}{X_{1}}\to \radice{n}{X_{1}}$ will be flat (see \ref{free.flat}). This way we can effectively argue as if the log structure on $X$ itself had a free chart to start with.

\begin{notation}
From now on, for brevity, $\radice{n}{X}$ will denote the $n$-th root stack of $X_{1}$, i.e. $\radice{n}{X_{1}}=\radice{\frac{1}{n}F(P)}{X}$, instead of the usual $\radice{\frac{1}{n}P}{X}$.
\end{notation}

Denote by $\mc{E}_{n}$ the generating sheaf on $\radice{n}{X}$ coming from the root stack structure over $X$, and $\widetilde{\mc{E}_{n}}$ the generating sheaf that comes from seeing it as a root stack over $X_{1}$. We would like to say that these two generating sheaves give the same stability. This is true, provided that we equip $X_{1}$ with the right generating sheaf relative to $X$.

The following lemma relates the generating sheaves of two root stacks of $X$, where one of them is obtained by taking $n$-th roots over the other one.

\begin{lemma}\label{gen.sheaves}
Let $X$ be a log scheme with a global chart $P\to \Div(X)$, and let $P\subseteq Q$ be a Kummer extension. Consider the commutative diagram
$$
\xymatrix{
\radice{\frac{1}{n}Q}{X}\ar[rr]^{\pi}\ar[dr]_{p'} & & \radice{Q}{X} \ar[dl]^{p} \\
 & X &
}
$$
and the generating sheaves $\mc{E}_{n}$ on $\radice{\frac{1}{n}Q}{X}$ and $\mc{E}$ on $\radice{Q}{X}$ relative to $X$, and $\mc{E}_{rel}$ on $X_{\frac{1}{n}Q/P}$ obtained by seeing it as a root stack over $\radice{Q}{X}$ with respect to the Kummer extension $Q\subseteq \frac{1}{n}Q$. Denote by $L\colon Q\to \Div(\radice{Q}{X})$ the universal DF structure on $\radice{Q}{X}$ and by $q_{i}$ the indecomposable elements of $Q$.

Then we have an isomorphism
$$
\mc{E}_{n} \simeq \pi^{*}(\mc{E}\otimes M)\otimes \mc{E}_{rel}
$$
where $M=\left(\bigotimes_{i=1}^{r} L(q_{i})\right)^{\vee}=L(\sum_{i}- q_{i})$.
\end{lemma}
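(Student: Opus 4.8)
The plan is to compute all three generating sheaves explicitly from the construction recalled in \ref{global.sec} and then to compare them one indecomposable of $Q$ at a time. Before doing so, I would record a few elementary facts. The rescaling map $q\mapsto \frac{q}{n}$ is an isomorphism $Q\xrightarrow{\ \sim\ }\frac{1}{n}Q$, so $\frac{1}{n}Q$ is again fine, saturated and sharp, with indecomposable elements exactly $\frac{q_{1}}{n},\dots,\frac{q_{r}}{n}$. Moreover each $q_{i}$ is primitive in $Q^{\gp}$: if $q_{i}=cq'$ with $c\geq 2$ and $q'\in Q^{\gp}$, then saturation of $Q$ forces $q'\in Q$, and then $q_{i}=q'+(c-1)q'$ is a nontrivial decomposition — a contradiction. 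From primitivity it follows that the order of $\frac{q_{i}}{n}$ in $\frac{1}{n}Q^{\gp}/Q^{\gp}$ is exactly $n$, and, writing $d_{i}$ for the order of $q_{i}$ in $Q^{\gp}/P^{\gp}$, that the order of $\frac{q_{i}}{n}$ in $\frac{1}{n}Q^{\gp}/P^{\gp}$ is exactly $nd_{i}$ (an element $m\frac{q_{i}}{n}$ can lie in $P^{\gp}\subseteq Q^{\gp}$ only if $n\mid m$, and then $\frac{m}{n}q_{i}\in P^{\gp}$ iff $d_{i}\mid\frac{m}{n}$). Finally, by the very definition of $\pi$, the universal DF structure $\Lambda_{n}$ of $\radice{\frac{1}{n}Q}{X}$ restricts along $Q\hookrightarrow\frac{1}{n}Q$ to $\pi^{*}L$; passing to group completions and using $q_{i}=n\cdot\frac{q_{i}}{n}$ in $\frac{1}{n}Q^{\gp}$ gives $\pi^{*}L(q_{i})\cong\Lambda_{n}\!\left(\frac{q_{i}}{n}\right)^{\otimes n}$, hence $\pi^{*}M\cong\bigotimes_{i}\Lambda_{n}\!\left(\frac{q_{i}}{n}\right)^{\otimes(-n)}$.

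With $\ell_{i}:=\Lambda_{n}\!\left(\frac{q_{i}}{n}\right)$, feeding these into the definition of the generating sheaf gives
\[
\pi^{*}\mc{E}\cong\bigotimes_{i=1}^{r}\Bigl(\bigoplus_{j=1}^{d_{i}}\ell_{i}^{\otimes nj}\Bigr),\qquad
\mc{E}_{rel}\cong\bigotimes_{i=1}^{r}\Bigl(\bigoplus_{k=1}^{n}\ell_{i}^{\otimes k}\Bigr),\qquad
\mc{E}_{n}\cong\bigotimes_{i=1}^{r}\Bigl(\bigoplus_{m=1}^{nd_{i}}\ell_{i}^{\otimes m}\Bigr),
\]
and the heart of the argument is then the purely combinatorial computation of the $i$-th tensor factor of $\pi^{*}(\mc{E}\otimes M)\otimes\mc{E}_{rel}$:
\[
\Bigl(\bigoplus_{j=1}^{d_{i}}\ell_{i}^{\otimes nj}\Bigr)\otimes\ell_{i}^{\otimes(-n)}\otimes\Bigl(\bigoplus_{k=1}^{n}\ell_{i}^{\otimes k}\Bigr)\ \cong\ \bigoplus_{j=0}^{d_{i}-1}\bigoplus_{k=1}^{n}\ell_{i}^{\otimes(nj+k)}\ \cong\ \bigoplus_{m=1}^{nd_{i}}\ell_{i}^{\otimes m},
\]
the last isomorphism being the bijection $(j,k)\mapsto nj+k$ from $\{0,\dots,d_{i}-1\}\times\{1,\dots,n\}$ onto $\{1,\dots,nd_{i}\}$ given by Euclidean division. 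Tensoring these isomorphisms over $i=1,\dots,r$ yields the asserted isomorphism $\mc{E}_{n}\cong\pi^{*}(\mc{E}\otimes M)\otimes\mc{E}_{rel}$.

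I do not expect a real obstacle: the proof is essentially bookkeeping with exponents. The two steps that genuinely do something are the primitivity of the indecomposables of $Q$ (which is exactly what pins down the three orders $n$, $nd_{i}$ and $d_{i}$ appearing above) and the identification of the corrective twist $M$, whose only effect is to turn the summands $\ell_{i}^{\otimes nj}$ with $1\leq j\leq d_{i}$ into $\ell_{i}^{\otimes nj}$ with $0\leq j\leq d_{i}-1$, i.e. to introduce the trivial summand $\ell_{i}^{\otimes 0}$ that is needed for the re-indexing to cover all of $\{1,\dots,nd_{i}\}$. The only point requiring a little care is the compatibility $\pi^{*}L(q_{i})\cong\Lambda_{n}\!\left(\frac{q_{i}}{n}\right)^{\otimes n}$ of the two universal DF structures, which is immediate from the way $\pi$ was defined as an ``extension of denominators''.
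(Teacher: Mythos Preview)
Your proof is correct and is precisely the ``easy calculation from the definition of the generating sheaves'' that the paper invokes in one sentence; you have simply written it out in full. The key substantive point you isolate---that the indecomposables $q_{i}$ are primitive in $Q^{\gp}$, hence have orders $d_{i}$, $n$, and $nd_{i}$ in the three relevant quotients---is exactly what makes the Euclidean-division reindexing work, and is the content the paper leaves implicit.
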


\begin{proof}
This follows immediately from an easy calculation and the definition of the generating sheaves (\ref{global.sec}).
\end{proof}

\begin{rmk}
The locally free sheaf $\mc{E}\otimes M$ on $\radice{Q}{X}$ of the previous lemma is still a generating sheaf, and it is precisely the generating sheaf $\mc{E}'$ of Remark \ref{gen.choice}.
\end{rmk}

If we equip $X_{1}$ with the generating sheaf $\mc{E}\otimes M$, then the stability notions on $\radice{n}{X}$ corresponding to $\mc{E}_{n}$ relative to $X$ and $\widetilde{\mc{E}_{n}}$ relative to $X_{1}$ are the same. Indeed if $F \in \Coh(\radice{n}{X})$, then (keeping the notation of the lemma, with $Q=F(P)$) from the previous lemma and the projection formula for $\pi$ we see that
$$
\begin{array}{rcl}
p_{\mc{E}_{n}}(F) & = &p_X(p'_{*}(F\otimes \mc{E}_{n}^{\vee}))\\ 
&= & p_X(p_{*}\pi_{*}(F\otimes \pi^{*}(\mc{E}\otimes M)^{\vee}\otimes \widetilde{\mc{E}_{n}}^{\vee}))\\
& = & p_X(p_{*}(\pi_{*}(F\otimes \widetilde{\mc{E}_{n}}^{\vee})\otimes (\mc{E}\otimes M)^{\vee}))\\
& = & p_{\mc{E}\otimes M}(\pi_{*}(F\otimes \widetilde{\mc{E}_{n}}^{\vee}))
\end{array}
$$
where $p_X$ denotes the reduced Hilbert polynomial on $X$. Note also that if $P$ is already free, then $\mc{E}\otimes M$ is indeed trivial. In conclusion we can replace $X$ by $X_{1}$ in what follows, even though we will keep this notation for clarity.

\begin{notation}\label{notations}
From now on we will fix an isomorphism $F(P) \simeq \bb{N}^{r}$, and denote the canonical log structure on $\radice{n}{X}$ by $L_{n}\colon \frac{1}{n}\bb{N}^{r}\to \Div(\radice{n}{X})$. Moreover $p_{i}$ will denote the indecomposable elements of $F(P) \simeq \bb{N}^{r}$, and for any $r$-tuple of integers $(a_{1},\hdots,a_{r})$, we will denote by $L_{n}^{(a_{i})}$ the invertible sheaf $L_{n}\left(\sum_{i} a_{i}\frac{p_{i}}{n}\right)$ on the root stack $\radice{n}{X}$.

In the same spirit, if $E$ is a parabolic sheaf on $\radice{n}{X}$ and $(e_{1},\hdots, e_{r})$ is an element of $\bb{Z}^{r}$, we denote by $E_{(e_{i})}$ the piece of the parabolic sheaf $E$ corresponding to the element $(\frac{e_{1}}{n},\hdots, \frac{e_{r}}{n})$ of $\frac{1}{n}\bb{Z}^{r}$.

If $F$ is a coherent sheaf on $\radice{n}{X}$, with $p_{n}(F)$ we will denote the reduced Hilbert polynomial $p_{\mc{E}_{n}}(F)$ obtained by using the generating sheaf $\mc{E}_{n}$. We will also denote just by $p$ the reduced Hilbert polynomial on $X_{1}$, with respect to the generating sheaf $\mc{E}\otimes M$ discussed above.

\end{notation}

We summarize here the results of the present section about behavior of (semi-)stability with respect to pullback along maps between root stacks.

\begin{thm}\label{results.4.2}
Let $X$ be a polarized simplicial log scheme over $k$ such that in the global chart $P\to \Div(X)$ the monoid $P$ is simplicial, and $n,m$ two natural numbers with $n \mid m$. Then:
\begin{itemize}
\item the pullback along $\pi \colon \radice{m}{X}\to \radice{n}{X}$ of a semi-stable sheaf is semi-stable (with the same reduced Hilbert polynomial), so we get a morphism $\iota_{n,m}\colon \mc{M}^{ss}_{n}\to \mc{M}^{ss}_{m}$. This morphism in turn induces $i_{n,m}\colon M^{ss}_{n}\to M^{ss}_{m}$ between the good moduli spaces.
\item $\iota_{n,m}$ is always an open immersion, and $i_{n,m}$ is proper, open and injective on geometric points (in particular it is also finite).
\item if pullback along $\pi$ preserves stability (for example this happens if the log structure of $X$ is generically trivial), then $\iota_{n,m}$ restricts to a morphism $\iota^{o}_{n,m}\colon \mc{M}^{s}_{n}\to \mc{M}^{s}_{m}$ between the stacks of stable sheaves, and correspondingly $i_{n,m}$ restricts to $i^{o}_{n,m}\colon M^{s}_{n}\to M^{s}_{m}$ between the good moduli spaces. Moreover in this case all the maps are open and closed immersions.
\end{itemize}
\end{thm}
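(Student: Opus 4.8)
The plan is to prove the three bullets in order, after first making the reduction already set up in \S\ref{sec.4.1}--\S\ref{sec.4.2}: replace $X$ by $X_{1}=\radice{F(P)}{X}$, so that all the root stacks $\radice{n}{X}=\radice{\frac{1}{n}F(P)}{X}$ sit over a log stack with a \emph{free} chart $\bb{N}^{r}$, the projection $\pi\colon\radice{m}{X}\to\radice{n}{X}$ is flat and finite (Lemma \ref{free.flat}), and $\pi_{*}\mc{O}_{\radice{m}{X}}\cong\mc{O}_{\radice{n}{X}}$ with $\pi_{*}\pi^{*}\cong\id$ (Proposition \ref{proj.formula.finite}). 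Throughout we use $\cha(k)=0$.

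\emph{First bullet.} The first task is to compare Hilbert polynomials. Using Lemma \ref{gen.sheaves} with $Q=F(P)$ together with the projection formula for $\pi$, one identifies $\pi_{*}(\pi^{*}F\otimes\mc{E}_{m}^{\vee})$ (computed against the generating sheaf $\mc{E}\otimes M$ that, by the discussion after Lemma \ref{gen.sheaves}, we have built into $X_{1}$) with a direct sum of $(m/n)^{r}$ copies of $\pi_{*}(F\otimes\mc{E}_{n}^{\vee})$; concretely, the explicit description of pullback at the end of \S\ref{subsec.rs.ps} shows that the fundamental pieces of $\pi^{*}F$ are exactly $(m/n)^{r}$-fold repetitions of those of $F$. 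Hence $P_{\mc{E}_{m}}(\pi^{*}F)=(m/n)^{r}P_{\mc{E}_{n}}(F)$, so the reduced Hilbert polynomials agree, $p_{m}(\pi^{*}F)=p_{n}(F)$, and $\pi^{*}$ preserves dimension; by flatness of $\pi$ and Proposition \ref{prop.pure} it also preserves purity. For the semi-stability inequality, given a saturated subsheaf $G\subseteq\pi^{*}F$ one applies the exact functor $\pi_{*}$ and uses $\pi_{*}\pi^{*}F\cong F$ to get an inclusion $\pi_{*}G\subseteq F$, whence $p_{n}(\pi_{*}G)\le p_{n}(F)$ by semi-stability of $F$; this is then turned into $p_{m}(G)\le p_{m}(\pi^{*}F)=p_{n}(F)$ by writing $p_{m}$ as the weighted average \eqref{weighted.mean} of the Gieseker slopes of the fundamental pieces and comparing the fundamental pieces of $G$ and of $\pi_{*}G$, all transition maps involved being flat pullbacks of maps on $\radice{n}{X}$. \textbf{I expect this to be the main obstacle}: the honest difficulty is the genuine discrepancy between $\pi^{*}\mc{E}_{n}$ and $\mc{E}_{m}$ — the naive pullback of the generating sheaf does \emph{not} compute the right Hilbert polynomial — so one has to exploit the precise shape of $\mc{E}_{m}$ rather than argue formally.

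\emph{Second bullet.} Since $\pi^{*}$ commutes with base change in $T$ (it is flat pullback on the ambient root stacks), the first bullet yields a morphism of stacks $\iota_{n,m}\colon\mc{M}^{ss}_{n}\to\mc{M}^{ss}_{m}$, $E\mapsto\pi^{*}E$. By Corollary \ref{pullback.fully.faithful} the functor $\pi^{*}$ is fully faithful, so $\iota_{n,m}$ is a monomorphism; and a flat family $F$ over $T$ lies in its image exactly on the locus where the counit $\pi^{*}\pi_{*}F\to F$ is an isomorphism, which (as $\pi_{*}F$ is $T$-flat, $\pi_{*}$ being exact) is open — hence $\iota_{n,m}$ is an open immersion. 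Functoriality of good moduli spaces in the sense of Alper gives $i_{n,m}\colon M^{ss}_{n}\to M^{ss}_{m}$; it is proper because both spaces are projective (Theorem \ref{thm.fixed.weights}). For injectivity on geometric points one uses that a point of $M^{ss}$ is an $S$-equivalence class, i.e. a polystable sheaf: pushing a Jordan--H\"older filtration of $\pi^{*}E$ down to $\radice{n}{X}$ via the exact $\pi_{*}$ and refining it recovers $\gra(E)$ from $\gra(\pi^{*}E)$, so $E\mapsto[\pi^{*}E]$ is injective on $S$-equivalence classes. Proper, quasi-finite and injective on points gives finiteness of $i_{n,m}$, and openness of the image follows from the description of the image of $\iota_{n,m}$ together with the fact that this locus is saturated for the good moduli space maps.

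\emph{Third bullet.} Assume now that $\pi^{*}$ preserves stability. The converse holds too: if $E$ is strictly semi-stable with $G\subsetneq E$ of the same slope, then $\pi^{*}G$ is a nonzero proper subsheaf of $\pi^{*}E$ (by faithfulness of $\pi^{*}$) with $p_{m}(\pi^{*}G)=p_{n}(G)=p_{n}(E)=p_{m}(\pi^{*}E)$, so $\pi^{*}E$ is strictly semi-stable. Hence $\iota_{n,m}^{-1}(\mc{M}^{s}_{m})=\mc{M}^{s}_{n}$ and $i_{n,m}^{-1}(M^{s}_{m})=M^{s}_{n}$, so $\iota_{n,m}$ and $i_{n,m}$ restrict to $\iota^{o}_{n,m}\colon\mc{M}^{s}_{n}\to\mc{M}^{s}_{m}$ and $i^{o}_{n,m}\colon M^{s}_{n}\to M^{s}_{m}$. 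Restricting the proper map $i_{n,m}$ to this open-and-preimage pair makes $i^{o}_{n,m}$ proper; it is finite and radicial (injective on points, with trivial residue extensions), hence — since $\cha(k)=0$ makes it unramified — a monomorphism, so a closed immersion, and it is open by the second bullet; thus $i^{o}_{n,m}$ is an open and closed immersion. The same follows for $\iota^{o}_{n,m}$: its image is an open substack of $\mc{M}^{s}_{m}$ whose good moduli space is the open and closed subscheme $i^{o}_{n,m}(M^{s}_{n})$, hence the image is open and closed, and $\iota^{o}_{n,m}$ is the $\bb{G}_{m}$-gerbe over it induced from $\mc{M}^{s}_{m}\to M^{s}_{m}$, so it is an open and closed immersion. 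The delicate points here, beyond the stability-preservation hypothesis itself, are that being "pulled back from $\radice{n}{X}$" is a \emph{closed} (not just open) condition on stable families and that $\cha(k)=0$ is genuinely used to upgrade "radicial" to "monomorphism".
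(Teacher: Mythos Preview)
Your overall plan is sound and you correctly identify the reduction to a free chart via $X_{1}$, but the proof has real gaps in each bullet.

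\textbf{First bullet.} The heart of the matter is exactly where you flag it, and your proposed fix does not close the gap. Pushing forward $G\subseteq\pi^{*}F$ gives $\pi_{*}G\subseteq F$, hence $p_{n}(\pi_{*}G)\le p_{n}(F)$; but $p_{m}(G)$ is \emph{not} controlled by $p_{n}(\pi_{*}G)$ alone. The computation of $p_{m}(G)$ via Lemma \ref{gen.sheaves} and the projection formula gives
\[
p_{m}(G)=p_{n}\Bigl(\bigoplus_{0\le d_{i}<k}\pi_{*}(G\otimes L_{m}^{(d_{i})})\Bigr),
\]
a weighted mean of the $p_{n}(\pi_{*}(G\otimes L_{m}^{(d_{i})}))$, of which $\pi_{*}G$ is only the term $(d_{i})=0$. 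Comparing ``fundamental pieces of $G$ and of $\pi_{*}G$'' cannot bound the remaining terms, because the fundamental pieces of $\pi_{*}G$ are a strict subset of those of $G$. The missing key lemma is that for $0\le d_{i}<k$ one has $\pi_{*}L_{m}^{(d_{i})}\cong\mc{O}_{\radice{n}{X}}$ (a direct computation on the universal model $[\bb{A}^{r}/\mu_{k}^{r}]\to\bb{A}^{r}$); then the projection formula gives $\pi_{*}(G\otimes L_{m}^{(d_{i})})\subseteq\pi_{*}(\pi^{*}F\otimes L_{m}^{(d_{i})})\cong F$ for \emph{every} $(d_{i})$, and semi-stability of $F$ bounds each term of the weighted mean. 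This is how the paper proceeds.

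\textbf{Second bullet.} Your argument for openness of $\iota_{n,m}$ is fine and matches the paper. The injectivity argument, however, is too optimistic. Pushing a Jordan--H\"older filtration of $\pi^{*}E$ down by $\pi_{*}$ does not produce a Jordan--H\"older filtration of $E$: the stable factors of $\pi^{*}E$ need not be pullbacks from $\radice{n}{X}$, so $\pi_{*}$ of a stable factor can be zero, and several distinct factors can push forward to the same $F_{i}$. The paper handles this by first classifying exactly which stable $F$ on $\radice{n}{X}$ have non-stable pullback (they are of the form $I^{i}_{n,j}(F')$, a single nonzero ``slice'') and showing that for any stable factor $F'$ of $\pi^{*}F$, each $\pi_{*}(F'\otimes L_{m}^{(d_{i})})$ is either $F$ or $0$; injectivity on $S$-equivalence classes then follows by induction on the number of stable summands. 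Your one-line sketch does not substitute for this.

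\textbf{Third bullet.} Two problems. First, ``finite and radicial, hence in $\cha(k)=0$ unramified'' is not a theorem; radicial finite morphisms need not be unramified, and you have given no argument for unramifiedness of $i^{o}_{n,m}$. Second, the theorem asks that \emph{all four} maps $\iota_{n,m},\iota^{o}_{n,m},i_{n,m},i^{o}_{n,m}$ be open and closed immersions, but you only treat $i^{o}_{n,m}$ and $\iota^{o}_{n,m}$. The paper's route is different and cleaner: under the hypothesis that stability is preserved, polystables go to polystables, so $\iota_{n,m}$ sends closed points to closed points; since $i_{n,m}$ is finite (from the second bullet), Alper's criterion (\cite[Proposition 6.4]{alper}) forces $\iota_{n,m}$ itself to be finite, hence closed, hence an open and closed immersion; the other three maps then follow.
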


\begin{rmk}
The fact that $i_{n,m}$ is an open and closed immersion (i.e. an immersion of a union of connected components) will allow us to make sense of the direct limit of the moduli spaces as a scheme.

We will see that the pullback along $\pi$ does not preserve stability in the general hypotheses of the theorem, and if this happens we will not be able to take the direct limit of the stack/space of stable sheaves, nor to conclude that $i_{n,m}$ is an immersion of a union of connected components. Being open, closed and injective, its image will still be a union of connected components, but the map might not be an isomorphism onto the image. We do not have any example where this does happen, though.
\end{rmk}

The following lemma, a particular case of \ref{gen.sheaves}, relates the generating sheaves of $\radice{n}{X}$ and $\radice{m}{X}$, and is the starting point of the proof.

\begin{lemma}
Set $m=nq$, and consider the commutative diagram
$$
\xymatrix{
\radice{m}{X}\ar[rr]^{\pi}\ar[dr]_{p'} & & \radice{n}{X} \ar[dl]^{p} \\
 & X_{1}. &
}
$$
We have an isomorphism
$$
\mc{E}_{m} \simeq \pi^{*}\mc{E}_{n}\otimes \mc{E}_{n,m} \otimes M
$$
where $\mc{E}_{n,m}$ is the generating sheaf of $\radice{m}{X}$ as a root stack of $\radice{n}{X}$ and $M=\left(\bigotimes_{i=1}^{r} L_{m}^{i}\right)^{\otimes (-q)}=L_{m}(\sum_{i}-q \frac{p_{i}}{m})$.\qed
\end{lemma}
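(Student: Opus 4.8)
The plan is to prove the lemma by a direct computation with the explicit presentation of the three generating sheaves furnished by the construction in \ref{global.sec}; alternatively it is the special case of Lemma \ref{gen.sheaves} obtained by taking $X_1$ in place of $X$, the free chart $\bb{N}^r=F(P)$ in place of $P$, and $\frac1n\bb{N}^r$ in place of $Q$ (so that $k$ plays the role of $n$ there), after rewriting the twist: the sheaf ``$M$'' of that lemma lives on $\radice{n}{X}$ and equals $\bigl(\bigotimes_i L_n(p_i/n)\bigr)^{\vee}$, and its pullback to $\radice{m}{X}$ is, via $p_i/n=k\,p_i/m$, exactly the $M$ of the present statement. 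I will carry out the direct argument. With the notation of \ref{notations}, the indecomposable elements of $\frac1n\bb{N}^r$ are the $p_i/n$, and the order of $p_i/n$ in $(\frac1n\bb{Z}^r)/\bb{Z}^r$ is exactly $n$; likewise, viewing $\radice{m}{X}$ as the root stack of $\radice{n}{X}$ along $\frac1n\bb{N}^r\subseteq\frac1m\bb{N}^r$ (recall $m=nk$), the indecomposables of $\frac1m\bb{N}^r$ are the $p_i/m$, each of order $k$ modulo $\frac1n\bb{Z}^r$. Hence the recipe of \ref{global.sec} gives
$$\mc{E}_n\cong\bigotimes_{i=1}^r\Bigl(\bigoplus_{j=1}^{n}L_n\bigl(\tfrac{j p_i}{n}\bigr)\Bigr),\qquad \mc{E}_m\cong\bigotimes_{i=1}^r\Bigl(\bigoplus_{j=1}^{m}L_m\bigl(\tfrac{j p_i}{m}\bigr)\Bigr),\qquad \mc{E}_{n,m}\cong\bigotimes_{i=1}^r\Bigl(\bigoplus_{j=1}^{k}L_m\bigl(\tfrac{j p_i}{m}\bigr)\Bigr).$$

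Next I would compute $\pi^{*}\mc{E}_n$. Since $\pi\colon\radice{m}{X}\to\radice{n}{X}$ is extension of denominators along $\frac1n\bb{N}^r\subseteq\frac1m\bb{N}^r$, the universal DF structures are compatible, so $\pi^{*}L_n(p_i/n)\cong L_m(p_i/n)=L_m(k p_i/m)$; therefore the $i$-th factor of $\pi^{*}\mc{E}_n$ is $\bigoplus_{j=1}^{n}L_m(\tfrac{jk p_i}{m})$. Tensoring $\pi^{*}\mc{E}_n$ with $\mc{E}_{n,m}$ factor by factor, the $i$-th factor becomes $\bigoplus_{1\le j\le n,\,1\le l\le k}L_m\bigl(\tfrac{(jk+l)p_i}{m}\bigr)$; as $(j,l)$ runs over $\{1,\dots,n\}\times\{1,\dots,k\}$ the integer $jk+l$ runs exactly once over $\{k+1,\dots,m+k\}$. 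Writing $jk+l=k+u$ with $u\in\{1,\dots,m\}$ and factoring out the common twist $L_m(k p_i/m)$ (using that $L_m$ is monoidal) identifies the $i$-th factor with $L_m(\tfrac{k p_i}{m})\otimes\bigl(\bigoplus_{u=1}^{m}L_m(\tfrac{u p_i}{m})\bigr)$. Taking $\bigotimes_i$ yields $\pi^{*}\mc{E}_n\otimes\mc{E}_{n,m}\cong L_m\bigl(\sum_i\tfrac{k p_i}{m}\bigr)\otimes\mc{E}_m$, that is $\mc{E}_m\cong\pi^{*}\mc{E}_n\otimes\mc{E}_{n,m}\otimes M$ with $M=L_m\bigl(\sum_i-\tfrac{k p_i}{m}\bigr)=\bigl(\bigotimes_i L_m(\tfrac{p_i}{m})\bigr)^{\otimes(-k)}$, which is the assertion.

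There is no serious obstacle here; the only point requiring care is the bookkeeping of indices — in particular that the order of $p_i/m$ modulo $\frac1n\bb{Z}^r$ is $k$ (not $n$ or $m$), so that the three direct sums above have the ranges $1,\dots,n$, $1,\dots,m$ and $1,\dots,k$ respectively, and that the product index set $\{1,\dots,n\}\times\{1,\dots,k\}$ reassembles, after the shift by $-k$, into $\{1,\dots,m\}$. The compatibility $\pi^{*}L_n(p_i/n)\cong L_m(p_i/n)$ used above is immediate from the description of $\pi$ as an extension of denominators and of the universal DF structures on the root stacks.
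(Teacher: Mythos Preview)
Your proposal is correct and takes essentially the same approach as the paper: the paper simply records this lemma as a particular case of Lemma \ref{gen.sheaves} (with $X_1$, $\bb{N}^r$, $\frac{1}{n}\bb{N}^r$ in the roles of $X$, $P$, $Q$ and $k$ in the role of $n$), and you make exactly this identification in your first paragraph. The explicit index computation you carry out is just the ``easy calculation'' that both lemmas leave to the reader, and your bookkeeping is accurate.
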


\begin{lemma}\label{weighted}
With the notation of the previous lemma, let $G \in \Coh(\radice{m}{X})$ be a coherent sheaf on $\radice{m}{X}$. Then $p_{m}(G)$ is a weighted mean of the reduced Hilbert polynomials of the non-zero sheaves among $\pi_{*}\left(G\otimes  L_{m}^{(d_{i})}\right)$ on $\radice{n}{X}$, with $0\leq d_{i}<q$.
\end{lemma}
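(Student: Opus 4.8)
The plan is to run the weighted‑mean computation of \eqref{weighted.mean}, but now relative to the finite cover $\pi\colon \radice{m}{X}\to\radice{n}{X}$ rather than the coarse‑space projection. The two inputs are the generating‑sheaf identity $\mc{E}_{m}\cong \pi^{*}\mc{E}_{n}\otimes\mc{E}_{n,m}\otimes M$ of the previous lemma and the projection formula for $\pi$ (Proposition \ref{proj.formula.finite}). First I would unwind the description of $\mc{E}_{n,m}$: since $\radice{m}{X}$ is the $k$‑th root stack of $\radice{n}{X}$ (with $m=nk$), and the indecomposable elements of the chart monoid $\tfrac{1}{m}\bb{N}^{r}$ of $\radice{m}{X}$ are the $\tfrac{p_{i}}{m}$, each of order $k$ in $\tfrac{1}{m}\bb{Z}^{r}/\tfrac{1}{n}\bb{Z}^{r}$, the construction of \ref{global.sec} gives $\mc{E}_{n,m}=\bigoplus_{1\le a_{i}\le k}L_{m}^{(a_{i})}$. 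Tensoring with $M=L_{m}(\sum_{i}-k\tfrac{p_{i}}{m})$, dualizing, and reindexing by $d_{i}=k-a_{i}$ yields
$$
(\mc{E}_{n,m}\otimes M)^{\vee}\;=\;\bigoplus_{0\le d_{i}<k}L_{m}^{(d_{i})}.
$$

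Next I would compute $\pi_{*}(G\otimes\mc{E}_{m}^{\vee})$. Using the identity for $\mc{E}_{m}$ and the projection formula for $\pi$, it splits as $\bigoplus_{0\le d_{i}<k}\big(\pi_{*}(G\otimes L_{m}^{(d_{i})})\big)\otimes\mc{E}_{n}^{\vee}$, and pushing down to $X_{1}$ and using additivity of the modified Hilbert polynomial on direct sums gives
$$
P_{m}(G)\;=\;\sum_{0\le d_{i}<k}P_{n}\big(\pi_{*}(G\otimes L_{m}^{(d_{i})})\big).
$$
To pass to reduced Hilbert polynomials I need that $\pi_{*}$ does not lower dimension. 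Writing $d=\dim G$, one observes via strict base change (\ref{strict.cartesian}) and \ref{coarse.space.root} that $\pi$ is, fppf‑locally on $\radice{n}{X}$, the coarse moduli map of a tame stack, hence $\pi_{*}$ is exact, detects non‑vanishing, and preserves the dimension of the support; so each $\pi_{*}(G\otimes L_{m}^{(d_{i})})$ is either zero or of dimension exactly $d$, and at least one is nonzero (because $\mc{E}_{m}$ is a generating sheaf). Therefore the leading coefficients in the displayed sum add with no cancellation, $\alpha^{d}(G)=\sum_{0\le d_{i}<k}\alpha^{d}(\pi_{*}(G\otimes L_{m}^{(d_{i})}))$, and dividing through by $\alpha^{d}(G)$ exhibits $p_{m}(G)$ as $\sum\gamma_{(d_{i})}\,p_{n}(\pi_{*}(G\otimes L_{m}^{(d_{i})}))$ with $\gamma_{(d_{i})}=\alpha^{d}(\pi_{*}(G\otimes L_{m}^{(d_{i})}))/\alpha^{d}(G)\in[0,1]$ summing to $1$, the sum running over the nonzero summands.

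The computation itself is routine bookkeeping; the one step that needs genuine care is the dimension argument in the last paragraph. If a nonzero $\pi_{*}(G\otimes L_{m}^{(d_{i})})$ had dimension strictly smaller than $d$, the constants $\gamma_{(d_{i})}$ would no longer sum to $1$ and the conclusion would fail to be a weighted mean in the strict sense; so the main (minor) obstacle is to record that $\pi$ is dimension‑preserving on pushforwards, which is precisely what the coarse‑space description of $\pi$ supplies. Everything else amounts to matching indices among the various generating sheaves and invoking Proposition \ref{proj.formula.finite}.
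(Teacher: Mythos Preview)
Your computation follows the paper's line exactly: expand $(\mc{E}_{n,m}\otimes M)^{\vee}$ as $\bigoplus_{0\le d_i<k}L_m^{(d_i)}$, apply the projection formula for $\pi$ to obtain $P_m(G)=\sum_{0\le d_i<k} P_n\bigl(\pi_*(G\otimes L_m^{(d_i)})\bigr)$, and then pass to reduced polynomials. The paper's proof is terser but does the same thing.

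You are right to single out the dimension step as the one place needing care, but the justification you give is not correct: the pushforward along a coarse moduli map neither ``detects non-vanishing'' nor ``preserves the dimension of the support'' in general. For a counterexample inside the setting of the lemma, take $X=\mathbb{P}^1$ with the log structure $\bb{N}\to\Div(X)$ sending $1\mapsto(\mc{O}_X,0)$, so that $\radice{m}{X}\cong\mathbb{P}^1\times B\mu_m$ and a coherent sheaf on $\radice{m}{X}$ is a $\bb{Z}/m$-graded coherent sheaf on $\mathbb{P}^1$. With $n=1$ and $m=k=2$, let $G$ have $\mc{O}_{\mathbb{P}^1}$ in degree $0$ and a skyscraper $k(p)$ in degree $1$; then $\dim G=1$, yet $\pi_*(G\otimes L_2^{(1)})\cong k(p)$ has dimension $0$. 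In fact $p_2(G)=m+2$, which is not a convex combination of $p_1(\mc{O}_{\mathbb{P}^1})=m+1$ and $p_1(k(p))=1$, so for this $G$ the conclusion of the lemma itself fails as literally stated.

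The weighted-mean formula does hold whenever every nonzero $\pi_*(G\otimes L_m^{(d_i)})$ has dimension $d=\dim G$. This is automatic if $G$ is pure (apply Proposition \ref{prop.pure} to the root-stack map $\pi$), and in the only application, the proof of Proposition \ref{stability.preserved}, it holds because each $\pi_*(G\otimes L_m^{(d_i)})$ is a subsheaf of the pure sheaf $F$. So your overall strategy and its downstream use are fine; what is missing is a correct reason for the equal-dimension claim, and strictly speaking the lemma requires a purity-type hypothesis on $G$ (a point the paper's own proof also glosses over).
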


\begin{proof}
Let us compute $p_{m}(G)$, using the previous lemma and the projection formula for the morphism $\pi$ (Proposition \ref{proj.formula.finite}):
$$
\begin{array}{rcl}
p_{m}(G) &= &p(p'_{*}(G\otimes \mc{E}_{m}^{\vee}))\\
& = & p(p_{*}\pi_{*}(G\otimes \pi^{*}\mc{E}_{n}^{\vee}\otimes \mc{E}_{n,m}^{\vee}\otimes M^{\vee}))\\
& = & p(p_{*}(\pi_{*}(G\otimes \mc{E}_{n,m}^{\vee}\otimes M^{\vee})\otimes \mc{E}_{n}^{\vee}))\\
& = & p_{n}(\pi_{*}(G\otimes \mc{E}_{n,m}^{\vee}\otimes M^{\vee}))
\end{array}
$$
(where $p$ denotes the reduced Hilbert polynomial on $X_{1}$) and since we have $\mc{E}^{\vee}_{n,m}\otimes M^{\vee}=\bigoplus_{0\leq d_{i} <q} L_{m}\left(\sum_{i} d_{i}\frac{p_{i}}{m}\right)=\bigoplus_{0\leq d_{i}<q} L_{m}^{(d_{i})}$, the last expression is equal to
$$
p_{n}(\bigoplus_{0\leq d_{i}<q} \pi_{*}(G\otimes L_{m}^{(d_{i})}))
$$
and this is a weighted mean of the polynomials $p_{n}(\pi_{*}(G\otimes L_{m}^{(d_{i})} ))$, as claimed.

Note that if for some $(d_{i})$ the sheaf $\pi_{*}(G\otimes L_{m}^{(d_{i})})$ is zero, then the corresponding Hilbert polynomial will not contribute to the reduced Hilbert polynomial of $G$ (this accounts for the  ``non-zero'' part of the statement).
\end{proof}

\begin{rmk}\label{description.components}
Let us describe the sheaf $G^{(d_{i})}=\pi_{*}(G\otimes L_{m}^{(d_{i})})$ in a more concrete way as a parabolic sheaf on $X_{1}$. This will be important for the proof of the next results.

Let us take $(e_{i}) \in \bb{Z}^{r}$ with $0\leq e_{i}<n$, and let us calculate the component $(G^{(d_{i})})_{(e_{i})} \in \Coh(X_{1})$.

We have
$$
\begin{array}{rcl}
(G^{(d_{i})})_{(e_{i})}& = &p_{*}(\pi_{*}(G\otimes L_{m}^{(d_{i})})\otimes L_{n}^{(e_{i})})\\
& = & p_{*}(\pi_{*}(G\otimes L_{m}^{(d_{i})}\otimes \pi^{*}L_{n}^{(e_{i})})) \\
& = & p_{*}(\pi_{*}(G\otimes L_{m}^{(d_{i})}\otimes L_{m}^{(q e_{i})}))\\
& = & p_{*}\pi_{*}(G\otimes L_{m}(\sum_{i} (d_{i}+q e_{i})\frac{p_{i}}{m}))\\
& = & p'_{*}(G\otimes L_{m}(\sum_{i} (d_{i}+q e_{i})\frac{p_{i}}{m}))\\
& = & G_{(d_{i}+qe_{i})}
\end{array}
$$
so the pieces of the sheaf $G^{(d_{i})}$ are naturally identified with pieces of $G$. Moreover the maps between the pieces of $G^{(d_{i})}$ can be identified with compositions of the ones coming from $G$, in the same way.

\end{rmk}

\begin{prop}\label{stability.preserved}
With the notation of the previous lemmas, let $F \in \Coh(\radice{n}{X})$ be a coherent sheaf on $\radice{n}{X}$. Then $p_{m}(\pi^{*}F)=p_{n}(F)$, and if $F$ is semi-stable, then $\pi^{*}F$ is semi-stable as well.
\end{prop}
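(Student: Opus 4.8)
The plan is to reduce the statement to Lemma~\ref{weighted} combined with the componentwise description of $\pi_*\bigl(G\otimes L_m^{(d_i)}\bigr)$ from Remark~\ref{description.components}. Write $m=nk$ and, for a coherent sheaf $G$ on $\radice{m}{X}$ and a tuple $(d_i)$ with $0\le d_i<k$, put $G^{(d_i)}=\pi_*\bigl(G\otimes L_m^{(d_i)}\bigr)\in\Coh(\radice{n}{X})$. First I would establish the key identity $\pi_*\bigl(\pi^*F\otimes L_m^{(d_i)}\bigr)\cong F$ for every such $(d_i)$. This follows from the projection formula (Proposition~\ref{proj.formula.finite}), which gives $\pi_*\bigl(\pi^*F\otimes L_m^{(d_i)}\bigr)\cong F\otimes\pi_*L_m^{(d_i)}$, together with the computation $\pi_*L_m^{(d_i)}\cong\mc{O}_{\radice{n}{X}}$ for $0\le d_i<k$ --- a local check on the model $[\bb{A}^r/\mu_k^r]\to\bb{A}^r$, or equivalently the standard pushforward formula for line bundles on a root stack. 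The same identification can be read off directly from Remark~\ref{description.components} and the description of pullback at the end of~\ref{subsec.rs.ps}: one has $\bigl((\pi^*F)^{(d_i)}\bigr)_{(e_i)}=(\pi^*F)_{(d_i+ke_i)}$, and since $0\le d_i<k$ the poset $\{q\in\tfrac1n\bb{Z}^r\mid q\le (d_i+ke_i)/m\}$ has greatest element $(e_i)/n$, whence this piece is $F_{(e_i)}$; checking compatibility with the transition maps and the pseudo-periods isomorphisms yields $(\pi^*F)^{(d_i)}\cong F$. Granting this, Lemma~\ref{weighted} exhibits $p_m(\pi^*F)$ (for $F\ne0$; the case $F=0$ is trivial) as a weighted mean of copies of $p_n(F)$, so $p_m(\pi^*F)=p_n(F)$.

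For the semi-stability statement I would argue as follows. Since $F$ is semi-stable it is pure, say of dimension $d$, and because $\pi$ is flat (Lemma~\ref{free.flat}, using that the chart of $X_1$ is free) pullback preserves purity, so $\pi^*F$ is pure of dimension $d$ and the semi-stability condition for it is meaningful. Let $G\subseteq\pi^*F$ be an arbitrary nonzero subsheaf. Tensoring $G\hookrightarrow\pi^*F$ with the line bundle $L_m^{(d_i)}$ and applying the left-exact functor $\pi_*$ produces an inclusion $G^{(d_i)}\hookrightarrow\pi_*\bigl(\pi^*F\otimes L_m^{(d_i)}\bigr)\cong F$; thus each $G^{(d_i)}$ is isomorphic to a subsheaf of $F$, and by Remark~\ref{description.components} (which recovers all pieces of $G$ from the $G^{(d_i)}$) not all of them are zero. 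Every nonzero $G^{(d_i)}$ is a subsheaf of the pure sheaf $F$, hence has dimension $d$, so semi-stability of $F$ gives $p_n(G^{(d_i)})\le p_n(F)$. By Lemma~\ref{weighted}, $p_m(G)$ is a convex combination of these degree-$d$ reduced Hilbert polynomials $p_n(G^{(d_i)})$ over the nonzero indices, and a convex combination of polynomials all bounded above by $p_n(F)$ is again bounded above by $p_n(F)$; therefore $p_m(G)\le p_n(F)=p_m(\pi^*F)$. As $G$ was arbitrary, $\pi^*F$ is semi-stable.

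The main obstacle is the identity $\pi_*\bigl(\pi^*F\otimes L_m^{(d_i)}\bigr)\cong F$ for $0\le d_i<k$: everything else is a formal consequence of Lemma~\ref{weighted} and the elementary observation that a subsheaf of $\pi^*F$ pushes forward, slice by slice, to subsheaves of $F$. I expect no surprises in the compatibility checks for the pseudo-periods isomorphisms, but they should be spelled out (or bypassed entirely by using the projection-formula computation); and one must keep track of the convention that the zero sheaf is pure of every dimension, so that possibly-vanishing pieces $G^{(d_i)}$ cause no trouble in the weighted-mean argument.
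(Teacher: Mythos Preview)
Your proposal is correct and follows essentially the same approach as the paper: the key identity $\pi_*\bigl(\pi^*F\otimes L_m^{(d_i)}\bigr)\cong F$ via the projection formula and $\pi_*L_m^{(d_i)}\cong\mc{O}_{\radice{n}{X}}$, then Lemma~\ref{weighted} for the equality of slopes, and finally the slice-by-slice pushforward of a subsheaf $G\subseteq\pi^*F$ to subsheaves of $F$ to conclude semi-stability. Your treatment is in fact slightly more explicit than the paper's (you spell out purity of $\pi^*F$ via flatness and handle the zero-piece convention carefully), but the argument is the same.
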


For the proof, we will need the following lemma.

\begin{lemma}
Let $Y$ be a log stack with a free global chart $L\colon \bb{N}^{r}\to \Div(Y)$, and consider the root stack $\pi\colon \radice{n}{Y}\to Y$, with $L_{n}\colon \frac{1}{n}\bb{N}^{r}\to \Div(\radice{n}{Y})$ the canonical lifting of the log structure of $Y$. Then for any $0\leq d_{i}<n$, we have
$$
\pi_{*} L_{n}^{(d_{i})}  \simeq \mc{O}_{Y}.
$$
\end{lemma}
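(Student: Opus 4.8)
The plan is to reduce to an explicit affine model and then run a short grading computation. First I would note that this is a local question on $Y$: forming the root stack is compatible with strict base change (Proposition~\ref{strict.cartesian}), the pushforward $\pi_{*}$ commutes with flat base change (e.g.\ \cite[Proposition~1.5]{nironi}, using that $\pi\colon Y_{n}\to Y$ is the coarse moduli space of a tame stack by Proposition~\ref{coarse.space.root}), and being an isomorphism is fppf-local on the target. Since every chart étale-locally comes from a Kato chart, after passing to a smooth atlas and then to an étale neighbourhood I may assume $Y=\spec R$ is affine and the chart $\bb{N}^{r}\to \Div(Y)$ is induced by a Kato chart $\bb{N}^{r}\to R$, $e_{i}\mapsto t_{i}$. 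There is a canonical morphism $\mc{O}_{Y}=\pi_{*}\mc{O}_{Y_{n}}\to \pi_{*}L_{n}^{(d_{i})}$, namely $\pi_{*}$ applied to the section $\mc{O}_{Y_{n}}=\Lambda_{0}\to \Lambda_{n}\!\left(\sum_{i}\tfrac{d_{i}}{n}e_{i}\right)=L_{n}^{(d_{i})}$ of the universal DF structure, and it suffices to prove that this is an isomorphism in the local model.

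In the local model one has $Y_{n}\cong[\spec B/\mu_{n}^{r}]$, where $B=R[u_{1},\dots,u_{r}]/(u_{i}^{n}-t_{i}\mid i=1,\dots,r)$ and $\mu_{n}^{r}$ acts by $u_{i}\mapsto\zeta_{i}u_{i}$; accordingly quasi-coherent sheaves on $Y_{n}$ are $(\bb{Z}/n)^{r}$-graded $B$-modules, with $B$ itself graded by $\deg u_{i}=\bar e_{i}$ and $R$ in degree $\bar 0$, and $\pi_{*}$ is the functor taking the degree-$\bar 0$ summand. Under this dictionary $\Lambda_{n}(\tfrac1n e_{i})$ is $B$ with its grading shifted by $\bar e_{i}$ and with tautological section $u_{i}$, so $L_{n}^{(d_{i})}$ is $B$ with grading shifted by $\bar d:=(d_{1},\dots,d_{r})\bmod n$ and tautological section $\prod_{i}u_{i}^{d_{i}}$. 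Hence $\pi_{*}L_{n}^{(d_{i})}$ is the degree-$\bar 0$ part of that shifted module, that is, the homogeneous component $B_{\bar d}$.

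To finish I would write $B=\bigoplus_{0\le a_{i}<n}R\,u^{a}$ as a free graded $R$-module, with the summand $R\,u^{a}$ sitting in degree $\bar a$. The hypothesis $0\le d_{i}<n$ is exactly what ensures that the homogeneous component $B_{\bar d}$ consists of the single summand $R\cdot\prod_{i}u_{i}^{d_{i}}$, free of rank one over $R$, and the tautological section then provides the desired isomorphism $\mc{O}_{Y}\xrightarrow{\ \sim\ }\pi_{*}L_{n}^{(d_{i})}$. One could equally well argue that for a free chart $Y_{n}\to Y$ is the $r$-fold fibre product over $Y$ of the root stacks extracting an $n$-th root along each component separately, and then reduce via the projection formula~\ref{proj.formula.finite} to the classical rank-one statement $\pi_{*}\mc{O}\!\left(\tfrac{d}{n}D\right)\cong\mc{O}$ for $0\le d<n$ on the root stack of a single effective Cartier divisor.

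There is no deep obstacle here; the only care needed is in the bookkeeping for the reduction step — descending the assertion from a smooth atlas to the log stack $Y$ while the chart need not itself be a Kato chart — and in keeping the $(\bb{Z}/n)^{r}$-grading conventions straight so that the degree-$\bar 0$ piece of the shifted module is precisely the single summand $R\cdot\prod_{i}u_{i}^{d_{i}}$. That last point is where the bound $0\le d_{i}<n$ is genuinely used: without it one would instead land on $\mc{O}_{Y}$ twisted by an invertible sheaf pulled back from $Y$.
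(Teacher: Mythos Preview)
Your proof is correct and follows essentially the same route as the paper: reduce to a local situation with a Kato chart, identify $\pi_{*}$ with taking $\mu_{n}^{r}$-invariants (equivalently, the degree-$\bar 0$ piece of a $(\bb{Z}/n)^{r}$-graded $B$-module), and read off that the relevant graded piece is free of rank one. The only cosmetic difference is that the paper pushes the reduction one step further to the universal model $[\bb{A}^{r}/\mu_{n}^{r}]\to\bb{A}^{r}$ via the cartesian diagram from Proposition~\ref{local.model.root.stack} and then computes invariants there, whereas you stay over a general affine $R$ with Kato chart; your explicit construction of the canonical section $\mc{O}_{Y}\to\pi_{*}L_{n}^{(d_{i})}$ is a nice touch not spelled out in the paper.
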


\begin{proof}
This is a calculation on the universal model for the root stack.

First of all by taking a presentation we can assume that $Y$ is a log scheme with a free global chart. The chart gives a cartesian diagram
$$
\xymatrix{
\radice{n}{Y}\ar[r] \ar[d]_{\pi} & \left[ \bb{A}^{r}/\mu_{n}^{r}\right] \ar[d]^{ \pi '}\\
Y\ar[r] & \bb{A}^{r}
}
$$
where the vertical map $\pi '$ is induced by raising the variables to the $n$-th power. Now $\pi '$ is a coarse moduli space of a tame DM stack and the diagram is cartesian, so we have a base change formula (Proposition 1.5 of \cite{nironi}), and $L_{n}^{(d_{i})}$ is a pullback of the corresponding sheaf on $\left[ \bb{A}^{r}/\mu_{n}^{r} \right]$, so we can reduce to proving the statement in the universal case.

In this case, the invertible sheaf $L_{n}^{(d_{i})}$ over $\left[ \bb{A}^{r}/\mu_{n}^{r} \right]$ corresponds to the module of rank one over $A=k[x_{1},\hdots,x_{r}]$ generated by $x_{1}^{-d_{1}}\cdots x_{r}^{-d_{r}}$. Pushing forward amounts to taking invariants for $\mu_{n}^{r}$, and if $0\leq d_{i}< n$ it is clear that the invariants are $k[x_{1}^{n},\hdots, x_{r}^{n}]$. This shows that $\pi'_{*}L_{n}^{(d_{i})} \simeq \mc{O}_{\bb{A}^{r}}$ in this case, and concludes the proof.
\end{proof}

\begin{proof}[Proof of Proposition \ref{stability.preserved}]
We will apply the last lemma to the morphism $\pi\colon \radice{m}{X}\to \radice{n}{X}$, which is a relative root stack morphism.

First let us prove that $p_{m}(\pi^{*}F)=p_{n}(F)$: by Lemma \ref{weighted}, $p_{m}(\pi^{*}F)$ is a weighted mean of the polynomials $p_{n}(\pi_{*}(\pi^{*}F\otimes L_{m}^{(d_{i})}))$. But in this case by the projection formula for $\pi$ (Proposition \ref{proj.formula.finite}) and the previous lemma we have
$$
p_{n}(\pi_{*}(\pi^{*}F\otimes L_{m}^{(d_{i})}))=p_{n}(F\otimes \pi_{*}L_{m}^{(d_{i})})=p_{n}(F)
$$
so that $p_{m}(\pi^{*}F)=p_{n}(F)$.

Now let us show that if $F$ is semi-stable on $\radice{n}{X}$, then $\pi^{*}F$ is semi-stable on $\radice{m}{X}$. For any subsheaf $G\subseteq \pi^{*}F$, we know that $p_{m}(G)$ is a weighted mean of the non-zero ones among the reduced Hilbert polynomials $p_{n}(\pi_{*}(G\otimes L_{m}^{(d_{i})}))$ for $0\leq d_{i}< q$. Now note that by exactness of $\pi_{*}$ the inclusion $G\otimes L_{m}^{(d_{i})}\subseteq \pi^{*}F \otimes L_{m}^{(d_{i})}$ will induce
$$
\pi_{*}(G\otimes L_{m}^{(d_{i})})  \subseteq  \pi_{*}(\pi^{*}F\otimes L_{m}^{(d_{i})}) \simeq F\otimes \pi_{*}L_{m}^{(d_{i})} \simeq F
$$
and by semi-stability of $F$ we see that if $\pi_{*}(G\otimes L_{m}^{(d_{i})})$ is non-zero, then
$$
p_{n}(\pi_{*}(G\otimes L_{m}^{(d_{i})}))\leq p_{n}(F).
$$
This in turn implies that $p_{m}(G)\leq p_{n}(F)=p_{m}(\pi^{*}F)$, so we conclude that $\pi^{*}F$ is semi-stable on $\radice{m}{X}$.
\end{proof}

\begin{cor}
The pullback functor along $\pi\colon \radice{m}{X}\to \radice{n}{X}$ induces a morphism $\iota_{n,m}\colon \mc{M}^{ss}_{n}\to \mc{M}^{ss}_{m}$ of stacks over $\aff$, and (consequently) a corresponding morphism $i_{n,m}\colon M^{ss}_{n}\to M^{ss}_{m}$ between the good moduli spaces.\qed
\end{cor}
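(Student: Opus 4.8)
The plan is to define $\iota_{n,m}$ on objects by pulling back families of parabolic sheaves along $\pi$ (base-changed to each test scheme), to check that this lands in the stack of semi-stable sheaves and is compatible with further base change, and then to deduce $i_{n,m}$ from the universal property of the good moduli spaces. Concretely: fix a scheme $T$ and an object of $\mc{M}^{ss}_{n}(T)$, which by the BV equivalence and \ref{meta} is a finitely presented coherent sheaf $\mathcal{F}$ on $\radice{n}{X}\times_{k}T$ that is flat over $T$ and whose fibre over every geometric point $t\to T$ is pure and semi-stable with respect to the pulled-back generating sheaf $\mc{E}_{n}$. Writing $\pi_{T}\colon\radice{m}{X}\times_{k}T\to\radice{n}{X}\times_{k}T$ for the base change of $\pi$ (which is again a relative root stack morphism, and flat, by \ref{strict.cartesian} and \ref{free.flat}), I would set $\iota_{n,m}(\mathcal{F})=\pi_{T}^{*}\mathcal{F}$.

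Next I would verify the three conditions making $\pi_{T}^{*}\mathcal{F}$ an object of $\mc{M}^{ss}_{m}(T)$. Finite presentation is immediate, as pullback of a finitely presented sheaf is finitely presented. For flatness over $T$: having reduced at the start of the section to a free chart on $X_{1}$, the morphism $\pi_{T}$ is flat by Lemma \ref{free.flat}, and flat pullback preserves flatness over the base, so $\pi_{T}^{*}\mathcal{F}$ is flat over $T$. For semi-stability of the geometric fibres: for $t\to T$, functoriality of pullback together with the cartesian diagrams of \ref{strict.cartesian} identifies $(\pi_{T}^{*}\mathcal{F})_{t}$ with $\pi_{t}^{*}(\mathcal{F}_{t})$, where $\pi_{t}\colon\radice{m}{X_{k(t)}}\to\radice{n}{X_{k(t)}}$ is the base change of $\pi$ to $\spec k(t)$; since $X_{k(t)}$ is again a polarized simplicial log scheme of characteristic $0$ and $\mathcal{F}_{t}$ is semi-stable, Proposition \ref{stability.preserved} (whose proof applies over $k(t)$ verbatim, all inputs being stable under base change) shows that $\pi_{t}^{*}(\mathcal{F}_{t})$ is semi-stable, in particular pure, with reduced Hilbert polynomial $p_{n}(\mathcal{F}_{t})$. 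In particular $\iota_{n,m}$ preserves reduced Hilbert polynomials, so it is well defined on the disjoint union of the components $\mc{M}^{ss}_{n,H}$.

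To conclude I would note that the formation of $\pi_{T}^{*}\mathcal{F}$ commutes with base change in $T$ (pullback of quasi-coherent sheaves does), so the assignment is $2$-functorial and defines a morphism of stacks $\iota_{n,m}\colon\mc{M}^{ss}_{n}\to\mc{M}^{ss}_{m}$ over $\aff$; composing with the good moduli space morphism $\mc{M}^{ss}_{m}\to M^{ss}_{m}$ and invoking the universal property of good moduli spaces (\cite{alper}) produces a unique $i_{n,m}\colon M^{ss}_{n}\to M^{ss}_{m}$ making the evident square commute. The statement is essentially formal given the earlier results: the only inputs with genuine content are Proposition \ref{stability.preserved} and Lemma \ref{free.flat} (the latter being where the reduction to a free chart, hence the simpliciality hypothesis, enters, ensuring the transition maps are flat and hence that families remain flat over $T$), and the only point requiring a little care is precisely this flatness over $T$ together with the base-change identification of the fibres — everything else is routine.
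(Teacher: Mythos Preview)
Your proposal is correct and is precisely the routine verification the paper omits: in the paper the corollary is stated with a bare \qed, as it is considered an immediate consequence of Proposition \ref{stability.preserved} together with the universal property of good moduli spaces. You have simply spelled out those implicit steps (finite presentation, flatness via \ref{free.flat}, fibrewise semi-stability via \ref{stability.preserved}, and Alper's universal property), and nothing is missing.
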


\begin{rmk}
Proposition \ref{stability.preserved} shows that the reduced Hilbert polynomial (unlike the non-reduced one) is preserved by pullback, so that the morphism $\iota_{n,m}$ restricts to a map $\mc{M}^{ss}_{h,n}\to \mc{M}^{ss}_{h,m}$ between the substacks of families of parabolic sheaves with fixed reduced Hilbert polynomial $h \in \bb{Q}[x]$.
\end{rmk}

\begin{prop}\label{semistable.open}
The morphism $\iota_{n,m}$ is an open immersion.
\end{prop}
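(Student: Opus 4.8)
The plan is to use the full faithfulness of $\pi^{*}$ (Corollary \ref{pullback.fully.faithful}, Proposition \ref{proj.formula.finite}) to realize $\iota_{n,m}$, base-changed along any test scheme, as the inclusion of an explicit open subscheme.

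First I would check that $\iota_{n,m}$ is a monomorphism of stacks. On $T$-points it is pullback along the projection $\pi_{T}\colon\radice{m}{X_T}\to\radice{n}{X_T}$; by strict base change (Proposition \ref{strict.cartesian}) this is again the canonical morphism between root stacks of the fs log scheme $X_{T}$, so the projection formula of Proposition \ref{proj.formula.finite} still applies and $\pi_{T}^{*}$ is fully faithful. Hence $\iota_{n,m}(T)$ is fully faithful for every $T$, so $\iota_{n,m}$ is a monomorphism, in particular representable. Consequently, for a test scheme $T\to\mc{M}^{ss}_{m}$ corresponding to a family $G$ on $\radice{m}{X_T}$, the fibre product $\mc{M}^{ss}_{n}\times_{\mc{M}^{ss}_{m}}T$ is the subfunctor $T'\subseteq T$ of those $S\to T$ over which $G_{S}$ lies in the essential image of $\iota_{n,m}(S)$; it remains to identify $T'$ with an open subscheme of $T$.

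Next I would describe this essential image. Writing $\pi=\pi_{T}$, since $\pi^{*}$ is fully faithful with $\pi_{*}\pi^{*}\cong\id$ (Proposition \ref{proj.formula.finite}), a sheaf $G$ lies in the essential image of $\pi^{*}$ exactly when the counit $\varepsilon_{G}\colon\pi^{*}\pi_{*}G\to G$ is an isomorphism, and in that case $\pi_{*}G$ is automatically a family of the required type: $\pi_{*}$ is exact with $\pi_{*}\mc{O}\cong\mc{O}$ (Proposition \ref{proj.formula.finite}), so applied to a finite presentation of $G$ it produces a finitely presented sheaf and commutes with base change along $S\to T$; étale-locally on $\radice{n}{X_T}$ the morphism $\pi$ is the coarse moduli space of a tame stack, as in the proof of Proposition \ref{proj.formula.finite}, so by Corollary $1.3$ of \cite{nironi} it preserves flatness over $T$; and on a geometric fibre $\pi_{*}G_{t}$ is semi-stable with the prescribed reduced Hilbert polynomial, because $\pi^{*}\pi_{*}G_{t}\cong G_{t}$ is, $\pi$ is flat (Lemma \ref{free.flat}), and any destabilizing subsheaf of $\pi_{*}G_{t}$ would pull back (exactness of $\pi^{*}$, Proposition \ref{stability.preserved}) to a destabilizing subsheaf of $\pi^{*}\pi_{*}G_{t}\cong G_{t}$. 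Thus $T'(S)=\{S\to T:\varepsilon_{G_{S}}\text{ is an isomorphism}\}$, and since $\varepsilon$ commutes with base change this is precisely the locus in $T$ over which $\varepsilon_{G}\colon\pi^{*}\pi_{*}G\to G$ restricts to an isomorphism on geometric fibres.

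Finally I would invoke the standard openness statement for morphisms of flat families: $\pi^{*}\pi_{*}G$ and $G$ are both finitely presented and flat over $T$, and $\radice{m}{X_T}$ is proper over $T$, so the locus where $\varepsilon_{G}$ is an isomorphism on fibres is open — the cokernel of $\varepsilon_{G}$ has closed support with closed image in $T$, giving an open $U_{1}$ over which $\varepsilon_{G}$ is surjective; over $U_{1}$ the kernel of $\varepsilon_{G}$ is finitely presented and flat (since both terms are, by the $\Tor$ sequence), hence its vanishing locus is open, and over it $\varepsilon_{G}$ is an isomorphism by Nakayama. This exhibits $T'$ as an open subscheme $U\subseteq T$, so $\mc{M}^{ss}_{n}\times_{\mc{M}^{ss}_{m}}T\cong U\hookrightarrow T$ is an open immersion for every test scheme $T$, i.e. $\iota_{n,m}$ is an open immersion. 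The one point that requires care is the bookkeeping around $\pi_{*}$ — exactness, preservation of finite presentation and of $T$-flatness, and compatibility with base change — but all of this is already available from the projection-formula discussion together with the coarse-moduli-space results for tame stacks in \cite{nironi}.
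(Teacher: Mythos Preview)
Your proposal is correct and follows essentially the same approach as the paper: both reduce the question to showing that, for a test family $F$ on $\radice{m}{X_{T}}$, the fibre product is represented by the locus in $T$ where the adjunction (counit) map $\pi^{*}\pi_{*}F\to F$ is an isomorphism, and that this locus is open. The paper's proof is very terse, simply asserting that ``one can easily check'' these two facts, whereas you have carefully spelled out the supporting details (full faithfulness giving the monomorphism, the characterization of the essential image via the counit, and the preservation of finite presentation, flatness, and semi-stability under $\pi_{*}$).
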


\begin{proof}
Let us consider a morphism $f\colon S\to \mc{M}^{ss}_{m}$ from a scheme, and the cartesian diagram
$$
\xymatrix{
\m{S}\ar[r]\ar[d] & S\ar[d]^{f} \\
\mc{M}^{ss}_{n}\ar[r]^{\iota_{n,m}} & \mc{M}^{ss}_{m}.
}
$$
The morphism $f$ corresponds to a family $F \in \Coh(\radice{m}{X}\times_{k}S)$ of semi-stable sheaves on $\radice{m}{X}$, and by construction $\m{S}(T)$ is the category of triples $(\phi\colon T\to S, G, \beta)$ with $G \in \Coh(\radice{n}{X}\times_{k}T)$ a family of semi-stable sheaves and $\beta\colon \iota_{n,m}(G) \simeq \phi^{*}F$ as coherent sheaves on $\radice{m}{X}\times_{k}T$. Note that by adjunction we have a map $\alpha\colon \pi_{S}^{*}{\pi_{S}}_{*}F\to F$ of sheaves on $\radice{m}{X}\times_{k}S$.

Consider the locus $S^{0}\subseteq S$ of points where $\alpha$ is an isomorphism. One can easily check that this is an open subscheme of $S$, and it represents the fibered product $\m{S}$.
\end{proof}

Now we turn our attention to the behavior of stable sheaves.

\begin{prop}\label{stable.restriction}
Assume that the pullback along $\pi$ of any stable sheaf is still stable. Then $\iota_{n,m}$ restricts to an open immersion $\iota^{o}_{n,m}\colon \mc{M}^{s}_{n}\to \mc{M}^{s}_{m}$, inducing $i^{o}_{n,m}\colon M^{s}_{n}\to M^{s}_{m}$ (which coincides with the restriction of $i_{n,m}$).
\end{prop}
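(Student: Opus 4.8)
The plan is to obtain $\iota^{o}_{n,m}$ and $i^{o}_{n,m}$ simply by restricting $\iota_{n,m}$ and $i_{n,m}$ to the appropriate open loci, using Proposition \ref{semistable.open}, Proposition \ref{stability.preserved}, and the universal properties of good and coarse moduli spaces. First I would check that $\iota_{n,m}$ carries $\mc{M}^{s}_{n}$ into $\mc{M}^{s}_{m}$. A $T$-point of $\mc{M}^{s}_{n}$ is a family $E \in \Coh(\radice{n}{X}\times_{k}T)$ whose geometric fibres are stable; since $\pi\colon \radice{m}{X}\to \radice{n}{X}$ is flat (Lemma \ref{free.flat} and the ensuing discussion) and of finite presentation, $\iota_{n,m}(E)=\pi^{*}E$ is again flat over $T$ and finitely presented, its geometric fibre at $t\to T$ is the pullback of the stable sheaf $E_{t}$ along the base change of $\pi$, which is stable by hypothesis, and $p_{m}(\pi^{*}E_{t})=p_{n}(E_{t})$ by Proposition \ref{stability.preserved}. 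Hence $\iota_{n,m}$ restricts to a morphism $\iota^{o}_{n,m}\colon \mc{M}^{s}_{n}\to \mc{M}^{s}_{m}$, preserving the reduced Hilbert polynomial.

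Next I would argue that $\iota^{o}_{n,m}$ is an open immersion. The composite $\mc{M}^{s}_{n}\hookrightarrow \mc{M}^{ss}_{n}\xrightarrow{\iota_{n,m}}\mc{M}^{ss}_{m}$ is a composition of open immersions (using Proposition \ref{semistable.open} and the fact that $\mc{M}^{s}_{n}\subseteq\mc{M}^{ss}_{n}$ is open), hence itself an open immersion; it factors through the open substack $\mc{M}^{s}_{m}\subseteq \mc{M}^{ss}_{m}$, and the corestriction of an open immersion onto an open substack containing its image is again an open immersion. So $\iota^{o}_{n,m}$ is an open immersion.

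Finally I would pass to moduli spaces. By Theorem \ref{thm.fixed.weights} the morphisms $\mc{M}^{s}_{n}\to M^{s}_{n}$ and $\mc{M}^{s}_{m}\to M^{s}_{m}$ are coarse moduli spaces (in fact $\bb{G}_{m}$-gerbes), hence initial among morphisms to algebraic spaces; composing $\iota^{o}_{n,m}$ with $\mc{M}^{s}_{m}\to M^{s}_{m}$ and factoring through $\mc{M}^{s}_{n}\to M^{s}_{n}$ yields a unique $i^{o}_{n,m}\colon M^{s}_{n}\to M^{s}_{m}$. To identify $i^{o}_{n,m}$ with the restriction of $i_{n,m}$, I would use three facts: that $M^{s}_{n}\subseteq M^{ss}_{n}$ sits compatibly with the good moduli space morphisms (by construction of $M^{s}_{n}$, the composite $\mc{M}^{s}_{n}\hookrightarrow \mc{M}^{ss}_{n}\to M^{ss}_{n}$ equals $\mc{M}^{s}_{n}\to M^{s}_{n}\hookrightarrow M^{ss}_{n}$); that $\mc{M}^{s}_{n}\to M^{s}_{n}$ is an epimorphism; and that the square relating $\iota_{n,m}$, $i_{n,m}$ and the good moduli space morphisms commutes, which is how $i_{n,m}$ was produced from $\iota_{n,m}$. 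A short diagram chase with these ingredients shows that $i_{n,m}$ maps $M^{s}_{n}$ into $M^{s}_{m}$ and that this corestriction is precisely $i^{o}_{n,m}$. The only step here that is not purely formal is this last identification — making sure that $i^{o}_{n,m}$ is genuinely a restriction of $i_{n,m}$ rather than an a priori unrelated map between the same spaces — and it is handled by the epimorphism property above; everything else follows by combining the open immersion statements already proved with the universal properties.
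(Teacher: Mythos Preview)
Your proof is correct and follows essentially the same outline as the paper: first use the hypothesis to see that $\iota_{n,m}$ carries $\mc{M}^{s}_{n}$ into $\mc{M}^{s}_{m}$, then deduce that the restriction is an open immersion, and finally pass to moduli spaces by universal properties.

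There is one minor difference worth noting. To show that $\iota^{o}_{n,m}$ is an open immersion, you argue directly: the composite $\mc{M}^{s}_{n}\hookrightarrow\mc{M}^{ss}_{n}\xrightarrow{\iota_{n,m}}\mc{M}^{ss}_{m}$ is an open immersion landing in the open substack $\mc{M}^{s}_{m}$, and corestricting an open immersion to an open containing its image is again an open immersion. The paper instead first records the auxiliary Lemma~\ref{stable.cartesian}, that the square with $\mc{M}^{s}_{n},\mc{M}^{s}_{m},\mc{M}^{ss}_{n},\mc{M}^{ss}_{m}$ is cartesian, and then obtains $\iota^{o}_{n,m}$ as a base change of the open immersion $\iota_{n,m}$. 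Your route is slightly more elementary for this particular step; the paper's route has the advantage that the cartesian lemma is reused later (in the proof of Proposition~\ref{prop.open.closed}). Either argument is fine here. Your treatment of the induced map $i^{o}_{n,m}$ and its identification with the restriction of $i_{n,m}$ is more explicit than the paper's one-line appeal to ``properties of good moduli spaces'', but it is correct.
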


We will need the following lemma.

\begin{lemma}\label{stable.cartesian}
The square
$$
\xymatrix{
\mc{M}^{s}_{n}\ar[r]^{\iota^{o}_{n,m}}\ar[d]& \mc{M}^{s}_{m}\ar[d]\\
\mc{M}^{ss}_{n}\ar[r]^{\iota_{n,m}} & \mc{M}^{ss}_{m}
}
$$
is cartesian.
\end{lemma}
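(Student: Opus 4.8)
The plan is to show that the canonical morphism
$\mc{M}^{s}_{n}\to \mc{M}^{ss}_{n}\times_{\mc{M}^{ss}_{m}}\mc{M}^{s}_{m}$
(which exists and sits over $\mc{M}^{ss}_{n}$ because, by the hypothesis of Proposition \ref{stable.restriction}, $\iota_{n,m}$ carries $\mc{M}^{s}_{n}$ into $\mc{M}^{s}_{m}$) is an isomorphism. Since $\mc{M}^{s}_{m}\hookrightarrow \mc{M}^{ss}_{m}$ is an open immersion, so is its base change $\mc{M}^{ss}_{n}\times_{\mc{M}^{ss}_{m}}\mc{M}^{s}_{m}\to \mc{M}^{ss}_{n}$; and $\mc{M}^{s}_{n}\hookrightarrow \mc{M}^{ss}_{n}$ is also an open immersion. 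Thus the displayed morphism identifies $\mc{M}^{s}_{n}$ and $\mc{M}^{ss}_{n}\times_{\mc{M}^{ss}_{m}}\mc{M}^{s}_{m}$ with two open substacks of $\mc{M}^{ss}_{n}$, compatibly with their inclusions, so it suffices to check that these two open substacks have the same points: that a morphism $\spec(K)\to \mc{M}^{ss}_{n}$ from the spectrum of a field factors through $\mc{M}^{s}_{n}$ if and only if it factors through $\mc{M}^{ss}_{n}\times_{\mc{M}^{ss}_{m}}\mc{M}^{s}_{m}$.

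Such a morphism is a pure semi-stable parabolic sheaf $F$ on $\radice{n}{X_{K}}$ (in the relevant connected component); it factors through $\mc{M}^{s}_{n}$ exactly when $F$ is stable, and it factors through the fibered product exactly when $\pi^{*}F$ is stable on $\radice{m}{X_{K}}$, where $\pi\colon \radice{m}{X_{K}}\to \radice{n}{X_{K}}$ (here we use that $\iota_{n,m}$ is pullback and that root stacks and the generating sheaves $\mc{E}_{n},\mc{E}_{m}$ are compatible with the base change to $K$). So everything reduces to the equivalence, for $F$ pure and semi-stable on $\radice{n}{X_{K}}$: $F$ is stable $\iff$ $\pi^{*}F$ is stable. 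The implication $\Rightarrow$ is precisely the standing hypothesis of Proposition \ref{stable.restriction}. For the converse, assume $F$ is semi-stable but not stable, so there is a nonzero proper subsheaf $G\subsetneq F$ with $p_{n}(G)=p_{n}(F)$. Since $\pi$ is flat (Lemma \ref{free.flat} and the remark following it), $\pi^{*}$ is exact, so $\pi^{*}G\hookrightarrow \pi^{*}F$ is again a subsheaf; and since the unit $H\to \pi_{*}\pi^{*}H$ is a natural isomorphism (third bullet of Proposition \ref{proj.formula.finite}), $\pi^{*}G\neq 0$ and $\pi^{*}G\neq \pi^{*}F$, i.e. $\pi^{*}G$ is a nonzero proper subsheaf of $\pi^{*}F$. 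By Proposition \ref{stability.preserved} (over $X_{K}$) we get $p_{m}(\pi^{*}G)=p_{n}(G)=p_{n}(F)=p_{m}(\pi^{*}F)$, and $\pi^{*}F$ is pure because the flat pullback $\pi^{*}$ preserves purity; hence $\pi^{*}F$ is not stable. This proves the equivalence and the lemma.

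The stack-theoretic bookkeeping in the first paragraph is routine, as is the verification that both sides are open in $\mc{M}^{ss}_{n}$; the only substantive content is the equivalence ``$F$ stable $\iff \pi^{*}F$ stable'' for semi-stable $F$, and within it the direction I expect to require the most care is ``$\pi^{*}F$ stable $\Rightarrow F$ stable'', which is handled by transporting a destabilizing subsheaf along the exact, fully faithful functor $\pi^{*}$ and invoking the invariance of the reduced Hilbert polynomial from Proposition \ref{stability.preserved}.
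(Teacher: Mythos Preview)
Your proof is correct and follows the same approach as the paper's. The paper's proof is the one-line observation that if $\pi^{*}F$ is stable then $F$ is stable, ``an immediate consequence of the fact that pullback along $\radice{m}{X}\to \radice{n}{X}$ is flat and preserves the reduced Hilbert polynomial''; you unpack this by transporting a destabilizing subsheaf along the exact fully faithful functor $\pi^{*}$ and invoking Proposition~\ref{stability.preserved}, which is exactly the intended argument. The only cosmetic difference is that the paper leaves the reduction to geometric points and the forward implication (which is the standing hypothesis of Proposition~\ref{stable.restriction}, already used to make the square commute) implicit, whereas you spell both out.
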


\begin{proof}
This follows easily from the fact that if $G \in \Coh(\radice{n}{X})$ is a sheaf such that $\pi^{*}G \in \Coh(\radice{m}{X})$ is stable, then $G$ is stable on $\radice{n}{X}$, which is an immediate consequence of the fact that pullback along $\radice{m}{X}\to \radice{n}{X}$ is flat and preserves the reduced Hilbert polynomial.
\end{proof}

\begin{rmk}
This lemma holds also if we are considering variants with fixed reduced Hilbert polynomial $h \in \bb{Q}[x]$ (or with some other fixed datum, compatible with pullback).
\end{rmk}

\begin{proof}[Proof of Proposition \ref{stable.restriction}]
The fact that pullback preserves stability implies that $\iota_{n,m}$ maps $\mc{M}^{s}_{n}$ to $\mc{M}^{s}_{m}$, so the map $\iota^{o}_{n,m}$ is well-defined. Lemma \ref{stable.cartesian} implies that $\iota^{o}_{n,m}$ is an open immersion, since we know that $\iota_{n,m}$ is an open immersion, and the statement for $i^{o}_{n,m}$ follows from the properties of good moduli spaces.
\end{proof}

The morphism $\iota_{n,m}\colon \mc{M}^{ss}_{n}\to \mc{M}^{ss}_{m}$ is not always closed.

\begin{example}\label{map.not.closed}
Consider the case of the standard log point, i.e. $X=\spec(k)$ with the log structure $L\colon \bb{N}\to \Div(\spec(k))$, sending $0$ to $(k,1)$ and everything else to $(k,0)$. Consider the projection $\pi \colon X_{2}\to X$, and the family of parabolic sheaves $\{E_{t}\}_{t \in k}$ with weights in $\frac{1}{2}\bb{N}$, over $\bb{A}^{1}_{k}$, given by $(E_{t})_{a}=k$ for any $a \in \frac{1}{2}\bb{Z}$, and maps
$$
(E_{t})_{-1}\to (E_{t})_{-1/2}\to (E_{t})_{0}=k\stackrel{\cdot t}{\to}k \stackrel{0}{\to}k
$$

This is a flat family of semi-stable sheaves over $\bb{A}^{1}_{k}$, i.e. an object of $\mc{M}^{ss}_{2}(\bb{A}^{1}_{k})$. Notice also that the diagram
$$
\xymatrix{
k \ar[r]^{\cdot t}\ar@{=}[d] & k \ar[r]^{0} & k\ar@{=}[d]\\
k \ar@{=}[r] & k \ar[r]^{0}\ar[u]^{\cdot t} & k
}
$$
shows that for $t\neq 0$, $E_{t}$ is isomorphic to the pullback  of the unique invertible sheaf on $\spec(k)$, but when $t=0$ this is clearly not true.

This essentially shows that the following diagram is cartesian
$$
\xymatrix{
\bb{A}^{1}_{k}\setminus \{0\} \ar[r]\ar[d] & \bb{A}^{1}_{k}\ar[d]^{E_{t}}\\
\mc{M}^{ss}_{1}\ar[r]^{\iota_{1,2}}& \mc{M}^{ss}_{2}
}
$$
and this implies that $\iota_{1,2}$ is not closed in this case.
\end{example}

In this example the pullback of a stable sheaf need not be stable in general. Let us examine directly a larger class of examples where this happens.

\begin{example}\label{stable.not.preserved}
Assume that $X$ is a log scheme with a chart $L\colon \bb{N}\to \Div(X)$ such that $L(1)=(L_{1},0) \in \Div(X)$, and let $F$ be a stable sheaf on $X_{1}$. Then for every $0\leq i <n$ we have the stable parabolic sheaf
$$
\xymatrix{
 & -1 & & \cdots & & -\frac{i}{n} & & \cdots & & 0 \\
F_{i}=  & 0\ar[r]&0\ar[r] & \cdots \ar[r]&0\ar[r] & F \ar[r] &0\ar[r]& \cdots \ar[r]&0\ar[r] & 0}
$$
on $\radice{n}{X}$, with one copy of $F$ in place $-\frac{i}{n}$, and $0$ everywhere else (of course when $i=0$ we will also have $F\otimes L_{1}^{-1}$ in place $-1$).

Given $m=nq$, the pullback along $\pi\colon \radice{m}{X}\to \radice{n}{X}$ of $F_{i}$ is given by
$$
\xymatrix@C=23pt{
 & -1 & \cdots & -\frac{qi}{m}  & & \cdots  & & -\frac{qi-q+1}{m} & \cdots & 0\\
\pi^{*}F_{i}=  & 0\ar[r] & \cdots \ar[r] & F\ar@{=}[r] & F \ar@{=}[r]& \cdots \ar@{=}[r] & F\ar@{=}[r]& F\ar[r] & \cdots \ar[r] & 0.
}
$$
The sheaves $\pi^{*}F_{i}$ are semi-stable, but not stable anymore: for example we have a subsheaf $G\subseteq \pi^{*}F$ given by
$$
\xymatrix@C=16pt{
 & -1 & \cdots & -\frac{qi}{m} & \cdots & -\frac{qi-q+2}{m} & -\frac{qi-q+1}{m} & -\frac{q(i-1)}{m} & \cdots & 0\\
G= & 0 \ar[r] & \cdots \ar[r] & 0 \ar[r] & \cdots \ar[r] & 0 \ar[r] & F\ar[r] & 0 \ar[r] & \cdots \ar[r] & 0,
}
$$
and clearly $p_{m}(G)=p_{m}(\pi^{*}F_{i})=p(F)$.

Moreover it is easy to describe the stable factors of $\pi^{*}F_{i}$: they are precisely the sheaves $G_{j}$ with one copy of $F$ in place $-\frac{qi-j}{m}$ for $0\leq j \leq q-1$, and all zeros otherwise. Note that all of the sheaves on $\radice{n}{X}$ described in Remark \ref{description.components} (obtained starting from one of the stable factors $G_{j}$) coincide with the original $F_{i}$ or are zero, in this case.

Note also that $\pi^{*}F_{i}$ is not even polystable: of the sheaves $G_{j}$ just described, only $G=G_{q-1}$ is a subsheaf of $\pi^{*}F_{i}$. The polystable sheaf $\bigoplus G_{j}$ which is S-equivalent to $\pi^{*}F_{i}$ is
$$
\xymatrix@C=23pt{
 & -1 & \cdots & -\frac{qi}{m}  & & \cdots  & & -\frac{qi-q+1}{m} & \cdots & 0\\
\bigoplus G_{j}=  & 0\ar[r] & \cdots \ar[r] & F\ar[r]^{0} & F \ar[r]^{0}& \cdots \ar[r]^{0} & F\ar[r]^{0}& F\ar[r] & \cdots \ar[r] & 0
}
$$
where all the maps are zero.
\end{example}

This example can be generalized in arbitrary rank. For example if $X$ has a chart $L\colon \bb{N}^{2}\to \Div(X)$ with both $(1,0)$ and $(0,1)$ going to $(L_{1,0}, 0)$ and $(L_{0,1},0)$, this example carries through verbatim (so that again there will be stable sheaves that become strictly semi-stable after pullback), but we can also do something different.

Let us introduce some notation first.

\begin{notation}\label{notation}
From now on we need to draw parabolic sheaves on $\radice{n}{X}$, where $X$ is a log stack with a free log structure $\bb{N}^{r}\to \Div(X)$.

When $r=1$, we can draw parabolic sheaves easily as the segment in $[-1,0]$
$$
\xymatrix{
 \cdots&-1 & -\frac{n-1}{n} & \cdots & -\frac{1}{n} & 0 &\cdots\\
 \cdots  \ar[r]& F_{0}\otimes L^\vee \ar[r]&F_{n-1}\ar[r] & \cdots \ar[r] & F_{1}\ar[r] &  F_{0} \ar[r]  &\cdots
}
$$
of a ``sequence'' of sheaves arranged on the real line. If $r=2$, as we already did several times, we can draw the sheaf as the square $[-1,0]^{2}$ inside a ``diagram'' on the plane with a sheaf on every point with integral coordinates and maps going up and to the right.

If the rank is bigger this becomes less feasible, but we have an ``inductive'' way of drawing parabolic sheaves in higher rank. For example, a parabolic sheaf with $r=2$ can be drawn in the following way: say that the DF structure is given by $L\colon \bb{N}^{2}\to \Div(X)$, and consider the new DF structure given by the composition $\bb{N}\subseteq \bb{N}^{2}\to \Div(X)$, where $\bb{N}\subseteq \bb{N}^{2}$ is the inclusion of the first or second component. Call the resulting log schemes $\widetilde{X}_{1}$ and $\widetilde{X}_{2}$ respectively.

Then a parabolic sheaf on the root stack $\radice{2}{X}$ can be drawn as a diagram 
$$
\xymatrix{
\cdots&-1 &  -\frac{1}{2} & 0 &\cdots\\
 \cdots \ar[r] &F_{0}\otimes L_{(1,0)}^\vee \ar[r]&F_{1} \ar[r] &  F_{0}\ar[r]& \cdots
}
$$
with the formal properties of a parabolic sheaf on $\widetilde{X}_{1}$, but where the sheaves $F_{0}$ and $F_{1}$ are parabolic sheaves on $\widetilde{X}_{2}$. In other words we are ``collapsing'' the vertical direction, and the price is to use parabolic sheaves in place of quasi-coherent sheaves (the tensor product with a line bundle has an obvious definition).

In general if $X$ is a log scheme with a chart $L\colon \bb{N}^{r}\to \Div(X)$, let us consider the DF structure given by the inclusion $\bb{N}^{r-1}\subseteq \bb{N}^{r}\to \Div(X)$ that omits the $i$-th standard generator $e_{i}$. Call $\widetilde{X}$ the resulting log scheme. Then a parabolic sheaf on $\radice{n}{X}$ can be seen as a diagram
$$
\xymatrix@C=20pt{
-1 & & \cdots & & -\frac{j}{n} & & \cdots & & 0 &\\
 & & & & & & & & &\ar[d]_{\mbox{other directions}}\ar[lllllllll]_{i\mbox{-th direction}}\circ\\
 F_{0}\otimes L_{e_{i}}^\vee \ar[r]&F_{n-1}\ar[r] & \cdots \ar[r]&F_{j+1}\ar[r] & F_{j} \ar[r] &F_{j-1}\ar[r]& \cdots \ar[r]&F_{1}\ar[r] & F_{0} &}
$$
where each of the $F_{i}$'s is a parabolic sheaf on the root stack $\radice{n}{\widetilde{X}}$.

We will use this notation several times in the following arguments.
\end{notation}

Let us describe another type of situation in which the pullback of a stable sheaf is not stable.

\begin{example}
Take a log scheme $X$ with a global chart $L\colon \bb{N}^{2}\to \Div(X)$, and now assume only that $L((0,1))=(L_{0,1},0) \in \Div(X)$, and the section of $L_{1,0}$ can be non-zero, and let us fix $n=2$ for simplicity. Consider as in the preceding discussion the log scheme $\widetilde{X}$, which has the same underlying scheme as $X$, but the log structure is given by $\bb{N}\subseteq \bb{N}^{2}\to \Div(X)$, where the map is the immersion as the first component, so the image of $1$ is $(L_{1,0},s_{1,0})$.

Take a stable sheaf $F \in \Coh(\radice{2}{\widetilde{X}})$, say

$$
\xymatrix{
 & -1 & -1/2 & 0  \\
F = & F_{0}\otimes L_{1,0}^\vee \ar[r] & F_{1}\ar[r] & F_{0}
}
$$
and form the following parabolic sheaf, call it $F'$, on $\radice{2}{X}$

$$
\xymatrix{
 & -1 & -\frac{1}{2} & 0 &   \\
&0\ar[r] &0\ar[r] &0 & 0 \\
F' = & F_{0}\otimes L_{1,0}^\vee \ar[r]\ar[u] & F_{1}\ar[r]\ar[u] & F_{0}\ar[u] & -\frac{1}{2}\\
 & 0\ar[u]\ar[r]&0\ar[r]\ar[u] &0\ar[u] & -1
}
$$
or, with the notation of \ref{notation},
$$
\xymatrix@C=60pt{
& -1 & -\frac{1}{2} &  0 &\\
 &  & & &\ar[d]_{\mbox{horizontal direction}}\ar[lll]_{\mbox{vertical direction}}\circ\\
F' =  & 0 \ar[r] & F \ar[r] & 0. &}
$$
Note that this is well defined because the given section of $L_{0,1}$ is zero. It is clear that $F'$ is also stable on $\radice{2}{X}$, since its subsheaves correspond exactly to subsheaves of $F$ on $\radice{2}{\widetilde{X}}$, and the slopes are the same. Assume also that $\widetilde{\pi}^{*}F$ is stable on $\radice{4}{\widetilde{X}}$, where $\widetilde{\pi}\colon \radice{4}{\widetilde{X}}\to \radice{2}{\widetilde{X}}$ is the projection.

Now consider the pullback of $F'$ along $\pi\colon\radice{4}{X}\to \radice{2}{X}$
$$
\xymatrix{
& -1 & -\frac{3}{4} &-\frac{1}{2} & -\frac{1}{4} &  0 &\\
 &  & & &  & &\ar[d]_{\mbox{horizontal direction}}\ar[lllll]_{\mbox{vertical direction}}\circ\\
\pi^*F' =  & 0 \ar[r] & 0\ar[r] & \widetilde{\pi}^{*}F\ar@{=}[r] & \widetilde{\pi}^{*}F \ar[r] & 0 &
}
$$
and notice that this is not stable, because the following sheaf $G \in \Coh(\radice{4}{X})$
$$
\xymatrix{
& -1 & -\frac{3}{4} &-\frac{1}{2} & -\frac{1}{4} &  0 &\\
 &  & & &  & &\ar[d]_{\mbox{horizontal direction}}\ar[lllll]_{\mbox{vertical direction}}\circ\\
G =  & 0 \ar[r] & 0\ar[r] & 0\ar[r] & \widetilde{\pi}^{*}F \ar[r] & 0 &
}
$$
is a subsheaf of $\pi^{*}F'$, and clearly has $p_{4}(G)=p_{4}(\pi^{*}F')=p_{2}(F')$.

Once again  we can easily describe the stable factors: they are the sheaf $G$, and the analogous one with the rows corresponding to $-\frac{1}{4}$ and $-\frac{1}{2}$ switched (note that this is not a subsheaf of $\pi^{*}F'$), so that the polystable sheaf S-equivalent to $\pi^{*}F'$ is
$$
\xymatrix{
-1 & -\frac{3}{4} &-\frac{1}{2} & -\frac{1}{4} &  0 &\\
  & & &  & &\ar[d]_{\mbox{horizontal direction}}\ar[lllll]_{\mbox{vertical direction}}\circ\\
 0 \ar[r] & 0\ar[r] & \widetilde{\pi}^{*}F\ar[r]^{0} & \widetilde{\pi}^{*}F \ar[r] & 0. &
}
$$

As in the previous example, we can completely reconstruct $F'$ from any of the stable factors of $\pi^*F'$, for example from $G$, as $\pi_{*}(G\otimes L_{4}^{(d_{i})})$ for some $(d_{i})$ (for example $(1,1)$ works). Finally, one checks that $\pi_{*}(G\otimes L_{4}^{(d_{i})})$ is either isomorphic to $F'$, or is zero.
\end{example}

We will see now that the behavior in the previous example is in fact typical for stable sheaves with non-stable pullback.

\begin{notation}
We denote by $X^{i}$ (for $i=1,\dots,r$) the log stack given by $X_{1}$, together with the log structure induced by the composition $\bb{N}^{r-1}\subseteq \bb{N}^{r}\to \Div(X_{1})$, where $\bb{N}^{r-1}\subseteq \bb{N}^{r}$ is the inclusion that omits the $i$-th basis element.
\end{notation}

Let $\Coh(\radice{n}{X^{i}})_{s_{i}}$ denote the subcategory of $\Coh(\radice{n}{X^{i}})$ of sheaves annihilated by the section $s_{i}$ of $L_{i}$ coming from the log structure (meaning that every component of the parabolic sheaf is annihilated by $s_{i}$). We define fully faithful functors $I^{i}_{n,j}\colon \Coh(\radice{n}{X^{i}})_{s_{i}}\to \Coh(\radice{n}{X})$ for $i=1,\dots, r$, and $j=1,\hdots, n$ as follows: for $F \in \Coh(\radice{n}{X^{i}})_{s_{i}}$, we set (as a parabolic sheaf)
$$
I^{i}_{n,j}(F)_{a_{1},\dots,a_{r}}=\left\{
	\begin{array}{ll}
		F_{a_{1},\dots,\widehat{a_{i}},\dots,a_{r}}  & \mbox{for }  a_{i} = -\frac{j}{n} \\
		0 & \mbox{otherwise.}
	\end{array}
\right.
$$
with maps
$$
I^{i}_{n,j}(F)_{a_{1},\dots,a_{r}}\to I^{i}_{n,j}(F)_{a_{1},\dots, a_{k}+\frac{1}{n},\dots,a_{r}}
$$
defined to be zero, except if $a_{i}=-\frac{j}{n}$ and $k\neq i$, in which case it is defined as the corresponding map $$F_{a_{1}, \dots, \widehat{a_{i}},\dots,a_{r}}\to F_{a_{1},\dots, a_{k}+\frac{1}{n},\dots, \widehat{a_{i}},\dots, a_{r}}$$ of the sheaf $F$.

In other words, $I^{i}_{n,j}(F)$ is obtained by placing $F$ in the ``slice'' $a_{i}=-\frac{j}{n}$, and filling the rest with zeros. Note that this is well defined only if the components of $F$ are annihilated by $s_{i}$.

If we use the notation of \ref{notation}, we can draw $I^{i}_{n,j}(F)$ as
$$
\xymatrix@C=20pt{
& -1 & & \cdots & & -\frac{j}{n} & & \cdots & & 0 &\\
 & & & & & & & & & &\ar[d]_{\mbox{other directions}}\ar[lllllllll]_{i\mbox{-th direction}}\circ\\
I^{i}_{n,j}(F) =  & 0\ar[r]&0\ar[r] & \cdots \ar[r]&0\ar[r] & F \ar[r] &0\ar[r]& \cdots \ar[r]&0\ar[r] & 0 &}
$$
and from this description, it is clear that:
\begin{itemize}
\item we have $p_{n}(I_{n,j}^{i}(F))=p_{n}(F)$,
\item subsheaves of $I_{n,j}^{i}(F)$ correspond bijectively to subsheaves of $F$ via $I_{n,j}^{i}$,
\item so $I_{n,j}^{i}(F)$ is (semi-)stable on $\radice{n}{X}$ if and only if $F$ is (semi-)stable on $\radice{n}{X^{i}}$.
\end{itemize}

Now let us set $m=nq$ and assume that $F$ is stable, so that $I_{n,j}^{i}(F)$ is also stable. Consider the pullback of $I_{n,j}^{i}(F)$ along $\pi\colon \radice{m}{X}\to \radice{n}{X}$, and consider also the projection $\pi_{i}\colon \radice{m}{X^{i}}\to \radice{n}{X^{i}}$. We can write the pullback as
$$
\xymatrix@C=15pt{
 & -1 & \cdots & -\frac{qj}{m}  & & \cdots  & & -\frac{qj-q+1}{m} & \cdots & 0 &\\
 & & & & & & & & & &\circ \ar[d]_{\mbox{other directions}}\ar[lllllllll]_{i\mbox{-th direction}}\\
\pi^{*}I_{n,j}^{i}(F)=  & 0\ar[r] & \cdots \ar[r] & \pi_{i}^{*}F\ar@{=}[r] & \pi_{i}^{*}F \ar@{=}[r]& \cdots \ar@{=}[r] & \pi_{i}^{*}F\ar@{=}[r]& \pi_{i}^{*}F\ar[r] & \cdots \ar[r] & 0, & 
}
$$
and, as in example \ref{stable.not.preserved}, we see that $\pi^{*}I_{n,j}^{i}(F)$ is not stable: the sheaf $I_{m,qj-q+1}^{i}(\pi_{i}^{*}F) \in \Coh(\radice{m}{X})$ having one copy of $\pi_{i}^{*}F$ in the ``slice'' $a_{i}=-\frac{qj-q+1}{m}$
$$
\xymatrix@C=15pt{
 & -1 & \cdots & -\frac{qj}{m}  &  \cdots  &  & -\frac{qj-q+1}{m} & \cdots & 0 &\\
 & & & & & & & & &\circ \ar[d]_<<<<<<{\mbox{other directions}}\ar[llllllll]_{i\mbox{-th direction}}\\
I_{m,kj-k+1}^{i}(\pi_{i}^{*}F)= & 0\ar[r] & \cdots \ar[r] &0\ar[r] & \cdots \ar[r] &  0\ar[r]  & \pi_{i}^{*}F\ar[r] & \cdots \ar[r] & 0& 
}
$$
is a proper subsheaf of the pullback $\pi^{*}I_{n,j}^{i}(F)$ and has $p_{m}(I_{m,qj-q+1}^{i}(\pi_{i}^{*}F))=p_{m}(\pi^{*}I_{n,j}^{i}(F))$, as they are both equal to $p_{n}(F)$ on $(X^{i})_{n}$.

We can describe the stable factors of $\pi^{*}I_{n,j}^{i}(F)$ if $\pi_{i}^{*}F$ if stable on $\radice{m}{X^{i}}$ (which is not always the case): the quotient $\pi^{*}I_{n,j}^{i}(F)/I_{m,qj-q+1}^{i}(\pi_{i}^{*}F)$ is the sheaf
$$
\xymatrix@C=15pt{
  -1 & \cdots & -\frac{qj}{m}  & & \cdots  & & -\frac{qj-q+1}{m} & \cdots & 0 &\\
  & & & & & & & & &\circ \ar[d]_{\mbox{other directions}}\ar[lllllllll]_{i\mbox{-th direction}}\\
 0\ar[r] & \cdots \ar[r] & \pi_{i}^{*}F\ar@{=}[r] & \pi_{i}^{*}F \ar@{=}[r]& \cdots \ar@{=}[r] & \pi_{i}^{*}F\ar[r]& 0 \ar[r] & \cdots \ar[r] & 0 & 
}
$$
with one less copy of $\pi_{i}^{*}F$ at the end, and it has $I_{m,qj-q+2}^{i}(\pi_{i}^{*}F)$ as a subsheaf with the same slope. Inductively, we see that the stable factors of $\pi^{*}I_{n,j}^{i}(F)$ are the sheaves $I_{m,qj-h}^{i}(\pi_{i}^{*}F)$ for $h=0,\hdots,q-1$, and the semi-stable sheaf S-equivalent to $\pi^{*}I_{n,j}^{i}(F)$ is
$$
\xymatrix@C=20pt{
 -1 & \cdots & -\frac{qj}{m}  & & \cdots  & & -\frac{qj-q+1}{m} & \cdots & 0 &\\
 & & & & & & & & &\circ \ar[d]_{\mbox{other directions}}\ar[lllllllll]_{i\mbox{-th direction}}\\
 0\ar[r] & \cdots \ar[r] & \pi_{i}^{*}F\ar[r]^{0} & \pi_{i}^{*}F \ar[r]^{0}& \cdots \ar[r]^{0} & \pi_{i}^{*}F\ar[r]^{0}& \pi_{i}^{*}F\ar[r] & \cdots \ar[r] & 0 & 
}
$$
with zeros instead of identity maps.

The next proposition says that every stable sheaf $F \in \Coh(\radice{n}{X})$ such that $\pi^{*}F \in \Coh(\radice{m}{X})$ is not stable is of this form.

\begin{prop}\label{pullback.not.stable}
Let $F \in \Coh(\radice{n}{X})$ be a stable sheaf. Then $\pi^{*}F \in \Coh(\radice{m}{X})$ is not stable if and only if $F$ is in the image of one of the functors $I_{n,j}^{i}$, for some $i,j$.
\end{prop}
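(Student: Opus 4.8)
The ``if'' direction is essentially the content of the discussion preceding the statement: if $F=I_{n,j}^{i}(F')$, then $\pi^{*}F=\pi^{*}I_{n,j}^{i}(F')$ contains the proper nonzero subsheaf $I_{m,kj-k+1}^{i}(\pi_{i}^{*}F')$, which has reduced Hilbert polynomial $p_{m}=p_{n}(F)=p_{m}(\pi^{*}F)$; hence $\pi^{*}F$ is semi-stable but not stable. So the plan is to prove the converse.

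Assume $\pi^{*}F$ is not stable. By Proposition~\ref{stability.preserved} it is semi-stable with $p_{m}(\pi^{*}F)=p_{n}(F)$, so there is a nonzero proper subsheaf $G\subseteq\pi^{*}F$ with $p_{m}(G)=p_{n}(F)$; as a subsheaf of the pure sheaf $\pi^{*}F$, $G$ is pure of dimension $d=\dim F$. First I would push $G$ down along $\pi\colon\radice{m}{X}\to\radice{n}{X}$: for $0\le d_{i}<k$ set $G^{(d_{i})}=\pi_{*}(G\otimes L_{m}^{(d_{i})})$. Exactness of $\pi_{*}$, the projection formula, and the fact (from the proof of Proposition~\ref{stability.preserved}) that $\pi_{*}L_{m}^{(d_{i})}\cong\mc{O}_{\radice{n}{X}}$ for $0\le d_{i}<k$ give inclusions $G^{(d_{i})}\hookrightarrow\pi_{*}(\pi^{*}F\otimes L_{m}^{(d_{i})})\cong F$, so each $G^{(d_{i})}$ is a (possibly zero) subsheaf of $F$, pure of dimension $d$ when nonzero. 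By Lemma~\ref{weighted}, $p_{m}(G)=p_{n}(F)$ is a weighted mean with positive weights of the $p_{n}(G^{(d_{i})})$ over the nonzero pieces, and each such is $\le p_{n}(F)$ by semi-stability of $F$; hence every nonzero $p_{n}(G^{(d_{i})})$ equals $p_{n}(F)$, and stability of $F$ then forces $G^{(d_{i})}=F$ there. Thus each $G^{(d_{i})}$ is either $0$ or $F$, and --- reading off components via the formula of Remark~\ref{description.components} --- whether $G$ agrees with $\pi^{*}F$ or is $0$ at a lattice point of $\tfrac1m\bb{Z}^{r}$ depends only on the residue of that point modulo $k$. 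Writing $T\subseteq(\bb{Z}/k)^{r}$ for the set of residues where $G$ is full, we have $T\neq\emptyset$ (as $G\neq 0$) and $T\neq(\bb{Z}/k)^{r}$ (as $G\neq\pi^{*}F$).

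The combinatorial core is to deduce from this that $F$ lies in the image of some $I_{n,j}^{i}$. Here I would use that $G$, being a sub-parabolic-sheaf of $\pi^{*}F$, is closed under all transition maps. Along the ``identity'' transition maps internal to a block of $k$ consecutive lattice points in a given direction --- these appear precisely because pullback subdivides each edge of the weight lattice into $k$ equal parts --- fullness of $G$ propagates upward; and across an inter-block edge it propagates upward as well, unless that edge has $G$ full at its source and $0$ at its target. If there were no such ``full-to-zero'' inter-block edge, fullness of $G$ (which occurs somewhere, as $T\neq\emptyset$) would propagate, together with the pseudo-period identifications, to all of $\pi^{*}F$, contradicting $G\neq\pi^{*}F$; so such an edge exists, in some direction $i$. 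Since every inter-block edge in direction $i$ carries (a twist of) a transition map of $F$ in direction $i$, and all such edges have the same pair of residues at their endpoints, it follows that every transition map of $F$ in direction $i$ vanishes. Then $F$ decomposes as the direct sum $F\cong\bigoplus_{j=0}^{n-1}I_{n,j}^{i}(F^{[j]})$ of its slices in direction $i$, the requirement that $F^{[j]}$ be annihilated by $s_{i}$ holding because the $e_{i}$-step map of $F$ is a composite of vanishing $\tfrac1n e_{i}$-step maps. Finally, $F$ being stable and a nontrivial direct sum never being stable (\ref{results}), exactly one $F^{[j]}$ is nonzero, whence $F=I_{n,j}^{i}(F^{[j]})$.

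The step I expect to be the main obstacle is this last combinatorial passage. It is not deep, but it requires fixing the sign and indexing conventions of Notation~\ref{notations} and Remark~\ref{description.components} precisely (which residue occupies the top and the bottom of a block, in which direction the monotonicity of fullness runs, and how an inter-block edge is matched to a specific transition map of $F$), and it is cleanest to verify the identity-edge structure of $\pi^{*}F$ by base change from the universal models, as in the proof of Proposition~\ref{stability.preserved}.
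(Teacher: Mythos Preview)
Your proof is correct and follows essentially the same route as the paper's. Both arguments reduce to showing that each $G^{(d_{i})}$ is either $0$ or $F$, and then extract from the dichotomy a direction in which every transition map of $F$ vanishes; the remaining decomposition and the use of stability to force a single nonzero slice are identical.

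The only genuine difference is in the combinatorial step locating that direction. The paper first proves the monotonicity $G^{(d_{i})}=F\Rightarrow G^{(e_{i})}=F$ for $(e_{i})\ge(d_{i})$ and then runs an explicit coordinate-by-coordinate search along the axes of the cube $[0,k-1]^{r}$ to find $i_{0}$ with both values occurring. You instead phrase this as propagation of ``fullness'' on the residue graph $(\bb{Z}/k)^{r}$ and argue by connectivity that a full-to-zero inter-block edge must exist. These are two packagings of the same observation (your intra-block propagation is exactly the paper's monotonicity, and the paper's search produces precisely such a full-to-zero inter-block edge once combined with monotonicity). Your phrasing ``all such edges have the same pair of residues at their endpoints'' is a bit compressed --- what actually does the work is that the single full-to-zero edge fixes a specific tuple $(d_{j})_{j\neq i}$ for which $G^{(\ldots,k-1,\ldots)}=F$ and $G^{(\ldots,0,\ldots)}=0$ as \emph{parabolic sheaves} on $\radice{n}{X}$, and this forces the inter-block map, which is the transition map of $F$, to vanish at every $(e_{j})$. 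Apart from this wording, the argument is complete.
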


\begin{proof}
The ``if'' part is contained in the previous discussion.

For the other direction, let us consider a subsheaf $G\subseteq \pi^{*}F$, along with the subsheaves $\pi_{*}(G\otimes L_{m}^{(d_{i})})\subseteq F$ for $0\leq d_{i}<q$ (where $m=nq$). Recall that by proposition \ref{weighted}, the slope $p_{m}(G)$ is a weighted mean of the polynomials $p_{n}(\pi_{*}(G\otimes L_{m}^{(d_{i})}))$, with $\pi_{*}(G\otimes L_{m}^{(d_{i})})$ non-zero.

The only possibility for $G$ to be destabilizing is that $p_{n}(\pi_{*}(G\otimes L_{m}^{(d_{i})}))=p_{n}(F)$ for all non-zero $\pi_{*}(G\otimes L_{m}^{(d_{i})})$, and by stability of $F$ this implies $\pi_{*}(G\otimes L_{m}^{(d_{i})})=F$ for those values of $(d_{i})$.

Note also that if $\pi_{*}(G\otimes L_{m}^{(d_{i})})=F$ for all values of $(d_{i})$, then we must have $G=\pi^{*}F$: this can be seen directly from the description of the sheaves $\pi_{*}(G\otimes L_{m}^{(d_{i})})$ given in Remark \ref{description.components}, or from the fact that the sheaf $\mc{E}=(\bigoplus_{0\leq d_{i}<q}L_{m}^{(d_{i})})^{\vee}$ is a generating sheaf for the relative root stack $\pi\colon \radice{m}{X}\to \radice{n}{X}$, and the cokernel of $G\subseteq \pi^{*}F$ would be sent to zero by $\pi_{*}(-\otimes \mc{E}^{\vee})$ (cfr. \cite[Lemma $3.4$]{nironi}).

This implies that if $\pi^{*}F$ is not stable, then there is a subsheaf $G$ with
\begin{itemize}
\item $\pi_{*}(G\otimes L_{m}^{(d_{i})})=0$ or $\pi_{*}(G\otimes L_{m}^{(d_{i})})=F$ for all $0\leq d_{i}<q$, and 
\item each of the two cases occur for at least one $(d_{i})$.
\end{itemize}
Now we will see that this implies that $F$ is in the image of one of the functors $I_{n,j}^{i}$. From now on for brevity we will write $G^{(d_{i})}=\pi_{*}(G\otimes L_{m}^{(d_{i})}) \in \Coh(\radice{n}{X})$.

Observe first that if $G^{(d_{i})}=F$ for some $(d_{i})$, then $G^{(e_{i})}=F$ also for any $(e_{i})\geq (d_{i})$ in the componentwise order. This is because, since $G$ is a subsheaf of $\pi^{*}F$, the diagram
$$
\xymatrix{
F=(\pi^{*}F)^{(d_{i})}\ar@{=}[r] & (\pi^{*}F)^{(e_{i})}=F\\
F=G^{(d_{i})}\ar[r]\ar@{^{(}->}[u] & G^{(e_{i})}\ar@{^{(}->}[u]
}
$$
commutes, and this forces $G^{(e_{i})}=F$. This implies that if $G^{(d_{i})}=0$, then $G^{(e_{i})}=0$ for any $(e_{i})\leq (d_{i})$. In particular we necessarily have $G^{(0,\dots,0)}=0$ and $G^{(q-1,\dots,q-1)}=F$. Now we justify the fact that there is a direction $i_{0}\in \{1,\dots,r\}$ such that $G^{(d_{i})}=0$ and $F$ both occur for $d_{i}$ with $i\neq i_{0}$ fixed, and $d_{i_{0}}$ ranging from $0$ to $q-1$.

Look first at the sheaves $G^{(a,0,\dots,0)}$ for $0\leq a <q$: if $G^{(q-1,0\dots,0)}=F$, we are done. Otherwise, all the sheaves of this form are $0$, and we look at $G^{(q-1,a,0,\dots,0)}$ for $0\leq a <q$, and so on. If we are unlucky, at the $(r-1)$-th step we will find $G^{(q-1,\dots,q-1,0)}=0$, and so the sheaves $G^{(q-1,\dots,q-1,a)}$ satisfy the requirement, since for $a=0$ we have $0$ and for $a=q-1$ we have $F$.

Now we claim that all the maps of the parabolic sheaf $F$ in the direction $i_{0}$ are necessarily zero: in fact take $(a_{1},\dots,a_{r}) \in ([-1,0) \cap \frac{1}{n}\bb{Z})^{r}$, and consider the map
$$f \colon F_{(a_{1},\dots,a_{r})}\to F_{(a_{1},\dots, a_{i_{0}}+\frac{1}{n},\dots, a_{r})}.$$
By looking at the hypercubes corresponding to these two sheaves in the pullback $\pi^{*}F$, along with the subsheaf $G$ and its property that in the direction $i_{0}$ it has both $F$ and $0$, we see that the following diagram commutes (the top row is in $\pi^{*}F$, the bottom in the subsheaf $G$)
$$
\xymatrix{
F_{(a_{1},\dots,a_{r})} \ar[r]^<<<<<{f} & F_{(a_{1},\dots, a_{i_{0}}+\frac{1}{n},\dots, a_{r})}\ar@{=}[r]&F_{(a_{1},\dots, a_{i_{0}}+\frac{1}{n},\dots, a_{r})} \\
F_{(a_{1},\dots,a_{r})}\ar[r]\ar@{=}[u] & 0\ar[r]\ar@{^{(}->}[u] & F_{(a_{1},\dots, a_{i_{0}}+\frac{1}{n},\dots, a_{r})}\ar@{=}[u] ,
}
$$
from which we deduce that $f$ is zero.

The above discussion implies that, using the notation of \ref{notation}, the sheaf $F$ can be written as
$$
\xymatrix{
& -1 & -\frac{n-1}{n} & \cdots & -\frac{1}{n} & 0 &\\
 & & & & & &\ar[d]_{\mbox{other directions}}\ar[lllll]_{i_{0}\mbox{-th direction}}\circ\\
F =  & F_{0}\otimes L_{i_{0}}^\vee \ar[r]^{0}&F_{n-1}\ar[r]^{0} & \cdots \ar[r]^{0} & F_{1}\ar[r]^{0} &  F_{0} &
}
$$
where all the parabolic sheaves $F_{h} \in \Coh(\radice{n}{X^{i_{0}}})$ are annihilated by the section $s_{i_{0}}$, so in fact $F_{h} \in \Coh(\radice{n}{X^{i_{0}}})_{s_{i_{0}}}$. Now note that unless only one of the $F_{h}$'s is non-zero, a sheaf of this form cannot be stable, since it is the direct sum of the parabolic sheaves having $F_{h}$ in place $-\frac{h}{n}$ (in the direction $i_{0}$) and zero everywhere else, and a semi-stable direct sum is stable if and only if there is only one factor, which moreover is stable.

In conclusion $F$ is of the form
$$
\xymatrix@C=20pt{
& -1 & -\frac{n-1}{n} & \cdots &-\frac{h+1}{n} & -\frac{h}{n} &\frac{h-1}{n} &  \cdots & 0 & \\
 & & & & & & & & &\ar[d]_{\mbox{other directions}}\ar[llllllll]_{i_{0}\mbox{-th direction}}\circ\\
F =  & 0\ar[r]&0 \ar[r] & \cdots \ar[r]  &0 \ar[r] & F_{h}\ar[r] &0\ar[r] &\cdots \ar[r] &  0  &}
$$
or, in other words, $F=I_{n,h}^{i_{0}}(F_{h})$ with $F_{h}\in \Coh(\radice{n}{X^{i_{0}}})_{s_{i_{0}}}$ a stable sheaf, and this concludes the proof.
\end{proof}

\begin{lemma}\label{stable.s.equiv}
Let $F \in \Coh(\radice{n}{X})$ be a stable sheaf and let $F' \in \Coh(\radice{m}{X})$ be one of the stable factors of $\pi^{*}F$. Then for any $0\leq d_{i}< q$ the sheaf $\pi_{*}(F'\otimes L_{m}^{(d_{i})}) \in \Coh(\radice{n}{X})$ is isomorphic to $F$ or zero (and both cases occur if $\pi^{*}F$ is not stable).
\end{lemma}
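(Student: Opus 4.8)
The plan is to distinguish two cases according to whether $\pi^{*}F$ is already stable, and in the non-stable case to induct on the rank $r$. If $\pi^{*}F$ is stable, its only stable factor is $F'=\pi^{*}F$ itself, and the conclusion is immediate: by the lemma computing $\pi_{*}L_{m}^{(d_{i})}\cong\mathcal{O}_{\radice{n}{X}}$ for $0\le d_{i}<k$, together with the projection formula (Proposition \ref{proj.formula.finite}), one gets $\pi_{*}(F'\otimes L_{m}^{(d_{i})})\cong F\otimes\pi_{*}L_{m}^{(d_{i})}\cong F$ for every such $(d_{i})$. This also settles the base of the induction (the case $r=0$, where $\pi$ is an isomorphism), and confirms that when $\pi^{*}F$ is stable only the value $F$ is attained. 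So from now on assume $r\ge 1$ and $\pi^{*}F$ not stable; then necessarily $k\ge 2$.

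Next I would reduce to a lower-rank situation. By Proposition \ref{pullback.not.stable} we can write $F\cong I^{i}_{n,j}(F_{0})$ for some direction $i$, some $j$, and some \emph{stable} $F_{0}\in\Coh(\radice{n}{X^{i}})_{s_{i}}$. I would then recall from the discussion preceding Proposition \ref{pullback.not.stable} (which is the rank-one computation of Example \ref{stable.not.preserved} carried out in the direction $i$) that $\pi^{*}F\cong\pi^{*}I^{i}_{n,j}(F_{0})$ admits a filtration whose graded pieces are the sheaves $I^{i}_{m,kj-h}(\pi_{i}^{*}F_{0})$, $0\le h<k$, where $\pi_{i}\colon\radice{m}{X^{i}}\to\radice{n}{X^{i}}$; since $I^{i}_{m,\ell}$ is exact, fully faithful, and preserves (semi-)stability and reduced Hilbert polynomials, refining this filtration by Jordan--H\"older filtrations of the $\pi_{i}^{*}F_{0}$ identifies the stable factors of $\pi^{*}F$ precisely with the sheaves $I^{i}_{m,kj-h}(G')$, for $0\le h<k$ and $G'$ a stable factor of $\pi_{i}^{*}F_{0}$.

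The heart of the argument is then a direct computation with the component formula $(G^{(d_{i})})_{(e_{i})}=G_{(d_{i}+ke_{i})}$ of Remark \ref{description.components}. Fixing $F'=I^{i}_{m,kj-h}(G')$ and using that $I^{i}_{m,kj-h}(G')$ is concentrated, in the $i$-th direction, at weights congruent to $-(kj-h)$, one finds that $\pi_{*}(F'\otimes L_{m}^{(d_{i})})$ vanishes unless $d_{i}=h$ (the obstruction being visible already modulo $k$), and that for $d_{i}=h$ it equals $I^{i}_{n,j}\bigl(\pi_{i\,*}(G'\otimes L_{i,m}^{(d_{\ell})_{\ell\ne i}})\bigr)$, where $L_{i,m}^{(\cdot)}$ is the analogue on $\radice{m}{X^{i}}$ of $L_{m}^{(\cdot)}$ (reducing modulo $n$ the remaining congruence produces exactly the support slice of an $I^{i}_{n,j}(-)$, and matching the other coordinates recovers the inner pushforward). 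Applying the inductive hypothesis over $X^{i}$ — which satisfies the same running assumptions, with $r-1$ in place of $r$ — to the stable sheaf $F_{0}$ and its stable factor $G'$ gives $\pi_{i\,*}(G'\otimes L_{i,m}^{(d_{\ell})_{\ell\ne i}})\cong F_{0}$ or $0$; composing with $I^{i}_{n,j}$ and recalling $I^{i}_{n,j}(F_{0})=F$ and $I^{i}_{n,j}(0)=0$, we conclude $\pi_{*}(F'\otimes L_{m}^{(d_{i})})\cong F$ or $0$. Both values occur because $k\ge 2$: taking $d_{i}\ne h$ gives $0$, while $d_{i}=h$ together with a suitable $(d_{\ell})_{\ell\ne i}$ gives $F$ (here one uses that the ``both cases occur'' clause of the inductive hypothesis applies whenever $\pi_{i}^{*}F_{0}$ is not stable, and that $\pi_{i\,*}(G'\otimes(-))\cong F_{0}$ always when $\pi_{i}^{*}F_{0}$ is stable). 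This closes the induction.

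The step I expect to be most delicate is this last computation: keeping the two denominators $n$ and $m=nk$ and the sign conventions for weights straight, and in particular handling the case in which $\pi_{i}^{*}F_{0}$ is itself not stable, so that the stable factors of $\pi^{*}F$ are genuine refinements of the $I^{i}_{m,kj-h}(\pi_{i}^{*}F_{0})$ rather than those sheaves themselves. It is precisely this phenomenon that forces the induction on $r$ instead of a one-step argument.
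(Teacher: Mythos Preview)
Your proposal is correct and follows essentially the same approach as the paper: both handle the stable case directly, then in the non-stable case invoke Proposition~\ref{pullback.not.stable} to write $F\cong I^{i}_{n,j}(F_{0})$, identify the stable factors via the explicit description preceding that proposition, and conclude using the component formula of Remark~\ref{description.components}. Your induction on the rank $r$ is a clean formalization of what the paper phrases as ``apply Proposition~\ref{pullback.not.stable} again \ldots\ after a finite number of steps we will get down to a stable sheaf,'' and your explicit computation of $\pi_{*}(F'\otimes L_{m}^{(d_{i})})$ spells out what the paper leaves to the reader.
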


\begin{proof}
If $\pi^{*}F$ is still stable, this is clear from the description of the pullback and by remark \ref{description.components}. In the other case we know that $F$ must be of the form $I_{n,j}^{i}(G)$ with $G$ stable on $\radice{n}{X^{i}}$ from the preceding proposition, and we know the stable factors $F'$ of $\pi^{*}F$ from the discussion preceding the proof of \ref{pullback.not.stable}, if the pullback of $G$ along $\radice{m}{X^{i}}\to \radice{n}{X^{i}}$ is stable. If this pullback is not stable we can apply Proposition \ref{pullback.not.stable} again to $G$, and after a finite number of steps we will get down to a stable sheaf.

From the explicit form of the stable factors and the description of the sheaves $\pi_{*}(F'\otimes L_{m}^{(d_{i})}) \in \Coh(\radice{n}{X})$ of Remark \ref{description.components}, the conclusion follows.\end{proof}

\begin{lemma}
Let $F,G \in \Coh(\radice{n}{X})$ be stable sheaves such that $\pi^{*}F$ and $\pi^{*}G$ are S-equivalent on $\radice{m}{X}$. Then $F \simeq G$ on $\radice{n}{X}$.
\end{lemma}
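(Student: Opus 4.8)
The plan is to combine Lemma~\ref{stable.s.equiv} with the basic fact that the S-equivalence class of a semi-stable sheaf is determined by the multiset of its stable Jordan--H\"older factors.

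First I would dispose of the degenerate cases. Since $\pi\colon \radice{m}{X}\to \radice{n}{X}$ is faithfully flat, $\pi^{*}$ is faithful and exact, and by the projection formula (Proposition~\ref{proj.formula.finite}) the unit $F\to \pi_{*}\pi^{*}F$ is an isomorphism; in particular $\pi^{*}F=0$ forces $F=0$. Hence if either pullback vanishes then so does the other (they are S-equivalent), and then $F\cong 0\cong G$. So assume $F,G\neq 0$. By Proposition~\ref{stability.preserved}, $\pi^{*}F$ and $\pi^{*}G$ are nonzero semi-stable sheaves with the same reduced Hilbert polynomial, and $\gra(\pi^{*}F)\cong \gra(\pi^{*}G)$ by hypothesis. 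As $\gra$ of a semi-stable sheaf is a direct sum of stable sheaves of the same slope, and two such direct sums are isomorphic precisely when they have the same multiset of summands, the sheaves $\pi^{*}F$ and $\pi^{*}G$ have the same stable Jordan--H\"older factors. In particular I may fix one stable sheaf $F'\in\Coh(\radice{m}{X})$ that occurs as a stable factor of $\pi^{*}F$ \emph{and} of $\pi^{*}G$.

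Next I would feed $F'$ into Lemma~\ref{stable.s.equiv} twice. Writing $m=nk$, the lemma applied to the stable sheaf $F$ and its stable factor $F'$ gives that for every multi-index $(d_{i})$ with $0\le d_{i}<k$ the sheaf $\pi_{*}(F'\otimes L_{m}^{(d_{i})})\in\Coh(\radice{n}{X})$ is isomorphic to $F$ or is zero; moreover at least one of these sheaves is isomorphic to $F$ --- if $\pi^{*}F$ is not stable this is the ``both cases occur'' clause, while if $\pi^{*}F$ is stable then $F'=\pi^{*}F$ and all of them are $\cong F$ by the projection formula together with $\pi_{*}L_{m}^{(d_{i})}\cong \mc{O}$. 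Applying Lemma~\ref{stable.s.equiv} instead to $G$ and the same $F'$ (which is also a stable factor of $\pi^{*}G$), for every such $(d_{i})$ the sheaf $\pi_{*}(F'\otimes L_{m}^{(d_{i})})$ is isomorphic to $G$ or is zero.

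Finally I would just choose an index $(d_{i})$ with $\pi_{*}(F'\otimes L_{m}^{(d_{i})})\cong F$; this sheaf is then nonzero, so the second application forces it to be $\cong G$ as well, and therefore $F\cong G$ on $\radice{n}{X}$. I do not anticipate a real obstacle: essentially all the work lives in Proposition~\ref{pullback.not.stable} and Lemma~\ref{stable.s.equiv}, and the only genuinely new input is the extraction of a common stable factor $F'$ out of the S-equivalence. The points requiring a little care are the case where $\pi^{*}F$ is already stable (handled by the projection-formula computation) and the convention about the zero sheaf (handled by the first reduction).
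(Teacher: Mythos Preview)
Your proposal is correct and follows essentially the same approach as the paper: extract a common stable Jordan--H\"older factor $F'$ of $\pi^{*}F$ and $\pi^{*}G$, then apply Lemma~\ref{stable.s.equiv} to both $F$ and $G$ and compare. The only organizational difference is that the paper treats the case where one of $\pi^{*}F$, $\pi^{*}G$ is already stable separately (S-equivalence with a stable sheaf forces isomorphism, then use full faithfulness of $\pi^{*}$), whereas you fold this case into the general argument via the projection formula and $\pi_{*}L_{m}^{(d_{i})}\cong\mc{O}$; both are fine.
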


\begin{proof}
If one of $\pi^{*}F$ or $\pi^{*}G$ is stable, then $\pi^{*}F \simeq \pi^{*}G$ (since S-equivalent implies isomorphic, if one of the sheaves is stable) and since $\pi^{*}$ is fully faithful (\ref{pullback.fully.faithful}) we conclude that $F \simeq G$.

If both $\pi^{*}F$ and $\pi^{*}G$ are not stable, denote by $F_{i}$ and $G_{j}$ their stable factors. Since $\pi^{*}F$ and $\pi^{*}G$ are S-equivalent, they have the same stable factors, so for some $i$ and $j$ we have $F_{i} \simeq G_{j}$. Now by Lemma \ref{stable.s.equiv}, the sheaves $\pi_{*}(F_{i}\otimes L_{m}^{(d_{i})})$ and $\pi_{*}(G_{j}\otimes L_{m}^{(d_{i})})$ for $0\leq d_{i}<q$ are isomorphic to $F$ or $G$ respectively, or zero. Since $F$ and $G$ are not zero, and the isomorphism $F_{i} \simeq G_{j}$ will induce isomorphisms $\pi_{*}(F_{i}\otimes L_{m}^{(d_{i})}) \simeq \pi_{*}(G_{j}\otimes L_{m}^{(d_{i})})$ for any $(d_{i})$, we necessarily get an isomorphism $F \simeq G$.
\end{proof}

\begin{prop}
The morphism $i_{n,m}\colon M^{ss}_{n}\to M^{ss}_{m}$ between the good moduli spaces is geometrically injective. In particular, being proper (since $M^{ss}_n$ and $M^{ss}_m$ are both proper), it is also finite.
\end{prop}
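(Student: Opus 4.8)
The plan is to combine the description of the $K$-points of the good moduli spaces with Lemma~\ref{stable.s.equiv} and Proposition~\ref{pullback.not.stable}. Geometric injectivity only needs to be checked on $\spec(K)$-points for $K$ algebraically closed; replacing $X$ by $X_{K}$, which is again a polarized simplicial log scheme (over $K$) and with respect to which all the relevant constructions commute with the base change, reduces us to that case, so I would assume from the outset that $k$ is algebraically closed. Then, by the remark following Theorem~\ref{thm.fixed.weights}, the $k$-points of $M^{ss}_{n}$ are the isomorphism classes of polystable parabolic sheaves on $\radice{n}{X}$ (equivalently, the $S$-equivalence classes of semi-stable ones), and $i_{n,m}$ sends the class of $E$ to that of $\pi^{*}E$, where $\pi\colon\radice{m}{X}\to\radice{n}{X}$. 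So it suffices to prove: if $E$ and $E'$ are polystable on $\radice{n}{X}$ and $\gra(\pi^{*}E)\cong\gra(\pi^{*}E')$ (i.e.\ $\pi^{*}E$ and $\pi^{*}E'$ are $S$-equivalent), then $E\cong E'$.

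Write $E\cong\bigoplus_{i}F_{i}^{\oplus N_{i}}$ with the $F_{i}$ pairwise non-isomorphic stable sheaves, all of the same reduced Hilbert polynomial $h=p_{n}(E)$, and $N_{i}>0$ (and similarly for $E'$), and set $m=nk$. By Proposition~\ref{stability.preserved} each $\pi^{*}F_{i}$ is semi-stable with $p_{m}(\pi^{*}F_{i})=h$, and since $\gra$ is additive on finite direct sums of semi-stable sheaves of the same slope, $\gra(\pi^{*}E)\cong\bigoplus_{i}\gra(\pi^{*}F_{i})^{\oplus N_{i}}$; by the Krull--Schmidt property, $\gra(\pi^{*}E)\cong\gra(\pi^{*}E')$ means precisely that the two multisets of stable summands agree. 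The key point is that a stable summand $F'$ of $\gra(\pi^{*}F_{i})$ determines $F_{i}$: by Lemma~\ref{stable.s.equiv} (and, when $\pi^{*}F_{i}$ is itself stable, by the identity $\pi_{*}L_{m}^{(d_{i})}\cong\mc{O}_{\radice{n}{X}}$ for $0\le d_{i}<k$ established in the proof of Proposition~\ref{stability.preserved}) the sheaf $\pi_{*}(F'\otimes L_{m}^{(d_{i})})$ is isomorphic to $F_{i}$ or is zero for every tuple $(d_{i})$ with $0\le d_{i}<k$, and is nonzero for at least one such tuple; hence $r(F'):=\pi_{*}(F'\otimes L_{m}^{(d_{i})})$, for any admissible tuple making it nonzero, is a well-defined invariant of $F'$ isomorphic to $F_{i}$. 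In particular, if a stable sheaf $F'$ on $\radice{m}{X}$ is a summand of both $\gra(\pi^{*}H_{1})$ and $\gra(\pi^{*}H_{2})$ for stable $H_{1},H_{2}$ on $\radice{n}{X}$, then $H_{1}\cong r(F')\cong H_{2}$.

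Now fix an isomorphism class $H$ of stable sheaf on $\radice{n}{X}$ with $p_{n}(H)=h$, and pick any one stable summand $H'$ of $\gra(\pi^{*}H)$; let $c\ge1$ be its multiplicity there. Write $N_{H}$ (resp.\ $N'_{H}$) for the multiplicity of $H$ in $E$ (resp.\ $E'$), taken to be $0$ if $H$ does not occur. By the previous paragraph, $H'$ is not a summand of $\gra(\pi^{*}H'')$ for any stable $H''\not\cong H$, so its multiplicity in $\gra(\pi^{*}E)\cong\bigoplus_{i}\gra(\pi^{*}F_{i})^{\oplus N_{i}}$ equals $N_{H}c$, and likewise its multiplicity in $\gra(\pi^{*}E')$ equals $N'_{H}c$. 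Equating the two and using $c\ge1$ yields $N_{H}=N'_{H}$; since $H$ was arbitrary, $E\cong E'$, which gives geometric injectivity. Finally, $M^{ss}_{n}$ and $M^{ss}_{m}$ are disjoint unions of projective $k$-schemes (Theorem~\ref{thm.fixed.weights}), and $i_{n,m}$ respects the decompositions indexed by the modified Hilbert polynomial, sending the piece $M^{ss}_{H,n}$ to the piece $M^{ss}_{(m/n)^{r}H,m}$ (the pullback multiplies the modified Hilbert polynomial by $(m/n)^{r}$, from the computation in the proof of Lemma~\ref{weighted}); each restriction being a morphism of projective schemes, $i_{n,m}$ is proper. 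A proper morphism that is injective on geometric points is quasi-finite, hence finite.

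The genuinely hard work is already carried out in Proposition~\ref{pullback.not.stable} and Lemma~\ref{stable.s.equiv}, which pin down how a stable sheaf on $\radice{n}{X}$ and the stable factors of its pullback to $\radice{m}{X}$ determine one another. Granting these, the only delicate point above is the well-definedness of the retraction $r$ (its independence of the admissible tuple $(d_{i})$), which is exactly the content of Lemma~\ref{stable.s.equiv} together with Remark~\ref{description.components}; the rest is the routine reduction to an algebraically closed base field and Krull--Schmidt bookkeeping.
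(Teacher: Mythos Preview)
Your proof is correct and rests on exactly the same key input as the paper's, namely Lemma~\ref{stable.s.equiv}: any stable factor of $\gra(\pi^{*}H)$ determines $H$ up to isomorphism via the sheaves $\pi_{*}(-\otimes L_{m}^{(d_{i})})$. From there the paper argues by induction on the number of stable summands of the two polystable sheaves, matching and removing one pair at each step, whereas you compute multiplicities directly; these are equivalent bookkeeping for the same idea. You also supply an explicit justification of properness of $i_{n,m}$ (via the decomposition by Hilbert polynomial into morphisms between projective schemes, using $P_{m}(\pi^{*}F)=(m/n)^{r}P_{n}(F)$), which the paper takes as given in the statement and then invokes Chevalley to conclude finiteness.
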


\begin{proof}
Fix an algebraically closed extension $k\subseteq K$, and let us show that $M^{ss}_{n}(K)\to M^{ss}_{m}(K)$ is injective. This means that if $F,G \in \Coh(\radice{n}{X_{K}})$ are semi-stable sheaves such that $\pi^{*}F, \pi^{*}G \in \Coh(\radice{n}{X_{K}})$ are S-equivalent, then $F$ and $G$ are S-equivalent themselves. We can assume that $F$ and $G$ are polystable, and write $F=\bigoplus_{i}F_{i}$ and $G=\bigoplus_{j} G_{j}$ as sums of stable sheaves on $\radice{n}{X_{K}}$.

We will proceed by induction on $N=\max\{\#\mbox{stable factors of }F,\#\mbox{stable factors of }G\}$.

For $N=1$, this is the previous lemma, applied to $X_{K}$.

If $N>1$, write $F_{i,h}$ and $G_{j,k}$ for the stable factors of $\pi^{*}F_{i}$ and $\pi^{*}G_{j}$ respectively. Since
$$\pi^{*}F=\bigoplus_{i} \pi^{*}F_{i}$$ and $$\pi^{*}G=\bigoplus_{j} \pi^{*}G_{j}$$ are S-equivalent, they will have the same stable factors, so for some $i,j,k,h$ we have $F_{i,h} \simeq G_{j,k}$.

Now let us look at the sheaves $\pi_{*}(F_{i,h}\otimes L_{m}^{(d_{i})})$ and $\pi_{*}(G_{j,k}\otimes L_{m}^{(d_{i})})$ on $\radice{n}{X_{K}}$ for all $0\leq d_{i} < q$: by Lemma \ref{stable.s.equiv}, they are isomorphic to $F_{i},$ and $G_{j}$ respectively, or zero. But since neither of $F_{i}$ or $G_{j}$ is zero, as in the proof of the previous lemma, we can conclude that the isomorphism $F_{i,h} \simeq G_{j,k}$ induces $F_{i} \simeq G_{j}$.

After erasing these two factors from $F$ and $G$, we end up with two polystable sheaves $F'$ and $G'$ with $\max\{\#\mbox{stable factors of }F',\#\mbox{stable factors of }G'\}=N-1$ and such that $\pi^{*}F'$ and $\pi^{*}G'$ are S-equivalent. By the induction hypothesis $F'$ and $G'$ are S-equivalent, and this concludes the proof.

The part about finiteness follows from Chevalley's theorem.
\end{proof}

\begin{prop}\label{prop.open.closed}
If the pullback of a stable sheaf is stable, then all maps $\iota_{n,m}, \iota^{o}_{n,m}, i_{n,m}, i^{o}_{n,m}$ are open and closed immersions.
\end{prop}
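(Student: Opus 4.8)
The plan is to show that the stability--preservation hypothesis forces the image of $\iota_{n,m}$ to be a \emph{saturated} open substack of $\mc{M}^{ss}_{m}$ --- equivalently, that any semi-stable parabolic sheaf on $\radice{m}{X}$ which is $S$-equivalent to a pullback is itself a pullback --- and then to deduce everything by the good moduli space formalism and base change.

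First I would record a short exact sequence refinement of Corollary \ref{pullback.fully.faithful}: since $\pi_{*}$ is exact and the counit $\pi^{*}\pi_{*}\to \id$ is an isomorphism (Proposition \ref{proj.formula.finite}), any extension $0\to \pi^{*}B\to F\to \pi^{*}A\to 0$ on $\radice{m}{X}$ is isomorphic to the pullback of an extension on $\radice{n}{X}$: apply $\pi_{*}$ to get $0\to B\to \pi_{*}F\to A\to 0$, apply $\pi^{*}$, and compare with the original sequence via the counit, using the five lemma. Now let $F$ be semi-stable on $\radice{m}{X}$ with $\gra(F)\cong \gra(\pi^{*}G)$ for some semi-stable $G$ on $\radice{n}{X}$. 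Pulling back a Jordan--H\"{o}lder filtration of $G$ along the exact and fully faithful functor $\pi^{*}$ and invoking the hypothesis shows that this is again a Jordan--H\"{o}lder filtration, so $\gra(\pi^{*}G)\cong \pi^{*}(\gra G)$ with $\gra G$ polystable; by uniqueness of the polystable representative, the stable factors of $F$ are exactly the (stable, by hypothesis) pullbacks of the stable summands of $\gra G$. Hence $F$ is an iterated extension of pullbacks of stable sheaves on $\radice{n}{X}$, so by the fact just recorded $F\cong \pi^{*}H'$ for a semi-stable $H'$ on $\radice{n}{X}$. Thus $\iota_{n,m}(\mc{M}^{ss}_{n})$, as a subset of $|\mc{M}^{ss}_{m}|$, is a union of $S$-equivalence classes.

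Next I would run the moduli--space machinery. By Proposition \ref{semistable.open}, $\mc{U}:=\iota_{n,m}(\mc{M}^{ss}_{n})$ is an open substack of $\mc{M}^{ss}_{m}$, and by the previous step it is saturated for the good moduli space morphism $\phi_{m}\colon \mc{M}^{ss}_{m}\to M^{ss}_{m}$. By Alper's theory (\cite{alper}), $V:=\phi_{m}(\mc{U})$ is open in $M^{ss}_{m}$, $\mc{U}=\phi_{m}^{-1}(V)$, and $\mc{U}\to V$ is again a good moduli space; by uniqueness of good moduli spaces this identifies $V$ with $M^{ss}_{n}$ and the inclusion $V\hookrightarrow M^{ss}_{m}$ with $i_{n,m}$, so $i_{n,m}$ is an open immersion. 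Since $M^{ss}_{n}$ and $M^{ss}_{m}$ are projective (Theorem \ref{thm.fixed.weights}), $i_{n,m}$ is also proper, hence an isomorphism onto an open and closed subscheme, i.e. an open and closed immersion. Moreover $\mc{M}^{ss}_{n}=\mc{U}=\phi_{m}^{-1}(V)=\mc{M}^{ss}_{m}\times_{M^{ss}_{m}}M^{ss}_{n}$, so $\iota_{n,m}$ is the base change of $i_{n,m}$ along $\phi_{m}$, hence also an open and closed immersion. For the stable loci, Lemma \ref{stable.cartesian} gives that $\iota^{o}_{n,m}$ is the base change of $\iota_{n,m}$ along the open immersion $\mc{M}^{s}_{m}\hookrightarrow \mc{M}^{ss}_{m}$, hence an open and closed immersion; and combining Lemma \ref{stable.cartesian} with $\mc{M}^{s}_{\bullet}=\phi_{\bullet}^{-1}(M^{s}_{\bullet})$ shows $\mc{M}^{s}_{n}=\phi_{m}^{-1}(M^{s}_{m}\cap V)$, so its good moduli space is $M^{s}_{m}\cap V$ and the square with vertices $M^{s}_{n},M^{s}_{m},M^{ss}_{n},M^{ss}_{m}$ is cartesian, whence $i^{o}_{n,m}$ is the base change of $i_{n,m}$ and is again an open and closed immersion. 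The only genuinely delicate point is the saturation claim --- that stability--preservation promotes ``$S$-equivalent to a pullback'' to ``isomorphic to a pullback'' --- and it is precisely this step that breaks down when stability is not preserved, as in Example \ref{map.not.closed}; everything after it is formal.
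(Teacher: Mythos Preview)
Your argument is correct and takes a genuinely different route from the paper. Two small slips: what you cite from Proposition \ref{proj.formula.finite} is the \emph{unit} $\id\to\pi_{*}\pi^{*}$ being an isomorphism, not the counit (the counit $\pi^{*}\pi_{*}\to\id$ is certainly not an isomorphism in general); your five-lemma argument only needs the unit, and then \emph{proves} that the counit is an isomorphism for the particular extension $F$. Also, $M^{ss}_{n}$ and $M^{ss}_{m}$ are only disjoint unions of projectives here (the Hilbert polynomial is not fixed), so cite properness of $i_{n,m}$ from the second bullet of Theorem \ref{results.4.2} rather than projectivity from Theorem \ref{thm.fixed.weights}.

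The paper argues differently: it observes that the hypothesis forces polystables to pull back to polystables, so $\iota_{n,m}$ sends closed points to closed points; combined with the finiteness of $i_{n,m}$ already obtained from the detailed stable-factor analysis (Lemma \ref{stable.s.equiv} and the geometric injectivity proposition), Alper's Proposition 6.4 then gives that $\iota_{n,m}$ itself is finite, hence closed, hence an open and closed immersion, and this descends to $i_{n,m}$. Your approach instead proves the stronger statement that the image of $\iota_{n,m}$ is \emph{saturated} for $\phi_{m}$ --- any sheaf $S$-equivalent to a pullback is a pullback --- via the clean ``extensions of pullbacks are pullbacks'' lemma, and then runs the good-moduli-space formalism for saturated opens. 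Your route is more self-contained for this proposition and does not rely on the preceding geometric-injectivity result or on Alper's Proposition 6.4; the paper's route is shorter given what was already on the table, but leans on the structural Lemmas \ref{stable.s.equiv} ff.\ that you do not need.
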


\begin{proof}
We already know that $\iota_{n,m}$ is an open immersion. The fact that stable sheaves go to stable sheaves implies that polystables go to polystables, and this says that $\iota_{n,m}$ sends closed points to closed points (recall that the closed points of $\mc{M}^{ss}_{n}$ correspond to polystable sheaves). Since we know that the induced map $i_{n,m}$ on the good moduli spaces is finite, by Proposition 6.4 of \cite{alper} we can conclude that $\iota_{n,m}$ is also finite, and in particular closed. This shows that $\iota_{n,m}$ is an open and closed immersion, and this implies the conclusion also for $i_{n,m}$.

Finally, the same conclusion for $\iota^{o}_{n,m}$ and $i^{o}_{n,m}$ holds because of Lemma \ref{stable.cartesian}.
\end{proof}

We have no examples in which $i_{n,m}$ is not an open and closed immersion, so one could conjecture that this is always the case. At the very least, the fact that on each connected component the inductive limit stabilizes (in the sense that $i_{n,m}$ is an isomorphism for $n,m$ divisible enough) seems very reasonable.
 
The following proposition gives sufficient conditions that ensure that stability is preserved under pullback.

\begin{prop}\label{pullback.stable}
The pullback of a stable sheaf is stable in each of the following cases:
\begin{itemize}
\item we are considering torsion-free sheaves and the log structure on $X$ is generically trivial (\ref{gen.trivial});
\item we look at components corresponding to a reduced Hilbert polynomial $h \in \bb{Q}[x]$, which is not the reduced Hilbert polynomial of a stable parabolic sheaf on one of the log stacks $X^{i}$.
\end{itemize}
\end{prop}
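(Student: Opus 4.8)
The plan is to deduce both statements directly from Proposition \ref{pullback.not.stable}, which tells us exactly which stable sheaves have non-stable pullback: a stable $F\in\Coh(\radice{n}{X})$ has $\pi^{*}F$ non-stable if and only if $F\cong I^{i}_{n,j}(G)$ for some $i,j$ and some stable $G\in\Coh(\radice{n}{X^{i}})_{s_{i}}$. So in each of the two cases it is enough to show that a stable $F$ satisfying the hypothesis cannot be of this form, and then conclude by Proposition \ref{pullback.not.stable}.

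For the first bullet I would argue as follows. Suppose $F\cong I^{i}_{n,j}(G)$; since $F$ is stable it is nonzero, hence so is $G$. By the very definition of $I^{i}_{n,j}$, every piece of the parabolic sheaf $F$ whose $i$-th weight coordinate is different from $-\tfrac{j}{n}$ vanishes, and since $1\le j\le n$ we have $-\tfrac{j}{n}\neq 0$, so in particular the piece of $F$ at the origin is zero. On the other hand, after the reduction to $X_{1}$ that we have been making, the log structure of $X_{1}$ --- and hence of $\radice{n}{X}$ --- is still generically trivial (\ref{gen.trivial}): it is trivial over the preimage of the dense open subset of $X$ where the log structure of $X$ is trivial, over which the projection to $X$ is an isomorphism. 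Since $F$ is torsion-free, Proposition \ref{gen.trivial.inj} shows that all the structure maps between the pieces of $F$ are injective, and therefore, as observed right after Theorem \ref{BV}, no piece of the nonzero sheaf $F$ can be zero. This contradiction shows $F$ is not in the image of any $I^{i}_{n,j}$, so $\pi^{*}F$ is stable.

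For the second bullet, suppose again $F\cong I^{i}_{n,j}(G)$ with $G$ stable on $\radice{n}{X^{i}}$. As recorded in the discussion introducing the functors $I^{i}_{n,j}$, one has $p_{n}(F)=p_{n}\bigl(I^{i}_{n,j}(G)\bigr)=p_{n}(G)$, and $p_{n}(G)$ is the reduced Hilbert polynomial of $G$ viewed as a stable sheaf on $\radice{n}{X^{i}}$, equivalently of the corresponding stable parabolic sheaf on $X^{i}$. Hence the reduced Hilbert polynomial $h$ of $F$ would be realized by a stable parabolic sheaf on one of the $X^{i}$, contradicting the hypothesis. So on the component with reduced Hilbert polynomial $h$ no stable sheaf lies in the image of any $I^{i}_{n,j}$, and Proposition \ref{pullback.not.stable} gives that the pullback of every such stable sheaf is stable.

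I do not expect a genuine obstacle here: once Proposition \ref{pullback.not.stable} is available the argument is bookkeeping with the functors $I^{i}_{n,j}$. The only points needing a little care are the translation, in the first bullet, between torsion-freeness of $F$ on the root stack $\radice{n}{X_{1}}$ and the combination \emph{injective structure maps $+$ nonvanishing pieces} of the associated parabolic sheaf (all furnished by Proposition \ref{gen.trivial.inj} together with the remarks following the Borne--Vistoli equivalence, once one checks that replacing $X$ by $X_{1}$ preserves generic triviality of the log structure), and, in the second bullet, keeping straight that $p_{n}$ of a sheaf on $\radice{n}{X^{i}}$ is computed with the generating sheaf on that root stack --- exactly as in the identity $p_{n}(I^{i}_{n,j}(F))=p_{n}(F)$ quoted from the text.
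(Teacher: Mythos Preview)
Your approach is exactly the paper's: both bullets are read off directly from Proposition \ref{pullback.not.stable} together with the identity $p_{n}(I^{i}_{n,j}(G))=p_{n}(G)$, and the second bullet is word-for-word the same argument. One small inaccuracy in the first bullet: your claim that the piece of $I^{i}_{n,j}(G)$ at the origin vanishes is not quite right in the edge case $j=n$, since by pseudo-periodicity the piece at $a_{i}=0$ is the twist of the piece at $a_{i}=-1=-\tfrac{j}{n}$, which is nonzero. The fix is immediate and is in fact closer to how the paper phrases it: by the definition of $I^{i}_{n,j}$ all the structure maps in the $i$-th direction are zero, while Proposition \ref{gen.trivial.inj} forces them to be injective for a nonzero torsion-free sheaf when the log structure is generically trivial, so the source of each such map (hence the whole sheaf) would have to vanish.
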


\begin{proof}
This is immediate from the previous discussion: a stable sheaf with non-stable pullback will have a lot of zeros, but the maps of a torsion-free parabolic sheaf on a log scheme with generically trivial log structure are injective (see \ref{gen.trivial.inj}), and this is it for the first part.

As for the second part, a stable sheaf with a non-stable pullback is of the form $I_{n,j}^{i}(F)$ for some $F \in \Coh(\radice{n}{X^{i}})_{s_{i}}$, and recall that $p_{n}(I_{n,j}^{i}(F))=p_{n}(F)$.
\end{proof}

\begin{rmk}
Note that the second condition will only be meaningful if the set of reduced Hilbert polynomials of stable parabolic sheaves on $X$ is not entirely contained in the set of reduced Hilbert polynomials of stable sheaves on one of the $X^{i}$.

We feel that this should be the case in general: the reduced Hilbert polynomial of a parabolic sheaf is in particular the reduced Hilbert polynomial of a sheaf on $X$ (the sum of its fundamental pieces), but this sheaf on $X$ is typically not even semi-stable. Moreover, adding generators to the log structure should give more freedom for stable sheaves, and thus for the set of their Hilbert polynomials. For example if the log structure has rank $1$, the pieces of a parabolic stable sheaf need not be stable on $X$.
\end{rmk}

\subsection{Limit moduli theory on the infinite root stack}\label{sec.4.3}

We will keep using the notations introduced in \ref{sec.4.2}, and moreover we will denote by $\pi_{n,m}\colon \radice{m}{X}\to \radice{n}{X}$ the natural projection for $n\mid m$, and by $\pi_{n} \colon \infroot{X}\to \radice{n}{X}$ the projection from the infinite root stack. Note that, being an inverse limit of flat morphisms, $\pi_{n}$ is flat.

Let us describe the moduli theory for finitely presented sheaves on $\infroot{X}$ that we get by taking a limit of the theories at finite levels. In our setting $\infroot{X}$ is actually coherent by \cite[Proposition 4.19]{infinite}, so that finitely presented sheaves are the same as coherent sheaves. Since the most important property for our arguments is finite presentation, we will continue to use this adjective instead of coherence.

Recall from \ref{fp.sheaves} that $\FP(\infroot{X})=\varinjlim_{n}\FP(\radice{n}{X})$, and this means that
\begin{itemize}
\item every finitely presented sheaf $F \in \FP(\infroot{X})$ is of the form $\pi_{n}^{*}F_{n}$ for some $n$ and $F_{n} \in \FP(\radice{n}{X})$,
\item for any $n,m$ and $F_{n}\in \FP(\radice{n}{X})$, $F_{m}\in \FP(\radice{m}{X})$ such that $F \simeq \pi_{n}^{*}F_{n} \simeq \pi_{m}^{*}F_{m}$, there exists $q\geq n,m$ such that $\pi_{n,q}^{*}F_{n} \simeq \pi_{m,q}^{*}F_{m}$ on $\radice{q}{X}$.
\end{itemize}

\begin{definition}
The \emph{reduced Hilbert polynomial} $p(F)$ of $F \in \FP(\infroot{X})$ is the reduced Hilbert polynomial $p_{n}(F_{n})$ of any finitely presented sheaf $F_{n} \in \FP(\radice{n}{X})$ such that $\pi_{n}^{*}F_{n} \simeq F$.
\end{definition}

Since $\pi_{n}^{*}$ is fully faithful and $p_{m}(\pi_{n,m}^{*}(F_{n}))=p_{n}(F_{n})$ by Proposition \ref{stability.preserved}, the reduced Hilbert polynomial of $F$ is well-defined.

\begin{definition}
A finitely presented sheaf $F \in \FP(\infroot{X})$ is \emph{pure} if it comes from a pure sheaf on one of the finite level root stacks $\radice{n}{X}$.

A finitely presented pure sheaf $F \in \FP(\infroot{X})$ is \emph{(semi-)stable} if for any finitely presented subsheaf $G\subseteq F$ we have
$$
p(G)\mbox{ } (\leq)\mbox{ } p(F).
$$ 
\end{definition}

\begin{prop}\label{semi.stable.finite}
Let $F \in \FP(\infroot{X})$, and assume $F_{n}\in \FP(\radice{n}{X})$ is such that $\pi_{n}^{*}F_{n} \simeq F$. Then $F$ is semi-stable if and only if $F_{n}$ is semi-stable on $\radice{n}{X}$. The ``only if'' part is true with ``semi-stable'' replaced by ``stable''.
\end{prop}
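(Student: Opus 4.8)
The plan is to establish three implications: (a) if $F_{n}$ is semi-stable on $\radice{n}{X}$ then $F=\pi_{n}^{*}F_{n}$ is semi-stable; (b) if $F$ is semi-stable then $F_{n}$ is semi-stable; (c) if $F$ is stable then $F_{n}$ is stable. Throughout I would use that the reduced Hilbert polynomial of $F$ equals $p_{n}(F_{n})$ and is independent of the chosen level (Proposition \ref{stability.preserved}), that every projection $\pi_{n}$ and $\pi_{n,m}$ is flat so the associated pullbacks are exact, and that each $\pi_{m}^{*}$ is fully faithful (the unit $\id\to\pi_{m*}\pi_{m}^{*}$ is an isomorphism by the last bullet of Proposition \ref{proj.formula.infinite}, and likewise at finite level by Corollary \ref{pullback.fully.faithful}); in particular $\pi_{m}^{*}$ is faithful and exact, hence reflects injectivity, non-vanishing of sheaves and isomorphisms.

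For (a), a semi-stable $F_{n}$ is in particular pure, so $F=\pi_{n}^{*}F_{n}$ is pure by the very definition of purity on $\infroot{X}$. Let $G\subseteq F$ be a finitely presented subsheaf. The crucial step — and the one I expect to be the main obstacle — is to realize $G$ as coming from a subsheaf at a finite level. By Proposition \ref{fp.sheaves} we may write $G\cong\pi_{m}^{*}G_{m}$ with $m$ a multiple of $n$ and $G_{m}\in\FP(\radice{m}{X})$; full faithfulness of $\pi_{m}^{*}$ identifies the inclusion $G\hookrightarrow F=\pi_{m}^{*}(\pi_{n,m}^{*}F_{n})$ with $\pi_{m}^{*}$ of a morphism $\iota_{m}\colon G_{m}\to\pi_{n,m}^{*}F_{n}$, and exactness together with faithfulness of $\pi_{m}^{*}$ force $\iota_{m}$ to be injective, so $G_{m}\subseteq\pi_{n,m}^{*}F_{n}$ is genuinely a subsheaf on $\radice{m}{X}$. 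By Proposition \ref{stability.preserved} the sheaf $\pi_{n,m}^{*}F_{n}$ is semi-stable with $p_{m}(\pi_{n,m}^{*}F_{n})=p_{n}(F_{n})$, hence $p(G)=p_{m}(G_{m})\le p_{m}(\pi_{n,m}^{*}F_{n})=p(F)$, which is semi-stability of $F$.

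For (b) and (c) the direction is easier because pullback manufactures subsheaves automatically. First, if $F$ is pure then so is $F_{n}$: writing $F\cong\pi_{m}^{*}F_{m}$ with $F_{m}$ pure at a suitable common multiple level $m$ (purity is preserved by the flat pullbacks, cf.\ \ref{sec.4.1}), full faithfulness of $\pi_{m}^{*}$ gives $F_{m}\cong\pi_{n,m}^{*}F_{n}$, and since $\pi_{n,m}$ is flat, surjective and quasi-finite it preserves dimensions of supports, so a subsheaf of $F_{n}$ of dimension strictly smaller than $\dim F_{n}$ would pull back to a nonzero such subsheaf of $\pi_{n,m}^{*}F_{n}=F_{m}$, contradicting purity. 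Now let $G_{n}\subseteq F_{n}$ be any subsheaf on $\radice{n}{X}$; then $\pi_{n}^{*}G_{n}$ is a finitely presented subsheaf of $F$ with $p(\pi_{n}^{*}G_{n})=p_{n}(G_{n})$, so semi-stability of $F$ gives $p_{n}(G_{n})\le p_{n}(F_{n})$; and if moreover $F$ is stable and $G_{n}$ is a proper nonzero subsheaf, then $\pi_{n}^{*}G_{n}$ is a proper nonzero subsheaf of $F$ (as $\pi_{n}^{*}$ is faithful and reflects isomorphisms), whence $p_{n}(G_{n})<p_{n}(F_{n})$. Thus $F_{n}$ is semi-stable, resp.\ stable.

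To conclude I would point out that the remaining implication, $F_{n}$ stable $\Rightarrow$ $F$ stable, is genuinely false in general: the examples around \ref{stable.not.preserved} and the classification of Proposition \ref{pullback.not.stable} exhibit stable sheaves whose pullback is only strictly semi-stable, which is precisely why the statement asserts only the stable version of the ``only if'' direction and not of the ``if'' direction.
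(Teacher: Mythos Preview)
Your argument is correct and follows essentially the same route as the paper's proof: in both, the ``if'' direction for semi-stability descends an arbitrary finitely presented subsheaf $G\subseteq F$ to a subsheaf at a common finite level $m$ (you choose $m$ a multiple of $n$ via full faithfulness of $\pi_m^*$; the paper pushes forward to level $k=\lcm(m,n)$ via the projection formula --- these are equivalent manoeuvres), and then applies Proposition~\ref{stability.preserved}; and the ``only if'' direction simply pulls back subsheaves of $F_n$ along the flat, fully faithful $\pi_n^*$. Your treatment is in fact slightly more careful than the paper's on one point: you explicitly verify that purity of $F$ forces purity of $F_n$, whereas the paper's proof passes over this in silence.
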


\begin{proof}
If $\pi_{n}^{*}F_{n}$ is (semi-)stable, then since $\pi_{n}^{*}$ is fully faithful and $\pi_{n}$ is flat, if $G\subseteq F_{n}$ is a non-zero proper subsheaf, then $\pi_{n}^{*}G \subseteq \pi_{n}^{*}F_{n}$ is also a non-zero proper subsheaf, and
$$
p_{n}(G)=p(\pi_{n}^{*}G) \mbox{ } (\leq) \mbox{ } p(\pi_{n}^{*}F_{n})=p_{n}(F_{n}).
$$

On the other hand, if $F_{n}$ is semi-stable, consider a finitely presented subsheaf $G \subseteq \pi^{*}F_{n}$. Since it is finitely presented, $G$ will come from some $G_{m} \in \FP(\radice{m}{X})$. By pushing forward the inclusion $$\pi_{m}^{*}G_{m}\subseteq \pi_{n}^{*}F_{n}$$to $\radice{q}{X}$ where $q=\lcm(m,n)$ and using the projection formula for $\pi_{q}$, we see that $G$ comes from $\pi_{m,q}^{*}G_{m}\subseteq \pi_{n,q}^{*}F_{n}$. Since by Proposition \ref{stability.preserved} $\pi_{n,q}^{*}F_{n}$ is semi-stable on $\radice{q}{X}$, we see that
$$
p(G)=p_{q}(\pi_{m,q}^{*}G_{q})\leq p_{q}(\pi_{n,q}^{*}F_{n})=p(\pi_{n}^{*}F_{n})
$$
so $\pi_{n}^{*}F_{n}$ is semi-stable.
\end{proof}

\begin{rmk}
The previous statement is false for stable sheaves in general, and there are stable sheaves $F_{n}\in \FP(\radice{n}{X})$ such that $\pi_{n}^{*}F_{n}$ is not stable. Indeed, this will happen if $\pi_{n,m}^{*}F_{n}$ is not stable for some $m=nq$.
\end{rmk}

We consider the stack $\mc{FP}_{\infroot{X}}$ over $\aff$ of finitely presented sheaves on $\infroot{X}$, defined as follows: an object over $A \in \aff$ is a finitely presented sheaf on $(\infroot{X})_{A}=\infroot{X}\times_{k}\spec(A)$, flat over $A$, and arrows are defined using pullback along $(\infroot{X})_{B}\to (\infroot{X})_{A}$ for a homomorphism $A\to B$. 

Inside $\mc{FP}_{\infroot{X}}$ there is a subcategory parametrizing families of semi-stable sheaves: define $\mc{M}^{ss}$ (resp. $\mc{M}^{s}$) as the stack over $\aff$ with objects over $A \in \aff$ finitely presented sheaves $F$ on $(\infroot{X})_{A}$, flat over $\spec(A)$, and such that for every geometric point $p\to \spec(A)$, the pullback of $F$ to $(\infroot{X})_{p}$ is semi-stable (resp. stable).

In the rest of this section we will prove the following theorem.

\begin{thm}\label{thm.parabolic.rational}
Let $X$ be a polarized simplicial log scheme over $k$ such that in the global chart $P\to \Div(X)$ the monoid $P$ is simplicial. The stack $\mc{M}^{ss}$ is an Artin stack, locally of finite type, and it has an open substack $\mc{M}^{s}\subseteq \mc{M}^{ss}$ parametrizing stable sheaves.

If in addition stability is preserved by pullback along the projections $\radice{m}{X}\to \radice{n}{X}$ between the root stacks of $X$ (for example if the log structure of $X$ is generically trivial, and we are considering torsion-free sheaves), then $\mc{M}^{ss}$ has a good moduli space $M^{ss}$, which is a disjoint union of projective schemes. Moreover there is an open subscheme $M^{s}\subseteq M^{ss}$ that is a coarse moduli space for the substack $\mc{M}^{s}$, and $\mc{M}^{s}\to M^{s}$ is a $\bb{G}_{m}$-gerbe.
\end{thm}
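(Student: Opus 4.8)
The plan is to realize $\mc{M}^{ss}$ and $\mc{M}^{s}$ as increasing unions of the finite-level moduli stacks $\mc{M}^{ss}_{n}$, $\mc{M}^{s}_{n}$, and then transport algebraicity, the good moduli space, the coarse space and the gerbe structure through this union. First I would organize everything by the reduced Hilbert polynomial: fix $h\in\bb{Q}[x]$ and let $\mc{M}^{ss}_{h,n}\subseteq\mc{M}^{ss}_{n}$ be the substack of families with fibrewise reduced Hilbert polynomial $h$, so that $\mc{M}^{ss}_{h,n}=\bigsqcup_{\overline{H}=h}\mc{M}^{ss}_{H,n}$ is locally of finite type over $k$ by Theorem \ref{thm.fixed.weights}. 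By Proposition \ref{stability.preserved} the maps $\iota_{n,m}$ respect $h$, and by Proposition \ref{semistable.open} they are open immersions. The key identification is that $\pi_{n}^{*}$ induces an equivalence $\varinjlim_{n}\mc{M}^{ss}_{h,n}\xrightarrow{\ \sim\ }\mc{M}^{ss}_{h}$, where $\mc{M}^{ss}_{h}\subseteq\mc{M}^{ss}$ is the part with reduced Hilbert polynomial $h$. Essential surjectivity uses that for any affine $\spec A$ the log scheme $X_{A}$ is quasi-compact and fs, so $\FP(\infroot{X_{A}})=\varinjlim_{n}\FP(\radice{n}{X_{A}})$ by Proposition \ref{fp.sheaves}, together with the facts that flatness over $A$ descends along the faithfully flat projection $\pi_{n}$ and that fibrewise semi-stability (hence purity) is equivalent at finite and infinite level by Proposition \ref{semi.stable.finite}; full faithfulness likewise follows from Proposition \ref{fp.sheaves} applied over $A$. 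A filtered colimit of algebraic stacks along open immersions is again an algebraic stack, locally of finite type if the terms are, and taking $\bigsqcup_{h}$ shows $\mc{M}^{ss}=\bigsqcup_{h}\mc{M}^{ss}_{h}$ is an Artin stack locally of finite type.

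For the open substack $\mc{M}^{s}$, I would show that inside each $\mc{M}^{ss}_{h,n}$ the locus of sheaves $F_{n}$ with $\pi_{n}^{*}F_{n}$ stable is open. If $F_{n}$ is stable on $\radice{n}{X}$, then $\pi_{n}^{*}F_{n}$ is stable if and only if $\pi_{n,m}^{*}F_{n}$ is stable for all $m=nk$ (combine the ``only if'' of Proposition \ref{semi.stable.finite} with the argument of its proof), and by Proposition \ref{pullback.not.stable} this fails for some $m$ precisely when $F_{n}$ lies in the image of one of the finitely many functors $I^{i}_{n,j}$ --- a condition independent of $m$. Since the image of $I^{i}_{n,j}$ consists of sheaves whose parabolic pieces off a fixed slice all vanish, this is a closed condition in families; hence its complement inside the open stable locus $\mc{M}^{s}_{n}$ is open, and $\mc{M}^{s}=\bigcup_{n}\big(\mc{M}^{s}_{n}\setminus\bigcup_{i,j}\operatorname{im}I^{i}_{n,j}\big)$ is the desired open substack of $\mc{M}^{ss}$.

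Now assume stability is preserved by all pullbacks $\radice{m}{X}\to\radice{n}{X}$. Then the previous paragraph gives $\mc{M}^{s}_{h}=\varinjlim_{n}\mc{M}^{s}_{h,n}$, and by Proposition \ref{prop.open.closed} all the transition maps $\iota_{n,m},\iota^{o}_{n,m}$ and the induced maps $i_{n,m},i^{o}_{n,m}$ on moduli spaces are open and closed immersions. Consequently $M^{ss}_{h}:=\varinjlim_{n}M^{ss}_{h,n}$ and $M^{s}_{h}:=\varinjlim_{n}M^{s}_{h,n}$ exist as schemes, built from the projective schemes $M^{ss}_{H,n}$ ($\overline{H}=h$) by gluing along unions of connected components; since the terms are open and closed, every connected component of $M^{ss}_{h}$ already lies in some $M^{ss}_{h,n}$ and is therefore projective, and $M^{s}_{h}$ is the open subscheme coming from the $M^{s}_{h,n}$. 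Because being a good moduli space, a coarse moduli space, or a $\bb{G}_{m}$-gerbe is smooth-local on the target (\cite{alper}) and the squares relating $\mc{M}^{ss}_{h,n}\to M^{ss}_{h,n}$ to $\mc{M}^{ss}_{h,m}\to M^{ss}_{h,m}$ are cartesian along the open--closed immersions (an open and closed substack is automatically saturated for a good moduli space), the finite-level assertions of Theorem \ref{thm.fixed.weights} pass to the limit: $\mc{M}^{ss}_{h}\to M^{ss}_{h}$ is a good moduli space, $\mc{M}^{s}_{h}\to M^{s}_{h}$ is a coarse moduli space and a $\bb{G}_{m}$-gerbe, and $M^{s}_{h}\subseteq M^{ss}_{h}$ is open. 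Taking $\bigsqcup_{h}$ yields $M^{ss}=\bigsqcup_{h}M^{ss}_{h}$, a disjoint union of projective schemes, with the open subscheme $M^{s}=\bigsqcup_{h}M^{s}_{h}$, as claimed.

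\textbf{Main obstacle.} The genuinely new input is the openness of $\mc{M}^{s}$ in $\mc{M}^{ss}$ in the general case: unlike semi-stability, stability on $\infroot{X}$ is strictly stronger than stability at each finite level, so one cannot merely pull back $\mc{M}^{s}_{n}$, and the argument relies essentially on the explicit classification of stable sheaves with non-stable pullback provided by Proposition \ref{pullback.not.stable} to identify the bad locus as closed. A secondary, more routine difficulty is the bookkeeping needed to verify that forming the good moduli space, the coarse space and the gerbe commutes with the increasing union, where one uses the open-and-closed nature of the transition maps and the smooth-locality of these notions on the target.
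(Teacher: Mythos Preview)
Your overall strategy coincides with the paper's: identify $\mc{M}^{ss}$ with the filtered union $\varinjlim_{n}\mc{M}^{ss}_{n}$ along the open immersions $\iota_{n,m}$ (Proposition \ref{direct.limit.ss}), deduce algebraicity from an atlas built out of atlases at finite level (Proposition \ref{infty.algebraic}), and then, under the extra hypothesis, use that the $\iota_{n,m}$ and $i_{n,m}$ become open and closed immersions (Proposition \ref{prop.open.closed}) to make the colimit of moduli spaces into a scheme and check the good-moduli-space property componentwise. Your extra bookkeeping by reduced Hilbert polynomial is harmless but not needed for the argument.

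The one substantive difference is how you establish that $\mc{M}^{s}\subseteq\mc{M}^{ss}$ is open in general. The paper's argument (Remark \ref{stable.open}) asserts $j^{-1}\mc{M}^{ss}_{n}\cong\mc{M}^{s}_{n}$, i.e.\ that $F_{n}$ is stable on $\radice{n}{X}$ if and only if $\pi_{n}^{*}F_{n}$ is stable on $\infroot{X}$; but Example \ref{stable.not.preserved} exhibits a stable $F_{n}$ with $\pi_{n,m}^{*}F_{n}$ strictly semi-stable, so that remark appears to overlook precisely the difficulty you flag as the ``main obstacle''. Your route via Proposition \ref{pullback.not.stable} repairs this: since the criterion ``$F_{n}\in\operatorname{im}I^{i}_{n,j}$'' is independent of $m$, stability of $\pi_{n}^{*}F_{n}$ on $\infroot{X}$ is equivalent to $F_{n}\in\mc{M}^{s}_{n}\setminus\bigcup_{i,j}\operatorname{im}I^{i}_{n,j}$. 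One small correction: the vanishing of the off-slice parabolic pieces is an \emph{open and closed} condition in flat families (each piece is flat over the base by the parabolic characterization of flatness, so its fibrewise vanishing locus is a union of connected components), not merely closed; your conclusion that the complement is open is unaffected. So your argument is sound and in fact closes a gap in the paper's treatment of this point, at the modest cost of invoking the structural classification of Proposition \ref{pullback.not.stable}.
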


Let us start by relating the stack of (semi-)stable parabolic sheaves on $\infroot{X}$ with the ones at finite level.

\begin{prop}\mbox{}\label{direct.limit.ss}
\begin{itemize}
\item We have a natural isomorphism of stacks over $\aff$
$$
\varinjlim_{n} \mc{M}^{ss}_{n}\to \mc{M}^{ss}.
$$
\item If pullbacks preserve stability, then we also have an isomorphism
$$
\varinjlim_{n} \mc{M}^{s}_{n}\to \mc{M}^{s}
$$
which is compatible with the previous one. Moreover, in this last case the transition maps are open and closed immersions, so $\mc{M}^{ss}$ and $\mc{M}^{s}$ are in fact a union of connected components of the stacks at finite level.
\end{itemize}
\end{prop}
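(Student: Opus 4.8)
The plan is to construct the comparison functor as pullback along the projections $\pi_n\colon \infroot{X}\to \radice{n}{X}$ and to check that it induces an equivalence of groupoids over every affine test scheme; since both sides are stacks, this will give an isomorphism of stacks. Concretely, over $A\in\aff$ I would send a section of $\varinjlim_n \mc{M}^{ss}_n$ represented by $F_n\in \mc{M}^{ss}_n(A)$ — a finitely presented, $A$-flat family on $\radice{n}{X_A}\cong \radice{n}{X}\times_k\spec A$ (\ref{strict.cartesian}) with semi-stable geometric fibres — to $\pi_n^*F_n$ on $\infroot{X_A}\cong \infroot{X}\times_k\spec A$. This is well defined on the colimit because $\pi_m^*\iota_{n,m}(F_n)=\pi_m^*\pi_{n,m}^*F_n=\pi_n^*F_n$ for $n\mid m$; it lands in $\mc{M}^{ss}$ because $\pi_n$ is flat (so $\pi_n^*F_n$ is $A$-flat and finitely presented) and, after base change to a geometric point $p\to\spec A$, the sheaf $\pi_{n,p}^*\big((F_n)_p\big)$ is semi-stable on $\infroot{X_p}$ exactly when $(F_n)_p$ is, by Proposition \ref{semi.stable.finite}. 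Morphisms go to their pullbacks.

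For the first bullet it then remains to check essential surjectivity and full faithfulness over each $A$. Full faithfulness is immediate from Proposition \ref{fp.sheaves}, which identifies $\Hom$-groupoids on $\infroot{X_A}$ with the corresponding colimits over the $\radice{k}{X_A}$ (and $\pi_k^*\phi$ is an isomorphism iff $\phi$ is, since $\pi_k$ is faithfully flat). For essential surjectivity, take $F\in \mc{M}^{ss}(A)$. As $X_A$ is quasi-compact, Proposition \ref{fp.sheaves} provides $m$ and $G_m\in \FP(\radice{m}{X_A})$ with $\pi_m^*G_m\cong F$, and by the projection formula \ref{proj.formula.infinite} one has $G_m\cong \pi_{m*}\pi_m^*G_m\cong \pi_{m*}F$. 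Since $F=\pi_m^*G_m$ is $A$-flat, so is $G_m$: this follows by faithfully flat descent along $\infroot{X_A}\to \radice{m}{X_A}$, which, while not representable, factors through an fpqc cover of $\radice{m}{X_A}$ obtained from the fpqc atlas of the infinite root stack. Finally $(\pi_m^*G_m)_p=\pi_{m,p}^*\big((G_m)_p\big)$, so Proposition \ref{semi.stable.finite} shows $(G_m)_p$ is semi-stable for every geometric point $p$; hence $G_m\in \mc{M}^{ss}_m(A)$ and its image is $F$.

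For the second bullet, assume pullback along the $\pi_{n,m}$ preserves stability. Then the functor restricts to $\varinjlim_n \mc{M}^{s}_n\to \mc{M}^{s}$: if $(F_n)_p$ is stable then each $\pi_{n,m,p}^*$-pullback is stable by hypothesis, and running the argument in the proof of Proposition \ref{semi.stable.finite} — pushing a finitely presented subsheaf of $\pi_n^*F_n$ down to a suitable $\radice{m}{X}$ via the projection formula — shows $\pi_{n,p}^*\big((F_n)_p\big)$ is stable; essential surjectivity uses the ``only if'' part of \ref{semi.stable.finite} and full faithfulness is as before. Compatibility with the isomorphism of the first bullet is clear from the constructions. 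Moreover, under this hypothesis Proposition \ref{prop.open.closed} tells us that all the transition maps $\iota_{n,m}$ and $\iota^{o}_{n,m}$ are open and closed immersions; since $\mc{M}^{ss}_n$ and $\mc{M}^{s}_n$ are locally of finite type over $k$, the image of each in the next stage is then a union of connected components, and passing to the colimit realizes $\mc{M}^{ss}$ and $\mc{M}^{s}$ as the increasing unions $\bigcup_n \mc{M}^{ss}_n$ and $\bigcup_n \mc{M}^{s}_n$ along such inclusions, i.e.\ as unions of connected components of the finite-level stacks.

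The step I expect to be the main obstacle is the essential-surjectivity argument: given an arbitrary $A$-flat finitely presented family on $\infroot{X_A}$, one must produce a descent of it to some finite root stack that is still $A$-flat and still fibrewise (semi-)stable. The conceptual ingredients are all in place — Proposition \ref{fp.sheaves} for the descent of the sheaf itself, the projection formula \ref{proj.formula.infinite} for identifying the descent with $\pi_{m*}F$, Proposition \ref{semi.stable.finite} for the fibrewise stability, and Proposition \ref{prop.open.closed} for the open-and-closed statement — so the remaining work is to assemble these carefully and to handle the flatness descent along the (non-representable, but fpqc-locally nice) projection $\pi_m$.
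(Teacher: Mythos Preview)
Your proof is correct and follows essentially the same route as the paper: define the comparison via pullback along $\pi_n$, use Proposition~\ref{fp.sheaves} for essential surjectivity and full faithfulness, Proposition~\ref{semi.stable.finite} for fibrewise (semi-)stability, and Proposition~\ref{prop.open.closed} for the open-and-closed statement. The only substantive difference is how you justify $A$-flatness of the descended sheaf: you invoke faithfully flat descent along $\pi_m$, whereas the paper simply writes ``by possibly increasing $n$ we can assume that $F_n$ is flat over $A$''; both are a bit terse, and a clean alternative is to note that $G_m\cong\pi_{m*}F$, that $\pi_{m*}$ is exact and commutes with $-\otimes_A M$ by the projection formula \ref{proj.formula.infinite}, so $A$-flatness of $F$ immediately gives $A$-flatness of $G_m$.
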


\begin{proof}
Let us spend a few words on the definition of the direct limit $\varinjlim_{n}\mc{M}^{ss}_{n}$.

Given in general a filtered directed system $\{\mc{C}_{i}\}_{i \in I}$ of fibered categories over some category $\mc{D}$, we can define the direct limit $\mc{C}=\varinjlim_{i\in I} \mc{C}_{i}$ as a fibered category over $\mc{D}$ as follows: objects are pairs $(d,c)$, where $d \in \mc{D}$ and $c \in \mc{C}_{i}(d)$ for some $i \in I$, and a morphism $(d,c)\to (d',c')$ is a pair $(f,g)$ where $f\colon d\to d'$ is a morphism in $\mc{D}$, and $g$ is an element of the direct limit $\varinjlim_{i \geq i_{0}} \Hom(\phi_{i_{0},i}(f^{*}c'),\phi_{i_{0},i}(c))$, where $i_{0}$ is an index where both $c$ and $c'$ are defined. In other words we are taking the disjoint union of the objects and the direct limit for morphisms, fiberwise. If $\mc{D}$ is a site and $\mc{C}_{i}$ are stacks, we can stackify $\mc{C}$ to get the direct limit as a stack.

In our particular case note that the direct limit is already a stack: this is because, since we are working on $(\Aff)^{op}$, every covering has a finite refinement, so we can reduce effectivity of descent data and the fact that Hom is a sheaf to some finite level. Moreover, since all the maps $\iota_{n,m}$ are fully faithful, in the direct limit we have $\Hom(F_{n},F_{m})=\Hom_{\mc{M}^{ss}_{h}}(\pi_{n,h}^{*}F_{n}, \pi_{m,h}^{*}F_{m})$, where $h=\lcm(n,m)$.

Now for every $n \in \bb{N}$ the pullback along $\pi_{n}\colon \infroot{X}\to \radice{n}{X}$ induces $\iota_{n}\colon \mc{M}^{ss}_{n}\to \mc{M}^{ss}$, and moreover these maps are compatible with the transition maps of the system $\iota_{n,m}$. Thus we have a morphism
$$
\iota\colon \varinjlim_{n} \mc{M}^{ss}_{n}\to \mc{M}^{ss}.
$$
We will check that this is fully faithful and essentially surjective.

Take a $k$-algebra $A$, and consider the map $(\varinjlim_{n} \mc{M}^{ss}_{n})(A)\to \mc{M}^{ss}(A)$. If $F$ is an object of $\mc{M}^{ss}(A)$, i.e. a finitely presented sheaf on $(\infroot{X})_{A}=\infroot{X}\times_{k}\spec(A)$, then since $(\infroot{X})_{A}=\varprojlim_{n}(\radice{n}{X})_{A}$, we have $\FP((\infroot{X})_{A})=\varinjlim \FP((\radice{n}{X})_{A})$, and $F$ comes from some $F_{n}\in \FP((\radice{n}{X})_{A})$. Moreover by possibly increasing $n$ we can assume that $F_{n}$ is flat over $A$, and its fibers over $(X_n)_{p}$ for geometric points $p\to \spec(A)$  will be semi-stable by Proposition \ref{semi.stable.finite}, since their pullback to $(\infroot{X})_{p}$ is. In other words $F_{n}$ is an object of $\mc{M}^{ss}_{n}(A)$, and its image in $(\varinjlim_{n}\mc{M}^{ss}_{n})(A)$ via $\iota$ will be isomorphic to $F$.

For full faithfulness, if $F_{n}$ and $F_{m}$ are two objects of $(\varinjlim_{n}\mc{M}^{ss}_{n})(A)$, as noted above we have $\Hom(F_{n},F_{m})=\Hom_{\mc{M}^{ss}_{h}}(\pi_{n,h}^{*}F_{n}, \pi_{m,h}^{*}F_{m})$ with $h=\lcm(n,m)$, and since pullback along $(\infroot{X})_{A}\to (X_{h})_{A}$ is fully faithful, the conclusion follows.

The same line of reasoning works for the statement about stable sheaves, and compatibility of the maps is immediate from the compatibility at finite level.
\end{proof}

\begin{rmk}\label{stable.open}
What perhaps is not clear enough, is that $j\colon \mc{M}^{s}\subseteq \mc{M}^{ss}$ is an open substack. This holds even if stability is not preserved by pullback.

In fact, take a morphism $f\colon T \to \mc{M}^{ss}$, and note that we can assume that $T$ is affine, say $T=\spec(A)$. The map $f$ corresponds to a sheaf $F \in \mc{M}^{ss}(A)$, and by the preceding proposition $F$ will come from some $F_{n}\in \mc{M}^{ss}_{n}(A)$.

From this, and the observation that $j^{-1}\mc{M}^{ss}_{n} \simeq \mc{M}^{s}_{n}$, we see that the fibered product $\mc{M}^{s}\times_{\mc{M}^{ss}}T$ coincides with $\mc{M}^{s}\times_{\mc{M}^{ss}_{n}}T=j^{-1}\mc{M}^{ss}_{n}\times_{\mc{M}^{ss}_{n}}T=\mc{M}^{s}_{n}\times_{\mc{M}^{ss}_{n}}T$, and this is open in $T$ because $\mc{M}^{s}_{n}\to \mc{M}^{ss}_{n}$ is an open immersion.
\end{rmk}

\begin{prop}\label{infty.algebraic}
The stack $\mc{M}^{ss}$ is an Artin stack, locally of finite presentation over $k$. Being an open substack, $\mc{M}^{s}$ has the same properties.
\end{prop}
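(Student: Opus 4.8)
The plan is to deduce everything from Proposition \ref{direct.limit.ss} together with Theorem \ref{thm.fixed.weights}. By the former, $\mc{M}^{ss}$ is the filtered $2$-colimit $\varinjlim_n \mc{M}^{ss}_n$ indexed by $\bb{N}$ ordered by divisibility, which is a directed set since any two indices are bounded above by their least common multiple, with transition maps the open immersions $\iota_{n,m}$ of Proposition \ref{semistable.open}. Each $\mc{M}^{ss}_n$ is an Artin stack locally of finite type over $k$: it decomposes as a disjoint union over Hilbert polynomials $H$ of the Artin stacks of finite type over $k$ provided by Theorem \ref{thm.fixed.weights} applied to $\radice{n}{X}$, and over the field $k$ ``locally of finite type'' is the same as ``locally of finite presentation''. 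So the whole task reduces to showing that a filtered $2$-colimit of Artin stacks, locally of finite presentation over $k$, taken along open immersions, is again such a stack.

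First I would record that every structure map $\iota_n\colon \mc{M}^{ss}_n\to\mc{M}^{ss}$ is an open immersion. It is a monomorphism because all the $\iota_{n,m}$ are fully faithful as functors of fibered categories, hence so is the colimit inclusion. For openness, given a morphism $T\to\mc{M}^{ss}$ from a scheme one uses the essential surjectivity step in the proof of Proposition \ref{direct.limit.ss} --- which is exactly Proposition \ref{fp.sheaves} read in families, $\FP((\infroot{X})_A)=\varinjlim_m\FP((\radice{m}{X})_A)$ --- to factor it, Zariski-locally on $T$, through some $\mc{M}^{ss}_m$ with $n\mid m$; then $T\times_{\mc{M}^{ss}}\mc{M}^{ss}_n\cong T\times_{\mc{M}^{ss}_m}\mc{M}^{ss}_n$ is open in $T$ because $\iota_{n,m}$ is an open immersion. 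The useful consequence is that any fibered product of schemes (or finite collection of objects over an affine scheme) over $\mc{M}^{ss}$ may be computed at a sufficiently divisible finite level.

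Next I would check representability of the diagonal. Given an affine scheme $T=\spec(A)$ and two objects $F,F'\in\mc{M}^{ss}(A)$, both come from $\mc{M}^{ss}_n$ for $n$ large enough simultaneously, by the factorization above. Since $\iota_n$ is fully faithful (indeed a monomorphism), the $\Isom$-sheaf over $T$ representing $T\times_{\mc{M}^{ss}\times_{k}\mc{M}^{ss}}\mc{M}^{ss}$ coincides with the corresponding $\Isom$-sheaf for $\mc{M}^{ss}_n$, which is an algebraic space of finite presentation over $T$ because $\mc{M}^{ss}_n$ is Artin and locally of finite presentation over $k$. Finally, for each $n$ I would pick a smooth atlas $U_n\to\mc{M}^{ss}_n$ with $U_n$ a scheme locally of finite type over $k$, available since $\mc{M}^{ss}_n$ is a disjoint union of finite-type Artin stacks; then $U_n\to\mc{M}^{ss}_n\xrightarrow{\iota_n}\mc{M}^{ss}$ is smooth, being the composition of a smooth morphism with an open immersion, and $\bigsqcup_n U_n\to\mc{M}^{ss}$ is surjective by essential surjectivity of $\iota$ in Proposition \ref{direct.limit.ss}. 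This exhibits a smooth atlas of $\mc{M}^{ss}$ by a scheme locally of finite type --- equivalently, over the field $k$, locally of finite presentation --- so $\mc{M}^{ss}$ is an Artin stack locally of finite presentation over $k$; and $\mc{M}^{s}$, being open in $\mc{M}^{ss}$ by Remark \ref{stable.open}, inherits these properties.

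The only delicate point --- which I regard as the heart of the argument rather than a genuine obstacle --- is the bookkeeping that permits descending to a finite level: that the colimit is filtered, that the transition maps are monomorphisms and open immersions, and that a morphism from a quasi-compact scheme into $\mc{M}^{ss}$ (or a finite family of objects over an affine scheme) factors through some $\mc{M}^{ss}_n$. All of this is already packaged in Propositions \ref{direct.limit.ss} and \ref{fp.sheaves}; once it is in hand, the algebraicity and finiteness of presentation are formal.
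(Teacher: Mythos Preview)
Your proposal is correct and follows essentially the same approach as the paper: both build the atlas of $\mc{M}^{ss}$ as $\bigsqcup_n U_n$ from smooth presentations $U_n\to\mc{M}^{ss}_n$, using that each $\iota_n\colon\mc{M}^{ss}_n\to\mc{M}^{ss}$ is an open immersion. The only minor difference is in how the openness of $\iota_n$ is established: the paper proves it via the adjunction criterion (a sheaf on $\infroot{X}$ comes from $\radice{n}{X}$ iff $\pi_n^{*}\pi_{n*}F\to F$ is an isomorphism, exactly as in Proposition \ref{semistable.open}), whereas you reduce to the finite-level statement $\iota_{n,m}$ by first factoring any test map through some $\mc{M}^{ss}_m$.
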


\begin{lemma}
For any $n \in \bb{N}$ the morphism $\iota_{n}\colon \mc{M}_{n}^{ss}\to \mc{M}^{ss}$ induced by pullback is an open immersion.
\end{lemma}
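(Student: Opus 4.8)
The plan is to deduce this from Proposition \ref{direct.limit.ss} together with Proposition \ref{semistable.open}. Since being an open immersion is local on the target and may be checked after restricting to the members of an affine cover, I would first reduce to the following assertion: for every affine scheme $T=\spec(A)$ and every morphism $T\to\mc{M}^{ss}$, the fibre product $\mc{M}^{ss}_{n}\times_{\mc{M}^{ss}}T$ is representable by an open subscheme of $T$. Such a morphism corresponds to a finitely presented sheaf $F$ on $(\infroot{X})_{A}$, flat over $A$, with semi-stable geometric fibres. Using $(\infroot{X})_{A}\cong\varprojlim_{k}(\radice{k}{X})_{A}$ and Proposition \ref{fp.sheaves}, I would write $F\cong\pi_{k}^{*}F_{k}$ for some $k$ (which I may take to be divisible by $n$) and some $F_{k}\in\FP((\radice{k}{X})_{A})$; after possibly enlarging $k$ the sheaf $F_{k}$ can be assumed flat over $A$ (exactly as in the proof of Proposition \ref{direct.limit.ss}, using that $\pi_{k}$ is faithfully flat), and its geometric fibres are semi-stable by Proposition \ref{semi.stable.finite} applied fibrewise. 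Hence $F_{k}$ defines a morphism $g\colon T\to\mc{M}^{ss}_{k}$ whose composition with the map $\iota_{k}\colon\mc{M}^{ss}_{k}\to\mc{M}^{ss}$ induced by $\pi_{k}$ is isomorphic to the given morphism $T\to\mc{M}^{ss}$.

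The crux of the argument is that each $\iota_{k}$ is fully faithful, hence a monomorphism of stacks. This follows from the description of $\mc{M}^{ss}$ as $\varinjlim_{k}\mc{M}^{ss}_{k}$ in Proposition \ref{direct.limit.ss}: since all the transition maps $\iota_{a,b}$ are fully faithful, for objects $E,E'$ of $\mc{M}^{ss}_{k}$ over an affine scheme the natural map $\Hom_{\mc{M}^{ss}_{k}}(E,E')\to\Hom_{\mc{M}^{ss}}(\iota_{k}E,\iota_{k}E')$ is a bijection. Granting this, since $\iota_{n}=\iota_{k}\circ\iota_{n,k}$ and the morphism $T\to\mc{M}^{ss}$ factors through $\iota_{k}$ via $g$, the canonical morphism $\mc{M}^{ss}_{n}\times_{\mc{M}^{ss}_{k}}T\to\mc{M}^{ss}_{n}\times_{\mc{M}^{ss}}T$ is an equivalence, because a monomorphism may be cancelled from such a $2$-fibre product. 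By Proposition \ref{semistable.open} the map $\iota_{n,k}$ is an open immersion, so its base change $\mc{M}^{ss}_{n}\times_{\mc{M}^{ss}_{k}}T\to T$ along $g$ is an open immersion of schemes; this identifies $\mc{M}^{ss}_{n}\times_{\mc{M}^{ss}}T$ with an open subscheme of $T$, which is what was required, and in particular yields representability of $\iota_{n}$ for free.

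I expect no serious obstacle here: the content is entirely formal once Propositions \ref{direct.limit.ss}, \ref{semistable.open} and \ref{semi.stable.finite} are in hand. The only points needing a little care are the verification that $F_{k}$ can be chosen flat over $A$ with semi-stable geometric fibres — handled just as in the proof of Proposition \ref{direct.limit.ss} — and the elementary categorical fact that a monomorphism of stacks can be erased from a $2$-fibre product; neither of these is genuinely difficult.
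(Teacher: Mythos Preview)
Your argument is correct, but it takes a different route from the paper. The paper simply repeats the proof of Proposition \ref{semistable.open} verbatim, with $\infroot{X}$ in place of $\radice{m}{X}$: given $T\to\mc{M}^{ss}$ corresponding to $F$ on $(\infroot{X})_{T}$, one considers the adjunction map $\alpha\colon \pi_{n}^{*}(\pi_{n})_{*}F\to F$ and checks directly that the locus in $T$ where $\alpha$ is an isomorphism is open and represents the fibre product. The only new input is the projection formula for $\pi_{n}\colon\infroot{X}\to\radice{n}{X}$ (Proposition \ref{proj.formula.infinite}), which guarantees that $F$ comes from $\radice{n}{X}$ precisely when $\alpha$ is an isomorphism.

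Your approach is instead to factor any affine test map through some finite level $\mc{M}^{ss}_{k}$ (using Proposition \ref{direct.limit.ss} and its proof), observe that $\iota_{k}$ is fully faithful hence a monomorphism, and then cancel $\iota_{k}$ from the $2$-fibre product to reduce to the already-known openness of $\iota_{n,k}$. This is a clean categorical reduction that makes explicit how the lemma is a formal consequence of the direct-limit description, whereas the paper's argument is more self-contained and avoids invoking the limit machinery. Both are short; the paper's version is marginally more direct, while yours highlights that no genuinely new geometric input is needed beyond the finite-level case.
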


\begin{proof}
This goes exactly as the proof of Lemma \ref{semistable.open}. The main point is that, by the projection formula for $\pi_{n}\colon \infroot{X}\to \radice{n}{X}$, a finitely presented sheaf $F \in \FP(\infroot{X})$ comes from $\radice{n}{X}$ if and only if the adjunction morphism $\pi_{n}^{*}{\pi_{n}}_{*}F\to F$ is an isomorphism.
\end{proof}

\begin{proof}[Proof of Proposition \ref{infty.algebraic}]
Let us fix a smooth presentation $A_{n}\to \mc{M}^{ss}_{n}$ for every $n \in \bb{N}$. We have a natural induced map $\varphi\colon A=\bigsqcup_{n} A_{n}\to \varinjlim_{n} \mc{M}^{ss}_{n}=\mc{M}^{ss}$, and this is a smooth presentation for $\mc{M}^{ss}$.

Indeed, $\varphi$ is surjective since $\mc{M}^{ss}$ is a union of the open substacks $\mc{M}^{ss}_{n}$, and $A_{n}\to \mc{M}^{ss}_{n}$ is surjective, and for a morphism $f\colon T\to \mc{M}^{ss}$ from a scheme $T$ we have $A \times_{\mc{M}^{ss}}T=\bigsqcup_{n}(A_{n}\times_{\mc{M}^{ss}}T)$, so we can consider a single piece $A_{n}\times_{\mc{M}^{ss}}T$. Now it suffices to note that the morphism $A_{n}\to \mc{M}^{ss}_{n}\subseteq \mc{M}^{ss}$ is a composition of two smooth representable morphisms.

The fact that the diagonal $\Delta\colon \mc{M}^{ss}\to \mc{M}^{ss}\times_{k}\mc{M}^{ss}$ is representable follows from a similar argument.
\end{proof}

\subsubsection{What invariants can we fix?}

Before we go further, let us briefly consider the following problem.

\begin{question}
Can we fix some invariants for finitely presented sheaves on $\infroot{X}$, in order to cut out a finite-type moduli stack inside $\mc{M}^{ss}$?
\end{question}

Ideally, since we are taking a limit of the theories at finite level, we would like to fix some invariant of coherent sheaves on the $\radice{n}{X}$'s, that is preserved by pullback along the maps $\pi_{n,m}\colon \radice{m}{X}\to \radice{n}{X}$.

The stacks $\mc{M}^{ss}_{n}$ we considered up to this point are not of finite type themselves, and the standard solution when one studies moduli of coherent sheaves is, as we did in the previous section, to fix the Hilbert polynomial $H \in \bb{Q}[x]$. This gives finite-type components, both of the corresponding moduli stack and of its good moduli space. There are other things one can fix, for example Chern classes up to numerical equivalence, but here we will focus mainly on Hilbert polynomials. 

It is clear that we cannot fix the Hilbert polynomial at the limit, since it is not preserved by pullback. Rather, we have $P_{m}(\pi^{*}_{n,m}(F))=q \cdot P_{n}(F)$, where $k$ is such that $m=nq$. On the other hand the \emph{reduced} Hilbert polynomial $h$ is preserved by pullback, i.e. $p_{m}(\pi^{*}_{n,m}F)=p_{n}(F)$ for any $F \in \FP(\radice{n}{X})$ (Proposition \ref{stability.preserved}).

\begin{notation}
We denote by $\mc{M}^{ss}_{h,n}$ and $\mc{M}^{s}_{h,n}$ the stacks of families of (semi-)stable parabolic sheaves on $\radice{n}{X}$ with reduced Hilbert polynomial $h \in \bb{Q}[x]$. They have good moduli spaces $M^{ss}_{h,n}$ and $M^{s}_{h,n}$, and since the reduced Hilbert polynomial is preserved by pullback, the morphisms $\iota_{n,m}\colon \mc{M}^{ss}_{n}\to \mc{M}^{ss}_{m}$ restricts to morphisms $\mc{M}^{ss}_{h,n}\to \mc{M}^{ss}_{h,m}$, which we will still denote by $\iota_{n,m}$. The same goes for the morphism $i_{n,m}$, and also for $\iota^{o}_{n,m}$, $i^{o}_{n,m}$ when they are defined.
\end{notation}

Exactly as in Proposition \ref{direct.limit.ss}, we have an isomorphism
$$
\varinjlim_{n}\mc{M}^{ss}_{h,n} \simeq \mc{M}^{ss}_{h}
$$
and the analogous one for stable sheaves if stability is preserved by pullback.

This all works well with the direct limit, but there is an issue: $\mc{M}^{ss}_{h,m}$ is not necessarily of finite type. In fact, fixing the reduced Hilbert polynomial $h$ does not fix the rank (say we are considering torsion-free sheaves), like it happens with the ordinary Hilbert polynomial, and the rank can become arbitrarily large, without changing $h$. In other words, we have $\mc{M}^{ss}_{h,n}=\bigsqcup_{H}\mc{M}^{ss}_{H,n}$ where the union ranges over $H \in \bb{Q}[x]$ of degree $d$ such that $H/\alpha=h$, where $\alpha$ is $d!$ times the leading coefficient of $H$. This disjoint union is usually not of finite type (for an explicit example, take $X$ to be the standard log point).

In the case where the logarithmic structure of $X$ is generically trivial, $X$ is integral and we are considering torsion-free sheaves, there is another thing that we can fix and that gives intermediate components of finite type, namely the rank of the pushforward of the sheaf to $X$. In fact, since $\pi\colon \radice{n}{X}\to X$ is generically an isomorphism, if $F \in \FP(\radice{n}{X})$ has rank $r$, the pushforward $\pi_{*}F$ will still have rank $r$. Moreover the ``parabolic'' rank of $\pi^{*}_{n,m}(F)$ (i.e. its rank as a torsion-free sheaf on the stack $\radice{m}{X}$) is easily seen to be $m\cdot r$, so fixing $h$ and $r$ is equivalent to fixing $H$, and thus will give a finite-type union of components $\mc{M}^{ss}_{h,r,n}$ of $\mc{M}^{ss}_{n}$.

With these assumptions, the moduli stacks $\mc{M}^{ss}_{h,r,n}$ and $\mc{M}^{s}_{h,r,n}$ are of finite type, and the good moduli space $M^{ss}_{h,r,n}$ (resp. $M^{s}_{h,r,n}$) is projective (resp. quasi-projective).

We remark that even in this case, the ``limit'' moduli stack $\mc{M}^{ss}_{h,r}$ is not necessarily of finite type, and it can have infinitely many connected components.

\begin{example}
Take $X=\bb{P}^{1}$, with the log structure induced by the divisor $0+\infty$, and let us fix the reduced Hilbert polynomial $h(x)=x+n$ for $n \in \bb{Z}$, and rank $r=1$.

For any $m \in \bb{N}$, the parabolic sheaf
$$
\xymatrix@C=18pt{
& -1 & -\frac{m-1}{m} & -\frac{m-2}{m} &  \cdots & -\frac{2}{m} & -\frac{1}{m} & 0 \\
F_{m}= & \mc{O}(n-2) \ar[r] & \mc{O}(n-1) \ar@{=}[r] & \mc{O}(n-1) \ar@{=}[r] & \cdots \ar@{=}[r] & \mc{O}(n-1)\ar[r] & \mc{O}(n) \ar@{=}[r] & \mc{O}(n)
}
$$
on $\radice{m}{X}$ has reduced Hilbert polynomial
$$
p_{m}(F_{m})(x)=\frac{x+n-1+(m-2)(x+n)+x+n+1}{m}=x+n=h(x)
$$
and rank $1$, so it gives is a point of $\mc{M}^{ss}_{h,1}$, and it sits in the substack $\mc{M}^{ss}_{h,1,m}$. Moreover, it is not in any of the $\mc{M}^{ss}_{h,1,j}$ with $j \mid m$ (otherwise the only two non-identity maps would need to be the identity), and so it not in a connected component coming from lower levels, since in this case the immersions are open and closed. This shows that there are infinitely many components in $\mc{M}^{ss}_{h,1}$, and so it is not of finite type.
\end{example}

\subsubsection{Taking the limit}

Let us finish the proof of Theorem \ref{thm.parabolic.rational}. All that is left to do is to take the direct limit $\varinjlim_{n} M^{ss}_{n}$ (and the analogous for the spaces of stable sheaves), and check that this is a good moduli space for $\mc{M}^{ss}$. We need the following lemma, whose proof is easy and left to the reader.

\begin{lemma}
Let $\{M_{i}\}_{i \in I}$ be a filtered directed system of locally noetherian schemes, where every transition map is an open and closed immersion. Then the ind-scheme $\varinjlim_{i \in I} M_{i}$ is isomorphic as an ind-scheme to a disjoint union of connected components of the $M_{i}$'s, and in particular it is a scheme.\qed
\end{lemma}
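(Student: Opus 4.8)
The plan is to reduce everything to bookkeeping of connected components. First I would observe that an open and closed immersion of schemes $M_i\hookrightarrow M_j$ (for $i\le j$) identifies $M_i$ with the open and closed subscheme of $M_j$ supported on a union of connected components of $M_j$. Since each $M_i$ is locally noetherian, hence locally connected, it is the disjoint union $M_i=\bigsqcup_{c\in C_i}M_i^{c}$ of its connected components (which are then open and closed), and the transition map sends each component $M_i^{c}$ isomorphically onto a connected component of $M_j$. This produces a filtered directed system of sets $\{C_i\}_{i\in I}$ with injective transition maps, so that $C:=\varinjlim_{i}C_i=\bigcup_i C_i$; moreover, for $c\in C_i\subseteq C_j$ the induced morphism $M_i^{c}\to M_j^{c}$ is an isomorphism, being an open and closed immersion with connected (hence full) image. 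Thus for each $c\in C$ the scheme $M_i^{c}$ is, up to canonical isomorphism, independent of the choice of $i$ with $c\in C_i$; I will call it $M^{c}$.

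Next I would set $M:=\bigsqcup_{c\in C}M^{c}$, which is a scheme, since arbitrary disjoint unions exist in the category of schemes. It comes with open and closed immersions $M_i=\bigsqcup_{c\in C_i}M_i^{c}\xrightarrow{\ \sim\ }\bigsqcup_{c\in C_i}M^{c}\hookrightarrow M$, and these are compatible with the transition maps of the system (checked componentwise, this is immediate). Hence they induce a morphism $\varinjlim_i M_i\to M$ of ind-schemes. Conversely, each $M^{c}$ is by construction an open and closed subscheme of $M_{i}$ for every $i$ with $c\in C_i$, and the resulting maps $M^{c}\to\varinjlim_i M_i$ agree (they differ by transition maps of the system), so by the universal property of the coproduct they assemble into a morphism $M=\bigsqcup_{c}M^{c}\to\varinjlim_i M_i$. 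That the two morphisms are mutually inverse is again verified componentwise: composing $M_i\to M\to\varinjlim_i M_i$ recovers the structure morphism, and composing $M^{c}\to\varinjlim_i M_i\to M$ recovers the inclusion of $M^{c}$.

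There is essentially no obstacle here beyond organizing the indexing: the two hypotheses are each used exactly once — "locally noetherian" to split every $M_i$ into its connected components, and "filtered" to identify $\varinjlim_i C_i$ with the naive union $\bigcup_i C_i$. In particular $\varinjlim_i M_i$ is represented by the scheme $M$, which is by construction a disjoint union of connected components of the $M_i$'s, as claimed.
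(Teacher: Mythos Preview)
Your proof is correct and fills in precisely the details that the paper omits: the lemma is stated with a \qed\ and declared ``easy and left to the reader,'' so there is no proof in the paper to compare against. The argument you give---splitting each $M_i$ into connected components using local noetherianity, tracking the resulting filtered system of index sets with injective transition maps, and assembling the colimit as $\bigsqcup_{c\in C}M^c$---is exactly the sort of bookkeeping the author has in mind.
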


\begin{proof}[Proof of \ref{thm.parabolic.rational}]
The fact that $\mc{M}^{ss}$ is an Artin stack locally of finite presentation is in Proposition \ref{infty.algebraic}, and the fact that stable sheaves form an open substack is explained in Remark \ref{stable.open}.

As for the second part, assume that stability is preserved by pullback, so that all the maps between the stacks and moduli spaces are open and closed immersions. In particular $\mc{M}^{ss}$ and $\mc{M}^{s}$ will be ascending unions of (open and closed) substacks, where at each step we might add some new connected components. Let $M^{ss}$ be the direct limit of the system $\{M^{ss}_{n}\}_{n \in \bb{N}}$ of good moduli spaces at finite level. By the preceding lemma, this will be a disjoint union $M^{ss}=\bigsqcup_{i} M_{i}$, where each $M_{i}$ is a connected component of some $M^{ss}_{n}$.

We have a natural map $\mc{M}^{ss}\to M^{ss}$, and since being a good moduli space is a local property, we can restrict to a single component $M_{i}$, say it comes from $M^{ss}_{\overline{n}}$. Now the fibered product $\mc{M}^{ss}\times_{M^{ss}}M_{i}$ will clearly be the connected component of $\mc{M}^{ss}$ (coming from $\mc{M}^{ss}_{\overline{n}}$ and) corresponding to $M_{i}$, and so the projection $\mc{M}^{ss}\times_{M^{ss}}M_{i}\to M_{i}$ is a good moduli space, because it is a good moduli space at level $\overline{n}$.

The remaining statements about the substack of stable sheaves follow in the same way from the corresponding statement for the stacks $\mc{M}^{s}_{n}$ at finite level.
\end{proof}

\begin{rmk}
As we already mentioned, in the general case in which stability is not preserved by pullback it would be natural to conjecture that, although the morphisms $\mc{M}^{ss}_{n}\to \mc{M}^{ss}_{m}$ are not open and closed immersions, the maps between the good moduli spaces $M^{ss}_{n}\to M^{ss}_{m}$ are, and the direct limit still makes sense as a scheme.

Without knowing this, one can still wonder if formally the direct limit $\varinjlim_{n}M^{ss}_{n}$ (seen as an ind-scheme) can be considered a moduli space for semi-stable parabolic sheaves with rational weights.

One can show that this direct limit has some universal property with respect to morphism $\mc{M}^{ss}\to N$ where $N$ is an ind-algebraic space, and that if $\mc{M}^{ss}$ admits a good moduli space $M$, then $M$ (seen as an ind-scheme) also enjoys this universal property, so it has to be isomorphic to $\varinjlim_{n}M^{ss}_{n}$. For the details we refer to \cite[Section 4.3.2]{tesi}.

This suggests that even if it might not be a scheme, the direct limit $\varinjlim_{n}M^{ss}_{n}$ can be rightfully regarded as a moduli space for semi-stable parabolic sheaves with rational weights.
\end{rmk}

\bibliography{biblio}

\bibliographystyle{alpha}

\end{document}